\RequirePackage{fix-cm}
\documentclass{amsart} 
\usepackage[utf8]{inputenc}
\usepackage{mathptmx}
\usepackage{latexsym}
\usepackage{amsmath}
\usepackage{amssymb}
\usepackage{amsfonts} 
\usepackage{eucal} 
\usepackage{amsbsy}
\usepackage{amsthm}
\usepackage[all]{xy}
\usepackage[pagebackref=true]{hyperref}
\usepackage{graphicx}
\usepackage{tikz}
\usepackage{color, colortbl}
\usepackage{stmaryrd}

\definecolor{Gray}{gray}{0.9}

\definecolor{Gray1}{gray}{0.9}
\definecolor{Gray2}{gray}{0.8}
\definecolor{Gray3}{gray}{0.7}

\newtheorem{thm}{Theorem}[section]
\newtheorem*{thm*}{Theorem}
\newtheorem{lemma}[thm]{Lemma}
\newtheorem{prop}[thm]{Proposition}
\newtheorem{cor}[thm]{Corollary}
\newtheorem*{cor*}{Corollary}

\theoremstyle{definition}
\newtheorem{defn}[thm]{Definition}
\newtheorem{example}[thm]{Example}
\newtheorem{examples}[thm]{Examples}

\theoremstyle{remark}
\newtheorem{remark}[thm]{Remark}

\newcommand {\Ga}    {\ensuremath{\mbox{$\mathcal{G}$}}}

\newcommand {\Sa}    {\ensuremath{\mbox{$\mathcal{S}$}}}

\newcommand {\Xa}    {\ensuremath{\mathcal{X}}}
\newcommand {\Ya}    {\ensuremath{\mathcal{Y}}}
\newcommand {\Za}    {\ensuremath{\mathcal{Z}}}

\newcommand {\real}  {\ensuremath{\mathbb{R}}}

\newcommand {\intg}  {\ensuremath{\mathbb{Z}}}

\newcommand {\cplx}  {\ensuremath{\mathbb{C}}}
\newcommand {\rat}   {\ensuremath{\mathbb{Q}}}

\newcommand {\Hom}   {\ensuremath{\operatorname{Hom}}}
\newcommand {\End}   {\ensuremath{\operatorname{End}}}

\newcommand {\codim} {\ensuremath{\operatorname{codim}}}

\newcommand {\smlhf} {\ensuremath{\mbox{$\frac{1}{2}$}}}

\newcommand {\supp}  {\ensuremath{\operatorname{supp}}}

\newcommand {\pt}    {\ensuremath{\operatorname{pt}}}
\newcommand {\id}    {\ensuremath{\operatorname{id}}}

\newcommand {\Witt}   {{\ensuremath{\operatorname{Witt}}}}

\newcommand {\MWITT}   {\ensuremath{\operatorname{MWITT}}}

\newcommand {\KO}   {{\ensuremath{\operatorname{KO}}}}

\newcommand {\BM}    {{\ensuremath{\operatorname{BM}}}}

\newcommand {\bp}    {{\ensuremath{\bar{p}}}}

\newcommand {\bm}    {{\ensuremath{\bar{m}}}}

\newcommand {\wY}    {{\ensuremath{\widetilde{Y}}}}

\newcommand {\wB}    {{\ensuremath{\widetilde{B}}}}
\newcommand {\Sign}    {{\ensuremath{\operatorname{Sign}}}}
\newcommand {\sig}    {{\ensuremath{\operatorname{sign}}}}

\newcommand {\bsd}    {{\ensuremath{\operatorname{bsd}}}}
\newcommand {\tr}    {{\ensuremath{\operatorname{tr}}}}
\newcommand {\Bdy}    {{\ensuremath{\operatorname{Bdy}}}}
\newcommand {\Lk}    {{\ensuremath{\operatorname{Lk}}}}

\newcommand {\sA}    {\mathbf{A}}
\newcommand {\sS}    {\mathbf{S}}
\newcommand {\sIS}   {\mathbf{IS}}

\makeatletter
\@namedef{subjclassname@2020}{%
  \textup{2020} Mathematics Subject Classification}
\makeatother

\begin{document}


\title[Equivariant L-Classes]
  {Equivariant L-Classes of Atiyah-Singer-Zagier Type for Singular Spaces}

\author{Markus Banagl}

\address{Institut f\"ur Mathematik, Universit\"at Heidelberg,
  Im Neuenheimer Feld 205, 69120 Heidelberg, Germany}

\email{banagl@mathi.uni-heidelberg.de}

\thanks{This work is funded in part by a research grant of the
 Deutsche Forschungsgemeinschaft (DFG, German Research Foundation)
 -- Projektnummer 495696766, and was supported in addition by
 INdAM, Istituto Nazionale di Alta Matematica}

\date{April 2026}

\subjclass[2020]{55N33, 57N80, 57R20, 57R91, 57S17}


\keywords{Stratified Spaces, Intersection Homology,
 Characteristic Classes, Transformation groups, $G$-Signature}


\begin{abstract}
If a finite group $G$ acts on a rational homology manifold, then the orbit space is well-known to be a rational homology manifold again. We consider here actions on spaces that may be much more singular. If the $G$-space is a Witt pseudomanifold, which includes all arbitrarily singular complex pure-dimensional algebraic varieties, then we prove that the orbit space is again a Witt pseudomanifold. In the compact oriented situation, this implies that the orbit space possesses characteristic L-classes, as defined by Goresky and MacPherson. 
We then construct Atiyah-Singer-Zagier type equivariant L-classes 
for such $G$-pseudomanifolds
which serve, as we show by establishing an averaging formula,
as a tool to compute the Goresky-MacPherson
L-class of the orbit space. The construction of the equivariant class builds on intersection homological transfer properties and on recent joint K-theoretic work with Eric Leichtnam and Paolo Piazza, which established a G-signature theorem on Witt pseudomanifolds.
\end{abstract}

\maketitle


\tableofcontents


\section{Introduction}

Let $X$ be a compact oriented pseudomanifold (without boundary).
We place the Witt condition on $X$, which requires that the middle perversity
intersection chain complex of sheaves is self-dual. For example every
pure-dimensional compact complex algebraic variety is a Witt pseudomanifold.
Suppose that a finite group $G$ acts on $X$ by orientation preserving
transformations. Then for every element $g\in G$, we define an
\emph{equivariant $L$-class}
\[ L_* (g,X) \in H_* (X;\cplx)^G.  \]
For $g=1$, this class agrees with the Goresky-MacPherson-Siegel
$L$-class of $X$ (\cite{gmih1}, \cite{siegel}), $L_* (1,X) = L_* (X)$.
When $X$ is a rational homology manifold, $L_* (g,X)$ agrees with the
Poincar\'e dual of Zagier's cohomological class $L^* (g,X) \in H^* (X;\cplx)^G$
constructed in \cite{zagier} and \cite{zagiertopology}.
Suppose that $X$ is a smooth manifold such that
$G$ acts smoothly on $X$, preserving the orientation.
For an element $g\in G$, let $X^g$ be the submanifold of points fixed by $g$
and let $\operatorname{or}_{X^g}$ denote the orientation local system on $X^g$.
Atiyah and Singer constructed an equivariant class
in $H^* (X^g; \operatorname{or}_{X^g} \otimes \cplx)$,
explicitly given in terms of characteristic
classes of $X^g$ and of the equivariant normal bundle of the inclusion
$j: X^g \subset X$, see \cite[p. 582]{atiyahsinger}. 
Let $L' (g,X)$ be the formulation of these classes
as given by Zagier in \cite[p. 12, (27)]{zagier}, \cite{zagiertopology}.
The $G$-Signature Theorem of Atiyah and Singer is the relation
\[ \Sign (g,X) = \langle L' (g,X), [X^g] \rangle. \] 
The inclusion has an associated Gysin homomorphism
$j_!: H^* (X^g) \to H^* (X)$ and the image of the Atiyah-Singer-class under $j_!$
is Zagier's class, i.e. $j_! L'(g,X) = L^* (g,X),$
\cite[p. 4, (3); p. 21, Thm. 1]{zagier}.
As the Gysin map corresponds under Poincar\'e duality to covariant
homological pushforward $j_*: H_* (X^g) \to H_* (X),$ the Atiyah-Singer-class
is related to our homological class by
\[ j_* (L'(g,X) \cap [X^g]) = L_* (g,X)  \]
in the differentiable case.

As in the smooth situation, an important application of the equivariant
classes $L_* (g,X)$ is that they enable the computation of the 
Goresky-MacPherson-Siegel class $L_* (X/G)$ of the orbit space $X/G$ via an
averaging formula. For $L_* (X/G)$ to be defined, one must first know
that the orbit space is also a Witt pseudomanifold.
We prove this in Corollary \ref{cor.orbitspaceiswitt}. Let us outline the strategy.
Like Zagier, we work with triangulated spaces and simplicial actions.
Foundational material on such actions is reviewed in Section \ref{sec.simplicialactions}.
For example, subanalytic proper actions admit a $G$-equivariant triangulation,
\cite[Prop. 6.7]{blp}; see Examples \ref{exples.subanalyticsimplicial}.
We show that if an oriented pseudomanifold $X$ is equipped with an oriented action
of a finite group $G$, then $X/G$ is again an oriented pseudomanifold
(Theorem \ref{thm.orbitspacepseudomanifold}).
It is harder to establish the Witt condition for $X/G$.
The idea is to proceed as in a transfer-based proof of the classical
Conner conjecture, which asserts that the orbit space
of a finite group action on a $\rat$-acyclic space is again
$\rat$-acyclic. Such arguments are for instance used to show
that the orbit space of a finite group action on a rational homology manifold
is again a rational homology manifold (Bredon \cite[V \S 19]{bredonsheaftheory}).
In fact, the Conner conjecture holds more generally
for finite ramified coverings $\pi: \wY \to Y$.
For such coverings, we construct an intersection homology transfer
$\pi_!: IH^\bp_* (Y;\intg) \to IH^\bp_* (\widetilde{Y}; \intg)$
for every $\bp$.
Such transfers appeared already in the work of Goresky and MacPherson
on the Lefschetz fixed point theorem for intersection homology, \cite{gmlefschetz}.
Our construction rests on two pillars:
an elegant formalization of ramified coverings
due to Larry Smith \cite{smithlarry} on the one hand, and on
Gajer's intersection Dold-Thom theorem (\cite{gajer}, corrected in \cite{gajercorrection})
on the other.
Smith's formalization involves symmetric products and thus, ultimately,
the free abelian topological group $AG(\wY)$ generated by the points
of $\wY$.
The intersection Dold-Thom theorem provides natural isomorphisms
$IH^\bp_* (Y;\intg) \cong I^\bp \pi_* (AG(Y))$
for connected $Y$, where
$I^\bp \pi_* (-)$ denotes the intersection homotopy groups
of a filtered space. The relationship of intersection homotopy to 
intersection homology groups
is clarified by Chataur, Saralegi-Aranguren and Tanr\'e in \cite{chataurst}.
Gajer's simplicial set can also be understood in terms of homotopy
theoretic truncations of the Postnikov tower of the links in a stratified space,
as has been shown by Chataur, Saralegi-Aranguren and Tanr\'e 
in \cite{chataursttruncations}.
Note that the free abelian topological group must be endowed with a filtration.
For this, one uses the Lawson filtration (Section \ref{sec.freeabtopgroups}).
Since the ramified covering $\pi: \wY \to Y$ is placid, it induces a
homomorphism $\pi_*: IH^\bp_* (\wY) \to IH^\bp_* (Y)$.
We construct a continuous group homomorphism
$\tau: AG(Y) \to AG(\wY)$ and prove that it is placid with respect to the
Lawson filtrations.
The desired transfer $\pi_!$ on intersection homology is then induced by
$\tau$ under the intersection Dold-Thom theorem.
There are of course other ways to define such a transfer, but
from this homotopy theoretic perspective, it is then particularly straightforward
to deduce up-down and down-up formulae.
That is, the composition $\pi_* \circ \pi_!$ is multiplication by the degree of
the covering (Proposition \ref{prop.updownmultbydegree}), 
and if $\pi: \wY=X \to X/G=Y$ is
an orbit projection, structured as a $|G|$-fold ramified covering, then
$\pi_! \circ \pi_* = \sum_{g\in G} g_*$ (Proposition \ref{prop.downupsumofgstar}).
The up-down property, in conjunction with regular neighborhood theory
and intersection homology properties of suspensions,
is then used in establishing the Witt condition for 
the base space $Y$ of a ramified covering 
(Theorem \ref{thm.baseoframifiedcoveriswitt}).
In summary, we obtain for orbit spaces:

\begin{thm*} 
Let $G$ be a finite group and $X$ an oriented Witt pseudomanifold
upon which $G$ acts by orientation preserving simplicial maps.
Then the orbit space $X/G$ is an oriented Witt pseudomanifold.
\end{thm*}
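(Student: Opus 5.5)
The plan has two parts: first show that $Y=X/G$ is an oriented pseudomanifold, then transfer the Witt condition from $X$ to $Y$ through the ramified covering structure of the orbit map $\pi: X\to Y$.

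\emph{Step 1: $Y$ is an oriented pseudomanifold.} After passing to a barycentric subdivision, the simplicial action becomes regular: if $g$ fixes a simplex setwise, it fixes it pointwise. Then $Y$ inherits a triangulation whose simplices are orbits of simplices of $X$. The pseudomanifold conditions follow directly. $Y$ is pure $n$-dimensional, since every simplex of $Y$ is the image of a simplex of $X$, which is a face of an $n$-simplex. Every $(n-1)$-simplex of $Y$ is a face of exactly two $n$-simplices, counted with multiplicity. I would check this by looking at the star of a preimage $(n-1)$-simplex: its stabilizer either swaps the two adjacent $n$-simplices, or fixes them. Orientation preservation rules out the swap. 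Indeed, an element fixing an $(n-1)$-face pointwise and swapping the two adjacent top simplices would reverse the local orientation. So the two adjacent $n$-simplices map to distinct $n$-simplices of $Y$, and codimension-one branching does not occur. The stratification of $Y$ is the image of a $G$-invariant stratification of $X$, refined by orbit type. The $G$-invariant orientation cycle of $X$ pushes forward to $|G|^{-1}\pi_*[X]$, which gives the orientation of $Y$; alternatively, one orients the top simplices of $Y$ compatibly using a $G$-invariant orientation on $X$. This is what Theorem \ref{thm.orbitspacepseudomanifold} records.

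\emph{Step 2: the Witt condition.} Regard $\pi$ as a $|G|$-fold ramified covering in Smith's sense, and invoke Theorem \ref{thm.baseoframifiedcoveriswitt}. The argument underlying it runs as follows. The Witt condition is local. For a point $y$ in a stratum of codimension $2k+1$ of $Y$, one must show $IH^{\bm}_k(L_y;\rat)=0$, where $L_y$ is the link of $y$. Choose a preimage $x$ with stabilizer $G_x$. A regular neighbourhood of $y$ is the cone on $L_y$, and it is the quotient of the cone on $L_x$ by $G_x$. So $L_y = L_x/G_x$, and the restriction $L_x\to L_y$ is itself a ramified covering of degree $|G_x|$. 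One point needs care: the stratum of $y$ in $Y$ may have a larger codimension relationship than the stratum of $x$ in $X$, because $x$ can lie in a stratum of $X$ of the same or lower dimension. Here I would use the suspension behaviour of intersection homology, writing $L_x$ as an iterated suspension of the normal link, to reduce to the case where $L_x$ is a link of $X$ in the relevant degree. In that case $IH^{\bm}_k(L_x;\rat)=0$ by the Witt condition on $X$.

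\emph{Step 3: conclude with the transfer.} The up-down formula $\pi_*\circ\pi_! = |G_x|$ (Proposition \ref{prop.updownmultbydegree}) shows that $IH^{\bm}_k(L_y;\rat)$ is a retract of $IH^{\bm}_k(L_x;\rat)=0$, so it vanishes. For this to apply, the transfer $\pi_!$ must be available for the restricted covering $L_x\to L_y$, which is itself a compact ramified covering of pseudomanifolds. It also matters that rational coefficients make $|G_x|$ invertible.

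I expect the main obstacle to be Step 2's bookkeeping. The stratification of $Y$ need not be the naive image of that of $X$, so the codimension of $y$ and the degree $k$ must be matched carefully against the links in $X$. This is why regular neighbourhood theory and suspension isomorphisms are needed. I would handle it by working with the intrinsic stratification, or by noting that Witt spaces can be tested on any admissible stratification.
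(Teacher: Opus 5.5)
Your proposal is correct and follows the paper's route. The pseudomanifold and orientation part is Theorem~\ref{thm.orbitspacepseudomanifold}, where regularity plus orientation preservation rules out a reflection swapping the two $n$-simplices at an $(n-1)$-face, and the Witt part is Theorem~\ref{thm.baseoframifiedcoveriswitt} applied to the orbit map as a Smith ramified covering, via restriction to links, desuspension (Lemma~\ref{lem.ihmiddledegstablesuspension}) and the rational up-down formula; the only differences are that the paper restricts over the whole fiber, covering $\Sigma^s|\Lk(\Delta,K)|$ by $\bigsqcup_{\Delta'\in\pi^*(\Delta)}\Sigma^s|\Lk(\Delta',L)|$ with degree $|G|$, rather than using a single stabilizer $G_x$, and that it uses simplicial stratifications, where Lemma~\ref{lem.fstardelta} gives $\dim\Delta'=\dim\Delta$, so the codimension bookkeeping you worried about never arises.
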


(This is Corollary \ref{cor.orbitspaceiswitt} in the main text.)
The equivariant $L$-class $L_* (g,X)$ is then constructed in
Section \ref{sec.equivariantzagierlclass}.
The method is based on Zagier's, which consists of
taking equivariant $g$-signatures $\Sign (g,X)$ of
transverse point-preimages of $G$-invariant maps from $X$
to spheres. As in \cite{atiyahsinger},
the equivariant signatures $\Sign (g,X)$
are complex numbers obtained by evaluating representation theoretic
$G$-signatures $\Sign (G,X) \in R(G)$.
For Witt spaces, however, we use the $G$-invariant intersection
form on middle intersection homology. The equivariant signature
of Witt pseudomanifolds is discussed in Section \ref{sec.gsignatures}; 
see also \cite[Section 5]{blp}.
We prove in particular that the signature of the orbit space is given in terms 
of equivariant signatures by
\[ \Sign (X/G) = \frac{1}{|G|} \sum_{g\in G} \Sign (g,X), \]
see Proposition \ref{prop.signatureoforbitspace}.

Using the equivariant $L$-classes $L_* (g,X)$, one can then
calculate the Goresky-MacPherson-Siegel $L$-class of the orbit
space as follows (Theorem \ref{thm.lclassorbitspace}).

\begin{thm*} 
Let $G$ be a finite group and $X$ an oriented closed Witt pseudomanifold
upon which $G$ acts simplicially, preserving the orientation.
Then $X/G$ is a Witt pseudomanifold and its
Goresky-MacPherson-Siegel $L$-class $L_* (X/G)$ is related to the
equivariant $L$-classes by 
\begin{equation} \label{equ.laverage}
L_* (X/G) 
    = \frac{1}{|G|} \sum_{g\in G} \pi_* L_* (g,X), 
\end{equation}
where $\pi: X \to X/G$ is the orbit projection.
\end{thm*}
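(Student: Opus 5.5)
The plan is to compare both sides of \eqref{equ.laverage} by their characterizing property. By construction, the Goresky--MacPherson--Siegel class $L_*(Y)$ of a closed oriented Witt space $Y$ is determined by signatures of transverse preimages: for $f: Y\to S^k$ in general position with respect to a stratification, with regular value $p$, the preimage $f^{-1}(p)$ is a closed oriented Witt space (normally nonsingular in $Y$). The classes $L_{k}(Y)\in H_k(Y;\rat)$ are then characterized by
\[ \langle f^*[S^k]^*, L_k(Y)\rangle = \Sign f^{-1}(p), \]
at least after stabilization $Y\times S^N$ in low degrees and in the range where the Hopf/Serre argument identifies $H^k$ with $[Y,S^k]\otimes\rat$.

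The equivariant classes $L_*(g,X)\in H_*(X;\cplx)^G$ are presumably defined in Section \ref{sec.equivariantzagierlclass} by the analogous property for $G$-invariant maps $\tilde f: X\to S^k$:
\[ \langle \tilde f^*\sigma, L_k(g,X)\rangle = \Sign(g,\tilde f^{-1}(p)). \]
Since the $L_k(g,X)$ are $G$-invariant, and $G$-invariant classes are detected by invariant cohomology classes, which are pulled back from $X/G$ (the transfer gives $H^*(X/G;\rat)\cong H^*(X;\rat)^G$ via $\pi^*$), I would proceed as follows. Take $f:X/G\to S^k$ transverse with preimage $Z=f^{-1}(p)$. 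Set $\tilde f=f\circ\pi$, which is $G$-invariant, with $\tilde Z=\tilde f^{-1}(p)=\pi^{-1}(Z)$. Then $G$ acts on $\tilde Z$ and $\tilde Z/G=Z$. Evaluating the right-hand side of \eqref{equ.laverage} on $f^*\sigma$ gives
\[ \frac1{|G|}\sum_g\langle \pi^*f^*\sigma, L_k(g,X)\rangle=\frac1{|G|}\sum_g \Sign(g,\tilde Z)=\Sign(\tilde Z/G)=\Sign(Z), \]
using Proposition \ref{prop.signatureoforbitspace} applied to $\tilde Z$. This equals $\langle f^*\sigma, L_k(X/G)\rangle$. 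Since every rational cohomology class of $X/G$ in the relevant degrees is, after rational multiples, of the form $f^*\sigma$, both sides agree in $H_k(X/G;\rat)$. Lower degrees are handled by crossing with spheres and using the product formula $L_*(Y\times S^N)=L_*(Y)\times[S^N]$ and its equivariant analogue with trivial action on $S^N$. Note also that $X/G$ is Witt by Corollary \ref{cor.orbitspaceiswitt}.

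The main obstacle is the transversality bookkeeping. One must check that for $f$ in general position on $X/G$, the pullback $\tilde f$ is admissible for defining $L_*(g,X)$. That is, $\tilde Z$ must be a Witt pseudomanifold with an oriented simplicial $G$-action, with compatible orientations, such that $\tilde Z\to Z$ is the orbit map. I would arrange this simplicially: take $f$ simplicial-transverse on a subdivision of the induced triangulation of $X/G$ and lift it to the $G$-equivariant triangulation of $X$. Since $\pi$ is a ramified covering that is locally a quotient by isotropy groups, the links of $\tilde Z$ are preimages of the links of $Z$. The Witt property and orientability then follow from the corresponding properties of $X$ via Theorem \ref{thm.baseoframifiedcoveriswitt} applied locally, or directly from the normal nonsingularity of $\tilde Z\subset X$.

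A secondary point is to match the definition of $L_*(g,X)$ exactly. If Zagier's construction uses arbitrary $G$-maps rather than invariant ones, I would restrict attention to invariant maps and use $G$-invariance of $L_*(g,X)$ to reduce pairings to invariant cohomology, as above.
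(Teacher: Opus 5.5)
Your proposal is correct and is essentially the paper's proof. You pull a simplicial $\overline{f}\colon X/G\to S^i$ back to the $G$-invariant map $f=\overline{f}\circ\pi$, evaluate the averaged classes via $\langle f^*u, L_*(g,X)\rangle=\Sign(g,f^{-1}(p))$, apply the orbit-space signature formula to $f^{-1}(p)$ using $f^{-1}(p)/G=\overline{f}^{-1}(p)$, compare with the Goresky--MacPherson--Siegel characterization, and stabilize by a sphere for low degrees.

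Your transversality concerns are already absorbed in the paper's definition of $L_*(g,X)$, which uses exactly such maps $\overline{f}\circ\pi$; there the Witt property of the preimage comes from its product tube and desuspension of the Witt condition. One small correction: the right-hand side is a priori only a complex class, so the comparison should be made in $H_*(X/G;\cplx)$. It is enough to test against rational cohomology classes, since these span complex cohomology.
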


For a smooth oriented closed $G$-manifold $M$, endowed
with a $G$-invariant Riemannian metric, the signature operator
$D^\sig$ is a Dirac-type operator commuting with the action of $G$.
It determines a class $[D^\sig] \in K^G_j (M),$ $j \equiv \dim M (2),$
in equivariant analytic $K$-homology
which is independent of the choice of metric.
By the Hodge theorem, $\Sign (G,M)$ is equal to the equivariant index
of the signature operator,
$\Sign (G,M) = \operatorname{ind}_G (D^{\sig, +}) \in R(G)$.
Let $X$ be a Witt $G$-pseudomanifold, equipped with an equivariant
Thom-Mather stratification and a $G$-invariant wedge metric.
As a consequence of Cheeger's Hodge-theorem
on Witt spaces  (\cite{cheeger}),
$\Sign (G,X)$ is again equal to the equivariant index of the signature 
operator $D^\sig$.
In \cite{blp}, Leichtnam, Piazza and the author use the associated
equivariant analytic $K$-homology class
$[D^\sig] \in K^G_j (X)$
to extend the fundamental result of Atiyah-Segal-Singer
from smooth $G$-manifolds to Witt $G$-pseudomanifolds.
The characteristic class formula we obtain there applies in situations
where the fixed point sets $X^g$ admit $G$-tubular neighborhoods
that have the structure of $G$-vector bundles.
The formula then expresses $\Sign (g,X)$
in terms of the Goresky-MacPherson-Siegel $L$-class of 
the Witt pseudomanifold $X^g$ and in terms 
of characteristic classes of the 
normal bundle. The latter are those characteristic classes used by Atiyah-Segal-Singer.
The proof uses Kasparov's bivariant KK-theory in an essential way, together
with the localization theorem.
In the present paper, we do not make assumptions on the normal topology
of fixed point sets, and characteristic classes of normal bundles are thus not
explicitly considered.
An investigation of the relationship of $L_* (g,X)$ to
$[D^\sig] \in K^G_j (X)$ using a suitable Chern character is beyond the
scope of this paper.

For a finite group $G$ acting continuously on a topological Witt space $X$
(satisfying weak regularity properties on the fixed point sets),
Cappell, Shaneson and Weinberger indicated the construction of a $G$-equivariant
class 
$\Delta^G (X) \in \KO^G_* (X) \otimes \intg [\smlhf]$ 
and a corresponding $G$-signature theorem in \cite{csw}.
Free group actions on PL Witt spaces were considered by Curran in \cite{curran},
and are here considered in Section \ref{sec.freeactions},
where we show that for such actions $L_* (g,X) =0$
for $g\not= 1$ (Theorem \ref{thm.freeaction}).

For singular complex algebraic varieties, homological Hirzebruch characteristic
classes have been defined by
Brasselet, Sch\"urmann and Yokura in \cite{bsy}.
Equivariant generalizations
$T_{y*} (X;g) \in H^\BM_{2*} (X^g) \otimes \cplx [y]$
have been constructed  
by Cappell, Maxim, Sch\"urmann and Shaneson in \cite{cmssequivcharcl}
for finite groups $G$
that act on quasi-projective varieties $X$ by algebraic automorphisms.
These classes satisfy the analog of our formula (\ref{equ.laverage}), i.e. their average
over all group elements computes $T_{y*} (X/G)$
(\cite[p. 1726, Thm. 1.1]{cmssequivcharcl}).
(Note that $X/G$ is again a quasi-projective variety.)

In \cite{banaglborelequivlclass},
we constructed a $G$-equivariant $L$-class
$L^G_* (X) \in H^G_* (X;\rat)$ in equivariant rational homology for
compact, oriented Whitney stratified Witt spaces $X \subset M$ that are
invariant under the oriented smooth action of a compact Lie group
$G$ on an ambient smooth manifold $M$. 
If $X=M^m$ is smooth, then equivariant Poincar\'e duality is an
isomorphism $H^{m-i}_G (X) \cong H^G_i (X),$ where
equivariant cohomology $H^*_G (-)$
means ordinary cohomology of the Borel space $(-)_G = EG\times_G (-)$.
In this case, $L^G_* (X)$ is the equivariant Poincar\'e dual
of the usual equivariant cohomology class $L^*_G (X) = L^* ((TX)_G)$,
where $(TX)_G$ is the homotopy quotient vector bundle of the equivariant tangent 
bundle $TX$. See L. Tu \cite[Chapter 30.3]{tuintrolectequivcoh} for such equivariant
characteristic classes.
The present paper is completely independent of \cite{banaglborelequivlclass}.
The class $L^G_*$ of \cite{banaglborelequivlclass}
is consistent with the equivariant viewpoint
of Ohmoto \cite{ohmoto1}, where equivariant Chern-Schwartz-MacPherson classes
were developed for algebraic actions of a complex reductive linear algebraic group 
on a possibly singular complex algebraic variety $X$.
It is also consistent with the construction of equivariant Todd classes by
Brylinski and Zhang in \cite{brylinskizhang}.\\

\textbf{Acknowledgments.}
We thank Eric Leichtnam and Paolo Piazza for many helpful discussions.
In particular, Proposition \ref{prop.wittgbordisminvariance} on 
the equivariant Witt bordism invariance of equivariant signatures for
singular spaces was developed jointly with them. We would like to thank
the Institut de Math\'ematiques de Jussieu - Paris Rive Gauche
and Universit\'a di Roma Sapienza for their hospitality during several visits.
We express our gratitude to David Chataur for a discussion on intersection
homotopy groups during a visit to the  
Université de Picardie Jules Verne in Amiens.
This work was funded in part by a research grant of the
 Deutsche Forschungsgemeinschaft (DFG, German Research Foundation)
 -- Projektnummer 495696766, and
by INdAM, Istituto Nazionale di Alta Matematica.

\section{Filtered Spaces and Placid Maps}
\label{sec.filteredspaces}

Intersection homology and intersection homotopy groups are
defined for filtered spaces. We thus review basic notions
related to filtered spaces with a particular emphasis on
stratified, placid, and cofiltered maps.
We establish that
a finite-to-one surjective simplicial map between triangulated 
finite-dimensional polyhedra
is placid with respect to the simplicial filtrations
(Lemma \ref{lem.finsurjsimpmapiscofiltered}).

\begin{defn}
Let $X$ be a nonempty topological space. 
A \emph{filtration $\Xa$ of $X$ of formal dimension $n$} is a finite
nested sequence
\[ X_{-1} = \varnothing \subset X_0 \subset X_1 \subset
\cdots \subset X_{n-2} \subset X_{n-1} \subset X_n =X \]
of subspaces, $X_{n-1} \not= X$.
The pair $(X,\Xa)$ is then called a \emph{filtered space}.
The nonempty connected components $S$ of the differences $X_i - X_{i-1}$ are called
the \emph{strata of $\Xa$ of formal dimension $i$}.
The \emph{formal codimension of $S$ in $X$} is
$\codim S := n-i$. It is convenient to set $X^i := X_{n-i}$.
\end{defn}
We do not require the subspaces $X_i$ to be closed in $X$.
The above notion of formal dimension need not be related to any
notion of topological dimension and will indeed later be applied to
spaces of infinite topological dimension. Although the formal dimension
is a property of $\Xa$ and not of the space $X$, it is nevertheless
often convenient to write $\dim X=n$ when the filtration is understood.
Base points $x_0 \in X$ of pointed filtered spaces are always
chosen to lie in $X - X_{n-1}$.
Let $\Sa (\Xa)$ denote the set of strata of a filtration $\Xa$.

\begin{lemma} \label{lem.stratumnontrivintiscontained}
Let $(X,\Xa)$ be a filtered space.
If $S \in \Sa (\Xa)$ is a stratum and $i$ an index such that
$S \cap X_i \not= \varnothing$, then
$S \subset X_i$.
\end{lemma}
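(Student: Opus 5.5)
The plan is to use only the definition of strata as connected components of the differences $X_j - X_{j-1}$ and the nestedness of the filtration. There is no topology to speak of beyond the fact that a stratum lies in a single difference set.

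\emph{Locating the stratum.} Let $S$ be a stratum of formal dimension $j$. By definition $S$ is a connected component of $X_j - X_{j-1}$, so $S \subset X_j$ and $S \cap X_{j-1} = \varnothing$. We compare the given index $i$ with $j$.

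\emph{Case $i \geq j$.} Nestedness gives $X_j \subset X_i$. Hence $S \subset X_j \subset X_i$, and the conclusion holds without even using the hypothesis $S \cap X_i \neq \varnothing$.

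\emph{Case $i < j$, i.e.\ $i \leq j-1$.} Nestedness now gives $X_i \subset X_{j-1}$. Therefore
\[ S \cap X_i \subset S \cap X_{j-1} = \varnothing, \]
which contradicts the hypothesis $S \cap X_i \neq \varnothing$, so this case cannot occur. If $i = -1$, then $X_{-1} = \varnothing$, and this is automatically covered by the same case.

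The lemma follows. The only point needing care is the boundary bookkeeping: the index $i$ ranges over $-1, \ldots, n$, and the indices $j-1$ and $i$ must both lie in the range where the filtration is defined. Since $0 \leq j \leq n$ and $X_{-1} = \varnothing$, this is immediate, so I do not expect any genuine obstacle.
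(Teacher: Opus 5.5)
Your proof is correct. The paper states this lemma without proof as an immediate consequence of the definitions, and your case split on $i \geq j$ versus $i \leq j-1$, using nestedness and $S \subset X_j - X_{j-1}$, is exactly that routine verification.
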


\begin{lemma} \label{lem.stratumcontainedimpliescodiminequ}
Let $(X,\Xa)$ be a filtered space of formal dimension $n$ and
let $S \in \Sa (\Xa)$ be a stratum. Then 
$S\subset X_{n-k}$ if and only if $\codim S \geq k$.
\end{lemma}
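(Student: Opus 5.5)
The plan is to translate the condition $S\subset X_{n-k}$ into a statement about which difference set $X_i - X_{i-1}$ contains $S$, and then compare indices. By definition, $S$ is a nonempty connected component of $X_i - X_{i-1}$ for a unique index $i$. This index is unique because the sets $X_i - X_{i-1}$, $i=0,\dots,n$, are pairwise disjoint and their union is $X$. With this $i$ we have $\codim S = n-i$. Hence $\codim S\geq k$ is equivalent to $i \leq n-k$, and it suffices to show that $S\subset X_{n-k}$ holds if and only if $i\leq n-k$.

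For the direction ``$\Leftarrow$'', suppose $i \leq n-k$. Since the filtration is nested, $S \subset X_i - X_{i-1} \subset X_i \subset X_{n-k}$, and we are done.

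For ``$\Rightarrow$'', suppose $S\subset X_{n-k}$ and, for a contradiction, $i > n-k$, i.e. $n-k\leq i-1$. Then $X_{n-k}\subset X_{i-1}$, so $S\subset X_{i-1}$. But $S \subset X_i - X_{i-1}$ is disjoint from $X_{i-1}$ and is nonempty, which is a contradiction. Lemma \ref{lem.stratumnontrivintiscontained} is not even needed here. It would only be relevant if the hypothesis were weakened to $S\cap X_{n-k}\neq\varnothing$, which it then upgrades to containment; it is worth mentioning that this gives the same equivalence with the weaker hypothesis.

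There is no real obstacle. The only care needed is bookkeeping of edge cases: if $k\le 0$, then $X_{n-k}$ should be read as $X$ (or the statement is only meant for $0\le k\le n$), and if $k>n$, then $X_{n-k}$ is empty (with $X_{-1}=\varnothing$ and the convention extended to more negative indices). In both cases the equivalence holds trivially. I will handle these by the convention $X_j = X$ for $j\geq n$ and $X_j=\varnothing$ for $j<0$.
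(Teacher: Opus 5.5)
Your proof is correct. The paper states this lemma without proof, as an immediate consequence of the definitions, and your argument is exactly the routine verification it leaves to the reader: you place the nonempty set $S$ in the unique difference $X_i - X_{i-1}$, so that $\codim S = n-i$, and then use nestedness for one direction and disjointness from $X_{i-1}$ for the other. Your side remark is also apt: in the proof that placid maps are cofiltered, the paper uses Lemma \ref{lem.stratumnontrivintiscontained} precisely to upgrade $T\cap Y_{m-k}\neq\varnothing$ to $T\subset Y_{m-k}$ before invoking this lemma.
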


\begin{defn} \label{def.stratifiedmap}
A \emph{stratified map} $f:(X,\Xa) \to (Y,\Ya)$ between filtered spaces
is a continuous map $f:X\to Y$ such that for each stratum $S$ of $\Xa$
there exists a stratum $T$ of $\Ya$ with 
$f(S) \subset T.$
\end{defn} 
Note that since $f(S)$ is nonempty and connected, the
stratum $T$ is uniquely determined by $S$ and $f$. We shall
also use the notation $f_* (S) := T$.
The above notion of stratified map agrees with Friedman's usage
of this term in \cite[Def. 2.9.1]{friedmanihbook}.

\begin{lemma} \label{lem.compstratmapsisstratmap}
The composition $g\circ f$ of stratified maps $f,g$ is again a stratified map,
and $(g\circ f)_* = g_* \circ f_*$.
\end{lemma}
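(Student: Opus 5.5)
Let $f:(X,\Xa)\to(Y,\Ya)$ and $g:(Y,\Ya)\to(Z,\Za)$ be stratified maps. The proof is a direct unwinding of Definition \ref{def.stratifiedmap}. We first show that $g\circ f$ is stratified, and then read off the functoriality of the induced map on strata from the uniqueness remark made after that definition.

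First, $g\circ f:X\to Z$ is continuous, being a composition of continuous maps. Now fix a stratum $S\in\Sa(\Xa)$. Since $f$ is stratified, there is a stratum $T=f_*(S)\in\Sa(\Ya)$ with $f(S)\subset T$. Since $g$ is stratified, there is a stratum $U=g_*(T)\in\Sa(\Za)$ with $g(T)\subset U$. Therefore
\[ (g\circ f)(S)=g(f(S))\subset g(T)\subset U, \]
so $g\circ f$ satisfies the defining condition of a stratified map.

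For the identity $(g\circ f)_*=g_*\circ f_*$, we use the uniqueness of the target stratum. The set $(g\circ f)(S)$ is nonempty and connected, as the continuous image of the nonempty connected set $S$. Distinct strata of $\Za$ are disjoint, since they are distinct connected components of differences $Z_i-Z_{i-1}$, and these differences are pairwise disjoint for different $i$. Hence at most one stratum of $\Za$ can contain the nonempty set $(g\circ f)(S)$, namely the one meeting it. We have just shown that $U=g_*(f_*(S))$ contains it. By uniqueness, $(g\circ f)_*(S)=U=g_*(f_*(S))$.

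There is no genuine obstacle in this argument. The only point needing care is the disjointness of distinct strata, which underlies the well-definedness of $(-)_*$ asserted after Definition \ref{def.stratifiedmap}. It holds because the differences $Z_i-Z_{i-1}$ are disjoint for different $i$ and the components within a single difference are disjoint.
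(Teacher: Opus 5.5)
Your argument is correct and is exactly the routine verification the paper intends: the paper states the lemma without proof, relying on the uniqueness remark after Definition~\ref{def.stratifiedmap}, which is what you use. For the uniqueness step, nonemptiness of $(g\circ f)(S)$ and pairwise disjointness of strata already suffice, so connectedness is not needed there.
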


\begin{defn}
A stratified map $f:(X,\Xa) \to (Y,\Ya)$ between filtered spaces
is called \emph{equidimensionally stratified}, if 
$f(X_i - X_{i-1}) \subset Y_i - Y_{i-1}$ for every $i=0,\ldots, n=\dim X$.
\end{defn}

\begin{lemma} \label{lem.compequstratmapsisequstratmap}
The composition of equidimensionally stratified maps
is again an equidimensionally stratified map.
\end{lemma}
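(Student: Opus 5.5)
Let $f:(X,\Xa)\to(Y,\Ya)$ and $g:(Y,\Ya)\to(Z,\Za)$ be equidimensionally stratified maps, of formal dimensions $n=\dim X$, $\dim Y$ and $\dim Z$. The plan is to check the two defining requirements for $g\circ f$ separately. First, $g\circ f$ is stratified by Lemma \ref{lem.compstratmapsisstratmap}. Second, we verify the inclusion $(g\circ f)(X_i - X_{i-1}) \subset Z_i - Z_{i-1}$ for every $i=0,\ldots,n$.

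For the inclusion, take $i\le n$. Since $f$ is equidimensionally stratified, $f(X_i - X_{i-1}) \subset Y_i - Y_{i-1}$. We then want to apply the equidimensionality of $g$ at the same index $i$, which gives $g(Y_i - Y_{i-1}) \subset Z_i - Z_{i-1}$. Chaining the two inclusions yields
\[ g(f(X_i - X_{i-1})) \subset g(Y_i - Y_{i-1}) \subset Z_i - Z_{i-1}. \]

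The only real obstacle is that the definition quantifies $i$ only up to $\dim Y$ for $g$, so we need $i \le \dim Y$. If $X_i - X_{i-1}$ is empty, the inclusion holds trivially. Otherwise, $Y_i - Y_{i-1}$ is nonempty because it contains $f(X_i - X_{i-1})$, so $Y_i$ is a genuinely meaningful filtration level of $Y$. If $i > \dim Y$, we read $Y_i = Y$, as the nested sequence stabilizes at $Y_{\dim Y} = Y$. Then $Y_{i-1} = Y$ as well, so $Y_i - Y_{i-1} = \varnothing$, which is impossible since this difference is nonempty. Hence $i \le \dim Y$ and $g$ applies at index $i$. The same convention handles the indices of $Z$ if needed.

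Combining the two parts, $g\circ f$ is stratified and satisfies the equidimensionality inclusion at every index, so it is equidimensionally stratified.
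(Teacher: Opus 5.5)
Your argument is correct and is the routine verification the paper intends; the paper states this lemma without proof. Stratifiedness of $g\circ f$ comes from Lemma \ref{lem.compstratmapsisstratmap}, and the required inclusion follows by chaining $f(X_i - X_{i-1})\subset Y_i - Y_{i-1}$ with $g(Y_i - Y_{i-1})\subset Z_i - Z_{i-1}$.

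Your bookkeeping for indices $i>\dim Y$, via the convention $Y_i=Y$, is sound. It could be shortened: applying the hypothesis on $f$ at $i=\dim X$, where $X - X_{\dim X-1}\neq\varnothing$, already forces $\dim X\le\dim Y$. In any case the point is harmless, since the paper only uses the lemma for spaces of equal formal dimension.
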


\begin{defn} \label{def.placidmap}
A stratified map $f:(X,\Xa) \to (Y,\Ya)$ between filtered spaces
is called \emph{placid} if 
\[ \codim f_* (S) \leq \codim S \]
for every stratum $S\in \Sa (\Xa)$.
\end{defn}
This agrees with Friedman's usage of this term (\cite[Example 4.1.5]{friedmanihbook}).
We note parenthetically that what we
call a placid map is called a stratified map in \cite{chataurst}.

\begin{lemma} \label{lem.compplacidmapsisplacidmap}
The composition of placid maps is again a placid map.
\end{lemma}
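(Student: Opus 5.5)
We argue directly from Definition \ref{def.placidmap} together with Lemma \ref{lem.compstratmapsisstratmap}. Let $f:(X,\Xa)\to(Y,\Ya)$ and $g:(Y,\Ya)\to(Z,\Za)$ be placid maps. Since placid maps are in particular stratified maps, Lemma \ref{lem.compstratmapsisstratmap} shows that $g\circ f$ is a stratified map and that $(g\circ f)_* = g_*\circ f_*$ on strata. This equality of induced stratum assignments is the only nontrivial input. It holds because $g(f(S))\subset g(f_*(S))\subset g_*(f_*(S))$, and the stratum containing the nonempty connected set $(g\circ f)(S)$ is uniquely determined.

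It then remains to verify the codimension inequality. Let $S\in\Sa(\Xa)$ be a stratum, and put $T=f_*(S)\in\Sa(\Ya)$. Placidity of $g$ applied to $T$, followed by placidity of $f$ applied to $S$, gives
\[ \codim (g\circ f)_*(S) = \codim g_*(T) \leq \codim T = \codim f_*(S) \leq \codim S. \]
Hence $g\circ f$ is placid.

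There is essentially no obstacle. The only point needing care is that codimension is measured relative to the formal dimension of the filtration of the respective space in which the stratum lives, so the three filtrations may have different formal dimensions. The chain of inequalities above compares each stratum's codimension only within its own space at each step, so this causes no difficulty.
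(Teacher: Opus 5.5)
Your proof is correct. The paper states this lemma without proof, and the evident argument is the one you give: Lemma \ref{lem.compstratmapsisstratmap} supplies stratifiedness and the identity $(g\circ f)_* = g_*\circ f_*$, and the two codimension inequalities then chain.
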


\begin{defn} \label{def.gajercofilteredmap}
(Gajer \cite{gajer}.)
Let $(X,\Xa)$ and  $(Y,\Ya)$ be filtered spaces.
A continuous map $f:X \to Y$ is called 
\emph{cofiltered} with respect to $\Xa$ and $\Ya$ if
$f^{-1} (Y^k) \subset X^k$
for all $k\geq 0$.
\end{defn}

\begin{lemma} \label{lem.compcofilteredmapsiscofiltered}
The composition of cofiltered maps is cofiltered.
\end{lemma}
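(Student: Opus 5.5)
The proof is a direct unwinding of Definition \ref{def.gajercofilteredmap}. Let $(X,\Xa)$, $(Y,\Ya)$ and $(Z,\Za)$ be filtered spaces, and let $f:X\to Y$ and $g:Y\to Z$ be continuous maps that are cofiltered with respect to these filtrations.

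First, the composition $g\circ f: X\to Z$ is continuous, since $f$ and $g$ are.

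Next, fix $k\geq 0$. We must show $(g\circ f)^{-1}(Z^k) \subset X^k$. Taking preimages is monotone with respect to inclusion, and $(g\circ f)^{-1} = f^{-1}\circ g^{-1}$ on subsets. Therefore
\[ (g\circ f)^{-1}(Z^k) = f^{-1}\bigl(g^{-1}(Z^k)\bigr) \subset f^{-1}(Y^k) \subset X^k. \]
The first inclusion uses that $g$ is cofiltered, so $g^{-1}(Z^k)\subset Y^k$, combined with monotonicity of $f^{-1}$. The second inclusion uses that $f$ is cofiltered.

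There is one bookkeeping point. The notation $X^k = X_{n-k}$ depends on the formal dimension, so $k$ may exceed a given formal dimension. In that case one reads $X^k = X_{n-k}$ with $X_j=\varnothing$ for $j<0$, extending the convention $X_{-1}=\varnothing$. This reading is the one already implicit in the phrase ``for all $k\geq 0$'' in Definition \ref{def.gajercofilteredmap}.

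With that convention the displayed chain holds uniformly in $k$, and nothing further is required. This step is not a real obstacle: the argument never compares formal dimensions of $X$, $Y$ and $Z$, so no case distinction on $n$ arises.
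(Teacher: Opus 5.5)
Your argument is correct and is exactly the immediate verification the paper intends; the paper states this lemma without proof. The chain $(g\circ f)^{-1}(Z^k)=f^{-1}(g^{-1}(Z^k))\subset f^{-1}(Y^k)\subset X^k$ for every $k\geq 0$ is all that is needed, and your convention $X_j=\varnothing$ for $j<0$ is the natural reading of the definition and does not affect the argument.
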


Let us clarify the relationship between the concepts of
stratified, placid and cofiltered maps. Simple examples show that
a cofiltered map need not be stratified.
Lemma \ref{lem.stratumcontainedimpliescodiminequ}
is helpful in proving the next lemma.
\begin{lemma} \label{lem.stratifiedandcofilteredisplacid}
A stratified cofiltered map is placid.
\end{lemma}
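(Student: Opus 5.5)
Let $f:(X,\Xa)\to(Y,\Ya)$ be stratified and cofiltered, with $\dim X = n$ and $\dim Y = m$. Fix a stratum $S\in\Sa(\Xa)$ and put $T := f_*(S)$ and $k := \codim T$. The goal is $\codim S \geq k$. By Lemma \ref{lem.stratumcontainedimpliescodiminequ} applied in $X$, this is equivalent to $S\subset X_{n-k} = X^k$, so I will prove that inclusion directly.

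\textbf{Step 1: $T$ lies in $Y^k$.} Apply Lemma \ref{lem.stratumcontainedimpliescodiminequ} to $T$ in $Y$. Since $\codim T \geq k$ trivially, we get $T\subset Y_{m-k} = Y^k$. Some care is needed at the edge of the index range: $k = \codim T$ lies between $0$ and $m$, so $Y^k$ is one of the filtration terms and $k\geq 0$, which is the range in which the cofiltered condition of Definition \ref{def.gajercofilteredmap} applies.

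\textbf{Step 2: pull back.} Because $f$ is stratified, $f(S)\subset T\subset Y^k$, hence $S\subset f^{-1}(Y^k)$. Because $f$ is cofiltered, $f^{-1}(Y^k)\subset X^k$. Together these give $S\subset X^k = X_{n-k}$.

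\textbf{Step 3: conclude.} The reverse direction of Lemma \ref{lem.stratumcontainedimpliescodiminequ} now gives $\codim S\geq k = \codim f_*(S)$. This is exactly the placid condition of Definition \ref{def.placidmap}.

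\textbf{Main obstacle.} The only delicate point is index bookkeeping. If $k > n$, the set $X^k = X_{n-k}$ must be read as empty (or as $X_{-1}=\varnothing$, extending the indexing as usual). Then Step 2 would force $S=\varnothing$, which is impossible since strata are nonempty. So this case cannot occur, and the inequality still holds. I would state this convention explicitly, so that Lemma \ref{lem.stratumcontainedimpliescodiminequ} is only invoked for $0\le k\le n$.
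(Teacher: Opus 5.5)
Your proof is correct and is essentially the argument the paper intends. The paper gives no written proof and only points to Lemma \ref{lem.stratumcontainedimpliescodiminequ} as the key tool; your chain $S\subset f^{-1}(Y^k)\subset X^k$, with that lemma applied once in $Y$ and once in $X$, mirrors the paper's proof of the converse that placid maps are cofiltered. Your treatment of the edge case $k>n$ is also sound, and it does not even need the convention $X^k=\varnothing$: in that case $Y^k\subset Y^{n+1}$, so $S\subset f^{-1}(Y^{n+1})\subset X^{n+1}=X_{-1}=\varnothing$, which is impossible.
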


\begin{lemma}
Placid maps are cofiltered.	
\end{lemma}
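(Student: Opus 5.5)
We need to show: if $f:(X,\Xa)\to(Y,\Ya)$ is placid, then $f^{-1}(Y^k)\subset X^k$ for all $k\geq 0$. Recall $Y^k = Y_{m-k}$ with $m=\dim Y$, and $X^k = X_{n-k}$ with $n = \dim X$. The plan is to argue pointwise through strata, reducing everything to the codimension criterion of Lemma \ref{lem.stratumcontainedimpliescodiminequ}, which we apply once in $Y$ and once in $X$.

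Fix $k\geq 0$ and a point $x\in X$ with $f(x)\in Y^k$.

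\begin{enumerate}
\item Since the strata of $\Xa$ partition $X$ (every point of $X=X_n$ lies in $X_i - X_{i-1}$ for a unique $i$, hence in a unique connected component), there is a stratum $S\in\Sa(\Xa)$ containing $x$.
\item Because $f$ is stratified, $f(S)\subset T := f_*(S)$. In particular $f(x)\in T\cap Y_{m-k}$, so this intersection is nonempty.
\item By Lemma \ref{lem.stratumnontrivintiscontained}, $T\subset Y_{m-k}$. Then Lemma \ref{lem.stratumcontainedimpliescodiminequ}, applied in $Y$, gives $\codim T\geq k$.
\item Placidity gives $\codim S\geq \codim f_*(S) = \codim T\geq k$.
\item Lemma \ref{lem.stratumcontainedimpliescodiminequ}, now applied in $X$, yields $S\subset X_{n-k}=X^k$. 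Hence $x\in X^k$.
\end{enumerate}

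Two edge cases need a check. If $k > m$, then $Y^k$ is $Y_{-1}=\varnothing$ or has a negative index; we read the latter as empty, so the inclusion is trivial. If $k>n$, the same convention makes $X^k$ empty. This causes no contradiction: a nonempty stratum has codimension at most $n$, so step 4 shows no point $x$ can map into $Y^k$ in that range.

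There is no real obstacle here. The only point needing care is to keep $m=\dim Y$ and $n=\dim X$ distinct, since the formal dimensions of the two spaces may differ. Lemma \ref{lem.stratumcontainedimpliescodiminequ} is stated for a single filtered space, so it must be invoked separately in $Y$ and in $X$, each with its own formal dimension.
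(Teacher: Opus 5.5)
Your proof is correct and follows the paper's argument step for step. You take the stratum $S$ through $x$, use Lemma~\ref{lem.stratumnontrivintiscontained} and Lemma~\ref{lem.stratumcontainedimpliescodiminequ} in $Y$ to get $\codim f_*(S)\geq k$, pass this to $S$ via placidity, and apply Lemma~\ref{lem.stratumcontainedimpliescodiminequ} again in $X$; your treatment of the edge cases $k>m$ and $k>n$ is a harmless addition that the paper leaves implicit.
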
	
\begin{proof}
Let $f:(X,\Xa) \to (Y,\Ya)$ be a placid map and suppose that
$x\in f^{-1} (Y^k)$ is any point.
Let $S$ be the unique stratum of $\Xa$ that contains $x$.
Since $f$ is a stratified map, there is a unique stratum 
$T$ of $\Ya$ with $f(S) \subset T$.
We know that $f(x) \in Y^k = Y_{m-k}$, where $m=\dim Y$.
In addition, $f(x) \in f(S) \subset T$.
Thus, $f(x) \in T\cap Y_{m-k}$.
By Lemma \ref{lem.stratumnontrivintiscontained}, $T \subset Y_{m-k}$, and
by Lemma \ref{lem.stratumcontainedimpliescodiminequ}, $\codim T \geq k$.
Since $f$ is placid, $\codim T \leq \codim S$.
Therefore, $\codim S \geq k$.
By Lemma \ref{lem.stratumcontainedimpliescodiminequ},
$S\subset X_{n-k}$, where $n = \dim X$.
It follows that $x\in S \subset X_{n-k} = X^k$.
\end{proof}
Together, the above two lemmas show that a continuous map is
placid if and only if it is stratified and cofiltered. 

All of our perversity functions $\bp$ will be Goresky-MacPherson type perversities
(\cite{gmih1}). 
In particular, they only depend on the codimension of a stratum, i.e.
$\bp (S) = \bp (\codim S)$ holds on every stratum $S$;
we will use the same symbol $\bp$ to denote the function on strata
and the function on codimensions.
A placid map $f$ is ``$(\bp, \bp)$-stratified'' for every perversity $\bp$, that is,
$\bp (S) - \codim S \leq 
\bp (f_* (S)) - \codim f_* (S)$
for all strata $S \in \Sa (\Xa)$. This is a consequence of 
the Goresky-MacPherson growth condition on perversities.
See \cite[p. 137, Example 4.1.5]{friedmanihbook}.

\begin{lemma} \label{lem.finsurjsimpmapiscofiltered}
A finite-to-one surjective simplicial map between triangulated 
finite-dimensional polyhedra
is placid with respect to the simplicial filtrations.
\end{lemma}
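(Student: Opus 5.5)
Let $f: K \to L$ be a finite-to-one surjective simplicial map between finite-dimensional simplicial complexes, with $|K| = X$ and $|L| = Y$. The simplicial filtration of $X$ is by skeleta, $X_i = |K^{(i)}|$, so the strata of formal dimension $i$ are the connected components of $|K^{(i)}| - |K^{(i-1)}|$, and similarly for $Y$.

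First I compare formal dimensions. Since $f$ is finite-to-one, it maps every simplex of $K$ injectively onto a simplex of $L$: a simplicial map collapsing two vertices of a simplex $\sigma$ collapses an edge of $\sigma$, and so has infinite fibres on that edge. Hence $f$ maps each open simplex of dimension $i$ homeomorphically onto an open simplex of dimension $i$, and therefore $f(X_i - X_{i-1}) \subset Y_i - Y_{i-1}$. Surjectivity makes $\dim K = \dim L =: n$: the image of $K^{(n-1)}$ misses every top simplex of $L$, which forces $\dim L \le \dim K$, and the reverse inequality holds because simplex dimensions are preserved. So formal dimensions agree, and codimensions can be compared directly.

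Next I show $f$ is stratified. Let $S$ be a component of $X_i - X_{i-1}$. Then $f(S)$ is connected, and by the previous step it lies in $Y_i - Y_{i-1}$, so it lies in a single component $T$. Moreover $\dim T = \dim S = i$, and with $n$ common to both this gives $\codim f_*(S) = n - i = \codim S$. This is the placid inequality, even with equality; in fact $f$ is equidimensionally stratified.

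The only step needing care is the first one: that finite-to-one forces simplices to map nondegenerately, and that surjectivity gives equal formal dimension $n$. If one preferred to avoid the dimension-equality argument, it would still suffice to note that placidity compares $\codim = n - i$ on both sides, which requires equal $n$. Surjectivity settles this, since every top simplex of $L$ is hit by a simplex of $K$ of the same dimension.
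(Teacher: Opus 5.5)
Your proof is correct and follows the paper's argument. Finite-to-one plus simplexwise linearity forces each simplex to map nondegenerately onto a simplex of the same dimension, surjectivity gives $\dim K = \dim L$, and then $\codim f_*(S) = \codim S$; the paper also notes that dimensions of simplices are preserved, although it only records the inequality. The one loose spot is your sentence about ``the image of $K^{(n-1)}$'': it should be read with $n = \dim L$, so that if $\dim K < n$ then $K = K^{(n-1)}$ and its image lies in $L^{(n-1)}$, missing the open top simplices of $L$.
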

\begin{proof}
Let $K$ and $L$ be finite-dimensional simplicial complexes
with associated polyhedra $X=|K|$ and $Y=|L|$.
The simplicial filtrations $\Xa, \Ya$ are given by the polyhedra
$X_i = |K_i|,$ $Y_i = |L_i|$ of the $i$-dimensional
simplicial skeleta $K_i \subset K,$ $L_i \subset L$.
The strata $S$ of $\Xa$ are given by the interiors
$S=\Delta^\circ$ of the simplices $\Delta \in K$.
Similarly for the strata $T$ of $\Ya$.
Formal dimensions and codimensions agree here with topological
dimensions and codimensions.
Let $n=\dim X$ and $m=\dim Y$.

Suppose that $f: K \to L$ is a simplicial map which is finite-to-one
and surjective.
In order to prove that $f: (X,\Xa) \to (Y,\Ya)$ is placid, we need
to establish first that $f$ is a stratified map.
Let $S = \Delta^\circ$ be any stratum of $\Xa$.
Since $f$ is simplicial, the images of the vertices of $\Delta$
span a simplex $f\Delta$ of $L$.
Then $T := (f\Delta)^\circ$ is a stratum of 
$\Ya$ with $f(S) \subset T$. This shows that $f$ is a stratified map. 

It remains to prove that $\codim f_* (S) \leq \codim S$ for every
stratum $S\in \Sa (\Xa)$.
The key point is that the finiteness of $f$ ensures that $f$ cannot decrease 
the dimension of simplices.
Indeed, if $f$ did decrease the dimension of some simplex, then
the linearity of $f$ on that simplex together with the pigeonhole principle
would imply that $f$ is constant on an edge, contradicting the 
finiteness of $f$.
(The simplexwise linearity of $f$ implies of course that $f$
cannot increase the dimension of simplices. Thus $f$ preserves
the dimension of simplices.)

We observe next that $n=m$, i.e. $X$ and $Y$ have the same dimension:
Since $f$ does not decrease the dimension of simplices, the
inequality $n\leq m$ holds.
The surjectivity of $f$ implies that $n\geq m$, by considering a point $y$ in 
the interior of an $m$-simplex of $L$, and then selecting a point $x$
with $f(x)=y$.

Let $S = \Delta^\circ$ be a stratum of $\Xa$.
We are now in a position to prove that 
$\codim f_* (S) \leq \codim S$.
Since $f$ was already shown to be stratified, there does indeed exist a
unique stratum $T=f_* (S)$ of $\Ya$ such that $f(S) \subset T$.
As we have seen, this stratum is the interior $T = (f\Delta)^\circ$
of the simplex $f\Delta$ of $L$ which is spanned by the images of the
vertices of $\Delta$.
Since $f$ does not decrease the dimension of simplices, we have
$\dim \Delta \leq \dim f\Delta$
and consequently
\begin{align*}
\codim T 
&= m - \dim T 
 = n - \dim f\Delta \leq n - \dim \Delta \\
&= n - \dim S 
 = \codim S.
\end{align*}
Thus $f$ is placid with respect to the simplicial stratifications.
\end{proof}
The simplicial inclusion of a vertex in an interval, which is not placid,
illustrates the necessity of the surjectivity requirement in the lemma.
Placid maps $f: (X,\Xa) \to (Y,\Ya)$ induce homomorphisms
$f_*: IH^{\bar{p}}_* (X,\Xa) \to IH^{\bar{p}}_* (Y,\Ya)$ on intersection homology
for every $\bar{p},$ see e.g. \cite[Prop. 4.1.6, p. 138]{friedmanihbook}.
Thus, by Lemma \ref{lem.finsurjsimpmapiscofiltered},
a finite surjective simplicial map $\pi: \widetilde{Y} \to Y$
(between simplicially stratified complexes),
induces a homomorphism
\[ \pi_*: IH^{\bar{p}}_* (\widetilde{Y}) \longrightarrow IH^{\bar{p}}_* (Y).  \]

\section{Free Abelian Topological Groups generated by Polyhedra}
\label{sec.freeabtopgroups}

The Dold-Thom theorem and its intersection version involve the
free abelian topological group $AG(Y)$ generated by the points of a space $Y$.
We review here some aspects of such groups, focusing eventually
on filtrations on them, and cofiltered maps between them.
Let $Y$ be a compactly generated topological space, for example a CW complex.
For every positive integer $d$, let $SP^d (Y)$ denote the
$d$-fold symmetric product 
$SP^d (Y) = Y^d / \Sigma_d,$ where the symmetric group
$\Sigma_d$ on $d$ letters acts on the $d$-fold product 
$Y^d = Y\times \cdots \times Y$ by permuting the coordinates.
Algebraically, $AG (Y) = \intg [Y]$
is the free abelian group on the underlying set of $Y$.
For every $d$, there is an obvious set-theoretic injection
$SP^d (Y) \to AG (Y)$ given by viewing points of the symmetric product,
which are just finite subsets of points of $X$ with multiplicities
adding up to $d$, as linear combinations
with those multiplicities as coefficients.
One topologizes $AG (Y)$ as the quotient space of
$\bigsqcup_{n,m} SP^n (Y) \times SP^m (Y)$
under the map
$\bigsqcup_{n,m} SP^n (Y) \times SP^m (Y) \to AG (Y)$
given by $(\phi, \psi) \mapsto \phi-\psi$.
Then $AG (Y)$ is a topological group and a compactly generated space.
A good reference is Lima-Filho \cite[Section 2.1]{limafilhogspaces}
(where our $AG(Y)$ is denoted by $\Za (Y)$).
The elements of $AG(Y)$ are represented by words $w$ which are finite formal sums
$w=\sum n_i y_i$, $n_i \in \intg,$ $y_i \in Y$.
In general, a given element has many different such representations.
To a word $w$ one can associated the function $\phi: Y \to \intg$
given by
$\phi (y) = \sum_{\{ i ~| y_i = y \}} n_i.$
Different words representing the same element of $AG(Y)$ yield the same function $\phi$.
Thus the elements of $AG(Y)$ are functions $\phi: Y \to \intg$ which vanish
at all but finitely many points of $Y$.
The \emph{support} of $\phi$ is the finite subset $\supp (\phi)$ of $Y$ consisting of all
points $y$ such that $\phi (y) \not= 0$.
To $\phi$, we can associate a word $w$ by
$w = \sum_{y\in \supp (\phi)} \phi (y) y.$
If a word is of this form, we call it \emph{reduced}.
That is, a word $w=\sum n_i y_i$ is reduced if and only if
$y_i \not= y_j$ whenever $i\not= j$ and $n_i \not= 0$ for all $i$.
The zero element $\phi =0$ is represented by the empty word.
If $w$ is any word (not necessarily reduced), its support is defined to 
be the support of the function $\phi$ associated to $w$.
If $w=\sum n_i y_i$ is a reduced word, then
$\supp (w) = \{ y_i ~|~ i \}.$

Now let $Y$ be the polyhedron of a countable simplicial
complex, not necessarily connected, and
let $Y^+ := Y \sqcup \{ * \}$
be the disjoint topological union of $Y$ with an isolated point $* \not\in Y$.
The pointed space $(Y^+,*)$ has an associated abelian topological group
$AG (Y^+,*)$
constructed by Dold-Thom in \cite{doldthom}. 
Its underlying algebraic group is $\intg [Y^+ - *] = \intg [Y]$.
Their theorem \cite[p. 274, Satz 6.10 I for $Y$]{doldthom} provides an isomorphism 
$H_* (Y;\intg) = \widetilde{H}_* (Y^+;\intg) \cong
   \pi_* (AG(Y^+, *), 0).$
The polyhedron $Y$ need not be connected here.
There is a continuous isomorphism $AG (Y^+ ,*) \cong AG (Y)$
of topological groups (as follows from 
\cite[p. 570 b; and p. 571, Lemma 2.2 a]{limafilhogspaces}). Thus
\[ H_* (Y;\intg) \cong \pi_* (AG(Y),0) \]
as stated in \cite{gajer}.
As $Y$ is the polyhedron of a countable simplicial complex,
$AG(Y^+,*)$ and $AG(Y)$ have the structure of a CW complex.
From now on, the term polyhedron will mean the polyhedron of a 
countable simplicial complex.
There are embeddings $Y=SP^1 (Y) \subset SP^d (Y) \hookrightarrow AG (Y)$
for every $d$. The following universal property is implied by
\cite[p. 570, d]{limafilhogspaces}.

\begin{prop} \label{prop.markovuniversal}
(Lima-Filho.)
For a compactly generated space $Y$, 
the map $j: Y \to AG (Y)$ satisfies the following
universal property: 
Let $G$ be an abelian topological compactly generated group
and let $f:Y \to G$ be a continuous map. Then there exists a unique
topological group homomorphism $F: AG (Y) \to G$ such that $F \circ j =f$.
\end{prop}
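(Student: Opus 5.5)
The plan is to build $F$ on each symmetric product, glue these pieces over the defining quotient of $AG(Y)$, and then check continuity, the homomorphism property and uniqueness.

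\emph{Uniqueness.} By construction $AG(Y)=\intg[Y]$ is generated as an abstract group by the image $j(Y)$. Any group homomorphism $F$ with $F\circ j=f$ is therefore forced on every element:
\[ F\Bigl(\sum n_i y_i\Bigr)=\sum n_i f(y_i). \]
So uniqueness holds already among abstract homomorphisms, before continuity enters.

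\emph{Existence as a group homomorphism.} Define $F$ by the formula above. It is well defined because the reduced word of an element is unique, and on a non-reduced word the formula gives the same value as on the corresponding reduced word. Additivity is immediate, since $G$ is abelian and addition of words is concatenation.

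\emph{Continuity.} This is the only real step. I would use that $AG(Y)$ carries the quotient topology from $\bigsqcup_{n,m} SP^n(Y)\times SP^m(Y)$ via $(\phi,\psi)\mapsto \phi-\psi$. It then suffices to show that each composite
\[ SP^n(Y)\times SP^m(Y)\to AG(Y)\xrightarrow{F} G \]
is continuous. This composite factors through $Y^n\times Y^m$, sending
\[ (y_1,\dots,y_n,z_1,\dots,z_m)\mapsto \sum f(y_i)-\sum f(z_j). \]
That map is continuous, being built from $f$, addition and inversion in $G$. The one point that needs care is continuity of addition $G\times G\to G$ when the product is taken in the compactly generated category. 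I would take this to be what ``compactly generated topological group'' means, and note that for CW complexes such as polyhedra of countable complexes it agrees with the ordinary product.

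The composite is $\Sigma_n\times\Sigma_m$-invariant, so it descends to $SP^n(Y)\times SP^m(Y)=(Y^n\times Y^m)/(\Sigma_n\times\Sigma_m)$. For this I need the identification of a product of quotients with the quotient of the product, which holds in compactly generated spaces because $-\times Z$ preserves quotient maps there. By the universal property of the disjoint union and of the quotient topology, $F$ is continuous.

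\emph{Main obstacle.} The difficulty is not the algebra but the point-set bookkeeping: products and quotients must be taken in the compactly generated category throughout. That is exactly what makes both the descent to symmetric products and the continuity of the group operations in $G$ valid. I would cite Lima-Filho for the relevant properties of the topology on $AG(Y)$ rather than re-derive them.
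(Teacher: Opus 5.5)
Your proof is correct. The paper gives no argument of its own here. It only says the universal property is implied by Lima-Filho's Properties 2.1(d) (p.~570 of his paper), so your proposal is a self-contained replacement for that citation rather than a variant of a proof in the text. Your decomposition is the natural one:
\begin{itemize}
\item uniqueness is purely algebraic, because $j(Y)$ generates $\intg[Y]$;
\item existence of the homomorphism is the freeness of $\intg[Y]$;
\item continuity is checked on each piece of the same quotient presentation $\bigsqcup_{n,m} SP^n(Y)\times SP^m(Y)\to AG(Y)$ that the paper uses to topologize $AG(Y)$.
\end{itemize}
The citation spares the paper the point-set bookkeeping. Your version shows that nothing beyond that bookkeeping is involved.

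Two remarks reduce the care you thought was needed.

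First, the convention for ``compactly generated topological group'' does not matter. If addition is continuous on the ordinary product $G\times G$, it is also continuous on the finer $k$-ified product. So the composite $Y^n\times Y^m\to G$ is continuous under either reading.

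Second, in the ordinary topological category the descent to symmetric products does not need the fact that $-\times Z$ preserves quotient maps. The projection $Y^n\to SP^n(Y)$ is a quotient by a finite group, hence an open surjection. A product of open surjections is an open surjection, hence a quotient map. In the $k$-ified setting, your appeal to compactly generated products is the right tool. For the countable polyhedra to which the paper applies the proposition, the ordinary and $k$-ified products coincide anyway.
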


A continuous map $f:Y \to Y'$ between polyhedra as above induces
a well-defined continuous homomorphism
$AG(f): AG(Y) \to AG(Y')$
by setting
$AG(f)(\sum n_i y_i) := \sum n_i f(y_i),$
(\cite[p. 961]{gajer}), that is, 
$AG(f)(\phi)(y') = \sum_{y\in f^{-1} (y')} \phi (y).$
In this way, $AG (-)$ becomes a covariant functor
(\cite[p. 570, Properties 2.1 a]{limafilhogspaces},
\cite[6.6]{mccord}).\\

Let $Y$ be a space which is the polyhedron of a countable simplicial complex.
A filtration $\Ya$ of $Y$ induces a filtration $AG(\Ya),$ the 
\emph{Lawson filtration}, of
$AG (Y)$ as follows: The formal dimension $m$ of $AG(\Ya)$ is declared 
to be equal to the formal dimension of $\Ya$, and the filtration subspaces
are given for $k>0$ by
\[
AG(Y)_{m-k} = AG(Y)^k := \{ \phi \in AG(Y) ~|~
             \supp (\phi) \cap Y^k \not= \varnothing \}.
\]
If $\phi =0$ is the zero element of $AG(Y)$, then its support is
empty. Thus is does not belong to any $AG(Y)_{m-k},$ $k>0$.
For $k=0$, we define
\[
AG(Y)_m = AG(Y)^0 := \{ \phi \in AG(Y) ~|~
             \supp (\phi) \cap Y^0 \not= \varnothing \} \cup \{ 0 \}
         = AG(Y).
\]
If $k>0$ and
$\supp (\phi) \cap Y_{m-k} \not= \varnothing$, then
$\supp (\phi) \cap Y_{m-k+1} \not= \varnothing$. This shows that
\[ AG(Y)^k = AG(Y)_{m-k} \subset AG(Y)_{m-k+1} = AG(Y)^{k-1}. \]
The next lemma follows by a straightforward verification from definitions.
\begin{lemma} \label{lem.canonemyagyiseqdstratandcof}
Let $(Y,\Ya)$ be a filtered polyhedron.
The canonical embedding
\[ i:(Y,\Ya) \hookrightarrow (AG(Y), AG(\Ya)) \]
is equidimensionally stratified and cofiltered.
\end{lemma}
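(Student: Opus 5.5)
The plan is to verify the two properties directly from the definition of the Lawson filtration $AG(\Ya)$. We only need to know what the image $i(y)$ of a point $y \in Y$ is: it is the reduced word $1\cdot y$, i.e.\ the function $\phi_y$ with $\phi_y(y)=1$ and $\phi_y=0$ elsewhere. So $\supp (i(y)) = \{y\}$, and in particular $i(y) \neq 0$. Recall that both filtrations have the same formal dimension $m$.

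\textbf{Cofiltered.} First we compute $i^{-1}(AG(Y)^k)$. For $k>0$, a point $y$ satisfies $i(y)\in AG(Y)^k$ if and only if $\{y\} \cap Y^k \neq \varnothing$, that is, if and only if $y \in Y^k$. Hence
\[ i^{-1}(AG(Y)^k) = Y^k \quad \text{for } k>0. \]
For $k=0$ both sides are the whole space, since $AG(Y)^0 = AG(Y)$ and $Y^0 = Y_m = Y$. In particular $i^{-1}(AG(Y)^k) \subset Y^k$ for all $k\ge 0$, which is the cofiltered condition of Definition \ref{def.gajercofilteredmap}.

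\textbf{Equidimensionally stratified.} The equality above, written as $i^{-1}(AG(Y)_j) = Y_j$ for all $j$ (with the convention that both sides are empty for $j=-1$), gives
\[ i(Y_j - Y_{j-1}) \subset AG(Y)_j - AG(Y)_{j-1}. \]
Indeed, $y \in Y_j$ gives $i(y) \in AG(Y)_j$, and $y \notin Y_{j-1}$ gives $i(y) \notin AG(Y)_{j-1}$.

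It remains to check that $i$ is a stratified map in the sense of Definition \ref{def.stratifiedmap}, namely that each stratum $S$ of $\Ya$ lands in a single stratum of $AG(\Ya)$. This is the only point needing a little care, and I expect it to be the main obstacle: we need a topological fact, not just set-theoretic containments. Let $S$ be a connected component of $Y_j - Y_{j-1}$. We know $i(S) \subset AG(Y)_j - AG(Y)_{j-1}$. Since $i$ is continuous (it is the embedding $Y = SP^1(Y) \hookrightarrow AG(Y)$), the set $i(S)$ is connected. It is also nonempty, so it lies in a unique connected component $T$ of $AG(Y)_j - AG(Y)_{j-1}$, and $T$ is a stratum of $AG(\Ya)$.

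This shows that $i$ is stratified and equidimensionally stratified. Continuity of $i$ is taken from the cited topology of $AG(Y)$ (Lima-Filho), so nothing beyond these definitional checks should be required.
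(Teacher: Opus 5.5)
Your proof is correct and is the same direct check from the definitions that the paper has in mind; the paper states only that the lemma "follows by a straightforward verification from definitions." The argument works because $\supp(i(y))=\{y\}$ gives $i^{-1}(AG(Y)_j)=Y_j$ for all $j$, which yields both cofilteredness and the equidimensional containment. Continuity of $i$ together with connectedness of strata then shows that $i$ is stratified.
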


Let us investigate when the extension of an equidimensionally
stratified and cofiltered map $Y\to AG(X)$ to a continuous homomorphism
$AG(Y) \to AG (X)$ is again equidimensionally stratified and cofiltered.
This will be so under the following condition.
\begin{defn} \label{def.separatedmapintoag}
Let $X$ be a polyhedron and $Y$ a topological space.
A map $f:Y \to AG(X)$ is called \emph{separated},
if for all $y,y' \in Y$ with $y\not= y'$, we have
$\supp (f(y)) \cap \supp (f(y')) = \varnothing.$
\end{defn}

\begin{lemma} \label{lem.extensionisequistrat}
Let $(X,\Xa)$ be a filtered polyhedron and $(Y,\Ya)$ a filtered compactly
generated space such that $\dim X = \dim Y$.
Let $f: Y \to AG(X)$ be a separated continuous map.
Let $F: AG(Y) \to AG(X)$ be the unique extension of
$f$ to a continuous group homomorphism.
If $f$ is equidimensionally stratified and cofiltered
(with respect to $\Ya$ and the
Lawson filtration on $AG(X)$ induced by $\Xa$), then
$F$ is equidimensionally stratified and cofiltered
(with respect to the Lawson filtrations).
\end{lemma}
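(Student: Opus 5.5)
The whole argument rests on one computation: the support of $F(\phi)$ is the disjoint union of the supports of the $f(y)$, $y\in\supp(\phi)$. Once that is in hand, everything is bookkeeping with the Lawson filtrations.

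\textbf{Step 1: supports.} Let $\phi\in AG(Y)$ be written as a reduced word $\phi=\sum_i n_i y_i$, with the $y_i$ distinct and all $n_i\neq 0$. Since $F$ is a homomorphism extending $f$,
\[ F(\phi)=\sum_i n_i f(y_i). \]
Separatedness says the sets $\supp f(y_i)$ are pairwise disjoint. Hence no cancellation occurs between different summands, and on $\supp f(y_i)$ the function $F(\phi)$ equals $n_i f(y_i)$.

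We also need each $f(y_i)\neq 0$. Put $m=\dim X=\dim Y$. Every point $y$ lies in $Y^0=Y_m$. The map $f$ is equidimensionally stratified, so $f(y)\in AG(X)_i-AG(X)_{i-1}$ for some $i$. The zero element lies in $AG(X)_m$ but in no $AG(X)_{m-k}$ with $k>0$. So $f(y)=0$ could only happen if $y\in Y_m-Y_{m-1}$. I expect the intended convention excludes this case; failing that, cofilteredness plus the fact that $f(y)$ lies in some stratum should rule it out. This is the step I expect to require the most care. Granting it,
\[ \supp F(\phi)=\bigsqcup_i \supp f(y_i). \]

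\textbf{Step 2: cofiltered.} Suppose $F(\phi)\in AG(X)^k$ with $k>0$. Then some point of $\supp F(\phi)$ lies in $X^k$. By Step 1 that point lies in $\supp f(y_i)$ for some $i$, so $f(y_i)\in AG(X)^k$. Since $f$ is cofiltered, $y_i\in Y^k$. Therefore $\supp\phi\cap Y^k\neq\varnothing$, that is, $\phi\in AG(Y)^k$. The case $k=0$ is trivial.

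\textbf{Step 3: the Lawson strata.} For $\phi\neq 0$, let $c(\phi)$ be the largest $k$ with $\supp\phi\cap Y^k\neq\varnothing$. Then $\phi\in AG(Y)_i-AG(Y)_{i-1}$ exactly when $i=m-c(\phi)$. The zero element lies in $AG(Y)_m-AG(Y)_{m-1}$.

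\textbf{Step 4: equidimensionality.} Equidimensionality of $f$ means that the maximal codimension index over $\supp f(y)$ equals the codimension index of $y$ in $\Ya$. Taking the maximum over the disjoint union in Step 1 gives $c(F(\phi))=c(\phi)$, so $F$ maps $AG(Y)_i-AG(Y)_{i-1}$ into $AG(X)_i-AG(X)_{i-1}$. The zero element maps to zero, which lies in the top difference of both filtrations.

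\textbf{Step 5: stratified.} It remains to check that $F$ is a stratified map, i.e.\ that each connected stratum of $AG(\Ya)$ maps into a single stratum of $AG(\Xa)$. By Step 4, $F$ maps each difference $AG(Y)_i-AG(Y)_{i-1}$ into $AG(X)_i-AG(X)_{i-1}$. A stratum of $AG(\Ya)$ is a connected component of such a difference. Its image is connected, because $F$ is continuous, and it lies in $AG(X)_i-AG(X)_{i-1}$. A connected subset of this difference lies in one of its components, which is a stratum of $AG(\Xa)$. So $F$ is stratified.
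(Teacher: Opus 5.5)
Your argument follows the paper's proof. Both write $\phi$ as a reduced word, use separatedness to get $\supp F(\phi)=\bigcup_i \supp f(y_i)$ with disjoint pieces, and then read off the Lawson level of $F(\phi)$ from those of the $y_i$. The differences are minor. You prove cofilteredness of $F$ directly from the support formula and cofilteredness of $f$. The paper instead proves equidimensionality first and deduces cofilteredness from it: if $w$ lies in the level-$q$ difference, so does $F(w)$, which forces $k\le q$. Your Step 5 also makes explicit that $F$ is stratified (connected components map into connected components), which the paper leaves implicit.

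The one thing to repair is the nonvanishing claim in Step 1, which you cannot prove because it is false in general. Since $0\in AG(X)^0-AG(X)^1$ and separatedness is vacuous for empty supports, $f$ may vanish at points of the top difference $Y^0-Y^1$. For example, $f\equiv 0$ is separated, equidimensionally stratified and cofiltered when $Y_{m-1}=\varnothing$. So your suggested fallback via cofilteredness cannot work.

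You do not need the claim, though. The identity $\supp F(\phi)=\bigcup_i \supp f(y_i)$ holds as written, with summands where $f(y_i)=0$ contributing the empty set. The only place nonvanishing seems to enter is the phrase in Step 4 that ``the maximal codimension index over $\supp f(y)$ equals that of $y$'', which is meaningless when $f(y)=0$. Replace it by a case distinction:
\begin{itemize}
\item Equidimensionality of $f$ forces $f(y)\neq 0$ whenever $y\in Y^1$, because $0\notin AG(X)^k$ for $k\ge 1$.
\item If $c(\phi)=k\ge 1$, the point $y_{i_0}$ realizing $k$ has $f(y_{i_0})\neq 0$, and $\supp f(y_{i_0})$ meets $X^k$ but not $X^{k+1}$. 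Every other $\supp f(y_i)$ misses $X^{c(y_i)+1}\supset X^{k+1}$. Hence $c(F(\phi))=k$.
\item If $c(\phi)=0$, every $\supp f(y_i)$ misses $X^1$. So $F(\phi)$, which may be $0$, lies in $AG(X)^0-AG(X)^1$.
\end{itemize}
With this change, your proof is complete.
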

\begin{proof}
Set $n := \dim X = \dim Y$. Then by definition of the Lawson filtration,
$n = \dim AG(X)=\dim AG(Y).$
Therefore, given a codimension $k\geq 0,$
we must show that 
\[ F(AG(Y)^k - AG(Y)^{k+1}) \subset AG(X)^k - AG(X)^{k+1}.\]
Since $f$ is equidimensionally stratified
and $\dim Y= \dim AG(X)$, we have
\begin{equation} \label{equ.fykinagc}
f(Y^k - Y^{k+1}) \subset AG(X)^k - AG(X)^{k+1}. 
\end{equation}
Let 
$w=\sum n_i y_i \in AG(Y)^k - AG(Y)^{k+1}$
be a reduced word, $\supp (w) = \{ y_i ~|~ i \}$.
Thus
\[ \supp (w) \cap Y^k \not= \varnothing \text{ and }
   \supp (w) \cap Y^{k+1} = \varnothing. \]
Consequently, there exists an index $i_0$ such that
\begin{equation} \label{equ.yi0inyk}
y_{i_0} \in Y^k - Y^{k+1}. 
\end{equation}
We will show that
$\supp (F(w)) \cap X^k \not= \varnothing$ and
$\supp (F(w)) \cap X^{k+1} = \varnothing,$
which will then place $F(w)$ in $AG(X)^k - AG(X)^{k+1}$.
In order to do this, we shall determine the support of $F(w)$.
The extension $F,$ being a group homomorphism, is given by
$F(w) = \sum n_i f(y_i).$
For every $i$, let 
$f(y_i) = \sum_j m_{ij} x_{ij}$
be the representation by a reduced word, $m_{ij} \in \intg - \{ 0\},$
$x_{ij} \in X$. Since for every $i$, the word is reduced we know
that $x_{ij} \not= x_{ij'}$ if $j\not= j'$.
Thus the support of $f(y_i)$ is
$\supp (f(y_i)) = \{ x_{ij} ~|~ j \}.$
The image of $w$ can be expanded as
$F(w) = \sum_i \sum_j n_i m_{ij} x_{ij}.$
We claim that this word is reduced.
Indeed, if $i\not= i'$, then $y_i \not= y_{i'}$ as $w$ is reduced.
Therefore, since $f$ is separated, 
\[ \{ x_{ij} ~|~ j \} \cap \{ x_{i' j} ~|~ j \}
  = \supp (f(y_i)) \cap \supp (f(y_{i'})) = \varnothing.  \]
We conclude that 
$x_{ij} \not= x_{i' j'}$ whenever $(i,j) \not= (i',j')$.
Furthermore, $n_i m_{ij} \not= 0$ for all $(i,j)$.
This shows that $\sum_{i,j} n_i m_{ij} x_{ij}$ is a reduced word,
which implies that
$\supp (F(w)) = \{ x_{ij} ~|~ i,j \}.$
By (\ref{equ.fykinagc}) and (\ref{equ.yi0inyk}),
$f(y_{i_0}) \in AG(X)^k - AG(X)^{k+1}.$
So
$\supp (f(y_{i_0})) \cap X^k \not= \varnothing$ and
$\supp (f(y_{i_0})) \cap X^{k+1} = \varnothing.$
Hence, there exists a point
$x_{i_0 j_0} \in \supp (f(y_{i_0})) \cap X^k$.
Since
\[ \supp (f(y_{i_0})) = \{ x_{i_0 j} ~|~ j \}
  \subset \{ x_{ij} ~|~ i,j \} = \supp F(w), \]
the point $x_{i_0 j_0}$ is in $\supp F(w) \cap X^k$, which shows that
$\supp (F(w)) \cap X^k \not= \varnothing$.

We will verify next that $\supp (F(w)) \cap X^{k+1} = \varnothing$.
This will use the assumption that $f$ is cofiltered.
We proceed by contradiction:
Suppose that $\supp (F(w)) \cap X^{k+1} \not= \varnothing$.
Then there exists a pair $(i_0, j_0)$ such that  
$x_{i_0 j_0} \in X^{k+1}$.
Since this point is in the support of the corresponding $f(y_{i_0}),$
it follows that $f(y_{i_0}) \in AG(X)^{k+1}$.
As $f$ is cofiltered, the inclusion
$f^{-1} (AG(X)^{k+1}) \subset Y^{k+1}$ holds,
which implies that 
$y_{i_0} \in Y^{k+1}$.
This contradicts $\supp (w) \cap Y^{k+1} = \varnothing$.
Therefore, $\supp (F(w)) \cap X^{k+1} = \varnothing$ as claimed.
We have shown that $F$ is equidimensionally stratified.

It remains to be shown that $F$ is cofiltered.
We must thus establish the inclusion
\[ F^{-1} (AG(X)^k) \subset AG(Y)^k \] 
for every $k\geq 0$.
Given any element $w\in F^{-1} (AG(X)^k)$,
let $q\geq 0$ be the unique index with
$w\in AG(Y)^q - AG(Y)^{q+1}$.
Then, as $F$ is already known to be equidimensionally stratified,
we have $F(w) \in AG(X)^q - AG(X)^{q+1}$.
Hence $AG(X)^k \cap (AG(X)^q - AG(X)^{q+1}) \not= \varnothing,$
which can only happen when $k\leq q$.
It follows that $w \in AG(Y)^q \subset AG(Y)^k,$ as was to be shown.
\end{proof}

The following lemma will be of use in the context of a group
$G$ acting on a filtered polyhedron $(Y,\Ya)$ by placid homeomorphisms
$g: (Y,\Ya) \to (Y,\Ya)$.
\begin{lemma} \label{lem.aggequistratcofiltered}
Let $(X,\Xa)$ and $(Y,\Ya)$ be filtered polyhedra of equal dimension
$\dim X = \dim Y$.
Let $g: (Y,\Ya) \to (X,\Xa)$ be an equidimensionally stratified
cofiltered map. If $g$ is injective, 
then the induced morphism $AG(g): AG(Y) \to AG(X)$
is equidimensionally stratified and cofiltered
(with respect to the Lawson filtrations).
\end{lemma}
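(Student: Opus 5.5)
The plan is to reduce to Lemma \ref{lem.extensionisequistrat}. Set $f := i_X \circ g : Y \to AG(X)$, where $i_X : (X,\Xa) \hookrightarrow (AG(X), AG(\Xa))$ is the canonical embedding of Lemma \ref{lem.canonemyagyiseqdstratandcof}. Then $AG(g)$ is a continuous group homomorphism with $AG(g) \circ i_Y = f$. By the uniqueness part of Proposition \ref{prop.markovuniversal}, $AG(g)$ is therefore the unique extension $F$ of $f$. (Alternatively, one sees directly from the formula $AG(g)(\sum n_i y_i) = \sum n_i g(y_i)$ that $AG(g)$ is a homomorphism extending $f$.) It then suffices to check the hypotheses of Lemma \ref{lem.extensionisequistrat} for $f$, using $\dim X = \dim Y$.

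\emph{Separatedness.} We have $f(y) = 1\cdot g(y)$, so $\supp f(y) = \{ g(y) \}$. If $y \neq y'$, injectivity of $g$ gives $g(y) \neq g(y')$, hence the supports are disjoint.

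\emph{Equidimensionally stratified and cofiltered.} The map $g$ is equidimensionally stratified and cofiltered by hypothesis, and $i_X$ is as well by Lemma \ref{lem.canonemyagyiseqdstratandcof}. Lemmas \ref{lem.compequstratmapsisequstratmap} and \ref{lem.compcofilteredmapsiscofiltered} then give the same properties for the composite $f$. The formal dimensions match: $\dim Y = \dim X = \dim AG(X)$. This is exactly what Lemma \ref{lem.extensionisequistrat} requires, namely that $f(Y^k - Y^{k+1}) \subset AG(X)^k - AG(X)^{k+1}$.

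The only point needing care is the identification of $AG(g)$ with the universal extension $F$, since Proposition \ref{prop.markovuniversal} is stated for compactly generated $Y$ and target groups. Polyhedra of countable complexes are CW complexes, so they are compactly generated, and $AG(X)$ is a compactly generated abelian group; hence uniqueness applies. Alternatively, this step can be bypassed by noting that the proof of Lemma \ref{lem.extensionisequistrat} only uses the formula $F(\sum n_i y_i) = \sum n_i f(y_i)$, which $AG(g)$ satisfies by definition. Applying Lemma \ref{lem.extensionisequistrat} then shows that $AG(g)$ is equidimensionally stratified and cofiltered with respect to the Lawson filtrations.
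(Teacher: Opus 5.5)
Your proposal is correct and matches the paper's proof: you set $f = i\circ g$, obtain separatedness from the injectivity of $g$, get the equidimensionally stratified and cofiltered properties from the composition lemmas, and then apply Lemma \ref{lem.extensionisequistrat}. Your extra justification that $AG(g)$ equals the extension $F$, either by uniqueness or directly from the formula, spells out what the paper records in its commutative square.
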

\begin{proof}
Let $f: Y \to AG(X)$ be the composition of $g$ with the
canonical inclusion $i: X \hookrightarrow AG(X)$.
In view of the commutative square
\[ \xymatrix{
Y \ar@{^{(}->}[d] \ar[r]^g \ar[rd]_f & X \ar@{^{(}->}[d]^i \\
AG(Y) \ar[r]_{AG(g) = F} & AG(X),
} \]
Lemma \ref{lem.extensionisequistrat} will imply that $AG(g)=F$ is equidimensionally
stratified and cofiltered once we have shown that 
$f$ is separated, equidimensionallly stratified and cofiltered.
We shall now check these properties, beginning with the separation.
Let $y,y' \in Y$ be two points, $y\not= y'$.
The support of $f(y)$ is given by the one-point set
$\{ g(y) \}$. The injectivity of $g$ thus implies
\[ \supp (f(y)) \cap \supp (f(y')) 
  = \{ g(y) \} \cap \{ g(y') \} = \varnothing.   \]
This shows that $f$ is separated.
By Lemma \ref{lem.canonemyagyiseqdstratandcof},
the canonical embedding
$i$ is equidimensionally stratified and cofiltered.
Since $g$ is, by assumption, equidimensionally stratified and cofiltered,
the composition $f=i\circ g$ is
equidimensionally stratified and cofiltered
(Lemmas \ref{lem.compequstratmapsisequstratmap}, 
\ref{lem.compcofilteredmapsiscofiltered}).
\end{proof}

\begin{remark}
The injectivity condition in Lemma \ref{lem.aggequistratcofiltered}
is necessary. For an equidimensionally stratified and cofiltered map $g$ which is
not injective, the induced map $AG(g)$ will not generally be
equidimensionally stratified.
Consider for example the collapse map $g: Y=[0,1] \to S^1=X$ from the unit
interval to the circle with $g(0)=g(1)$. Both source and domain are
to have formal dimension $1$, and $Y_0 = \{ 0,1 \},$ $X_0 = \{ g(0)=g(1) \}$.
Then $g$ is equidimensionally stratified and cofiltered.
Consider the reduced word $w= y_0 - y_1 \in AG(Y)$, where
$y_0 =0\in Y$, $y_1 =1 \in Y$. 
Then $w \in AG(Y)^1$, but
$AG (g)(w) = g(y_1) - g(y_2) =0$ has empty support and
$AG(g)(w) \in AG (X)^0 - AG(X)^1$.
\end{remark}

The property of being cofiltered is preserved by $AG(-)$, as was
already observed by Gajer.
\begin{lemma} \label{lem.cofilteredpresbyag}
(See Gajer \cite[p. 961]{gajer}.)
Let $(X,\Xa)$ and $(Y,\Ya)$ be filtered polyhedra.
If $g: (Y,\Ya) \to (X,\Xa)$ is
cofiltered, then the induced morphism $AG(g): AG(Y) \to AG(X)$
is cofiltered (with respect to the Lawson filtrations).
\end{lemma}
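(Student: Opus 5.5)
We have to show that $AG(g)^{-1}(AG(X)^k) \subset AG(Y)^k$ for every $k\geq 0$, where $AG(X)^k$ and $AG(Y)^k$ are the Lawson filtration pieces induced by $\Xa$ and $\Ya$. For $k=0$ there is nothing to prove, since $AG(Y)^0 = AG(Y)$. So fix $k>0$ and an element $\phi \in AG(Y)$ with $AG(g)(\phi) \in AG(X)^k$. By definition of the Lawson filtration, this says that $\supp (AG(g)(\phi)) \cap X^k \neq \varnothing$. We must produce a point of $\supp (\phi)$ lying in $Y^k$.

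The key step is a support inclusion:
\[ \supp (AG(g)(\phi)) \subset g(\supp (\phi)). \]
This follows from the formula $AG(g)(\phi)(x) = \sum_{y\in g^{-1}(x)} \phi(y)$. If $x\notin g(\supp(\phi))$, then every $y \in g^{-1}(x)$ satisfies $\phi(y)=0$, so the sum vanishes. The sum is finite because $\phi$ has finite support. Cancellation may shrink the support, as in the Remark following Lemma \ref{lem.aggequistratcofiltered}, but it can never enlarge it. This is why cofilteredness survives even though equidimensional stratification does not.

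Now pick $x \in \supp (AG(g)(\phi)) \cap X^k$. By the inclusion there is $y\in\supp(\phi)$ with $g(y)=x\in X^k$. Thus $y\in g^{-1}(X^k)\subset Y^k$ because $g$ is cofiltered. Hence $\supp(\phi)\cap Y^k\neq\varnothing$, i.e.\ $\phi\in AG(Y)^k$.

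One bookkeeping point needs care. The formal dimensions of $\Xa$ and $\Ya$ may differ, since no equal-dimension hypothesis is imposed. This causes no difficulty, because both the cofiltered condition and the Lawson filtration are phrased in terms of codimension-indexed pieces $X^k$, $Y^k$. No step is a genuine obstacle; the only point to get right is the support inclusion, and in particular its direction.
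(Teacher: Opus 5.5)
Your proof is correct: the support inclusion $\supp(AG(g)(\phi)) \subset g(\supp(\phi))$, read off from $AG(g)(\phi)(x)=\sum_{y\in g^{-1}(x)}\phi(y)$, combined with $g^{-1}(X^k)\subset Y^k$, gives $AG(g)^{-1}(AG(X)^k)\subset AG(Y)^k$ for $k>0$, and the case $k=0$ is trivial. The paper gives no proof of its own here and simply cites Gajer, and your argument is the standard verification behind that citation, in the same spirit as the support argument the paper writes out for sums in Lemma \ref{lem.sumofcofilterediscofandzeroisplacid}: a nonzero coefficient must come from a nonzero contribution in the source.
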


\section{Intersection Simplicial Sets and Intersection Homotopy Groups}

Simplicial sets (and more generally simplicial objects) 
will be denoted by bold face letters such as $\sS$.
Let $\sS (X)$ denote the singular simplicial set of a topological 
space $X$. It satisfies the Kan extension condition and
if $x_0$ is a base point in $X$, then
\begin{equation}  \label{equ.htpygrpsofsingsimplset}
\pi_n (\sS (X), x_0) = \pi_n (X, x_0), 
\end{equation}
where $x_0$ also denotes the simplicial subset of $\sS (X)$
generated by $x_0: \Delta^0 \to X$.

\begin{prop} \label{prop.sgissimplicialgroup}
If $X=G$ is a topological group, then $\sS (G)$ is a simplicial group.
If $G$ is abelian, then $\sS (G)$ is an abelian simplicial group.
If $f:G\to H$ is a continuous homomorphism of topological groups,
then the induced simplicial map $\sS (f): \sS (G) \to \sS (H)$
is a simplicial homomorphism of simplicial groups.
\end{prop}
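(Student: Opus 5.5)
We prove Proposition \ref{prop.sgissimplicialgroup} by defining the group structure on each set of $n$-simplices pointwise and then checking that all simplicial structure maps are homomorphisms. For $n\geq 0$, the set $\sS(G)_n$ consists of all continuous maps $\sigma: \Delta^n \to G$. Given $\sigma, \tau \in \sS(G)_n$, we set
\[ (\sigma \cdot \tau)(t) := \sigma(t)\tau(t), \qquad \sigma^{-1}(t) := \sigma(t)^{-1}, \]
and take the constant map at the identity element as the neutral element.

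These maps are continuous. The map $t\mapsto (\sigma(t),\tau(t))$ is continuous into the product $G\times G$, and multiplication and inversion in $G$ are continuous. (For compactly generated groups such as $AG(Y)$, the product is understood in the compactly generated category. Since $\Delta^n$ is compact Hausdorff, a map into the compactly generated product is still continuous.) The group axioms for $\sS(G)_n$ then hold pointwise, because they hold in $G$. If $G$ is abelian, $\sigma\tau=\tau\sigma$ holds pointwise, so each $\sS(G)_n$ is abelian.

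Next we verify that the face and degeneracy maps are homomorphisms. Every simplicial operator $\theta^*: \sS(G)_n \to \sS(G)_m$ is given by precomposition $\sigma \mapsto \sigma\circ\theta_*$ with a continuous map $\theta_*: \Delta^m \to \Delta^n$. Then
\[ (\sigma\tau)\circ\theta_* = (\sigma\circ\theta_*)(\tau\circ\theta_*), \]
again by the pointwise definition. Hence $\sS(G)$ is a simplicial group, and an abelian simplicial group when $G$ is abelian.

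Finally, let $f:G\to H$ be a continuous homomorphism. The map $\sS(f)$ acts by $\sigma\mapsto f\circ\sigma$, and it commutes with the simplicial operators because $(f\circ\sigma)\circ\theta_* = f\circ(\sigma\circ\theta_*)$. It is a homomorphism in each degree, since
\[ f(\sigma(t)\tau(t)) = f(\sigma(t))f(\tau(t)). \]

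The argument is routine, and there is no real obstacle. The only point that needs care is the continuity of the pointwise product when $G$ carries the compactly generated topology, such as $AG(Y)$; this is resolved by the compactness of $\Delta^n$, as noted above.
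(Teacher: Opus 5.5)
Your proof is correct and follows the paper's own argument: you define pointwise multiplication on $\sS(G)_n$, and the simplicial operators and $\sS(f)$ are homomorphisms because they act by pre- and post-composition respectively. Your extra remark on continuity of the pointwise product for compactly generated groups such as $AG(Y)$, using that $\Delta^n$ is compact Hausdorff, supplies a detail the paper leaves implicit.
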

\begin{proof}
We must endow every $\sS (G)_n$ with a group structure such that
the face maps $d_i: \sS (G)_n \to \sS (G)_{n-1}$ and the
degeneracy maps $s_i: \sS (G)_n \to \sS (G)_{n+1}$ are group
homomorphisms.
The set $\sS (G)_n$ is the set of all continuous maps
$\Delta^n \to G$. Given two such maps $\sigma, \tau$, we
define their product $\sigma \cdot \tau$ pointwise by
$(\sigma \cdot \tau)(t) := \sigma (t) \tau (t)$ for $t\in \Delta^n$,
using the group operation on $G$.
It is then straightforward to verify the required properties.
\end{proof}

A subspace $A$ of a polyhedron $P$ is said to have
\emph{polyhedral dimension} less than or equal to $k$ if
$A$ is contained in a subpolyhedron $Q\subset P$ with $\dim Q \leq k$.
In this case, one writes $\dim A \leq k$.
An important property of polyhedral dimension is its monotonicity,
that is, if $B \subset A \subset P$, then
$\dim B \leq \dim A$ in the sense that for any $k$ with $\dim A \leq k$,
one has $\dim B \leq k$.
Let $(X,\Xa)$ be a filtered space and
let $\bar{p}$ be a (Goresky-MacPherson) perversity.
A continuous map $f: P \to X$ is called
\emph{$\bp$-allowable} (with respect to $\Xa$) 
if for every skeleton $X^s$ of $\Xa$
the polyhedral dimension of the preimage of the skeleton satisfies
\[ \dim f^{-1} (X^s) \leq \dim P - s + \bar{p} (s). \]
Here, the empty set is deemed to have dimension $-\infty$.
This definition is used by Gajer, \cite[p. 943]{gajer}.
If $f$ is $\bp$-allowable, then it is $\bp$-allowable in the sense
of Chataur et al.  \cite[Def. 2.12, p. 9]{chataurst}.
Let $(M,\partial M)$ be a PL manifold.
A continuous map $f:M \to X$ is called a \emph{$\bp$-map}
(with respect to $\Xa$), if both $f$ and its restriction to the
boundary $\partial M$ are $\bp$-allowable.
Thus, if $\partial M$ is empty, then a $\bp$-map is the same thing
as a $\bp$-allowable map.
A \emph{$\bp$-homotopy} is a $\bp$-map $F: M\times I \to X$.
A singular simplex $\sigma: \Delta^i \to X$ is called \emph{$\bar{p}$-full}
(Chataur-Saralegi-Tanr\'e \cite[p. 10, Def. 3.1]{chataurst})
if $\sigma$ and all of its faces
$d_{j_1} \cdots d_{j_k} (\sigma)$ are $\bar{p}$-allowable.
If $\sigma$ is $\bar{p}$-full, then every degenerate simplex
$s_{j_1} \cdots s_{j_k} (\sigma)$ is $\bar{p}$-full,
see Gajer \cite[p. 945]{gajer}.
Taking $\sIS^{\bar{p}} (X,\Xa)_i \subset \sS (X)_i$ to be the set of all
$\bar{p}$-full singular simplices $\sigma: \Delta^i \to X$,
one thus obtains a sub-simplicial set
\[ \sIS^{\bar{p}} (X,\Xa) \subset \sS (X) \]
of the singular simplicial set of $X$, the
\emph{intersection singular simplicial set} of $(X,\Xa)$.
If the perversity $\bar{p}$ is understood, we will often write
$\sIS (X,\Xa)$ for $\sIS^{\bar{p}} (X,\Xa)$, and if the filtration 
$\Xa$ on $X$ is understood we will tend to write
$\sIS (X)$ for $\sIS (X,\Xa)$.
(In \cite{chataurst}, $\sIS (X)$ is called the \emph{Gajer $\bar{p}$-space}.)
The intersection singular simplicial set $\sIS (X)$ satisfies the
Kan condition.

If the underlying topological space of
a topological group $G=X$ is equipped with a filtration $\Ga = \Xa$ of formal
dimension $n$,
and the neutral element $1\in G$ is to serve as a base point (which is typically the case),
then $1$ must be in $G-G_{n-1}$. Thus every subgroup of $G$ must intersect
$G-G_{n-1}$ nontrivially.
\begin{defn}
Let $G=X$ be a topological group. A filtration $\Ga = \Xa$ of the underlying
topological space of $G$ is called \emph{compatible with the group structure}
if $G-G_i$ is an algebraic subgroup of $G$ for every $i<n$, where $n$
is the formal dimension of $\Ga$.
\end{defn}
The examples relevant for us are given by the Lawson filtration of the free 
abelian topological group on a polyhedron:
\begin{lemma} \label{lem.lawsonfiltriscompatible}
Let $(Y,\Ya)$ be a filtered polyhedron.
Then the Lawson filtration $AG(\Ya)$ on the free abelian topological group
$AG(Y)$ is compatible with the group structure.
\end{lemma}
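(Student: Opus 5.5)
The plan is to unwind the definition of the Lawson filtration, so that the complement of each filtration subspace becomes a support condition, and then to check the subgroup axioms directly using elementary properties of supports. No earlier machinery beyond the definitions is needed.

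Let $m$ be the formal dimension of $\Ya$, so that $AG(\Ya)$ also has formal dimension $m$. Fix an index $i<m$ and write $k := m-i > 0$. By the definition of the Lawson filtration for $k>0$,
\[ AG(Y) - AG(Y)_{m-k} = \{ \phi \in AG(Y) ~|~ \supp (\phi) \cap Y^k = \varnothing \}. \]
Here the zero element is included, because its support is empty; this agrees with the earlier observation that $0$ lies in no $AG(Y)_{m-k}$ with $k>0$. The extreme index $i=-1$, i.e. $k=m+1$, is also covered: since $Y^{m+1} = Y_{-1} = \varnothing$, the set $AG(Y)_{-1}$ is empty, and its complement is all of $AG(Y)$, which is trivially a subgroup. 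So in all cases it suffices to show that, for any subset $A = Y^k \subset Y$, the set $H_A := \{ \phi ~|~ \supp(\phi) \cap A = \varnothing \}$ is an algebraic subgroup of $AG(Y)$.

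For this I would view elements of $AG(Y)$ as finitely supported functions $\phi: Y \to \intg$, with the group law being pointwise addition of functions. Two facts about supports then give the result.
\begin{itemize}
\item $\supp(-\phi) = \supp(\phi)$, since $(-\phi)(y) = -\phi(y)$ vanishes exactly where $\phi(y)$ does.
\item $\supp(\phi + \psi) \subset \supp(\phi) \cup \supp(\psi)$, since $\phi(y) = \psi(y) = 0$ forces $(\phi+\psi)(y) = 0$.
\end{itemize}
Consequently $0 \in H_A$, $H_A$ is closed under inverses, and if $\phi, \psi \in H_A$ then $\supp(\phi+\psi) \cap A \subset (\supp\phi \cap A) \cup (\supp\psi \cap A) = \varnothing$. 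Hence $H_A$ is a subgroup.

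There is no genuine obstacle in this argument. The only point that needs care is the bookkeeping of indices, namely making sure that every $i<m$, including $i=-1$, corresponds to some $k>0$, where the description of the complement as a support condition is valid. It is also worth noting why $k=0$ is excluded: $AG(Y)_m$ was defined to be all of $AG(Y)$, and the compatibility condition only concerns indices $i<m$, so that case never arises.
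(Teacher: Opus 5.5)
Your proof is correct and follows essentially the same route as the paper: you identify the complement of each $AG(Y)_{m-k}$, $k>0$, as the set of $\phi$ with $\supp(\phi)\cap Y^k=\varnothing$, and conclude it is a subgroup from elementary support properties. The paper phrases this with the single inclusion $\supp(\phi-\psi)\subset\supp(\phi)\cup\supp(\psi)$, whereas you split it into the inverse and sum cases and also check the index bookkeeping (including $i=-1$) explicitly; the content is the same.
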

\begin{proof}
The proof is straightforward and mainly rests on the fact that
$\supp (\phi - \psi) \subset \supp (\phi) \cup \supp (\psi)$
for $\phi, \psi \in AG(Y)$.
\end{proof}

\begin{prop} \label{prop.isgissimplgrp}
Let $G=X$ be a topological group whose underlying topological space
is endowed with a filtration $\Ga = \Xa$.
If $\Ga$ is compatible with the group structure,
then the sub-simplicial set
$\sIS^\bp (G, \Ga) \subset \sS (G)$
is a simplicial subgroup of the simplicial group $\sS (G)$.
\end{prop}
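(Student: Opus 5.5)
We need to show that $\sIS^\bp (G,\Ga)_i$ is a subgroup of $\sS (G)_i$ for every $i$, under the pointwise group structure of Proposition \ref{prop.sgissimplicialgroup}. Since the face and degeneracy maps of $\sS(G)$ are already homomorphisms and $\sIS^\bp (G,\Ga)$ is already a sub-simplicial set, this levelwise statement suffices. Concretely, there are three things to check: the constant simplex at $1$ is $\bp$-full; $\bp$-full simplices are closed under products; and they are closed under inverses. Because fullness is defined through $\bp$-allowability of $\sigma$ and all its iterated faces, and faces of products (or inverses) are products (or inverses) of faces, everything reduces to a statement about $\bp$-allowable maps $P\to G$ from a polyhedron $P=\Delta^j$.

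The key step is the following set-theoretic observation. For $s\geq 1$ the skeleton $G^s = G_{n-s}$ has complement $G - G_{n-s}$, which is a subgroup by compatibility. Take maps $\sigma,\tau: P \to G$ and a point $t$ with $\sigma(t)\tau(t) \in G^s$. Then $\sigma(t)$ and $\tau(t)$ cannot both lie in the subgroup $G-G^s$, since their product would then lie there too. Hence
\[ (\sigma\cdot\tau)^{-1}(G^s) \subset \sigma^{-1}(G^s) \cup \tau^{-1}(G^s). \]
In the same way, $\sigma(t)^{-1}\in G^s$ forces $\sigma(t)\in G^s$, because $G-G^s$ is closed under inversion. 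So $(\sigma^{-1})^{-1}(G^s) \subset \sigma^{-1}(G^s)$.

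For $s=0$ we have $G^0=G$, and the allowability condition reads $\dim P \leq \dim P + \bp(0)$. This is independent of the map, using the convention $\bp(0)=0$ (or more generally that it holds whenever it holds for $\sigma$). For the constant map at $1$: since $1\in G-G_{n-1}$ and $G^s\subset G_{n-1}$ for $s\geq 1$, the preimage of $G^s$ is empty, of dimension $-\infty$. So the constant simplex, and hence the identity of $\sS(G)_i$, is $\bp$-full.

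Now we use polyhedral dimension. If $\sigma^{-1}(G^s)\subset Q_1$ and $\tau^{-1}(G^s)\subset Q_2$ with subpolyhedra $Q_1,Q_2\subset P$ of dimension at most $\dim P - s+\bp(s)$, then $Q_1\cup Q_2$ is a subpolyhedron of the same dimension bound containing the preimage under $\sigma\cdot\tau$. Monotonicity handles the inverse. Applying this to every iterated face $d_{j_1}\cdots d_{j_k}$, using $d_J(\sigma\cdot\tau)=d_J\sigma\cdot d_J\tau$ and $d_J(\sigma^{-1})=(d_J\sigma)^{-1}$ (immediate from the pointwise definition), gives fullness of $\sigma\cdot\tau$ and $\sigma^{-1}$.

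The main point requiring care, rather than a genuine obstacle, is the union step: a finite union of subpolyhedra of a common ambient polyhedron $P$ is again a subpolyhedron with dimension the maximum. This holds for compact polyhedra such as simplices, after passing to a common subdivision. The complementary-subgroup argument itself is straightforward once compatibility is unpacked.
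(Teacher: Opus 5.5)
Your proposal is correct and follows the paper's proof essentially step for step. The unit is $\bp$-full because $1\in G-G_{n-1}$ makes the preimages of the skeleta $G^s$, $s\geq 1$, empty. Closure under products and inverses comes from the inclusion $(\sigma\cdot\tau)^{-1}(G^s)\subset\sigma^{-1}(G^s)\cup\tau^{-1}(G^s)$, which holds because $G-G^s$ is a subgroup, together with the max-rule $\dim(A_1\cup A_2)=\max(\dim A_1,\dim A_2)$ for polyhedral dimension and the fact that the face maps are homomorphisms.
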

\begin{proof}
Let $\sigma_1: \Delta^k \to G$ be the unit of $\sS (G)_k$, i.e.
the constant map $\sigma_1 (t)=1$. We claim that $\sigma_1$ is
$\bp$-full. Since the faces of $\sigma_1$ are again units,
the claim follows once we have shown that $\sigma_1$ is
$\bp$-allowable, that is,
$\dim \sigma^{-1}_1 (G^s) \leq k-s+\bp (s)$
for all $s\geq 0$. 
If $s=0$, then $G^0=G$ and $\sigma^{-1}_1 (G) = \Delta^k$.
Therefore,
$\dim \sigma^{-1}_1 (G) = k = k-0+\bp (0),$ as required.
If $s>0$, then $1\not\in G^s$, since $1 \in G^0 - G^1$.
Hence in this case, $\sigma^{-1}_1 (G^s) = \varnothing$ and
$\dim \sigma^{-1}_1 (G^s) = \dim \varnothing =
  -\infty \leq  k-s+\bp (s),$ as required.
We have shown that the unit of $\sS (G)_k$ is in $\sIS^\bp (G, \Ga)$.

Let $\sigma, \tau \in \sIS^\bp (G, \Ga)_k$ be elements. 
We have to show that their product $\sigma \cdot \tau,$ taken in the group
$\sS (G)_k$ as explained in Proposition \ref{prop.sgissimplicialgroup}, 
actually lies in $\sIS^\bp (G, \Ga)_k$.
The singular simplices $\sigma, \tau: \Delta^k \to G$
are $\bp$-full, that is, they are $\bp$-allowable and all of their
faces are also $\bp$-allowable.
Thus for all $s\geq 0$, we have
$\max (\dim \sigma^{-1} (G^s), \dim \tau^{-1} (G^s)) \leq k - s + \bp (s).$
 We claim that 
\begin{equation} \label{equ.preimageofsigmaplustau} 
(\sigma \cdot \tau)^{-1} (G^s)
  \subset \sigma^{-1} (G^s) \cup \tau^{-1} (G^s). 
\end{equation}  
Indeed, suppose that $t\in \Delta^k$ is a point with
$\sigma (t) \tau (t) = (\sigma \cdot \tau)(t) \in G^s.$
If both $\sigma (t)$ and $\tau (t)$ were in $G-G^s$,
then their product would be in $G-G^s$, since the latter
is a subgroup by compatibility of $\Ga$.
Thus at least one of $\sigma (t), \tau (t)$ must be in $G^s$.
This proves the claim (\ref{equ.preimageofsigmaplustau}).
The polyhedral dimension is monotone and satisfies
$\dim (A_1 \cup A_2) = \max (\dim A_1,~ \dim A_2),$
see also \cite[p. 9]{chataurst}.
Thus by (\ref{equ.preimageofsigmaplustau}),
\begin{align*}
\dim (\sigma \cdot \tau)^{-1} (G^s)
&\leq \dim (\sigma^{-1} (G^s) \cup \tau^{-1} (G^s)) \\
&= \max (\dim \sigma^{-1} (G^s),~ \dim \tau^{-1} (G^s)) 
    \leq k-s+ \bp (s).
\end{align*}
This shows that $\sigma \cdot \tau: \Delta^k \to G$ is $\bp$-allowable.
The $\bp$-allowability of its faces
$d_{j_1} \cdots d_{j_m} (\sigma \cdot \tau)$
follows similarly, using
that the simplicial face maps $d_j$ in $\sS (G)$ are group homomorphisms.
This shows that $\sigma \cdot \tau$ is $\bp$-full, and thus
an element of 
$\sIS^\bp (G, \Ga))_k$.
Finally, one verifies similarly that $\sIS^\bp (G, \Ga))_k$ is closed under taking
inverses.
\end{proof}

The next statement is then an immediate consequence of
Lemma \ref{lem.lawsonfiltriscompatible} and Proposition \ref{prop.isgissimplgrp}.
\begin{prop} \label{prop.isagyissimplabgrp}
Let $Y$ be a polyhedron triangulated by a countable simplicial complex
and let $\Ya$ be a filtration on $Y$.
Then the sub-simplicial set
$\sIS^\bp (AG(Y), AG(\Ya)) \subset \sS (AG(Y))$
is a simplicial abelian subgroup of the simplicial abelian group $\sS (AG(Y))$.
\end{prop}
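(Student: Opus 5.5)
The plan is to combine Lemma \ref{lem.lawsonfiltriscompatible} with Proposition \ref{prop.isgissimplgrp}, and then add the one extra observation that commutativity is inherited.

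First, $AG(Y)$, with the topology recalled in Section \ref{sec.freeabtopgroups}, is an abelian topological group. By Proposition \ref{prop.sgissimplicialgroup}, $\sS(AG(Y))$ is therefore an abelian simplicial group under pointwise multiplication of singular simplices.

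Second, Lemma \ref{lem.lawsonfiltriscompatible} says the Lawson filtration $AG(\Ya)$ is compatible with the group structure. For each codimension $s>0$, the complement $AG(Y)-AG(Y)^s$ consists of the $\phi$ with $\supp(\phi)\cap Y^s=\varnothing$. This set is a subgroup for the following reasons:
\begin{itemize}
\item it contains $0$, since $\supp(0)=\varnothing$;
\item it is closed under negation, since $\supp(-\phi)=\supp(\phi)$;
\item it is closed under differences, since $\supp(\phi-\psi)\subset\supp(\phi)\cup\supp(\psi)$.
\end{itemize}
Also, the base point $0$ lies in $AG(Y)-AG(Y)_{m-1}$, as required.

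Third, apply Proposition \ref{prop.isgissimplgrp} with $G=AG(Y)$ and $\Ga=AG(\Ya)$. It gives that $\sIS^\bp(AG(Y),AG(\Ya))$ is a simplicial subgroup of $\sS(AG(Y))$: each level $\sIS^\bp_k$ is a subgroup of $\sS(AG(Y))_k$, and the face and degeneracy maps restrict to homomorphisms. A subgroup of an abelian group is abelian, so this is a simplicial abelian subgroup.

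There is essentially no obstacle, since all the work sits in the two cited results. The only points to double-check are the following:
\begin{itemize}
\item The degeneracies preserve $\bp$-fullness; this is Gajer's observation cited earlier.
\item Closure under inverses, which Proposition \ref{prop.isgissimplgrp} only sketches. It works here because $(-\sigma)^{-1}(G^s)=\sigma^{-1}(G^s)$, as the complement $G-G^s$ is closed under inversion.
\end{itemize}
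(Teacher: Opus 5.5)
Your proposal is correct and matches the paper's argument: the paper obtains the proposition as an immediate consequence of Lemma~\ref{lem.lawsonfiltriscompatible} and Proposition~\ref{prop.isgissimplgrp}, with commutativity inherited from $\sS(AG(Y))$. Your additional checks are valid fillings-in of details the paper leaves implicit, namely the support computation behind compatibility and closure under inverses via $(-\sigma)^{-1}(G^s)=\sigma^{-1}(G^s)$.
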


\begin{prop} \label{prop.cofilteredinducesmaponsisp}
A cofiltered map $f: (X,\Xa) \to (Y,\Ya)$ between filtered spaces
induces a simplicial map
$\sIS^{\bar{p}} (f): \sIS^{\bar{p}} (X,\Xa) \to \sIS^{\bar{p}} (Y,\Ya).$
If $X$ and $Y$ are topological groups, and $f$ is
in addition a group homomorphism, then
$\sIS^{\bar{p}} (f)$ is a simplicial homomorphism of simplicial groups.
\end{prop}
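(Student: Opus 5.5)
The plan is to define $\sIS^{\bar{p}}(f)$ as the restriction of the simplicial map $\sS(f): \sS(X) \to \sS(Y)$, $\sigma \mapsto f \circ \sigma$, to the sub-simplicial set $\sIS^{\bar{p}}(X,\Xa)$. Since $\sS(f)$ commutes with face and degeneracy maps and $\sIS^{\bar{p}}(Y,\Ya)$ is a sub-simplicial set of $\sS(Y)$, the only thing to check is that $\sS(f)$ carries $\bar{p}$-full simplices to $\bar{p}$-full simplices. The restriction is then automatically simplicial.

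First I show that $f \circ \sigma$ is $\bar{p}$-allowable whenever $\sigma: \Delta^i \to X$ is. Cofilteredness gives $f^{-1}(Y^s) \subset X^s$ for every $s \geq 0$, and hence
\[ (f\circ\sigma)^{-1}(Y^s) = \sigma^{-1}(f^{-1}(Y^s)) \subset \sigma^{-1}(X^s). \]
Monotonicity of polyhedral dimension then yields
\[ \dim (f\circ\sigma)^{-1}(Y^s) \leq \dim \sigma^{-1}(X^s) \leq i - s + \bar{p}(s). \]
The case where the preimage is empty is covered by the convention $\dim \varnothing = -\infty$.

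Next I pass from allowable to full. Faces commute with composition: $d_{j_1}\cdots d_{j_k}(f\circ\sigma) = f \circ (d_{j_1}\cdots d_{j_k}\sigma)$, since faces are given by precomposition with coface inclusions of standard simplices. If $\sigma$ is $\bar{p}$-full, every iterated face of $\sigma$ is $\bar{p}$-allowable. By the previous step, every iterated face of $f\circ\sigma$ is then $\bar{p}$-allowable as well, so $f\circ\sigma$ is $\bar{p}$-full.

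For the group statement, suppose $X$ and $Y$ are topological groups and $f$ is a continuous homomorphism. Proposition \ref{prop.sgissimplicialgroup} says that $\sS(f)$ is a simplicial homomorphism, because the group structure on $\sS(\cdot)_n$ is pointwise and $f(\sigma(t)\tau(t)) = f(\sigma(t))f(\tau(t))$. Here "simplicial groups" must be read as follows: the filtrations are such that $\sIS^{\bar{p}}$ are simplicial subgroups, for instance compatible filtrations as in Proposition \ref{prop.isgissimplgrp}. Under that reading, the restriction of a homomorphism to subgroups, landing in a subgroup, is a homomorphism. No step is a genuine obstacle. The only point needing care is the monotonicity of polyhedral dimension under inclusion of preimages, and that was recorded earlier in the paper.
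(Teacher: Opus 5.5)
Your proposal is correct and follows the paper's proof: define $\sIS^{\bar{p}}(f)$ as the restriction of $\sS(f)$, check that cofilteredness preserves $\bar{p}$-fullness, and get the homomorphism property by restricting the simplicial homomorphism $\sS(f)$ to the simplicial subgroups supplied by Proposition \ref{prop.isgissimplgrp}. You additionally spell out the allowability check (the preimage inclusion together with monotonicity of polyhedral dimension), which the paper leaves implicit. Your remark that the group statement needs compatible filtrations matches the paper's own appeal to that proposition.
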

\begin{proof}
The first statement has been observed by Gajer \cite[p. 946]{gajer};
see also Chataur et al. \cite[p. 11, Prop. 3.5]{chataurst}.
If $\sigma: \Delta^i \to X$ is a $\bp$-full singular simplex,
one takes $\sIS (f)$ to be the restriction of $\sS (f)$, that is,
$\sIS (f)(\sigma) = f \circ \sigma: \Delta^i \to Y.$
This works, as the composition
$f\circ \sigma$ is again $\bp$-full in $(Y,\Ya)$ if $f$ is cofiltered.
The map $\sIS (f)$ is simplicial, since it is the restriction of
$\sS (f): \sS (X) \to \sS (Y)$, which is simplicial.
If $X$ and $Y$ are topological groups and $f:X\to Y$
is a continuous homomorphism, then 
$\sS (f): \sS (X) \to \sS (Y)$ is a homomorphism of simplicial groups
by Proposition \ref{prop.sgissimplicialgroup}.
The map $\sIS (f)$ is the restriction of $\sS (f)$ to $\bp$-full simplices,
and thus also a group homomorphism.
Here, both $\sIS (X)$ and $\sIS (Y)$ are simplicial subgroups by
Proposition \ref{prop.isgissimplgrp}.
\end{proof}

We recall that base points $x_0$ of pointed filtered spaces
$(X,\Xa, x_0)$ are required to be in the top stratum, $x_0 \in X^0 - X^1$.
The \emph{intersection homotopy groups} of a pointed filtered space
$(X,\Xa, x_0)$ associated
to $\bp$ are given by the simplicial homotopy groups of the simplicial
set $\sIS^{\bar{p}} (X,\Xa),$
\[ I^\bp \pi_k (X,\Xa,x_0) := \pi_k (\sIS^{\bar{p}} (X,\Xa), x_0), \]
see \cite[p. 946]{gajer} and \cite[p. 16, Def. 4.1]{chataurst}.
This may be viewed as an intersection analog to (\ref{equ.htpygrpsofsingsimplset}).
Since $\sIS (X)$ satisfies the Kan condition, the elements of intersection
homotopy groups are represented by $\bp$-full singular simplices in $X$
whose boundary maps to $x_0$.
(The homology of $\sIS^{\bar{p}} (X,\Xa)$ will never be considered in this paper,
nor will we ever use the simplicial object $AG (\sIS^{\bar{p}} (X,\Xa))$.)

\begin{lemma} \label{lem.cofilteredonaginducesoninthtpygrps}
Let $f:(X,\Xa, x_0) \to (Y,\Ya, y_0)$ be a cofiltered pointed map between filtered 
pointed spaces.
Then $f$ induces a group homomorphism
$f_*: I^\bp \pi_* (X, \Xa, x_0) \to I^\bp \pi_* (Y, \Ya, y_0)$
for every $\bp$.
\end{lemma}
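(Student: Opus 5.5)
By Proposition \ref{prop.cofilteredinducesmaponsisp}, the cofiltered map $f$ induces a simplicial map
\[ \sIS^\bp (f): \sIS^\bp (X,\Xa) \to \sIS^\bp (Y,\Ya), \qquad \sigma \mapsto f\circ \sigma, \]
obtained by restricting $\sS (f)$ to $\bp$-full simplices. The plan is to take $f_*$ to be the map on simplicial homotopy groups induced by $\sIS^\bp (f)$, and to check that it is well defined and a homomorphism.

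\textbf{Step 1: base points.} The simplicial subset of $\sIS^\bp (X,\Xa)$ generated by $x_0$ consists of the constant simplices at $x_0$. These are $\bp$-full because $x_0 \in X^0 - X^1$: for $s>0$ the preimage of $X^s$ is empty, and for $s=0$ the required dimension bound is an equality. This is the same argument as for the unit in the proof of Proposition \ref{prop.isgissimplgrp}. Since $f(x_0)=y_0$, the map $\sIS^\bp (f)$ sends these constant simplices to the constant simplices at $y_0$, so it is a pointed simplicial map
\[ (\sIS^\bp (X,\Xa), x_0) \to (\sIS^\bp (Y,\Ya), y_0). \]

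\textbf{Step 2: homotopy groups.} Both simplicial sets satisfy the Kan condition, as recalled above. Simplicial homotopy groups are functorial for pointed simplicial maps between Kan complexes. Concretely, an element of $\pi_k (\sIS^\bp (X,\Xa), x_0)$ is represented by a $\bp$-full $k$-simplex $\sigma$ all of whose faces are the base point, and two such representatives are equivalent when they are related by a simplicial homotopy, which is itself a $(k+1)$-simplex of $\sIS^\bp (X,\Xa)$ with prescribed faces. Because $\sIS^\bp (f)$ commutes with face maps and preserves base points, it sends representatives to representatives and homotopies to homotopies. The assignment $[\sigma] \mapsto [f\circ\sigma]$ is therefore well defined on
\[ I^\bp \pi_k (X,\Xa,x_0) = \pi_k (\sIS^\bp (X,\Xa), x_0). \]

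\textbf{Step 3: the homomorphism property.} For $k \geq 1$, the group law on $\pi_k$ of a Kan complex is defined by filling a horn built from $\sigma$ and $\tau$ and taking the remaining face. A simplicial map carries a horn filler to a horn filler, so $[f\sigma]\cdot[f\tau] = [f(\text{filler face})] = f_*([\sigma][\tau])$. Thus $f_*$ is a group homomorphism; this is the standard fact that simplicial maps of Kan complexes induce homomorphisms on $\pi_k$, $k\ge1$, and maps of pointed sets for $k=0$.

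The only point needing care is Step 1, namely that the base-point subcomplex lies inside the intersection simplicial set and is preserved. Everything else is standard simplicial homotopy theory applied to Proposition \ref{prop.cofilteredinducesmaponsisp}.
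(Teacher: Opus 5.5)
Your proposal is correct and follows the paper's proof. The paper likewise obtains the simplicial map $\sIS^\bp(f)$ from Proposition~\ref{prop.cofilteredinducesmaponsisp} and passes to simplicial homotopy groups; your checks that the constant base-point simplices are $\bp$-full and that horn fillers are preserved supply details the paper leaves implicit.
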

\begin{proof}
By Proposition \ref{prop.cofilteredinducesmaponsisp},
$f$ induces a simplicial map
$\sIS^{\bar{p}} (f): \sIS^{\bar{p}} (X,\Xa) \to \sIS^{\bar{p}} (Y,\Ya).$
This map induces the desired homomorphism $f_*$ on 
simplicial homotopy groups.
\end{proof}

Let $(Y,\Ya)$ be a filtered polyhedron.
The filtration $\Ya$ is called a \emph{PL stratification}
if all $Y_i$ are subpolyhedra, the strata are PL manifolds,
and $\Ya$ is PL locally cone-like. Examples are the skeletal
filtration of a PL triangulation.
Gajer's intersection Dold-Thom theorem \cite{gajer}, corrected in \cite{gajercorrection},
provides natural isomorphisms
\[  IH^\bp_* (Y;\intg) \cong I^\bp \pi_* (AG(Y), AG (\Ya)) \]
for connected PL stratified polyhedra $Y$.

With a view towards the up-down and down-up properties
(Propositions \ref{prop.updownmultbydegree} and \ref{prop.downupsumofgstar}),
we establish the additivity of $\pi_* \sIS^\bp$. We will focus on endomorphisms,
though many of the statements are more generally true for homomorphisms.
Let $\sA$ be a simplicial abelian group.
The set $\End (\sA)$ of endomorphisms of $\sA$ is an abelian group
by defining the sum $f+g$ of $f,g \in \End (\sA)$ to be
$(f+g)_n:\sA_n \to \sA_n,$
$(f+g)_n (a) = f_n (a) + g_n (a),~ a\in \sA_n$, 
using the group law in $\sA_n$.

\begin{lemma} \label{lem.piadditive}
Given endomorphisms $f,g \in \End (\sA)$, additivity
$\pi_* (f+g) = \pi_* (f) + \pi_* (g)$
holds in $\End (\pi_* (\sA))$.
\end{lemma}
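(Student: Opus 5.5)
The plan is to use the classical fact that the homotopy groups of a simplicial abelian group $\sA$ (which is automatically Kan) are the homology groups of its normalized Moore complex $N\sA$, and that this identification is natural in simplicial homomorphisms. Concretely, $N_n\sA = \bigcap_{i=1}^{n} \ker (d_i: \sA_n \to \sA_{n-1})$ with differential $d_0$, and Moore's theorem gives natural isomorphisms $\pi_n (\sA, 0) \cong H_n (N\sA)$. Under this isomorphism, $\pi_* (f)$ corresponds to $H_* (N f)$ for every $f\in \End(\sA)$. I would recall how the isomorphism works: a class in $\pi_n(\sA,0)$ is represented by an $n$-simplex $a$ all of whose faces equal the basepoint $0$ (this uses the Kan condition), and such an $a$ is a cycle in $N_n \sA$. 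The map $\pi_n(f)$ sends $[a]$ to $[f_n(a)]$.

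The key steps in order are as follows. First, observe that if $f,g$ are simplicial homomorphisms, then $f+g$ is again a simplicial homomorphism. Since $\sA$ is abelian and the face and degeneracy maps are homomorphisms, $d_i (f_n(a)+g_n(a)) = f_{n-1}(d_i a) + g_{n-1}(d_i a)$, and similarly for the $s_i$. So $f+g\in \End(\sA)$ is well defined. Second, restrict to the normalized complex. $N f$, $N g$ and $N(f+g)$ are the restrictions of $f_n, g_n, (f+g)_n$ to $N_n\sA$, so $N(f+g) = Nf + Ng$ as chain maps. Third, additivity of homology finishes the argument: $H_*$ is an additive functor, and directly on representatives $[ (f+g)_n (a) ] = [f_n(a)] + [g_n(a)]$ in $H_n (N\sA)$. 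Transporting this back through the natural isomorphism gives $\pi_*(f+g) = \pi_*(f)+\pi_*(g)$.

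Alternatively, one can avoid the Moore complex and argue directly. For a simplicial group, the group structure on $\pi_n(\sA,0)$ induced by the simplicial group law agrees with the homotopy group structure, by an Eckmann–Hilton argument. Then $[f_n(a)+g_n(a)] = [f_n(a)] + [g_n(a)]$ holds immediately.

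The main obstacle is this compatibility: the addition in $\pi_n(\sA,0)$ coming from $\sA_n$ must coincide with the usual homotopy-group operation. I would cite the standard result (Moore, May's \emph{Simplicial Objects}, Thm. 22.1 / Prop. 17.2) rather than reprove it. With that in place, the remaining steps are routine.
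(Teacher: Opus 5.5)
Your proposal is correct and follows essentially the same route as the paper, which likewise identifies $\pi_*(\sA)$ with the homology of the Moore complex via Dold--Kan and then uses that the Moore functor and the homology functor are both additive. Your additional details, namely that $f+g$ is again a simplicial homomorphism and the Eckmann--Hilton alternative, spell out points the paper leaves implicit.
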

This follows e.g. from expressing the homotopy group via the 
Dold-Kan correspondence as the homology group of the Moore chain complex
and using the fact that both the Moore and the homology functor are additive.
Let $F,G:A \to A'$ be continuous homomorphisms of abelian topological
groups. Their sum $F+G:A \to A'$, given by the composition
\[ A \stackrel{(F,G)}{\longrightarrow}
  A' \times A' \stackrel{+}{\longrightarrow} A', \]
is again a continuous homomorphism.
In this way, the set $\Hom (A,A')$ of continuous homomorphisms is an abelian group.
In particular, $\End (A)$ is an abelian group.
\begin{lemma} \label{lem.singsimplsetfunctoradditive}
Let $A$ be an abelian topological group, for example $A=AG(Y)$.
Given $F,G \in \End(A),$ the additive compatibility
$\sS (F+G) = \sS (F) + \sS (G)$
holds in $\End (\sS (A))$.
\end{lemma}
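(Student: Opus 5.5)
The plan is to verify the identity levelwise, directly from the definitions. Both sides are endomorphisms of the simplicial abelian group $\sS (A)$. By Proposition \ref{prop.sgissimplicialgroup}, $\sS (A)$ is a simplicial abelian group and $\sS(F), \sS(G)$ are simplicial homomorphisms. The map $F+G$ is a continuous homomorphism, so $\sS (F+G)$ is one as well. Hence all three are elements of $\End (\sS (A))$. Two simplicial maps agree if their components $\sS(-)_n : \sS(A)_n \to \sS(A)_n$ agree for every $n$. So it suffices to fix $n\geq 0$ and a singular simplex $\sigma: \Delta^n \to A$ and compare both sides on $\sigma$.

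First I unwind the left side: $\sS (F+G)_n(\sigma) = (F+G)\circ \sigma$. By the definition of $F+G$ as the composition $+\circ (F,G)$, this is the map $t \mapsto F(\sigma(t)) + G(\sigma(t))$, $t\in \Delta^n$.

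Next I unwind the right side. By the definition of the sum in $\End(\sS(A))$,
\[ (\sS(F)+\sS(G))_n(\sigma) = \sS(F)_n(\sigma) + \sS(G)_n(\sigma) = (F\circ\sigma) + (G\circ \sigma), \]
where the last sum is taken in the group $\sS (A)_n$. By the construction in the proof of Proposition \ref{prop.sgissimplicialgroup}, this group law is pointwise. So the right side is again $t\mapsto F(\sigma(t)) + G(\sigma(t))$, and the two sides coincide.

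There is no real obstacle. The only point needing care is bookkeeping: one must confirm that the sum in $\End(\sS(A))$ is the levelwise one defined just before Lemma \ref{lem.piadditive}, and that the group structure on $\sS(A)_n$ is the pointwise one. Once these are recorded, the identity is a tautology.
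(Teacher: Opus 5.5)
Your proposal is correct and follows the same route as the paper. The paper simply calls the lemma a straightforward verification using the pointwise addition laws and Proposition \ref{prop.sgissimplicialgroup}. Your levelwise unwinding, with the sum in $\End(\sS(A))$ taken levelwise and the group law on $\sS(A)_n$ taken pointwise, is exactly that verification written out.
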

This is a straightforward verification using the various (pointwise) addition laws
recalled above, in particular using Proposition \ref{prop.sgissimplicialgroup}.

\begin{lemma} \label{lem.sumofcofilterediscofandzeroisplacid}
Let $f,g: AG(Y) \to AG(Y)$ be continuous maps. 
If $f$ and $g$ are cofiltered with respect to the Lawson filtration,
then the continuous map $f+g: AG(Y) \to AG(Y)$ given by 
$(f+g)(\phi) = f(\phi) + g(\phi)$ is also cofiltered.
The zero endomorphism $0: AG(Y) \to AG(Y)$ is placid.
\end{lemma}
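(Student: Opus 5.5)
The first assertion follows from the support inclusion $\supp (\phi + \psi) \subset \supp (\phi) \cup \supp (\psi)$ for $\phi,\psi \in AG(Y)$, just as in Lemma \ref{lem.lawsonfiltriscompatible}. Fix $k \geq 0$ and an element $\phi \in (f+g)^{-1} (AG(Y)^k)$; we must show $\phi \in AG(Y)^k$. For $k=0$ there is nothing to prove, since $AG(Y)^0 = AG(Y)$. For $k>0$, membership $f(\phi)+g(\phi) \in AG(Y)^k$ means that $\supp (f(\phi)+g(\phi))$ meets $Y^k$. By the support inclusion, at least one of $\supp f(\phi)$ and $\supp g(\phi)$ meets $Y^k$. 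Hence $f(\phi) \in AG(Y)^k$ or $g(\phi) \in AG(Y)^k$. Since both $f$ and $g$ are cofiltered, in either case $\phi \in AG(Y)^k$. Thus $(f+g)^{-1}(AG(Y)^k) \subset AG(Y)^k$, which is the cofiltered condition of Definition \ref{def.gajercofilteredmap}. Equivalently, $AG(Y) - AG(Y)^k$ is a subgroup, and the sum of two elements lying outside $AG(Y)^k$ stays outside it.

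For the zero endomorphism, recall that a continuous map is placid if and only if it is stratified and cofiltered.

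\emph{Stratified.} The map $0$ is constant with value $0 \in AG(Y)$. The element $0$ has empty support, so it lies in $AG(Y)^0 - AG(Y)^1$, the top open piece of the Lawson filtration. Every stratum $S$ of $AG(\Ya)$ is connected, so $0(S) = \{0\}$ is contained in the unique stratum $T$, of formal codimension $0$, that contains the point $0$. So $0$ is stratified, with $0_*(S) = T$.

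\emph{Placid.} We have $\codim 0_*(S) = 0 \leq \codim S$ for every $S$, which is exactly Definition \ref{def.placidmap}. As a cross-check, cofilteredness also holds directly: for $k>0$ the preimage $0^{-1}(AG(Y)^k)$ is empty because $0 \notin AG(Y)^k$, and for $k=0$ the condition is trivial.

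The only point needing care is the definitional convention that $0$ belongs to $AG(Y)^0$ but to no $AG(Y)^k$ with $k>0$. This convention is what places $0$ in a codimension-zero stratum, and it is recorded explicitly in the definition of the Lawson filtration. Beyond this, there is no real obstacle.
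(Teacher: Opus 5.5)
Your proof is correct and follows the paper's argument. For $f+g$ it is the same support reasoning: a point of $Y^k$ in $\supp(f(\phi)+g(\phi))$ lies in the support of $f(\phi)$ or of $g(\phi)$, and cofilteredness of $f$ or $g$ then puts $\phi$ in $AG(Y)^k$. For the zero map you likewise use that $0$ lands in the codimension-zero stratum containing $0$. The only differences are cosmetic: you read off placidity directly from $\codim 0_*(S)=0\le\codim S$ where the paper checks cofilteredness and invokes Lemma~\ref{lem.stratifiedandcofilteredisplacid}, and your separate handling of $k=0$ neatly covers the case where $0\in AG(Y)^0$ has empty support.
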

\begin{proof}
As $f$ and $g$ are cofiltered, we know that
$f^{-1} (AG(Y)^k) \subset AG(Y)^k,$
$g^{-1} (AG(Y)^k) \subset AG(Y)^k.$
We have to show that
$(f+g)^{-1} (AG(Y)^k) \subset AG(Y)^k.$
Let $\phi \in AG(Y)$ be an element with $(f+g)(\phi) \in AG(Y)^k$.
By definition of the Lawson filtration, membership in $AG(Y)^k$
means that
$\supp ((f+g)(\phi)) \cap Y^k \not= \varnothing.$
Consequently, there is some point $y_0 \in Y^k$
with $f(\phi)(y_0) + g(\phi)(y_0) \not= 0$.
Hence $f(\phi)(y_0) \not= 0$
or $g(\phi)(y_0) \not= 0$.
If $f(\phi)(y_0) \not= 0$, then
$\phi \in f^{-1} (AG(Y)^k) \subset AG(Y)^k.$
Similarly, $\phi$ is also in $AG(Y)^k$ when $g(\phi)(y_0) \not= 0$.

To show that the zero endomorphism $0$ on $AG(Y)$ is placid, 
let us first verify that it is cofiltered.
We must show that $0^{-1} (AG(Y)^k) \subset AG(Y)^k$ for all $k\geq 0$.
If $k=0$, then $0\in AG(Y)^0$ and thus
$0^{-1} (AG(Y)^0) = AG(Y) = AG(Y)^0.$
If $k>0$, then $0\not\in AG(Y)^k$ and thus
\[ 0^{-1} (AG(Y)^k) = \varnothing \subset AG(Y)^k. \]
Thus $0$ is cofiltered.
Let $T$ be the connected component of $0$ in $AG(Y)_m - AG(Y)_{m-1},$
where $m$ is the formal dimension of the filtered polyhedron $Y$.
Then for every stratum $S$ of $AG(Y)$, the image $0(S)$ is contained
in $T$. This shows that $0$ is a stratified map.
By Lemma \ref{lem.stratifiedandcofilteredisplacid}, $0$ is placid.
\end{proof}

\begin{remark}
If $f,g: AG(Y)\to AG(Y)$ are
stratified with respect to the Lawson filtration, then
it is not generally true that $f+g$ is again stratified.
\end{remark}

\begin{prop} \label{prop.piisisadditive}
Let $F,G\in \End (AG(Y))$ be endomorphisms.
If $F$ and $G$ are cofiltered with respect to the Lawson filtration, then
\[ \pi_* \sIS^\bp (F+G) =  \pi_* \sIS^\bp (F) + \pi_* \sIS^\bp (G)
   \in \End (\pi_* \sIS^\bp (AG(Y))).  \]
Under the intersection Dold-Thom identification 
$IH^\bp_* (Y;\intg) = I^\bp \pi_* (AG(Y)) =
\pi_* (\sIS^\bp (AG(Y))),$ where we take $Y$ to be connected, we thus have
$(F+G)_* = F_* + G_* \in \End (IH^\bp_* (Y;\intg)).$
\end{prop}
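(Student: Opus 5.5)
The plan is to reduce the statement to the two additivity lemmas already available, Lemma \ref{lem.singsimplsetfunctoradditive} and Lemma \ref{lem.piadditive}, by passing through the simplicial subgroup $\sIS^\bp (AG(Y))$.

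First I check that everything is defined. By Lemma \ref{lem.sumofcofilterediscofandzeroisplacid}, the sum $F+G$ of two cofiltered endomorphisms is again cofiltered, and it is a continuous homomorphism. Proposition \ref{prop.cofilteredinducesmaponsisp} then gives simplicial group homomorphisms $\sIS^\bp (F)$, $\sIS^\bp (G)$ and $\sIS^\bp (F+G)$ of $\sIS^\bp (AG(Y))$. This is an abelian simplicial subgroup of $\sS (AG(Y))$ by Proposition \ref{prop.isagyissimplabgrp}, so these three maps lie in $\End (\sIS^\bp (AG(Y)))$. The sum $\sIS^\bp (F) + \sIS^\bp (G)$ in this endomorphism group makes sense.

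Next I show $\sIS^\bp (F+G) = \sIS^\bp (F) + \sIS^\bp (G)$ as endomorphisms of $\sIS^\bp (AG(Y))$. Each $\sIS^\bp (-)$ is the restriction of $\sS (-)$ to $\bp$-full simplices. The group law on $\sIS^\bp (AG(Y))_n$ is the restriction of the pointwise law on $\sS (AG(Y))_n$. Hence for a $\bp$-full $\sigma$ we have
\[ (F+G)\circ\sigma = F\circ\sigma + G\circ\sigma, \]
which is exactly Lemma \ref{lem.singsimplsetfunctoradditive} restricted to the subgroup. One subtlety needs checking: $F\circ\sigma$ and $G\circ\sigma$ are individually $\bp$-full because $F$ and $G$ are cofiltered, so their sum is computed inside the subgroup $\sIS^\bp (AG(Y))_n$.

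Then I apply Lemma \ref{lem.piadditive} to the simplicial abelian group $\sA = \sIS^\bp (AG(Y))$. This gives
\[ \pi_* \sIS^\bp (F+G) = \pi_* \sIS^\bp (F) + \pi_* \sIS^\bp (G). \]
The basepoint causes no trouble: $0$ is fixed by every homomorphism, and homotopy groups of a simplicial group are independent of the base vertex in the component.

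Finally, for connected $Y$, the identification $IH^\bp_*(Y;\intg) = \pi_* \sIS^\bp(AG(Y))$ is Gajer's intersection Dold–Thom isomorphism. Under this identification the induced map $F_*$ is defined as $\pi_* \sIS^\bp (F)$, so the second formula is a direct translation of the first.

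I expect no real obstacle. The only point needing care is the subgroup-restriction step, namely that the sum in $\End(\sIS^\bp)$ agrees with the sum inherited from $\End(\sS(AG(Y)))$. This follows from the inclusion being a homomorphism of simplicial groups.
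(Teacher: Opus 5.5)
Your proposal is correct and follows the paper's proof essentially step for step. You show $F+G$ is cofiltered via Lemma \ref{lem.sumofcofilterediscofandzeroisplacid}, check that $\sIS^\bp(F)$, $\sIS^\bp(G)$, $\sIS^\bp(F+G)$ are well defined, restrict the identity $\sS(F+G)=\sS(F)+\sS(G)$ to the simplicial subgroup $\sIS^\bp(AG(Y))$, and then apply Lemma \ref{lem.piadditive}; your remarks on closure of the sum in the subgroup and on the basepoint are harmless refinements of the same argument.
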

\begin{proof}
We will write $\sIS = \sIS^\bp$. The sum $F+G$ is cofiltered
by Lemma \ref{lem.sumofcofilterediscofandzeroisplacid}.
Since $F,G$ and $F+G$ are cofiltered, the endomorphisms
$\sIS (F),$ $\sIS (G),$ $\sIS (F+G) \in \End (\sIS (AG(Y)))$
are well-defined by Proposition \ref{prop.cofilteredinducesmaponsisp}.
The simplicial abelian group $\sIS (AG(Y))$ is a simplicial subgroup
of the simplicial abelian group $\sS (AG(Y))$ by
Proposition \ref{prop.isagyissimplabgrp}.
Thus there is a commutative diagram
\[ \xymatrix{
\sIS (AG(Y)) \ar@{^{(}->}[d] \ar[r]^{\sIS (F)} & \sIS (AG(Y)) \ar@{^{(}->}[d] \\
\sS (AG(Y)) \ar[r]_{\sS (F)} & \sS (AG(Y)),
} \]
i.e. $\sIS (F)$ is the restriction of $\sS (F)$ to $\sIS (AG(Y))$.
There are similar diagrams for $\sIS (G), \sS (G)$ and for
$\sIS (F+G), \sS (F+G)$.
By Lemma \ref{lem.singsimplsetfunctoradditive}, 
$\sS (F) + \sS (G) = \sS (F+G)$.
Restricting this identity to $\sIS (AG(Y))$, we obtain
$\sIS (F) + \sIS (G) = \sIS (F+G)$.
Taking $\sA = \sIS (AG(Y))$, $f= \sIS (F)$ and $g= \sIS (G)$
in Lemma \ref{lem.piadditive}, it follows that
\[ \pi_* (\sIS (F+G))
   = \pi_* (\sIS (F) + \sIS (G))
   = \pi_* (\sIS (F)) + \pi_* (\sIS (G)). \]
\end{proof}

This applies for example to multiplication by an integer, which
is relevant for the up-down principle:

\begin{cor} \label{cor.multbydonih}
Let $d$ be an integer
and let $(Y,\Ya)$ be a connected (for intersection Dold-Thom) filtered polyhedron.
Let $h_d: IH^\bp_j (Y;\intg) \to IH^\bp_j (Y;\intg)$ and 
$a_d: AG(Y)\to AG(Y)$
denote the group endomorphisms given by multiplication by $d$.
Then, for any $\bp$, under the intersection Dold-Thom identification
$IH^\bp_j (Y;\intg) = I^\bp \pi_j (AG(Y)) = \pi_j (\sIS^\bp (AG(Y))),$
the placid map $a_d$ induces the endomorphism $h_d$.
\end{cor}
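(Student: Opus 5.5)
We need to show two things: that $a_d$ is placid, so that it induces a map on intersection homotopy groups, and that this induced map is multiplication by $d$. The second part will come from the additivity of Proposition \ref{prop.piisisadditive}, by induction on $|d|$. For cofilteredness alone, the sum formula in Lemma \ref{lem.sumofcofilterediscofandzeroisplacid} already gives what we need. For $d\geq 0$ write $a_d = \id + \cdots + \id$, with $a_0 = 0$. The zero endomorphism is placid by Lemma \ref{lem.sumofcofilterediscofandzeroisplacid}, and the identity is trivially placid. Then Lemma \ref{lem.sumofcofilterediscofandzeroisplacid} shows inductively that $a_d = a_{d-1} + \id$ is cofiltered.

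For $d<0$ we need $a_{-1}=-\id$ to be cofiltered. This holds because $\supp(-\phi)=\supp(\phi)$, so $a_{-1}^{-1}(AG(Y)^k) = AG(Y)^k$.

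Placidity also needs stratification. For $d\neq 0$, $\supp(d\phi)=\supp(\phi)$, so $a_d$ preserves each set $AG(Y)^k - AG(Y)^{k+1}$; that is, it is equidimensionally stratified at the level of filtration differences. A stratum is a connected component of such a difference, so its image is connected and lies in that same difference. Hence the image lies in a single stratum, and $a_d$ is stratified. Being cofiltered as well, it is placid by Lemma \ref{lem.stratifiedandcofilteredisplacid}. The case $d=0$ is covered by Lemma \ref{lem.sumofcofilterediscofandzeroisplacid}. In every case Proposition \ref{prop.cofilteredinducesmaponsisp} gives an induced homomorphism $(a_d)_* = \pi_j\sIS^\bp(a_d)$.

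Now compute $(a_d)_*$. Clearly $(\id)_* = \id$. For the zero map, $\sIS(0)$ is the zero endomorphism of the simplicial abelian group $\sIS(AG(Y))$, which sends everything to the constant simplex at the unit; hence it induces $0$ on $\pi_j$. For $d>0$, apply Proposition \ref{prop.piisisadditive} to $F=a_{d-1}$ and $G=\id$, both cofiltered:
\[ (a_d)_* = (a_{d-1})_* + \id . \]
Induction then gives $(a_d)_* = h_d$. For $d<0$, apply the same proposition to $a_{-1}+\id = a_0 = 0$, which gives $(a_{-1})_* = -\id$. Then $a_d = a_{d+1} + a_{-1}$ and induction downward gives $(a_d)_* = h_d$. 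The Dold-Thom identification for connected $Y$ transports this statement to $IH^\bp_j(Y;\intg)$.

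The only delicate point is the stratification claim: that a connected component of $AG(Y)^k - AG(Y)^{k+1}$ is mapped into a single connected component. This follows from continuity of $a_d$ together with the preservation of supports, so no real obstacle is expected. Note that no naturality of the Dold-Thom isomorphism is needed here, because $h_d$ is defined on $IH$ through that very identification.
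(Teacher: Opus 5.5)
Your proof is correct and follows the paper's route. Placidity of $a_d$ comes from $\supp(d\phi)=\supp(\phi)$ for $d\neq 0$, together with Lemma~\ref{lem.sumofcofilterediscofandzeroisplacid} for $d=0$; this is exactly the paper's Lemma~\ref{lem.multbydisplacidonagy}. The identity $(a_d)_*=h_d$ is the additivity of Proposition~\ref{prop.piisisadditive}, which you spell out more explicitly by induction on $|d|$, including the negative case.

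On your closing remark: $h_d$ is not defined via the Dold-Thom identification. What makes naturality unnecessary is that the identification is a group isomorphism with $IH^\bp_j(Y;\intg)$, so it commutes with multiplication by $d$.
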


Here, we used the following basic observation:
\begin{lemma} \label{lem.multbydisplacidonagy}
The map $a_d: (AG(Y),AG(\Ya)) \longrightarrow (AG(Y), AG(\Ya))$
is placid.
\end{lemma}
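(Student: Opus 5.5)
The plan is to split according to whether $d=0$, and in the case $d\neq 0$ to check directly that $a_d$ is stratified and cofiltered with respect to the Lawson filtration. Since a continuous map is placid exactly when it is stratified and cofiltered (Lemma \ref{lem.stratifiedandcofilteredisplacid} together with the subsequent lemma), this suffices.

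\emph{The case $d=0$.} Here $a_0$ is the zero endomorphism, which is placid by Lemma \ref{lem.sumofcofilterediscofandzeroisplacid}.

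\emph{The case $d\neq 0$, support and continuity.} First, $a_d$ is continuous: it is the $|d|$-fold sum of the identity or of the inversion map in the abelian topological group $AG(Y)$, as in the discussion preceding Lemma \ref{lem.singsimplsetfunctoradditive}. The key observation is the support identity
\[ \supp(a_d(\phi)) = \supp(\phi) \quad \text{for all } \phi\in AG(Y). \]
It holds because $(d\phi)(y) = d\cdot \phi(y)$, and $\intg$ has no zero divisors, so $d\phi(y)\neq 0$ if and only if $\phi(y)\neq 0$.

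\emph{Cofiltered and equidimensionally stratified.} Since the Lawson filtration pieces $AG(Y)^k$ for $k>0$ are defined purely in terms of supports, the support identity gives $a_d^{-1}(AG(Y)^k) = AG(Y)^k$ for $k>0$. For $k=0$ we have $AG(Y)^0=AG(Y)$, so the same equality holds trivially. Hence $a_d$ is cofiltered. The same equality shows that $a_d$ maps each difference $AG(Y)^k - AG(Y)^{k+1}$ into itself, so $a_d$ is equidimensionally stratified in the naive sense.

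\emph{Stratified.} Let $S$ be a stratum, that is, a connected component of $AG(Y)_i - AG(Y)_{i-1}$. Its image $a_d(S)$ is connected, by continuity, and lies in $AG(Y)_i - AG(Y)_{i-1}$. Therefore $a_d(S)$ lies in a single component $T$ of that difference, which shows that $a_d$ is stratified with $(a_d)_*(S)=T$. Moreover $\codim T = \codim S$, so the placidity inequality holds with equality. Alternatively, one may simply invoke Lemma \ref{lem.stratifiedandcofilteredisplacid} at this point.

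The only point requiring care is the support identity, which is where $d\neq 0$ and the torsion-freeness of $\intg$ enter. The case $d=0$ genuinely differs, because there supports collapse to the empty set, and this is why it is handled separately via Lemma \ref{lem.sumofcofilterediscofandzeroisplacid}. Beyond this, no real obstacle is expected.
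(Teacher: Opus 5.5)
Your proof is correct and follows the paper's own argument. You treat $d=0$ via Lemma \ref{lem.sumofcofilterediscofandzeroisplacid}, and for $d\neq 0$ you use the key fact that $a_d$ does not change supports. You then carefully unpack that fact into the cofiltered and stratified verifications, with equality of codimensions, which the paper's one-line proof leaves to the reader.
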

\begin{proof}
If $d=0$, then $a_0 =0$ is placid by Lemma 
\ref{lem.sumofcofilterediscofandzeroisplacid}.
If $d\not= 0$, 
the key point is that $a_d$ does not change supports.
\end{proof}

\section{Ramified Covers and Intersection Homology Transfer}
\label{sec.ramcovsandihtransfer}

A finite ramified covering $\pi: \widetilde{Y} \to Y$ has an associated
transfer homomorphism
\[ \pi_!: H_* (Y;\intg) \to H_* (\widetilde{Y}; \intg) \]
such that
\begin{equation} \label{equ.transfermultdegree}
\pi_* \circ \pi_!: H_* (Y;\intg) \longrightarrow H_* (Y;\intg)
 \text{ is multiplication by } d,
\end{equation}
where $d$ is the degree of $\pi$ and
$\pi_*: H_* (\widetilde{Y};\intg) \longrightarrow H_* (Y; \intg)$
the covariantly induced homomorphism.
In this section, we will show, using the free abelian topological
groups generated by $Y, \wY$ in a simplicial setting,
that this transfer lifts to intersection
homology $IH^{\bar{p}}_*$. There is thus a transfer 
\[ \pi_!: IH^{\bar{p}}_* (Y;\intg) \longrightarrow 
    IH^{\bar{p}}_* (\widetilde{Y}; \intg) \]
for every $\bar{p}$, when $\pi$ is simplicial and
$Y, \wY$ PL stratified polyhedra. 
This transfer will be shown 
(see Proposition \ref{prop.updownmultbydegree})
to satisfy the above
property (\ref{equ.transfermultdegree}) on intersection homology.
Furthermore, it is compatible with the transfer on ordinary homology,
that is,
\begin{equation} \label{equ.ihtransfercompatibleclassicaltransf}
\xymatrix@R=20pt{
IH^{\bar{p}}_* (Y;\intg) \ar[d] \ar[r]^{\pi_!} & 
   IH^{\bar{p}}_* (\widetilde{Y};\intg) \ar[d] \\
H_* (Y;\intg) \ar[r]_{\pi_!} & H_* (\widetilde{Y};\intg)
} \end{equation}
commutes, where the vertical maps are the canonical ones.

We begin by recalling the notion of a finite ramified covering of
a topological space $Y$ in the elegant formulation of L. Smith, \cite{smithlarry}.
\begin{defn} \label{def.ramifiedcovering}
Let $d$ be a positive integer.
A \emph{$d$-fold ramified covering} is a pair
$(\pi, \mu)$, where
\begin{itemize}
\item $\pi: \widetilde{Y} \to Y$ is a continuous surjective finite-to-one map, and
\item $\mu: \widetilde{Y} \to \{ 1,2,3,\ldots \}$ is a map, called the
   \emph{multiplicity map}, 
\end{itemize}
such that
\begin{enumerate}
\item $\sum_{x \in \pi^{-1} (y)} \mu (x) = d$ for every $y \in Y$, and
\item the map
 \[  \tau_\pi: Y \longrightarrow SP^d (\widetilde{Y}) \]
  defined by sending $y$ into $\pi^{-1} (y),$
  where each $x \in \pi^{-1} (y)$ occurs $\mu (x)$ times, is continuous.
 \end{enumerate}
\end{defn}
We will often write ramified coverings briefly as $\pi: \wY \to Y$,
omitting the multiplicity map from the notation.

\begin{example} \label{exple.orbitprojisramifiedcov}
Let $G$ be a finite group and $X$ a $G$-space.
Then the orbit projection $\pi: X \to X/G$ admits
the structure of a $|G|$-fold ramified covering
(\cite[Prop. 1.5]{smithlarry}).
\end{example}

Ramified coverings can be pulled back under arbitrary continuous
maps: The total space is the topological fiber product and
the multiplicity map on the fiber product is taken to be $\mu$
of the projection to the component $\wY$
(\cite[p. 488, Prop. 1.3]{smithlarry}). The degree of the
pullback is again $d$. In particular,
ramified coverings can be restricted to subspaces of their target.
Let $\Sigma X$ denote the unreduced suspension of a space $X$.

\begin{lemma} \label{lem.suspensionisramcov}
Let $\pi: \wY \to Y$ be a ramified covering of degree $d$ 
with multiplicity map $\mu$.
Then the suspension $\Sigma \pi: \Sigma \wY \to \Sigma Y$ is a ramified
covering of degree $d$ with multiplicity map $\mu_\Sigma$ given by
$\mu_\Sigma (t,x) = \mu (x)$ for $(t,x) \in (-1,1) \times \wY$ and
$\mu_\Sigma (\pm 1) = d,$ where $\pm 1 \in \Sigma \wY$ are the two suspension points.
\end{lemma}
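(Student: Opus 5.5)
We check the conditions of Definition \ref{def.ramifiedcovering} directly for the pair $(\Sigma\pi, \mu_\Sigma)$. Write $\Sigma \wY = [-1,1]\times \wY/\!\sim$, where $\{\pm 1\}\times\wY$ is collapsed to the point $\pm 1$, and similarly for $\Sigma Y$. Then $\Sigma\pi[t,x] = [t,\pi(x)]$, and $\Sigma\pi$ sends the suspension points to the suspension points.

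First come the elementary properties.

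\emph{Continuity.} $\Sigma\pi$ is continuous because $\id\times\pi$ is continuous and respects the identifications.

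\emph{Surjectivity.} This is inherited from $\pi$.

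\emph{Finite fibers.} Over an interior point $[t,y]$, $|t|<1$, the fiber is $\{t\}\times\pi^{-1}(y)$, which is finite. Over $\pm1$ the fiber is the single point $\pm 1$.

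\emph{Multiplicity condition (1).} Over $[t,y]$ with $|t|<1$ the sum is $\sum_{x\in\pi^{-1}(y)}\mu(x)=d$. Over $\pm 1$ it is $\mu_\Sigma(\pm1)=d$.

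The main step is condition (2), the continuity of
\[ \tau_{\Sigma\pi}: \Sigma Y \to SP^d(\Sigma\wY). \]
The plan is to factor this map through the suspension. Consider the map
\[ \Phi: [-1,1]\times SP^d(\wY) \to SP^d(\Sigma\wY), \qquad (t,[x_1,\dots,x_d]) \mapsto [[t,x_1],\dots,[t,x_d]]. \]
It is continuous, being induced from the continuous map $[-1,1]\times \wY^d \to (\Sigma\wY)^d$, $(t,x_1,\dots,x_d) \mapsto ([t,x_1],\dots,[t,x_d])$. That map is $\Sigma_d$-equivariant, so it descends to the quotients. One needs $[-1,1]\times -$ to commute with the quotient by $\Sigma_d$. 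This holds because $[-1,1]$ is locally compact Hausdorff, or alternatively because everything is taken in the compactly generated category.

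Now form $\Phi\circ(\id\times\tau_\pi): [-1,1]\times Y \to SP^d(\Sigma\wY)$. It is continuous since $\tau_\pi$ is continuous by hypothesis. At $t=\pm1$ it is constant with value $d\cdot(\pm 1)$, independently of $y$. Hence it factors through the quotient $\Sigma Y$.

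The induced map agrees with $\tau_{\Sigma\pi}$:
\begin{itemize}
\item At interior points, the points of $\{t\}\times\pi^{-1}(y)$ occur with multiplicities $\mu$.
\item At the suspension points, the single preimage occurs $d=\mu_\Sigma(\pm1)$ times.
\end{itemize}
Since $\Sigma Y$ carries the quotient topology of $[-1,1]\times Y$, continuity of the factored map follows. This completes condition (2).

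The only delicate point is the topological one: the product of $[-1,1]$ with the quotient map $\wY^d\to SP^d(\wY)$ must again be a quotient map, which is why local compactness of the interval, or working compactly generated throughout, is invoked. Everything else is a routine verification.
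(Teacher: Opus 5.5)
Your proof is correct and has the same structure as the paper's: both reduce continuity of $\tau_{\Sigma\pi}$ to continuity of $\tau_{\Sigma\pi}\circ q$ on $[-1,1]\times Y$ via the quotient topology of $\Sigma Y$, and then write that composite as a composition of continuous maps. The only difference is how the middle factor is justified. The paper obtains continuity of $\tau_{\overline{\pi}}\colon I\times Y\to SP^d(I\times\wY)$ from Smith's result that ramified coverings pull back (here along $I\times Y\to Y$) and then applies $SP^d(\widetilde{q})$. You instead construct $\Phi$ by hand and use that $\id_{[-1,1]}\times(\wY^d\to SP^d(\wY))$ is a quotient map, which makes explicit the point-set input the paper delegates to Smith.
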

\begin{proof}
The map $\Sigma \pi$, which is given by $(\Sigma \pi)(t,x) = (t,\pi (x))$
and $(\Sigma \pi)(\pm 1) = \pm 1,$ is surjective and finite-to-one,
by the corresponding properties of $\pi$.
By definition of the multiplicity map $\mu_\Sigma$, the summation property (1)
in Definition \ref{def.ramifiedcovering} holds, noting that the 
fibers of $\Sigma \pi$ are given by
$(\Sigma \pi)^{-1} (t,y) = \{ t \} \times \pi^{-1} (y),$
 $(t,y) \in (-1,1) \times Y,$
and $(\Sigma \pi)^{-1} (+1) = \{ +1 \},$
$(\Sigma \pi)^{-1} (-1) = \{ -1 \}$.
It remains to verify property (2), i.e. that the map
$\tau_{\Sigma \pi}: \Sigma Y \to SP^d (\Sigma \wY)$
determined by $\Sigma \pi$ and $\mu_\Sigma$ is continuous.
Consider the cartesian diagram
\[ \xymatrix{ 
I \times \wY  \ar[r]^{\operatorname{proj}} 
  \ar[d]_{\overline{\pi} = \id \times \pi} & \wY \ar[d]^\pi \\
I \times Y \ar[r]^{\operatorname{proj}} & Y.
} \]
We recalled above that ramified covers can be pulled back.
Thus $\overline{\pi}$ is a ramified covering, whose multiplicity
function $\overline{\mu}: I \times \wY \to \{ 1,2,\ldots \}$ is given
by $\overline{\mu} (t,x) = \mu (x)$.
Therefore, we know the map
$\tau_{\overline{\pi}}: I\times Y \longrightarrow SP^d (I \times \wY),$
given by
\[ \tau_{\overline{\pi}} (t,y) = 
     \sum_{(t,x) \in \overline{\pi}^{-1} (t,y)} \overline{\mu} (t,x) \cdot (t,x)
      = \sum_{x\in \pi^{-1} (y)} \mu (x) \cdot (t,x),  \]
to be continuous.
Let
$q: I\times Y \longrightarrow \Sigma Y,$  
$\widetilde{q}: I\times \wY \longrightarrow \Sigma \wY$
denote the quotient maps.
The latter quotient map induces a continuous map
$SP^d (\widetilde{q}): 
    SP^d (I\times \wY) \longrightarrow SP^d (\Sigma \wY).$
One checks readily that the diagram
\[ \xymatrix{ 
I \times Y \ar[r]^{\tau_{\overline{\pi}}} \ar[d]_q & SP^d (I \times \wY) 
   \ar[d]^{SP^d (\widetilde{q})} \\
\Sigma Y \ar[r]_{\tau_{\Sigma \pi}} & SP^d (\Sigma \wY)
} \]
commutes.
Now by definition of the quotient topology on $\Sigma Y$,
the map $\tau_{\Sigma \pi}$ is continuous if and only if
$\tau_{\Sigma \pi} \circ q$ is continuous.
By the commutativity of the above diagram,
$\tau_{\Sigma \pi} \circ q = SP^d (\widetilde{q}) \circ \tau_{\overline{\pi}}$
and the latter is continuous as the composition of two continuous maps.
\end{proof}

For later use, we record the standard fact that simplicial maps are cellular
with respect to the skeletal filtrations. This is a consequence of the simplex-wise
linearity of such a map. 
\begin{lemma} \label{lem.simplicialmapiscellular}
(Cellularity of simplicial maps.)
Let $X,Y$ be triangulated polyhedra of the same dimension, equipped
with the simplicial filtrations $\Xa, \Ya$.
If $f:X\to Y$ is a simplicial map, then 
$f(X^k) \subset Y^k$
for all $k\geq 0$.
\end{lemma}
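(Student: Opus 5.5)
The statement is about the simplicial filtrations, so I will work directly with skeleta and with the fact that a simplicial map is linear on each simplex. Write $X=|K|$ and $Y=|L|$ with $n=\dim X=\dim Y$. Then $X^k=X_{n-k}=|K_{n-k}|$ and $Y^k=Y_{n-k}=|L_{n-k}|$, where $K_{n-k}$ and $L_{n-k}$ are the $(n-k)$-dimensional simplicial skeleta. So I need to show that $f(|K_{n-k}|)\subset |L_{n-k}|$ for every $k\geq 0$.

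First I treat a single point. Let $x\in |K_{n-k}|$. Then $x$ lies in some closed simplex $\Delta\in K$ with $\dim\Delta\leq n-k$. Because $f$ is simplicial, the images of the vertices of $\Delta$ span a simplex $f\Delta$ of $L$, and $f$ restricted to $\Delta$ is the affine map onto $f\Delta$ determined by these vertex images. Hence $f(x)\in f\Delta$.

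Next I bound the dimension of $f\Delta$. A simplex of dimension $q$ has $q+1$ vertices, so $f\Delta$ is spanned by at most $q+1$ distinct vertices. Therefore $\dim f\Delta\leq\dim\Delta\leq n-k$. This places $f\Delta$ in $L_{n-k}$, so $f(x)\in|L_{n-k}|=Y^k$.

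Since $x$ was arbitrary, this gives $f(X^k)\subset Y^k$ for all $k\geq 0$. The only point that needs any care is the bookkeeping: I must use equality of dimensions so that the codimension index $k$ refers to the same skeleton degree $n-k$ in both $X$ and $Y$. If $n-k<0$, both skeleta are empty and the inclusion holds trivially. Unlike Lemma \ref{lem.finsurjsimpmapiscofiltered}, no finiteness or surjectivity hypothesis is needed, because simplicial maps can only lower or preserve the dimension of simplices, never raise it.
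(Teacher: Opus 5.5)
Your proof is correct and follows the same route as the paper, which records this lemma without a detailed proof as a consequence of the simplexwise linearity of $f$. Your argument spells that out: $f$ maps each closed simplex $\Delta$ affinely onto $f\Delta$ with $\dim f\Delta \leq \dim \Delta$, and equality of dimensions identifies $X^k = |K_{n-k}|$ and $Y^k = |L_{n-k}|$.
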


Let $\widetilde{Y}$ and $Y$ be spaces triangulated by countable
simplicial complexes and let $\pi: \widetilde{Y} \to Y$ be a
simplicial $d$-fold ramified cover.
Via the topological embedding
$SP^d (\widetilde{Y}) \hookrightarrow AG(\widetilde{Y}),$
we may regard the map $\tau_\pi$ as a continuous map
\[ \tau_\pi: Y \longrightarrow AG(\widetilde{Y}), \]
given by
$\tau_\pi (y) = \sum_{x\in \pi^{-1}(y)} \mu (x)\cdot x.$
Since $\mu (x)\geq 1$ for every $x\in \pi^{-1} (y)$,
the support is the fiber of $\pi$,
$\supp (\tau_\pi (y)) = \pi^{-1} (y).$
The map $\tau_\pi$ is separated in the sense of 
Definition \ref{def.separatedmapintoag},
since the fibers over different points are disjoint.

\begin{lemma} \label{lem.taupiequidimstratcofiltered}
Let $Y,\widetilde{Y}$ be finite-dimensional triangulated polyhedra.
Let $\pi: \widetilde{Y} \to Y$ be a simplicial ramified covering.
If $\Ya, \widetilde{\Ya}$ denote the filtrations of $Y,\widetilde{Y}$ given by
the simplicial skeleta, then the map 
\[ \tau_\pi: (Y,\Ya) \to 
  (AG(\widetilde{Y}), AG(\widetilde{\Ya})) \]
is equidimensionally stratified and cofiltered.
\end{lemma}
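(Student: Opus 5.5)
The plan is to verify the two properties directly from the definitions of the Lawson filtration. Two facts do all the work: the support of $\tau_\pi(y)$ is exactly the fiber $\pi^{-1}(y)$, and a simplicial finite-to-one surjection preserves the dimension of simplices. The map $\pi$ is surjective and finite-to-one by Definition \ref{def.ramifiedcovering}, so the argument in the proof of Lemma \ref{lem.finsurjsimpmapiscofiltered} applies. It gives that $\pi$ sends each open simplex $\Delta^\circ$ of $\widetilde{Y}$ into the open simplex $(\pi\Delta)^\circ$ with $\dim \pi\Delta = \dim\Delta$, and that $\dim \widetilde{Y} = \dim Y =: n$. In particular the Lawson filtration on $AG(\widetilde{Y})$ has formal dimension $n$, equal to that of $\Ya$. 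Continuity of $\tau_\pi: Y \to AG(\widetilde{Y})$ holds because it is the composition of the continuous map of Definition \ref{def.ramifiedcovering}(2) with the embedding $SP^d(\widetilde{Y}) \hookrightarrow AG(\widetilde{Y})$.

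\emph{Key step (fibers stay in the same skeleton).} Let $y \in Y^k - Y^{k+1}$, so $y$ lies in the interior of a unique simplex $\sigma$ of $L$ with $\dim\sigma = n-k$. Take any $x \in \pi^{-1}(y)$ and let $\Delta$ be the simplex with $x \in \Delta^\circ$. Then $y = \pi(x) \in (\pi\Delta)^\circ$, and since open simplices are disjoint, $\pi\Delta = \sigma$. Hence $\dim\Delta = \dim\sigma = n-k$, i.e.
\[ \pi^{-1}(Y^k - Y^{k+1}) \subset \widetilde{Y}^k - \widetilde{Y}^{k+1}. \]
By surjectivity the fiber $\pi^{-1}(y)$ is nonempty, and it is contained in $\widetilde{Y}^k - \widetilde{Y}^{k+1}$. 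Since $\supp \tau_\pi(y) = \pi^{-1}(y)$, this support meets $\widetilde{Y}^k$ and misses $\widetilde{Y}^{k+1}$. For $k>0$ this means $\tau_\pi(y) \in AG(\widetilde{Y})^k - AG(\widetilde{Y})^{k+1}$. For $k=0$ the same conclusion holds, since $AG(\widetilde{Y})^0$ is everything.

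\emph{Stratified and equidimensional.} Each stratum of $\Ya$ is an open simplex $\sigma^\circ$. It is connected and lies in a single difference $Y^k - Y^{k+1}$. Its image $\tau_\pi(\sigma^\circ)$ is therefore connected and, by the key step, contained in $AG(\widetilde{Y})^k - AG(\widetilde{Y})^{k+1}$. So it lies in one connected component of that difference, which is a single Lawson stratum. This gives both that $\tau_\pi$ is stratified and that it is equidimensionally stratified.

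\emph{Cofiltered.} Suppose $\tau_\pi(y) \in AG(\widetilde{Y})^k$ with $k>0$. Then some $x \in \pi^{-1}(y)$ lies in $\widetilde{Y}^k$, and Lemma \ref{lem.simplicialmapiscellular} gives $y = \pi(x) \in Y^k$. The case $k=0$ is trivial. Alternatively, cofilteredness follows from equidimensionality by the same index argument used at the end of the proof of Lemma \ref{lem.extensionisequistrat}.

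The only delicate point is the key step, namely that $\pi$ preserves simplex dimension; this is supplied by the pigeonhole argument of Lemma \ref{lem.finsurjsimpmapiscofiltered}. The remaining steps are routine bookkeeping with the Lawson filtration.
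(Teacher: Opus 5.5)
Your proof is correct and follows the same route as the paper. You use that $\supp\tau_\pi(y)=\pi^{-1}(y)$, that the fiber over a point of an open $(n-k)$-simplex lies in open $(n-k)$-simplices, and cellularity (Lemma \ref{lem.simplicialmapiscellular}) for cofilteredness. The differences are minor: you get the dimension statement from the pigeonhole argument of Lemma \ref{lem.finsurjsimpmapiscofiltered} (essentially Lemma \ref{lem.fstardelta}) instead of the Milnor--Stasheff product structure $\pi^{-1}(\Delta^\circ)\cong\Delta^\circ\times\pi^{-1}(y)$, and you check explicitly, via connectedness, that $\tau_\pi$ is a stratified map, which the paper leaves implicit.
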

\begin{proof}
We begin by showing that $\tau_\pi$ is equidimensionally stratified.
Since $\pi$ is finite-to-one, surjective and simplicial,
the proof of Lemma \ref{lem.finsurjsimpmapiscofiltered}
applies to show that $\dim (Y) = \dim (\widetilde{Y}) =: n$.
Given $k\geq 0,$ we must show that
$\tau_\pi (Y_{n-k} - Y_{n-k-1}) \subset 
    AG(\widetilde{Y})_{n-k} - AG(\widetilde{Y})_{n-k-1}.$
Let 
\[ S=\Delta^\circ \subset Y_{n-k} - Y_{n-k-1} = Y^k - Y^{k+1} \]
be a stratum of $\Ya$, $\dim \Delta = n-k$.
Let $y\in S$ be a point.
Since $\pi$ is simplicial, the preimage of the open simplex has
the form
$\pi^{-1} (\Delta^\circ) \cong \Delta^\circ \times \pi^{-1} (y),$
see e.g. Milnor-Stasheff \cite[p. 236, Lemma 20.5]{milnorstasheff}.
The set $\pi^{-1} (y)$ is finite, as $\pi$ is finite-to-one.
Thus 
$\pi^{-1} (\Delta^\circ) = \Delta^{n-k,\circ}_1 \sqcup \cdots 
    \sqcup \Delta^{n-k,\circ}_j$
is a disjoint union of the interiors of $(n-k)$-dimensional simplices 
in $\widetilde{Y}$.
We must show that
$\tau_\pi (y) \in AG(\widetilde{Y})^k - AG(\widetilde{Y})^{k+1}.$
By definition of the Lawson filtration, this requires us to verify
$\supp (\tau_\pi (y)) \cap \widetilde{Y}^k \not= \varnothing$
and
$\supp (\tau_\pi (y)) \cap \widetilde{Y}^{k+1} = \varnothing.$
The support is given by the fiber
\begin{equation} \label{equ.supptaupiisfiber} 
\supp (\tau_\pi (y)) = \pi^{-1} (y), 
\end{equation}
which is nonempty, since $\pi$ is surjective.
This nonempty set is contained in
\[ \pi^{-1} (\Delta^\circ)
   = \Delta^{n-k,\circ}_1 \sqcup \cdots 
       \sqcup \Delta^{n-k,\circ}_j
       \subset \widetilde{Y}^k - \widetilde{Y}^{k+1}. \]
This shows that $\tau_\pi$ is equidimensionally stratified.

It remains to be shown that $\tau_\pi$ is cofiltered.
We need to verify that
$\tau^{-1}_\pi (AG(\widetilde{Y})^k) \subset Y^k$
for every $k\geq 0$.
Thus let $y\in Y$ be a point such that
$\tau_\pi (y) \in AG(\widetilde{Y})^k$.
Then
$\supp (\tau_\pi (y)) \cap \widetilde{Y}^k \not= \varnothing.$
Since the support is the $\pi$-fiber (\ref{equ.supptaupiisfiber}),
there exists a point $x\in \widetilde{Y}^k$ with $\pi (x)=y$.
Now since $\pi$ is a simplicial map between polyhedra of equal dimension,
Lemma \ref{lem.simplicialmapiscellular} implies that
$\pi (\widetilde{Y}^k) \subset Y^k.$
Therefore, $y=\pi (x) \in Y^k$, as was to be shown.
We conclude that $\tau_\pi$ is cofiltered.
\end{proof}

Let $Y$ and $\widetilde{Y}$ be finite-dimensional (countably) triangulated 
polyhedra. As usual, we endow these polyhedra with their simplicial
filtrations $\Ya, \widetilde{\Ya}$.
Let $\pi: \widetilde{Y} \to Y$ be a simplicial ramified covering.
By Lemma \ref{lem.taupiequidimstratcofiltered},
$\tau_\pi: (Y,\Ya) \to (AG(\widetilde{Y}), AG(\widetilde{\Ya}))$
is equidimensionally stratified and cofiltered.
By the universal property of Proposition \ref{prop.markovuniversal}
there exists a unique extension of $\tau_\pi$
to a continuous group homomorphism
\[ \tau: AG(Y) \longrightarrow AG (\widetilde{Y}). \]
Since $\tau_\pi$ is separated, equidimensionally stratified and
cofiltered, Lemma \ref{lem.extensionisequistrat} implies
that $\tau$ is equidimensionally stratified and cofiltered.
Explicitly, this map is given by
$\tau (\phi)(x) = \phi (\pi (x)) \mu (x).$
The function $\tau (\phi)$ is nonzero only at finitely many points of
$\widetilde{Y}$, since this is true for $\phi$, and $\pi$ is finite-to-one.
On the characteristic function $1_y: Y\to \intg$ of a point $y\in Y$,
$\tau$ is given by
\[
\tau (1_y)(x) =
\begin{cases}
\mu (x),& x \in \pi^{-1} (y), \\
0,& \text{ otherwise.}
\end{cases}
\]
Alternatively, we may describe $\tau$ in the form
\[ \sum_{y \in Y} n_y y \mapsto
   \sum_{y \in Y} n_y \sum_{x \in \pi^{-1} (y)} \mu (x) x.  \]
According to Lemma \ref{lem.stratifiedandcofilteredisplacid},
$\tau$ is placid. 
By Proposition \ref{prop.cofilteredinducesmaponsisp},
$\tau$ induces for every $\bar{p}$ a simplicial group homomorphism
\[  \sIS^\bp (\tau): \sIS^\bp (AG(Y),AG(\Ya)) \longrightarrow 
                  \sIS^\bp (AG(\wY),AG(\widetilde{\Ya})). \]
On homotopy groups, this simplicial map induces a homomorphism
\[  \tau_* := \pi_* (\sIS^\bp (\tau)): \pi_* \sIS^\bp (AG(Y),AG(\Ya)) 
     \longrightarrow 
       \pi_* \sIS^\bp (AG(\wY),AG(\widetilde{\Ya})). \]
In this way, we receive a homomorphism
\[ \tau_*: I^\bp \pi_* (AG(Y), AG (\Ya)) \longrightarrow
   I^\bp \pi_* (AG(\widetilde{Y}), AG (\widetilde{\Ya})) \]
between the intersection homotopy groups of $Y$ and $\wY$.
Assume that the polyhedra $Y,\widetilde{Y}$ are connected.
Then by Gajer's intersection Dold-Thom theorem, \cite{gajer} and \cite{gajercorrection},
there are natural isomorphisms
\[  IH^\bp_* (Y;\intg) \cong I^\bp \pi_* (AG(Y), AG (\Ya)),~
     IH^\bp_* (\widetilde{Y};\intg) \cong 
   I^\bp \pi_* (AG(\widetilde{Y}), AG (\widetilde{\Ya})). \]
Under these isomorphisms, $\tau_*$ is a homomorphism
\[ \pi_! := \tau_*: IH^\bp_* (Y;\intg) \longrightarrow  
   IH^\bp_* (\widetilde{Y};\intg). \]
This is the desired \emph{intersection homology transfer} of the
ramified covering $\pi: \wY \to Y$.
The commutativity of Diagram (\ref{equ.ihtransfercompatibleclassicaltransf}) 
follows from the commutativity of
\[
\xymatrix{
\sIS^\bp (AG(Y) \ar@{^{(}->}[d] \ar[r]^{\sIS^\bp (\tau)} & 
   \sIS^\bp (AG(\wY)) \ar@{^{(}->}[d] \\
\sS(AG(Y)) \ar[r]_{\sS (\tau)} & \sS (AG(\wY)).
} \]

If $Y$ has even dimension $2k$, then only the intersection homology
in the middle degree $k$ is relevant for the definition of $G$-signatures
in Section \ref{sec.gsignatures}.
In degrees up to the middle, and using the lower middle perversity,
the connectivity assumption on $Y$ and $\wY$ is not needed,
as we will explain next.
If the perversity function $\bp$ is taken to be the lower middle
perversity $\bm$, then intersection homology
\emph{up to the lower middle degree} is insensitive to suspension.
\begin{lemma} \label{lem.ihlowermiddleofsuspension}
(1) If $L^{2k}$ is a $2k$-dimensional pseudomanifold with 
(unreduced) suspension $\Sigma L^{2k}$, then
$IH^\bm_i (\Sigma L^{2k}) = IH^\bm_i (L^{2k})$
for all $i \leq k$.\\
(2) If $L^{2k+1}$ is a $(2k+1)$-dimensional pseudomanifold, then
$IH^\bm_i (\Sigma L^{2k+1}) = IH^\bm_i (L^{2k+1})$
for all $i\leq k$. 
\end{lemma}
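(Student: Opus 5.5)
The plan is to compare both sides through the open cone $cL = L\times[0,1)/L\times\{0\}$ and use the standard cone formula together with a Mayer--Vietoris argument. Write $\Sigma L = c_+L \cup c_-L$, where $c_\pm L$ are the two open cones and $c_+L\cap c_-L \cong L\times(-1,1)$. The latter is stratified as $L\times\real$, so $IH^\bm_*(L\times(-1,1)) \cong IH^\bm_*(L)$ by stratified homotopy invariance (the K\"unneth formula with $\real$). Let $n=\dim L$, so $\dim\Sigma L = n+1$ and the cone points are strata of codimension $n+1$. The cone formula gives
\[ IH^\bm_i(cL) \cong IH^\bm_i(L) \text{ for } i < n - \bm(n+1), \qquad IH^\bm_i(cL)=0 \text{ for } i\geq n-\bm(n+1), \quad i\neq 0, \]
with $IH^\bm_0(cL)=\intg$ when the cutoff is positive, i.e.\ $\bm$ truncation. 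Moreover, in the range below the cutoff the isomorphism is induced by the inclusion $L\times(0,1)\hookrightarrow cL$.

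First compute the cutoff in each case. In case (1), $n=2k$ and $\bm(2k+1)=k-\tfrac12$ rounded down, which is $k-1$. The cutoff is $2k-(k-1)=k+1$, so $IH^\bm_i(cL)\cong IH^\bm_i(L)$ for $i\le k$. In case (2), $n=2k+1$ and $\bm(2k+2)=k$. The cutoff is $k+1$, so again the isomorphism holds for $i\le k$. So in both cases the inclusion $L\times(-1,1)\hookrightarrow c_\pm L$ is an isomorphism on $IH^\bm_i$ for $i\le k$. It vanishes in degree $k+1$, with the reduced/degree-$0$ caveat handled by using reduced groups or noting that $H_0$ behaves like ordinary homology.

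Next run the Mayer--Vietoris sequence for the open cover $\{c_+L,c_-L\}$ of $\Sigma L$:
\[ \cdots\to IH_i(L)\xrightarrow{(\iota_+,\iota_-)} IH_i(c_+L)\oplus IH_i(c_-L)\to IH_i(\Sigma L)\xrightarrow{\partial} IH_{i-1}(L)\xrightarrow{(\iota_+,\iota_-)}\cdots \]
For $i\le k$, both $\iota_\pm$ are isomorphisms in degrees $i$ and $i-1$. Hence $(\iota_+,\iota_-)$ is injective in degree $i-1$, so $\partial=0$. The map $IH_i(c_+L)\oplus IH_i(c_-L)\to IH_i(\Sigma L)$ is therefore surjective with kernel the diagonal image of $IH_i(L)$, and its cokernel is $\cong IH_i(L)$ via either summand. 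Thus $IH^\bm_i(\Sigma L)\cong IH^\bm_i(L)$ for $i\le k$.

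This isomorphism is realized by the map $IH_i(L)\cong IH_i(L\times\{0\})\to IH_i(\Sigma L)$ induced by the inclusion, which is placid. In degree $0$, reduced homology makes the sequence consistent; alternatively, when $i=0$ the claim is just the count of connected components. A pseudomanifold $L$ of positive dimension is connected or not, but in either case $\Sigma L$ is connected, so one should interpret degree $0$ via the cone convention. The main obstacle is the careful bookkeeping of the cone formula's cutoff index and the degree-$0$ term. I would state the cone formula precisely as in Friedman's book and check the boundary case $i=k$, where the degree-$(k+1)$ vanishing is not needed.
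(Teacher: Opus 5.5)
Your argument is correct, but it takes a different route from the paper's. The paper argues at chain level. The suspension points form a stratum of codimension $n+1$ in $\Sigma L$, where $n=\dim L$. A $\bm$-allowable $j$-chain can meet that stratum only if $j\ge n+1-\bm(n+1)=k+2$, in both case (1) and case (2). Hence $IC^\bm_j(\Sigma L)=IC^\bm_j(L\times(-1,1))$ for $j\le k+1$, and so $IH^\bm_i(\Sigma L)=IH^\bm_i(L\times(-1,1))\cong IH^\bm_i(L)$ for $i\le k$. This isomorphism is visibly induced by the inclusion $L=L\times\{0\}\hookrightarrow\Sigma L$.

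You instead apply the cone formula below its cutoff $k+1$ to each open cone and glue with Mayer--Vietoris. Your cutoff computations, the vanishing of $\partial$, and the cokernel identification are all fine. Note, though, that the below-cutoff half of the cone formula is itself proved by exactly this avoidance observation at a single cone vertex. So your route re-derives the same phenomenon through heavier black boxes and then reassembles it. It buys a purely formal argument from standard theorems. The paper's route buys brevity and gets the inclusion-induced identification for free; you also recover that from the cokernel.

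One correction: drop the degree-$0$ hedging. The parenthetical ``$IH^\bm_0(cL)=\intg$ when the cutoff is positive'' is backwards. That exception occurs only when the cutoff is $\le 0$, which never happens for Goresky--MacPherson perversities. Here the cutoff is $k+1\ge 1$, so $IH^\bm_0(cL)\cong IH^\bm_0(L)$, as your main line already says. Your Mayer--Vietoris argument therefore covers $i=0$ verbatim with unreduced groups.

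The conclusion $IH^\bm_0(\Sigma L)\cong IH^\bm_0(L)$ holds on the nose even for disconnected $L$. Connectedness of $\Sigma L$ is irrelevant: allowable $1$-chains cannot pass through the suspension points, so $IH^\bm_0$ does not count components of $\Sigma L$. No reinterpretation is needed. This unreduced statement for disconnected $L$ is exactly what the paper uses when it suspends disconnected polyhedra.
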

This is due to the fact that in the lower dimensions specified by the
above bounds, allowable chains cannot touch the suspension points.
The lemma has the following consequence.
Suppose that $Y^n$ is a polyhedral pseudomanifold of dimension $n$.
Even if $Y^n$ is not connected, its suspension $\Sigma Y^n$ is.
Thus the intersection Dold-Thom theorem does apply to $\Sigma Y^n$,
yielding an isomorphism
$IH^\bp_* (\Sigma Y^n) \cong I^\bp \pi_* (AG(\Sigma Y^n))$ for every $\bp$.
By Lemma \ref{lem.ihlowermiddleofsuspension},
$IH^\bm_i (Y^n) \cong I^\bm \pi_i (AG(\Sigma Y^n))$
for all $i \leq \lfloor \frac{n}{2} \rfloor,$
even when $Y$ is not connected. The degree bound is the same for
$Y$ and $\wY$, since they have the same dimension $n$.
By Lemma \ref{lem.suspensionisramcov}, the suspension of $\pi$
is a ramified covering $\Sigma \pi: \Sigma \wY \to \Sigma Y$.
It has an associated continuous homomorphism
$\tau_{\Sigma}: AG(\Sigma Y) \to AG(\Sigma \wY)$.
Thus for $i \leq \lfloor \frac{n}{2} \rfloor,$ one obtains a transfer
\[  
  \pi_!: IH^\bm_i (Y;\intg) \cong I^\bm \pi_i (AG(\Sigma Y))
   \stackrel{\tau_{\Sigma *}}{\longrightarrow}
   I^\bm \pi_i (AG(\Sigma \widetilde{Y})) 
   \cong  IH^\bm_i (\widetilde{Y};\intg). 
\]
If $Y$ happens to be already connected, then this agrees with
the transfer as defined previously, since the diagram
\[ \xymatrix@R=13pt{
AG(\Sigma Y) \ar[r]^{\tau_\Sigma} & AG(\Sigma \wY) \\
AG(Y) \ar[u] \ar[r]_\tau & AG(\wY) \ar[u]
} \]
commutes, where the vertical homomorphisms are induced
by the inclusions $Y \hookrightarrow \Sigma Y,$ 
$\wY \hookrightarrow \Sigma \wY$ at $t=0$.

The following result affirms the stability of  
intersection homology up to the middle-degree
under an \emph{arbitrary} number of iterated suspensions.
Its proof is a straightforward induction on the number of suspensions,
using Lemma \ref{lem.ihlowermiddleofsuspension}.
\begin{lemma} \label{lem.ihmiddledegstablesuspension}
If $L^{2k}$ is a $2k$-dimensional pseudomanifold, then for every
$s=0,1,2,3,\ldots$ and for every $i\leq k$,
\[ IH^\bm_i (\Sigma^s L^{2k}) = IH^\bm_i (L^{2k}).  \]
\end{lemma}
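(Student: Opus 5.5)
We argue by induction on the number $s$ of suspensions. The case $s=0$ is trivial. For the inductive step, we apply Lemma \ref{lem.ihlowermiddleofsuspension} to the pseudomanifold $\Sigma^{s} L^{2k}$, whose dimension is $2k+s$, keeping careful track of the degree bound and of the parity of the dimension.

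First we record that $\Sigma^s L^{2k}$ is again a pseudomanifold of dimension $2k+s$. This holds because the suspension of an $n$-dimensional pseudomanifold is an $(n+1)$-dimensional pseudomanifold: the two suspension points are added as new singular strata of codimension $n+1$, and every stratum of $L$ becomes the product of that stratum with an open interval. Now suppose the claim holds for $s$, and fix $i \le k$. We distinguish two cases according to the parity of $2k+s$.

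If $2k+s = 2k'$ is even, then $k' = k + s/2 \ge k \ge i$. Part (1) of Lemma \ref{lem.ihlowermiddleofsuspension}, applied to $\Sigma^s L^{2k}$, gives
$IH^\bm_i (\Sigma^{s+1} L^{2k}) = IH^\bm_i (\Sigma^s L^{2k})$.

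If $2k+s = 2k'+1$ is odd, then $k' = k + (s-1)/2 \ge k \ge i$. Part (2) of Lemma \ref{lem.ihlowermiddleofsuspension} gives the same identification.

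In either case, combining with the inductive hypothesis $IH^\bm_i(\Sigma^s L^{2k}) = IH^\bm_i(L^{2k})$ yields
$IH^\bm_i (\Sigma^{s+1} L^{2k}) = IH^\bm_i (L^{2k})$,
which completes the induction.

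The only point needing care is the degree bound. At each stage the admissible range $i \le \lfloor (2k+s)/2 \rfloor$ given by Lemma \ref{lem.ihlowermiddleofsuspension} contains the fixed range $i\le k$, because suspension only increases the dimension. So the stability range never shrinks, and the isomorphisms compose without further conditions. If one wants the identifications to be natural, for instance induced by the inclusions $\Sigma^s L \hookrightarrow \Sigma^{s+1} L$ at $t=0$, one should check that Lemma \ref{lem.ihlowermiddleofsuspension} is realized by these inclusions. This is consistent with the reason given there: allowable chains in these degrees cannot touch the suspension points.
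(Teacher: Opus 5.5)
Your proof is correct and is the argument the paper intends. The paper only states that the lemma follows by a straightforward induction on the number of suspensions using Lemma~\ref{lem.ihlowermiddleofsuspension}, and your parity case split is the bookkeeping that induction requires: in both cases $k'\ge k$, so the range $i\le k$ stays admissible.
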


By Lemma \ref{lem.finsurjsimpmapiscofiltered},
a finite surjective simplicial map $\pi$, in particular
a simplicial finite ramified covering $\pi: \widetilde{Y} \to Y$
(between simplicially stratified complexes), is placid and thus
induces a homomorphism
\[ \pi_*: IH^{\bar{p}}_* (\widetilde{Y}) \longrightarrow IH^{\bar{p}}_* (Y).  \]
In particular, by Example \ref{exple.orbitprojisramifiedcov},
a simplicial finite orbit projection $\pi: X \to X/G$
induces a map
$\pi_*: IH^{\bar{p}}_* (X) \to IH^{\bar{p}}_* (X/G).$
The map
$\pi_*: IH^{\bar{p}}_* (\widetilde{Y}) \to IH^{\bar{p}}_* (Y)$
can be described via the intersection Dold-Thom theorem as follows:
A continuous map $f: X \to Y$ between polyhedra induces a continuous homomorphism
$AG(f): AG(X) \to AG(Y)$.
If $f$ is cofiltered, then $AG(f)$ is cofiltered by Lemma \ref{lem.cofilteredpresbyag}.
Hence in this situation, 
by Lemma \ref{lem.cofilteredonaginducesoninthtpygrps}, $AG(f)$ induces a map
$f_*: I^\bp \pi_* (AG(X)) \to I^\bp \pi_* (AG(Y)).$
Now, as a simplicial ramified covering $\pi: \widetilde{Y} \to Y$ is
indeed cofiltered, $AG (\pi)$ induces 
$\pi_*: I^\bp \pi_* (AG(\widetilde{Y})) \longrightarrow
I^\bp \pi_* (AG(Y)).$

\begin{prop} \label{prop.updownmultbydegree}
(Up-Down.)
If $\pi: \wY \to Y$ is a $d$-fold simplicial ramified covering of connected polyhedra, then
\[   \pi_* \circ \pi_!: IH^\bp_* (Y;\intg) \longrightarrow IH^\bp_* (Y;\intg) \]
is multiplication by $d$.
Connectivity is not required when
$\bp = \bm$ and $* \leq \lfloor (\dim Y)/2 \rfloor$.
\end{prop}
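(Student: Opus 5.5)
Both maps are realized by continuous homomorphisms of free abelian topological groups, so the plan is to compute their composite at the level of $AG(-)$ and then pass to intersection homotopy. Under the intersection Dold-Thom identification, $\pi_!$ is induced by $\tau: AG(Y) \to AG(\wY)$. The map $\pi_*$ is induced by $AG(\pi): AG(\wY) \to AG(Y)$, which is cofiltered by Lemma \ref{lem.cofilteredpresbyag}, because $\pi$ is placid (Lemma \ref{lem.finsurjsimpmapiscofiltered}) and hence cofiltered. Functoriality of $\sIS^\bp$ on cofiltered maps (Proposition \ref{prop.cofilteredinducesmaponsisp}) and of $\pi_*$ on simplicial maps then gives
\[ \pi_* \circ \pi_! = \pi_*\sIS^\bp (AG(\pi)\circ \tau). \]
This uses that $\tau$ and $AG(\pi)$ are cofiltered, so their composite is cofiltered by Lemma \ref{lem.compcofilteredmapsiscofiltered}. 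It also uses naturality of Gajer's isomorphism with respect to maps induced by cofiltered maps of polyhedra.

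The key computation is $AG(\pi)\circ\tau = a_d$ on $AG(Y)$. Both sides are continuous homomorphisms, so by the uniqueness in Proposition \ref{prop.markovuniversal} it suffices to check them on generators $y \in Y$. For such $y$,
\[ AG(\pi)(\tau(y)) = AG(\pi)\Big(\sum_{x\in\pi^{-1}(y)} \mu(x)\, x\Big) = \sum_{x\in \pi^{-1}(y)} \mu(x)\, y = d\cdot y, \]
where the last equality is property (1) of Definition \ref{def.ramifiedcovering}. Corollary \ref{cor.multbydonih}, together with Lemma \ref{lem.multbydisplacidonagy}, then shows that $a_d$ induces multiplication by $d$ on $IH^\bp_*(Y;\intg)$. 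This proves the connected case.

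For the nonconnected case with $\bp=\bm$ and $*\le\lfloor \dim Y/2\rfloor$, run the same argument for the suspended covering $\Sigma\pi$, which by Lemma \ref{lem.suspensionisramcov} is a ramified covering of degree $d$. The transfer is defined through $\tau_\Sigma$. The map $\pi_*$ must likewise be compared with $(\Sigma\pi)_*$ under the suspension isomorphisms of Lemma \ref{lem.ihlowermiddleofsuspension}; this holds because $\Sigma\pi$ restricts to $\pi$ on the slice $t=0$, so the inclusions commute with the two induced maps. The identity $AG(\Sigma\pi)\circ\tau_\Sigma=a_d$ holds by the same generator check. At the suspension points it reads $\mu_\Sigma(\pm1)=d$, which is the choice made in Lemma \ref{lem.suspensionisramcov}.

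I expect the main obstacle to be bookkeeping rather than substance. One must ensure that the identification of $\pi_*$ via $AG(\pi)$ agrees with the placid-map-induced $\pi_*$, and that the suspension isomorphisms are natural with respect to the placid map $\Sigma\pi$. To handle this I would use naturality of the intersection Dold-Thom isomorphism and the fact that allowable chains in the relevant degrees avoid the suspension points.
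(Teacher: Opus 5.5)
Your proposal is correct and follows the paper's proof essentially step for step. The paper likewise establishes $AG(\pi)\circ\tau=a_d$, checking it pointwise on functions $\phi\colon Y\to\intg$ rather than on generators, which is an immaterial difference. It then concludes via functoriality of $\pi_*\sIS^\bp$ and Corollary \ref{cor.multbydonih}, and handles the nonconnected case by passing to $\Sigma\pi$, using that the $t=0$ inclusions commute with $\pi$ and $\Sigma\pi$; your remark that $\mu_\Sigma(\pm 1)=d$ makes the identity hold at the suspension points is a detail the paper leaves implicit.
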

\begin{proof}
Let $a_d: AG(Y) \to AG(Y)$ denote multiplication by $d$.
We claim that the diagram
\[ \xymatrix{
AG(Y) \ar[r]^\tau \ar[rd]_{a_d} & AG(\wY) \ar[d]^{AG(\pi)} \\
 & AG(Y)
} \]
commutes, i.e. that
$(AG(\pi) \circ \tau)(\phi) = d\cdot \phi.$
We proceed to verify this formula.
On an element $\phi: Y \to \intg$ of $AG(Y)$, we find at a point $y\in Y$,
\begin{align*}
((AG(\pi) \circ \tau)(\phi))(y)
&= (AG(\pi)(x \mapsto \phi (\pi (x)) \mu (x)))  (y) \\
&= \sum_{x \in \pi^{-1}(y)} \phi (\pi (x)) \mu (x) 
    = \sum_{x \in \pi^{-1}(y)} \phi (y) \mu (x) \\
&= \phi (y) \sum_{x \in \pi^{-1}(y)} \mu (x) 
   = \phi (y)\cdot d,
\end{align*}
as claimed.
Using Corollary \ref{cor.multbydonih},
\begin{align*}
\pi_* \circ \pi_!
&= \pi_* (\sIS^\bp (AG(\pi))) \circ \pi_* (\sIS^\bp (\tau)) 
   = \pi_* (\sIS^\bp (AG(\pi) \circ \tau)) \\
&= \pi_* (\sIS^\bp (a_d)) 
   = h_d.
\end{align*}
When one or both of the polyhedra $Y,\wY$ are not connected,
we apply the above argument to the suspension ramified cover 
$\Sigma \pi$ and obtain that $(\Sigma \pi)_* \circ (\Sigma \pi)_!$
is multiplication by $d$ on $IH^\bm_i (\Sigma Y)$. The result follows
by observing that $\pi_*$ corresponds to $(\Sigma \pi)_*$
under the identifications $IH^\bm_i (Y) = IH^\bm_i (\Sigma Y),$
$IH^\bm_i (\wY) = IH^\bm_i (\Sigma \wY),$
$i \leq \lfloor (\dim Y)/2 \rfloor,$ since these are induced by the
inclusions $Y \hookrightarrow \Sigma Y$,
$\wY \hookrightarrow \Sigma \wY,$ and these form a commutative
square with $\pi$ and $\Sigma \pi$.
\end{proof}
When rational (or real) coefficients are used,
the up-down formula of Proposition \ref{prop.updownmultbydegree}
implies that $\pi_!$ is a monomorphism and $\pi_*$ is an 
epimorphism.

The down-up property of the next proposition is an intersection homology
analog of \cite[Prop. 2.4, p.493]{smithlarry}. Basic material on simplicial group
actions is recalled in Section \ref{sec.simplicialactions}.
For example, subanalytic proper actions admit a $G$-equivariant triangulation,
\cite[Prop. 6.7]{blp}; see Examples \ref{exples.subanalyticsimplicial}.
\begin{prop} \label{prop.downupsumofgstar}
(Down-Up.)
Let $G$ be a finite group acting simplicially on the connected 
triangulated polyhedron $X$
with orbit map $\pi: X \to X/G = Y$ structured as a $d:= |G|$-fold
simplicial ramified covering. Then
\[   \pi_! \circ \pi_*: IH^\bp_* (X;\intg) \to IH^\bp_* (X;\intg) \]
is given by
\[ \pi_! \circ \pi_* = \sum_{g\in G} g_*, \]
where $g_*: IH^\bp_* (X) \to IH^\bp_* (X)$ is the automorphism induced
by $g\in G$.
Connectivity is not required when
$\bp = \bm$ and $* \leq \lfloor (\dim X)/2 \rfloor$.
\end{prop}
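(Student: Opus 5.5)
We argue at the level of the free abelian topological groups, as in the proof of Proposition \ref{prop.updownmultbydegree}. The orbit map $\pi: X \to Y = X/G$ is a $d$-fold ramified covering with multiplicity $\mu(x) = |G_x|$, the order of the isotropy group (Smith's structure, Example \ref{exple.orbitprojisramifiedcov}). The key identity to establish is
\[ \tau \circ AG(\pi) = \sum_{g\in G} AG(g) \]
as continuous homomorphisms $AG(X) \to AG(X)$.

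By the universal property (Proposition \ref{prop.markovuniversal}), both sides are homomorphisms, so it suffices to check the identity on generators $x_0 \in X$. The left side gives
\[ \tau(\pi(x_0)) = \sum_{x\in Gx_0} \mu(x)\, x = \sum_{x\in Gx_0} |G_x|\, x . \]
The right side gives $\sum_{g\in G} g x_0$. Each point $x$ of the orbit $Gx_0$ occurs exactly $|G_{x_0}| = |G_x|$ times in this sum, so the two sides agree.

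Next we check that every map involved is cofiltered with respect to the Lawson filtrations. For $AG(\pi)$ this holds by Lemma \ref{lem.cofilteredpresbyag}, since $\pi$ is placid by Lemma \ref{lem.finsurjsimpmapiscofiltered}. For $\tau$ it was shown above, after Lemma \ref{lem.taupiequidimstratcofiltered}. Each $g$ is a simplicial automorphism of $X$, hence an injective, equidimensionally stratified, cofiltered map; Lemma \ref{lem.aggequistratcofiltered} then shows that $AG(g)$ is cofiltered.

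Functoriality of $\sIS^\bp$ (Proposition \ref{prop.cofilteredinducesmaponsisp}) gives
\[ \pi_!\circ\pi_* = \pi_*\sIS^\bp(\tau)\circ \pi_*\sIS^\bp(AG(\pi)) = \pi_*\sIS^\bp(\tau\circ AG(\pi)) . \]
Applying Proposition \ref{prop.piisisadditive} repeatedly, which is legitimate because partial sums of cofiltered endomorphisms remain cofiltered by Lemma \ref{lem.sumofcofilterediscofandzeroisplacid}, this equals $\sum_g \pi_*\sIS^\bp(AG(g))$. Under the Dold-Thom identification, $\pi_*\sIS^\bp(AG(g))$ is $g_*$, by naturality of Gajer's isomorphism for the placid map $g$. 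Connectedness of $Y = \pi(X)$ follows from that of $X$, so the intersection Dold-Thom theorem applies to both spaces.

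For the non-connected case with $\bp = \bm$ and $* \leq \lfloor (\dim X)/2 \rfloor$, we pass to suspensions. Let $G$ act on $\Sigma X$ by suspending the action. Then $\Sigma X / G = \Sigma Y$, and $\Sigma\pi$ is the orbit ramified covering, since the suspension points have full isotropy and $\mu_\Sigma(\pm 1) = d$, consistent with Lemma \ref{lem.suspensionisramcov}. The identity above then holds for $\Sigma X$. Finally, the identifications $IH^\bm_i(X) = IH^\bm_i(\Sigma X)$ of Lemma \ref{lem.ihlowermiddleofsuspension} are induced by the inclusions, which are $G$-equivariant and commute with $\pi$ and $\Sigma\pi$.

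The main obstacle is checking that Smith's multiplicity function really is $\mu(x) = |G_x|$; without this the generator computation fails. A secondary concern is whether $\Sigma X$ with the suspended action admits a triangulation that makes the action simplicial; this can be arranged by coning off the given triangulation.
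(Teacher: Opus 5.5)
Your proposal is correct and follows essentially the same route as the paper. You use the multiplicity $\mu(x)=|G_x|$ to get the key identity $\tau\circ AG(\pi)=\sum_{g\in G}AG(g)$, check that every map involved (including the sum, via Lemma \ref{lem.sumofcofilterediscofandzeroisplacid}) is cofiltered, apply the additivity of Proposition \ref{prop.piisisadditive}, and pass to suspensions in the non-connected case. Your extra checks, namely the matching multiplicities $\mu_\Sigma(\pm 1)=d=|G_{\pm 1}|$ and the simplicial triangulation of $\Sigma X$, are details the paper leaves implicit.
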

\begin{proof}
The multiplicity map $\mu: X \to \{ 1, 2,\ldots \}$ is given by
$\mu (x) = |G_x|,$ the order of the isotropy group at $x$.
Thus, the composition
\[ AG (X) \stackrel{AG(\pi)}{\longrightarrow} AG (X/G)  
  \stackrel{\tau}{\longrightarrow} AG (X)
\]
is given by (the linear extension of)
\begin{align*}
\tau ((AG(\pi)) (x))
&= \tau (\pi (x)) 
  = \sum_{x' \in \pi^{-1} (\pi (x))} \mu (x')~x' 
  = \sum_{x' \in \pi^{-1} (\pi (x))} |G_{x'}|~ x' \\
&= \sum_{x' \in G\cdot x} |\{ g\in G ~|~ gx=x' \}| x' 
  = \sum_{g\in G} gx.
\end{align*}
In other words:
For every $g\in G$, the (simplicial) homeomorphism $g: X\to X$ induces
an automorphism $AG(g): AG(X) \to AG(X)$ of topological abelian groups. 
By functoriality of $AG(-)$, this defines an action of $G$
on $AG(X)$. 
The simplicial homeomorphism $g:X \to X$ is equidimensionally stratified
and cofiltered with respect to the filtration of $X$ by the simplicial skeleta.
By Lemma \ref{lem.aggequistratcofiltered}, $AG(g)$ is
equidimensionally stratified and cofiltered with respect to the
Lawson filtration.
In particular, $AG(g)$ is placid for every $g\in G$ 
(Lemma \ref{lem.stratifiedandcofilteredisplacid}).
The sum
$\sum_{g\in G} AG(g) \in \End (AG(X))$ is an endomorphism on the abelian
topological group $AG(X)$ and we have
\[ \tau \circ AG(\pi) = \sum_{g\in G} AG(g). \] 
We note that the sum $\sum_g AG(g)$ is cofiltered by
Lemma \ref{lem.sumofcofilterediscofandzeroisplacid}.
One can also see this as follows:
The orbit projection $\pi: X \to X/G$ is
cofiltered (in fact even placid) with respect to the simplicial stratifications
by Lemma \ref{lem.finsurjsimpmapiscofiltered}.
According to Lemma \ref{lem.cofilteredpresbyag}, $AG (\pi)$ is cofiltered.
We have seen earlier that $\tau$ is placid, in particular cofiltered.
Since the composition of cofiltered maps is cofiltered
(Lemma \ref{lem.compcofilteredmapsiscofiltered}), the map
$\tau \circ AG(\pi)$ is cofiltered.
This shows that the sum $\sum_{g\in G} AG(g)$ is cofiltered.
Since $\pi_* \sIS^\bp$ is additive by
Proposition \ref{prop.piisisadditive}, and all involved maps are cofiltered,
the claim follows from the calculation
\begin{align*}
\pi_! \circ \pi_*
&= \pi_* (\sIS^\bp (\tau)) \circ \pi_* (\sIS^\bp (AG(\pi))) 
  = \pi_* (\sIS^\bp (\tau \circ AG(\pi))) \\
&= \pi_* (\sIS^\bp (\sum_{g\in G} AG(g))) 
  = \sum_{g\in G} \pi_* (\sIS^\bp (AG(g))) 
  = \sum_{g\in G} g_*.
\end{align*}

When $X$ is not connected, one uses the suspension $\Sigma X$
as follows:
The suspension is a $G$-space in the standard way, i.e.
$g\cdot (t,x) = (t, g\cdot x)$. The two suspension points are fixed points.
Its orbit space is homeomorphic to $\Sigma (X/G)$ via
$(t,x)^* \mapsto (t,x^*)$, where $(-)^*$ denotes the orbit of a point.
Under this homeomorphism, the orbit projection
$\hat{\pi}: \Sigma X \to (\Sigma X)/G$ identifies with
$\Sigma \pi: \Sigma X \to \Sigma (X/G)$.
Moreover, the homeomorphism induces an isomorphism on $AG(-)$,
under which $\hat{\tau}: AG ((\Sigma X)/G) \to AG(\Sigma X)$
corresponds to
$\tau_\Sigma: AG(\Sigma (X/G)) \to AG (\Sigma X)$.
Since $\Sigma X$ is connected, we already know that
$\hat{\pi}_! \circ \hat{\pi}_* = \sum_g g_*$ on $IH^\bm_i (\Sigma X)$.
The result follows since for $i\leq \lfloor (\dim X)/2 \rfloor$, 
$\pi_! \circ \pi_*$ corresponds under the isomorphisms
$IH^\bm_i (X) = IH^\bm_i (\Sigma X)$ and
$IH^\bm_i (X/G) = IH^\bm_i (\Sigma (X/G))$
(Lemma \ref{lem.ihlowermiddleofsuspension})
to $(\Sigma \pi)_! \circ (\Sigma \pi)_*$, which in turn corresponds to
$\hat{\pi}_! \circ \hat{\pi}_*$. Also note that the 0-level inclusion
$X \hookrightarrow \Sigma X$ is $G$-equivariant. Thus
$g_*$ on $IH^\bm_i (X)$ corresponds to $g_*$ on $IH^\bm_i (\Sigma X)$.
\end{proof}

\begin{prop} \label{prop.rattransferisisoontoinvariants}
Let $G$ be a finite group acting (simplicially) on the connected polyhedron $X$
with orbit map $\pi: X \to X/G = Y$ structured as a $d:= |G|$ fold
ramified covering. Then the transfer map $\pi_!$ with $\rat$-coefficients
restricts to an isomorphism
\[ \pi_!: IH^\bp_* (X/G;\rat) \stackrel{\simeq}{\longrightarrow} 
        IH^\bp_* (X;\rat)^G \]
onto the subspace $IH^\bp_* (X;\rat)^G \subset IH^\bp_* (X;\rat)$
of invariant elements.        
Connectivity is not required when
$\bp = \bm$ and $* \leq \lfloor (\dim X)/2 \rfloor$.
\end{prop}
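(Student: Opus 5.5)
We will deduce the statement purely formally from the up-down formula (Proposition \ref{prop.updownmultbydegree}) and the down-up formula (Proposition \ref{prop.downupsumofgstar}), after tensoring with $\rat$. Since $\intg \to \rat$ is flat and $IH^\bp_*(-;\rat) = IH^\bp_*(-;\intg)\otimes \rat$ (universal coefficients for intersection homology with field coefficients of characteristic zero, where torsion dies), we define the rational transfer $\pi_! \otimes \rat$. Both formulae remain valid after tensoring: $\pi_* \circ \pi_! = d \cdot \id$ on $IH^\bp_*(X/G;\rat)$ and $\pi_! \circ \pi_* = \sum_{g\in G} g_*$ on $IH^\bp_*(X;\rat)$.

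First we check that the image of $\pi_!$ lies in the invariants. For $h\in G$ we have $\pi\circ h = \pi$, so $\pi_* \circ h_* = \pi_*$ by functoriality of the induced maps of placid maps. Hence
\[ h_* \circ \pi_! \circ \pi_* = h_* \circ \sum_{g} g_* = \sum_g (hg)_* = \pi_! \circ \pi_*. \]
Since $\pi_*$ is surjective rationally (because $\pi_* \circ \frac{1}{d}\pi_! = \id$), this gives $h_*\circ \pi_! = \pi_!$, i.e. $\operatorname{im} \pi_! \subset IH^\bp_*(X;\rat)^G$. Here we use that $g_*$ on intersection homology composes functorially, $(hg)_* = h_* g_*$, which holds because the $g$ are placid simplicial homeomorphisms.

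Injectivity is immediate from up-down: if $\pi_!(a)=0$ then $d a = \pi_*\pi_!(a)=0$, so $a=0$ over $\rat$. For surjectivity onto the invariants, let $b\in IH^\bp_*(X;\rat)^G$. Then
\[ \pi_!\Bigl(\tfrac{1}{d}\pi_*(b)\Bigr) = \tfrac{1}{d}\sum_g g_* b = \tfrac{1}{d}\, d\, b = b. \]
Thus $\pi_!$ is an isomorphism onto the invariants, with inverse $\frac1d \pi_*$ restricted to $IH^\bp_*(X;\rat)^G$.

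The non-connected case for $\bp=\bm$ and $*\le\lfloor(\dim X)/2\rfloor$ runs verbatim, since both formulae were established in that range. The only step needing care is the passage to rational coefficients: one must confirm that the Dold–Thom-defined $\pi_!$ and the maps $\pi_*, g_*$ are compatible with the coefficient change. We take $IH_*(-;\rat)$ to mean $IH_*(-;\intg)\otimes\rat$, naturally isomorphic to the usual rational theory by universal coefficients. Under this convention all identities are simply tensored, so we expect no real obstacle.
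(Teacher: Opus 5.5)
Your proof is correct and is essentially the paper's argument: injectivity comes from up-down, $G$-invariance of the image comes from down-up together with rational surjectivity of $\pi_*$ (the paper writes the preimage $v=\pi_*(\frac{1}{d}\pi_! v)$ explicitly, which amounts to the same thing), and surjectivity onto the invariants comes from $b=\pi_!(\frac{1}{d}\pi_* b)$. Your identification of $IH_*(-;\rat)$ with $IH_*(-;\intg)\otimes\rat$ is a point the paper leaves implicit, and your remark $\pi_* h_*=\pi_*$, while true, is not used anywhere in the argument.
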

\begin{proof}
We have already pointed out that $\pi_!$ is a monomorphism.
We show first that the image of $\pi_!$ is contained in $IH_* (X;\rat)^G$.
Given any element $v\in IH_* (X/G;\rat),$ we put
$w := \frac{1}{d} \pi_! (v) \in IH_* (X;\rat)$. 
(Here, rational coefficients are essential.)
Then, using the up-down Proposition \ref{prop.updownmultbydegree},
$\pi_* (w) = v$.
Using the down-up Proposition \ref{prop.downupsumofgstar} twice, 
an element $h\in G$ acts on $\pi_! (v)$ by
\begin{align*}
h_* (\pi_! (v))
& = h_* (\pi_! \pi_* (w)) 
  = h_* \left( \sum_{g\in G} g_* (w) \right) 
  = \sum_{g\in G} (hg)_* (w)  \\
&= \sum_{g\in G} g_* (w)  
  = \pi_! \pi_* (w) 
  = \pi_! (v).
\end{align*}
This shows that $\pi_! (v)$ is $G$-invariant.
Conversely, suppose that $w \in IH_* (X;\rat)^G$ is any invariant element.
Then $g_* (w)=w$ for all $g\in G$ and thus by the
down-up Proposition \ref{prop.downupsumofgstar},
\[ \pi_! \pi_* (w) = \sum_{g\in G} g_* (w) = |G|\cdot w = d\cdot w. \]
Consequently,
$w = \pi_! \left( \frac{1}{d} \pi_* (w) \right)$
is in the image of $\pi_!$, using rational coefficients.
The above argument applies also when $X$ is not connected,
$\bp = \bm$ and the degree $i$ satisfies $i \leq \lfloor (\dim X)/2 \rfloor$,
since the entire argument is homogeneously internal to that degree $i$.
\end{proof}

\begin{cor} \label{cor.pistarisorestrinvs}
For connected $X$,
the restriction of $\pi_*: IH^\bp_* (X;\rat) \to IH^\bp_* (X/G;\rat)$
to $IH^\bp_* (X;\rat)^G$ is an isomorphism
\[ \pi_*: IH^\bp_* (X;\rat)^G \stackrel{\simeq}{\longrightarrow} 
        IH^\bp_* (X/G;\rat). \]
Connectivity is not required when
$\bp = \bm$ and $* \leq \lfloor (\dim X)/2 \rfloor$.
\end{cor}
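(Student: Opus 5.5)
The corollary follows from Proposition \ref{prop.rattransferisisoontoinvariants} together with the up-down and down-up formulas. The plan is to exhibit an explicit two-sided inverse to the restricted map. Given the statement, the natural candidate is $\frac{1}{d}\pi_!$ with $d=|G|$.

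First, by Proposition \ref{prop.rattransferisisoontoinvariants}, the map $\pi_!$ with rational coefficients lands in $IH^\bp_*(X;\rat)^G$. Hence $\frac{1}{d}\pi_!$ defines a homomorphism
\[ \frac{1}{d}\pi_!: IH^\bp_*(X/G;\rat)\to IH^\bp_*(X;\rat)^G. \]

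Next we check that it inverts $\pi_*$ on both sides.

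\emph{Composition on $IH^\bp_*(X/G;\rat)$.} By the up-down Proposition \ref{prop.updownmultbydegree}, applied to the $d$-fold simplicial ramified covering $\pi$ of Example \ref{exple.orbitprojisramifiedcov}, we have $\pi_*\circ\frac{1}{d}\pi_! = \frac{1}{d}\cdot d = \id$.

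\emph{Composition on invariants.} For $w\in IH^\bp_*(X;\rat)^G$, the down-up Proposition \ref{prop.downupsumofgstar} gives
\[ \frac{1}{d}\pi_!\pi_*(w)=\frac{1}{d}\sum_{g\in G}g_*(w)=w. \]

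Together these show that $\pi_*|_{IH^\bp_*(X;\rat)^G}$ is bijective, with inverse $\frac{1}{d}\pi_!$.

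Two side remarks complete the argument. Connectedness of $X$ implies connectedness of $X/G$, since $\pi$ is a continuous surjection. So both propositions apply in the stated generality. In the nonconnected case with $\bp=\bm$ and $*\le\lfloor(\dim X)/2\rfloor$, the same three propositions hold degreewise, so the argument carries over verbatim.

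There is no real obstacle. The only points needing care are that the integral up-down and down-up identities pass to rational coefficients, which follows by tensoring with $\rat$ since $\pi_*,\pi_!,g_*$ are defined integrally, and that we divide by $d$ only after rationalizing.
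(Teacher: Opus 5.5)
Your proposal is correct and follows essentially the paper's route: the paper also uses $\frac{1}{d}\pi_!$, the up-down formula, and Proposition \ref{prop.rattransferisisoontoinvariants} (that $\pi_!$ lands in the invariants) to get surjectivity. The only difference is in injectivity: the paper writes an invariant $w$ as $\pi_!(v)$ and applies up-down to obtain $dv=0$, whereas you apply down-up directly to show $\frac{1}{d}\pi_!$ is a left inverse, which neatly packages the argument as an explicit two-sided inverse.
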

\begin{proof}
For improved clarity, we write $j: IH^\bp_* (X;\rat)^G \subset
IH^\bp_* (X;\rat)$ for the inclusion so that the restriction in question
is the composition $\pi_* \circ j$.
Let $w\in IH^\bp_* (X;\rat)^G$ be any element.
By Proposition \ref{prop.rattransferisisoontoinvariants},
there exists a (unique) $v\in IH^\bp_* (X/G;\rat)$ with 
$\pi_! (v)=w$. We will show that $\pi_* \circ j$ is injective.
Indeed, if $(\pi_* j)(w)=0$, then by the up-down
Proposition \ref{prop.updownmultbydegree},
\[ 0 = \pi_* j (w) = \pi_* j \pi_! (v) = d\cdot v,\]
which implies that $v=0$, and thus $w=0$.
For surjectivity, let $v$ be any element of $IH^\bp_* (X/G;\rat)$.
Then $w:= \frac{1}{d} \pi_! (v)$ is an element of $IH^\bp_* (X;\rat)$
with $\pi_* (w)=v$.
By Proposition \ref{prop.rattransferisisoontoinvariants}, $w$ is $G$-invariant.
\end{proof}
The above facts are also valid for real coefficients instead of rational
ones, indeed they hold over any field of characteristic zero.

\section{Duality Properties of Orbit Spaces}
\label{sec.dualitypropsoforbitspaces}

If a finite group acts algebraically on a quasi-projective complex algebraic
variety, then the orbit space is quasi-projective.
In particular, the orbit space can be stratified such that all strata
have even codimension and thus it satisfies the Witt condition.
We prove in this section that the above observation can be extended
from the algebraic setting to the purely topological setting, i.e.
the orbit space of the action of a finite group on a Witt space again
satisfies the Witt condition. In fact, we will establish a more general
statement: If the total space of a ramified covering is a Witt space,
then so is its base space (Theorem \ref{thm.baseoframifiedcoveriswitt}).
(We will deal with the propagation of the pseudomanifold condition
to the orbit space in Section \ref{sec.simplicialactions}.)
One may also draw a parallel to the
classical Conner conjecture, which asserts that the orbit space
of a finite group action on a $\rat$-acyclic space is again
$\rat$-acyclic. In fact, this conjecture holds more generally
for ramified coverings $\pi$ and is implied by the existence of 
an associated transfer homomorphism $\pi_!$ on ordinary homology
(L. Smith \cite[Cor. 2.4]{smithlarry}).
Closely related is the well-known fact that if a finite group acts on
a locally connected rational homology manifold, then the orbit space
is again a rational homology manifold (e.g. Bredon \cite{bredonsheaftheory}).
An important difference to these classical cases is that in order to establish
the Witt condition, we need a transfer on intersection homology.
Such a transfer is provided by Section \ref{sec.ramcovsandihtransfer}.

We begin with a lemma that can be deduced from 
M. Cohen's \cite[p. 194, Lemma 2.14]{cohenmarshallgenregnbhds}.
We provide a direct argument.
\begin{lemma} \label{sublemma1}
Let $f: X\to Y$ be a PL map between polyhedra and let $y\in Y$ be a point.
Then there exists a regular neighborhood $R(y)$ of $y$ in $Y$
such that
$f^{-1} R(y)$ is a regular neighborhood of the fiber $f^{-1} (y)$ in $X$.
\end{lemma}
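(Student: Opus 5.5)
We triangulate so that everything becomes simplicial, and then take derived (second barycentric) neighborhoods. Since $f$ is PL, there are triangulations $K$ of $X$ and $L$ of $Y$ with $f: K\to L$ simplicial. We may further subdivide $L$ so that $y$ is a vertex; the standard subdivision lemma for PL maps lets us pull such a subdivision back to a subdivision of $K$ keeping $f$ simplicial. So we assume $f$ is simplicial and $y$ is a vertex of $L$. Then $f^{-1}(y)$ is a full subcomplex of $K$, namely the span of the vertices mapping to $y$. Indeed, a simplex of $K$ maps into $y$ exactly when all its vertices map to $y$, and a point of a simplex maps to $y$ only if it lies on the face spanned by such vertices.

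Next we choose compatible derived subdivisions. We take derived subdivisions $K'$, $L'$ such that $f$ remains simplicial. Concretely, we choose the derived points of $L$ first (any interior points, for instance barycenters). For each simplex $\sigma$ of $K$ we then pick an interior point $\hat\sigma$ with $f(\hat\sigma)=\widehat{f\sigma}$; this is possible because $f$ maps $\sigma^\circ$ onto $(f\sigma)^\circ$ linearly and surjectively. With these choices $f: K'\to L'$ is simplicial, with vertex $\hat\sigma$ sent to $\widehat{f\sigma}$. Put $R(y) := N(y, L')$, the simplicial neighborhood of $y$ in the first derived. This is a regular neighborhood of $y$ in $Y$ (the closed star of $y$ in $L'$).

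The main step is to identify $f^{-1}R(y)$ with the derived neighborhood $N(f^{-1}(y),K')$. By definition, $N(f^{-1}(y),K')$ is the union of the simplices of $K'$ meeting $f^{-1}(y)$. Because $f^{-1}(y)$ is full in $K$, this is a regular neighborhood of $f^{-1}(y)$ in $X$ (standard derived-neighborhood theorem, Rourke--Sanderson/Cohen).

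For the inclusion $N(f^{-1}(y),K')\subset f^{-1}R(y)$: a simplex of $K'$ meeting $f^{-1}(y)$ has the form $\hat\sigma_0\cdots\hat\sigma_r$ with $\sigma_0<\cdots<\sigma_r$ and $\sigma_0\subset f^{-1}(y)$. Its image is spanned by $\widehat{f\sigma_0}=y,\widehat{f\sigma_1},\ldots$, a simplex of $L'$ containing $y$, so it lies in $R(y)$.

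For the converse inclusion, let $x$ be a point with $f(x)\in R(y)$, and let $\tau'=\hat\sigma_0\cdots\hat\sigma_r$ be the carrier of $x$ in $K'$. Then $f(\tau')$ is the carrier of $f(x)$ in $L'$. That carrier is a face of a simplex of $L'$ containing $y$, so it lies in the star of $y$. The delicate point is showing that $\tau'$ itself touches $f^{-1}(y)$; I expect this to be the main obstacle. Write $\sigma_0$ for the smallest simplex in the flag. The simplex $f(\tau')$ is spanned by the $\widehat{f\sigma_i}$, and it lies in a simplex of $L'$ with vertex $y$; hence the chain $f\sigma_0\le\cdots\le f\sigma_r$ can be extended by $y$, so $y$ is a vertex of $f\sigma_0$. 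Therefore $\sigma_0$ has a vertex $v$ with $f(v)=y$. The flag $v<\sigma_0<\cdots<\sigma_r$ then gives a simplex $\hat v\,\hat\sigma_0\cdots\hat\sigma_r$ of $K'$ having $\tau'$ as a face and meeting $f^{-1}(y)$ at $v$. Thus $x\in N(f^{-1}(y),K')$.

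Combining the two inclusions, $f^{-1}R(y)=N(f^{-1}(y),K')$, which is a regular neighborhood of the fiber $f^{-1}(y)$ in $X$.
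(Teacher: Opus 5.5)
Your proof is correct, but it reaches the conclusion by a different mechanism than the paper's. The paper also stars at $y$ and subdivides the source so that $f$ is simplicial. It then describes the neighborhood by a simplicial function $g$ on the triangulation of $Y$, with $g(y)=0$ and $g=1$ at every other vertex, and puts $R(y)=g^{-1}[0,\smlhf]$. The key observation is that $g\circ f$ is again simplicial with zero set exactly the fiber. Since every vertex off the fiber is forced to $1$, this also gives fullness. The identity $f^{-1}R(y)=(g\circ f)^{-1}[0,\smlhf]$ is then tautological, and regular neighborhood theory for such level sets (Cohen) finishes at once.

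You instead work with classical first-derived neighborhoods. This costs you the compatible choice of derived points $f(\hat\sigma)=\widehat{f\sigma}$ and the flag bookkeeping needed for $f^{-1}N(y,L')=N(f^{-1}(y),K')$. That bookkeeping is sound. The one step you assert without proof is that a simplex $\hat\sigma_0\cdots\hat\sigma_r$ of $K'$ meeting the fiber has $\sigma_0$ in the fiber. It holds because the simplex's intersection with the fiber is a union of faces lying in the induced subdivision of the fiber subcomplex. So some $\sigma_i$ lies in that subcomplex, and hence so does its face $\sigma_0$.

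The paper's route is shorter, and its level-set description is reused in Lemma \ref{lem.preimageoflink}: there $f^{-1}(\Bdy R(y))=\Bdy R(f^{-1}(y))$ drops out of $g^{-1}(\smlhf)$ via Lemma \ref{sublemma5}. Your route avoids the level-set characterization entirely. The same carrier argument, now with $\sigma_0$ having a vertex over $y$ but not lying in the fiber, gives the frontier identity $f^{-1}\dot N(y,L')=\dot N(f^{-1}(y),K')$ directly. That is exactly the link statement needed in that later lemma.
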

\begin{proof}
Let $K,L$ be simplicial complexes with $|K|=Y,$ $|L|=X$ and
such that $f:L\to K$ is simplicial.
Let $K'$ be the subdivision of $K$ obtained by starring
at the point $y \in |K|$. Then $y$ is a vertex of $K'$.
There exists a simplicial subdivision $L'$ of $L$ such that
$f: L' \to K'$ is simplicial.
The single vertex subcomplex $\{ y \}$ is full in $K'$.
Therefore, if we define a simplicial map
$g: K' \to [0,1]$
by setting $g(y) =0$ and $g(v)=1$ for all vertices $v\in K'$, $v\not= y$,
then
$g^{-1} (0) = \{ y \}.$
Here, we regard the interval $[0,1]$ as a simplicial complex
with one $1$-simplex and the two endpoints as the only vertices.
Then
\[ R(y) := g^{-1} [0,\smlhf], \]
is a regular neighborhood of $y$ in $|K|$.
We shall verify that $f^{-1} R(y)$ is a regular neighborhood of
the fiber $F := f^{-1} (y)$, which is a subpolyhedron of $|L|$.
The fiber $F$ is triangulated by the subcomplex
$T := \{ \sigma \in L' : f(\sigma) = \{ y \} \}$ of $L'$.
This subcomplex is full in $L'$.
Define a simplicial map $g_F: L' \to [0,1]$ to be the composition $g \circ f$.
Its vanishing set is
$g_F^{-1} (0) = f^{-1} (g^{-1} (0)) = f^{-1} (y) =F.$
In particular, any vertex of $L'$ which is not in $T$ cannot be
mapped to $0$ by $g_F$ and must therefore be mapped to $1$, since
this is the only other vertex in $[0,1]$ available.
(This also shows that $T$ is full in $L'$.)
By regular neighborhood theory, the polyhedron
$g_F^{-1} [0,\smlhf]$ is a regular neighborhood of $F$, and
$g_F^{-1} [0,\smlhf] = f^{-1} (g^{-1} [0,\smlhf]) 
    = f^{-1} R(y).$
\end{proof}

\begin{lemma} \label{sublemma2}
Let $X$ be a polyhedron and $F = \{ x_1, \ldots, x_k \}$ 
a finite subset of $X$.
Then for any regular neighborhood $R(F)$ of $F$ in $X$,
there exists for every $i$ a regular neighborhood $R(x_i)$ of $x_i$ in $X$
such that there is a PL homeomorphism $X \to X$ which carries
$R(F)$ onto  $R(x_1) \sqcup \cdots \sqcup R(x_k)$ 
and which is the identity on $F$.
\end{lemma}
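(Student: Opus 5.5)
The plan is to reduce to the uniqueness theorem for regular neighborhoods, after first constructing a disjoint family of small regular neighborhoods of the points $x_i$.

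First, choose a triangulation $K$ of $X$ in which every $x_i$ is a vertex. This is possible by starring successively at each point of $F$. Pass to the second barycentric subdivision $K''$. The closed stars $R(x_i) := \operatorname{st}(x_i, K'')$ are then pairwise disjoint: two distinct vertices $x_i, x_j$ of $K$ have disjoint closed stars in $K''$, because any simplex of $K''$ meeting both stars would force $x_i$ and $x_j$ to lie in a common simplex of the first derived subdivision, which is impossible for two original vertices. Each $\operatorname{st}(x_i,K'')$ is a regular neighborhood of $x_i$ in $X$, being a derived neighborhood.

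Second, observe that $F$ is triangulated by the subcomplex of $K$ consisting of the vertices $x_1,\ldots,x_k$, and this subcomplex is full in $K'$. The derived neighborhood $N(F, K'')$ is the union of the simplices of $K''$ meeting $F$, and it equals the disjoint union $R(x_1) \sqcup \cdots \sqcup R(x_k)$. Hence this disjoint union is a regular neighborhood of $F$ in $X$.

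Third, given an arbitrary regular neighborhood $R(F)$, apply the uniqueness theorem for regular neighborhoods (Hudson/Cohen; Rourke–Sanderson, Thm.~3.24). It supplies a PL homeomorphism $h: X \to X$ carrying $R(F)$ onto $N(F,K'')$. Moreover, $h$ can be taken to be ambient isotopic to the identity, fixing $F$ pointwise and indeed fixing a neighborhood of $F$; one can also arrange it to be supported in a neighborhood of $R(F)\cup N(F,K'')$. This $h$ is the required homeomorphism.

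The main obstacle is the requirement that $h$ be the identity on $F$. The standard uniqueness statement for polyhedra in a polyhedron, not just in manifolds, must be checked to give an isotopy relative to $F$. If it only gives a homeomorphism carrying $R(F)$ to $N$, I would fix this by the following argument. Both neighborhoods contain a common smaller derived neighborhood $N_0$ of $F$. Each of $R(F)$ and $N$ collapses onto $N_0$ through a collar $\overline{R(F)\setminus N_0}\cong \partial N_0\times I$, by the annulus property of regular neighborhoods. One then builds $h$ as the identity on $N_0$, so in particular on $F$, and pushes the collars along these product structures, extending by the identity far away.
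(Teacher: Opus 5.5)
Your proposal is correct and follows the paper's route. The paper simply cites the uniqueness theorem for regular neighborhoods of compact polyhedra in polyhedra (Rourke--Sanderson, Thm.~3.8); your construction of $N(F,K'')=\operatorname{st}(x_1,K'')\sqcup\cdots\sqcup\operatorname{st}(x_k,K'')$ just makes explicit the disjoint regular neighborhood of $F$ to which that theorem is applied, and the fallback in your last paragraph is unnecessary because that uniqueness theorem already yields a PL homeomorphism $X\to X$ that is the identity on $F$.
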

This is a consequence of the uniqueness theorem for 
regular neighborhoods of compact polyhedra in polyhedra
(see e.g. Rourke-Sanderson, p. 33, Thm. 3.8).
Lemmas \ref{sublemma1} and \ref{sublemma2} imply:

\begin{lemma} \label{sublemma3}
Let $f: X\to Y$ be a finite-to-one PL map and let $y \in Y$ be a point.
Then there exist a regular neighborhood $R(y)$ of $y$ in $Y$ 
and, for every point $x\in f^{-1} (y)$ in the fiber, 
a regular neighborhood $R(x)$ of $x$ in $X$
such that there is a PL homeomorphism
\[ f^{-1} R(y) \cong \bigsqcup_{x\in f^{-1} (y)} R(x) \]
which is the identity on $f^{-1} (y)$.
\end{lemma}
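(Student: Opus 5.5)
The plan is to combine Lemma \ref{sublemma1} with Lemma \ref{sublemma2}, applied to the finite fiber $F := f^{-1}(y) = \{ x_1, \ldots, x_k \}$. First, I apply Lemma \ref{sublemma1} to the PL map $f$ and the point $y$. This produces a regular neighborhood $R(y)$ of $y$ in $Y$ such that $f^{-1} R(y)$ is a regular neighborhood of $F$ in $X$. Since $f$ is finite-to-one, $F$ is a finite set of points, and in particular a compact subpolyhedron. Hence Lemma \ref{sublemma2} applies with $R(F) := f^{-1} R(y)$. It gives regular neighborhoods $R(x_i)$ of each $x_i$ in $X$, together with a PL homeomorphism $h: X \to X$ that restricts to the identity on $F$ and satisfies $h(f^{-1}R(y)) = R(x_1) \sqcup \cdots \sqcup R(x_k)$.

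Restricting $h$ to $f^{-1} R(y)$ gives the required PL homeomorphism
\[ f^{-1} R(y) \cong \bigsqcup_{x\in f^{-1}(y)} R(x), \]
and it is the identity on $f^{-1}(y)$. If $f$ is not surjective onto $y$, the fiber is empty; I would dispose of this case separately, since both sides are then empty for a suitable choice of $R(y)$. One choice is to take $R(y)$ disjoint from the compact image of the simplicial neighborhood. The simplest way is to note that in the proof of Lemma \ref{sublemma1} one has $g_F^{-1}[0,\smlhf] = \varnothing$ when $F=\varnothing$.

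The main point needing care is the hypothesis in Lemma \ref{sublemma2} that the regular neighborhoods $R(x_i)$ can be chosen pairwise disjoint. The target there is written as a disjoint union, so this is part of that lemma's content. If one wants to justify it directly, the approach is to pass to a sufficiently fine subdivision in which the $x_i$ are vertices with disjoint closed stars. Then take second-derived neighborhoods, which are pairwise disjoint and whose union is a regular neighborhood of $F$. By uniqueness of regular neighborhoods of the compact polyhedron $F$ (Rourke--Sanderson, Thm.~3.8), this union is ambient isotopic to $R(F)$ relative to $F$. The remaining steps are purely formal.
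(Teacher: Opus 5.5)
Your proposal is correct and is the paper's argument: the paper states that Lemma \ref{sublemma3} follows from Lemma \ref{sublemma1} and Lemma \ref{sublemma2}. First, Lemma \ref{sublemma1} gives $R(y)$ with $f^{-1}R(y)$ a regular neighborhood of the finite fiber; then Lemma \ref{sublemma2} carries $f^{-1}R(y)$ onto a disjoint union of point neighborhoods by a PL homeomorphism fixing the fiber. Your additional remarks on the empty-fiber case and on disjointness of the $R(x_i)$ are harmless refinements.
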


If $A\subset X$ is a subset of a topological space $X$, we
will write $\Bdy_X (A)$, or simply $\Bdy (A),$ for the 
topological boundary (frontier) of $A$ in $X$.
We will denote by $\Lk (p,P)$ the \emph{polyhedral} link of
a point $p$ in a polyhedron $P$. This link is a compact subpolyhedron
of $P$ that can be computed as follows: Choose a simplicial complex $K$
which triangulates $P$ and contains $p$ as a vertex. Then 
$\Lk (p,P)$ is PL homeomorphic to the polyhedron $|\Lk (p,K)|$,
where $\Lk (p,K)$ denotes the \emph{simplicial} link of the vertex $p$
in the complex $K$, see e.g. Armstrong \cite[p. 177]{armstrong}.
If $L$ is a subcomplex of a simplicial complex $K$, then
$N(L,K)$ denotes the simplicial neighborhood of $L$ in $K$.
The simplicial boundary of $N(L,K)$ is denoted by $\dot{N} (L,K)$.
\begin{lemma} \label{sublemma4}
Let $P$ be a polyhedron and $p\in P$ a point.
If $R(p)$ is any regular neighborhood of $p$, then there is a 
PL homeomorphism
$\Bdy R(p) \cong \Lk (p,P).$
\end{lemma}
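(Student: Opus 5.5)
The plan is to reduce the statement to a standard fact about regular neighborhoods: the frontier of a regular neighborhood is independent of which regular neighborhood is chosen, and for one specific choice it can be computed explicitly as a simplicial link.

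\emph{Step 1: an explicit model.} Choose a triangulation $K$ of $P$ that has $p$ as a vertex; if necessary, star the given triangulation at $p$, as in the proof of Lemma \ref{sublemma1}. Then pass to the first derived (barycentric) subdivision $K'$. The simplicial neighborhood $N(p,K')$ is the closed star of $p$ in $K'$, and it is a regular neighborhood of $p$. Because the vertex $\{p\}$ is full in $K$, the closed star of $p$ in $K'$ is the cone $p * \dot{N}(p,K')$. Its simplicial boundary $\dot{N}(p,K')$ is the link $\Lk(p,K')$, and this is PL homeomorphic to $|\Lk(p,K)|$. Link polyhedra are invariant under subdivision up to PL homeomorphism, and this is exactly the fact underlying the definition of $\Lk(p,P)$ that the paper quotes from Armstrong. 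So for this particular regular neighborhood we get $\Bdy N(p,K') \cong \Lk(p,P)$.

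\emph{Step 2: identify the frontier with the simplicial boundary.} We need $\Bdy_P N(p,K') = \dot{N}(p,K')$. The open star of $p$ is open in $P$ and equals $N \setminus \dot{N}$, so every point of $N \setminus \dot N$ is an interior point. Conversely, every point $z$ of $\dot{N}$ lies in a simplex of $K'$ that has a vertex opposite to $p$. Points of that simplex lying outside $N$ accumulate at $z$, so $z$ is a frontier point.

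One degenerate case needs separate treatment: if $p$ is an isolated point of $P$, then both the link and the frontier are empty. The same reasoning applies in that case, with both sides equal to $\varnothing$.

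\emph{Step 3: pass to an arbitrary $R(p)$.} This is the step I expect to be the main obstacle. By the uniqueness theorem for regular neighborhoods (Rourke--Sanderson, Thm.~3.8), there is a PL homeomorphism $h: P \to P$ carrying $N(p,K')$ onto $R(p)$. This is the same result already invoked for Lemma \ref{sublemma2}. A homeomorphism of the ambient space $P$ maps frontiers to frontiers, so
\[
\Bdy R(p) = h(\Bdy N(p,K')) \cong \dot{N}(p,K') \cong \Lk(p,P).
\]

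The subtlety is to make sure the uniqueness theorem we quote really gives an \emph{ambient} homeomorphism of $P$, and not merely a homeomorphism $N \cong R(p)$ fixing $p$. An abstract PL homeomorphism between the two neighborhoods need not send frontier to frontier. If only an isotopy rel $p$ is available, its time-one map is still a PL homeomorphism of $P$, and that suffices.

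As an alternative for Step 3, one can use the characterisation of regular neighborhoods as $g^{-1}[0,\varepsilon]$ for a suitable simplicial map $g$, exactly as in the proof of Lemma \ref{sublemma1}. For $R(p) = g^{-1}[0,\smlhf]$ the frontier is $g^{-1}(\smlhf)$, and radial projection from $p$ along the cone structure identifies this level set with $\Lk(p,K')$.
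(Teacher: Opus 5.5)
Your proof is correct, but it is organized differently from the paper's. The paper never compares two regular neighborhoods. It takes the given $R(p)$ and writes it directly as $|N(p,K')|$, where $K$ is a triangulation of $P$ having $p$ as a vertex, chosen for that particular $R(p)$, and $K'$ is a derived near $p$. It then cites Cohen's Lemma 2.12 for $\Bdy |N(p,K')| = |\dot{N}(p,K')|$, observes that $\dot{N}(p,K') = \Lk(p,K')$ by definition, and concludes because the simplicial link of a vertex computes the polyhedral link $\Lk(p,P)$ in any triangulation. In the paper, triangulation-independence of links does the work that your Step 3 assigns to the ambient uniqueness theorem. So the subtlety you flag, that an abstract homeomorphism $N \cong R(p)$ need not preserve frontiers, never arises there. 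Your route instead computes one explicit model, the closed star in the first barycentric subdivision, and transports it to $R(p)$. This costs you the ambient form of uniqueness, which the paper itself also relies on for Lemma \ref{sublemma2}, so it is legitimate here. In exchange, your argument does not depend on how regular neighborhoods are defined, and your Step 2 gives an elementary substitute for the citation of Cohen.

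One point in Step 2 should be stated more precisely. What you need is a simplex through $z$ that has a vertex \emph{not lying in} $N(p,K')$; a vertex merely different from $p$ is not enough. The existence of such a vertex is exactly where derivedness is used. Suppose $z$ lies in the interior of the simplex of $K'$ spanned by the barycenters $\hat\sigma_0, \ldots, \hat\sigma_r$ of a chain $\sigma_0 < \cdots < \sigma_r$ in $K$, with $p \in \sigma_0 \neq \{p\}$. Then adjoin the barycenter of the face of $\sigma_0$ opposite $p$. That vertex is not in $N(p,K')$, and points near $z$ in the interior of the enlarged simplex lie outside $N(p,K')$. Without derivedness the claim fails. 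For example, take $P$ to be a single edge $[p,q]$ with its trivial triangulation. Then $\dot{N}(p,K) = \{q\}$, while the frontier of $N(p,K) = P$ is empty.
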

\begin{proof}
We can write any given regular neighborhood as
$R(p) = |N(p,K')|,$ where
$K$ is a simplicial complex with $|K|=P$, $p$ is a vertex of $K$,
and $K'$ is a derived of $K$ near $p$.
By M. Cohen \cite[p. 194, Lemma 2.12]{cohenmarshallgenregnbhds},
$|\dot{N} (p,K')| = \Bdy |N(p,K')| = \Bdy R(p)$.
By their very definition, the simplicial complexes $\dot{N} (p,K')$
and $\Lk (p,K')$ are equal. The statement of the lemma follows,
as the polyhedron of the simplicial link $\Lk (p,K')$ is PL homeomorphic
to the polyhedral link $\Lk (p,P)$.
\end{proof}

\begin{lemma} \label{sublemma5}
Let $A \subset K$ be a full subcomplex.
Suppose that $g: K \to [0,1]$ is a simplicial map such that
$A = g^{-1} (0)$. Let $R(A)$ be the regular neighborhood of $|A|$
in $|K|$ given by $R(A) = g^{-1} [0,\smlhf]$. Then
$\Bdy R(A) = g^{-1} (\smlhf).$
\end{lemma}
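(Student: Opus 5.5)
The plan is to show the two inclusions $\Bdy R(A) \subset g^{-1}(\smlhf)$ and $g^{-1}(\smlhf) \subset \Bdy R(A)$ directly. Throughout, $\Bdy$ means the frontier in $|K|$. We only use that $g$ is continuous, simplicial and hence linear on each simplex, and that $g$ takes the values $0$ on the vertices of $A$ and $1$ on all other vertices. This last fact holds because $g$ is simplicial into $[0,1]$ with vertices $0,1$, and $g^{-1}(0)=A$, exactly as observed in the proof of Lemma \ref{sublemma1}.

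\emph{First inclusion.} Since $R(A) = g^{-1}[0,\smlhf]$ is closed, its frontier is contained in $R(A)$. Moreover $g^{-1}[0,\smlhf)$ is open in $|K|$ and lies in $R(A)$, so it lies in the interior of $R(A)$ and misses the frontier. Hence $\Bdy R(A) \subset g^{-1}(\smlhf)$.

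\emph{Second inclusion.} Let $x$ satisfy $g(x)=\smlhf$. We must show that every neighborhood of $x$ meets $|K| - R(A) = g^{-1}(\smlhf,1]$. Let $\sigma$ be the simplex of $K$ containing $x$ in its interior. Write $x=\sum t_v v$ in barycentric coordinates over the vertices $v$ of $\sigma$. Then $g(x) = \sum_{v\notin A} t_v$. Because $g(x)=\smlhf<1$, some vertex $w$ of $\sigma$ has $g(w)=1$. Consider the segment $x_s = (1-s)x + s w$ inside $\sigma$. By linearity of $g$ on $\sigma$, $g(x_s) = \smlhf + s/2 > \smlhf$ for $s>0$. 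Letting $s\to 0$, the points $x_s$ converge to $x$ and lie outside $R(A)$. Since $x\in R(A)$, it follows that $x\in \Bdy R(A)$.

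The main care point is confirming that vertices outside $A$ map to $1$, which requires $g$ to be simplicial onto the complex $[0,1]$ with only the vertices $0,1$. Fullness of $A$ is not needed for the frontier identity itself; it only guarantees that $R(A)$ is a regular neighborhood.
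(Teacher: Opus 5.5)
Your proof is correct, but it takes a different route from the paper. The paper settles the lemma in one line by appealing to M.~Cohen's theorem that $\Bdy R(A)$ is bicollared in $|K|-|A|$ (Cohen, Thm.~5.3). You instead prove both inclusions by hand:
\begin{itemize}
\item $\Bdy R(A)\subset g^{-1}(\frac{1}{2})$ follows because $g^{-1}[0,\frac{1}{2}]$ is closed and $g^{-1}[0,\frac{1}{2})$ is open.
\item $g^{-1}(\frac{1}{2})\subset \Bdy R(A)$ follows by sliding from $x$ linearly toward a vertex $w$ of its carrier simplex with $g(w)=1$.
\end{itemize}

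Your route is self-contained. It uses only simplexwise linearity of $g$ and the fact that vertices are sent to $\{0,1\}$, and, as you say, it needs neither fullness of $A$ nor any regular neighborhood theory. The citation route gives more structure, namely a collar of the level set on both sides. That extra structure is not needed for the frontier identity, but it fits the regular-neighborhood machinery (Cohen's results) that the paper uses in the surrounding lemmas.

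One small slip: the existence of a vertex $w$ of $\sigma$ with $g(w)=1$ comes from $g(x)=\sum_{v\notin A}t_v=\frac{1}{2}>0$, not from $\frac{1}{2}<1$. The latter only produces a vertex with $g=0$, which you do not need.

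Relatedly, your ``main care point'' is not actually essential. Since $g$ is linear on $\sigma$ with vertex values in $\{0,1\}$ and $g(x)=\frac{1}{2}$, some vertex must have value $1$, whichever vertices those are.
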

\begin{proof}
This is a consequence of the fact that $\Bdy R(A)$ is bicollared
in $|K| - |A|$, see Cohen \cite[p. 203, Thm. 5.3]{cohenmarshallgenregnbhds}.
\end{proof}

\begin{lemma} \label{lem.preimageoflink}
Let $f: X\to Y$ be a finite-to-one PL map between polyhedra.
Then for every point $y\in Y$, there is 
a simplicial complex $K$ such that 
$|K|=Y,$ $y$ is a vertex of $K$, 
$|\Lk (y,K)| \cong \Lk (y,Y),$ and there is
a PL homeomorphism
\[ f^{-1} (|\Lk (y,K)|) \cong \bigsqcup_{x\in f^{-1} (y)} \Lk (x,X). \]
\end{lemma}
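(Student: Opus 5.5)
We will combine Lemmas \ref{sublemma3}, \ref{sublemma4} and \ref{sublemma5}. Fix $y\in Y$. Following the proof of Lemma \ref{sublemma1}, triangulate $f$ by a simplicial map $f: L'\to K'$, where $K'$ is obtained from a triangulation of $Y$ by starring at $y$, so that $y$ is a vertex of $K'$. As there, define the simplicial map $g: K'\to [0,1]$ with $g(y)=0$ and $g(v)=1$ for all other vertices, and set $g_F = g\circ f: L'\to [0,1]$. Then $R(y)=g^{-1}[0,\smlhf]$ is a regular neighborhood of $y$, and $f^{-1}R(y)=g_F^{-1}[0,\smlhf]$ is a regular neighborhood of the finite fiber $F=f^{-1}(y)$. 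Since $\{y\}$ is full in $K'$ and $T=f^{-1}(y)$ is full in $L'$, Lemma \ref{sublemma5} applies to both and gives
\[ \Bdy R(y) = g^{-1}(\smlhf), \qquad \Bdy f^{-1}R(y) = g_F^{-1}(\smlhf) = f^{-1}(g^{-1}(\smlhf)) = f^{-1}(\Bdy R(y)). \]

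Next we produce $K$. Take $K$ to be a subdivision of $K'$ (for instance a derived near $y$, chosen so that $g^{-1}(\smlhf)$ becomes a subcomplex) in which $|\Lk(y,K)| = g^{-1}(\smlhf)=\Bdy R(y)$; equivalently, choose $K$ so that $R(y)=|N(y,K)|$ and use $|\dot N(y,K)|=|\Lk(y,K)|$ exactly as in the proof of Lemma \ref{sublemma4}. Then Lemma \ref{sublemma4} gives $|\Lk(y,K)|\cong \Lk(y,Y)$, and we get an honest equality $f^{-1}(|\Lk(y,K)|) = \Bdy f^{-1}R(y)$.

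It remains to identify $\Bdy f^{-1}R(y)$. By Lemma \ref{sublemma2} applied to the regular neighborhood $f^{-1}R(y)$ of $F$ (this is the content of Lemma \ref{sublemma3}), there is a PL homeomorphism of $X$ carrying $f^{-1}R(y)$ onto $\bigsqcup_{x\in F}R(x)$. An ambient homeomorphism preserves frontiers, so
\[ \Bdy f^{-1}R(y) \cong \Bdy\Bigl(\bigsqcup_{x\in F} R(x)\Bigr) = \bigsqcup_{x\in F}\Bdy R(x) \cong \bigsqcup_{x\in F}\Lk(x,X), \]
where the middle equality uses that the $R(x)$ are pairwise disjoint compact sets, and the last step is Lemma \ref{sublemma4}.

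The main obstacle is the bookkeeping in the second paragraph: one must ensure that a \emph{single} complex $K$ has $y$ as a vertex, has its simplicial link equal on the nose to the level set $g^{-1}(\smlhf)$, and has this level set equal to the frontier of the chosen regular neighborhood. The plan is to use the standard fact that $g^{-1}[0,\smlhf]$ is the simplicial neighborhood of $y$ in a derived subdivision of $K'$ near $y$, with vertices at the points where $g=\smlhf$ on edges through $y$. If this fails to hold literally, we will instead only claim a PL homeomorphism $f^{-1}(|\Lk(y,K)|)\cong f^{-1}(\Bdy R(y))$, obtained by pulling back a radial PL isotopy of the cone neighborhood of $y$ along $f$.
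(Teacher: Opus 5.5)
Your proposal is correct and follows the paper's proof essentially step for step. The paper also takes $K=K''$ to be a derived of $K'$ near $y$ with $R(y)=|N(y,K'')|$, applies Lemma \ref{sublemma5} to both $g$ and $g_F$ to get $f^{-1}(\Bdy R(y))=\Bdy R(F)$, and then concludes with Lemmas \ref{sublemma3} and \ref{sublemma4}. Your explicit use of the ambient PL homeomorphism of $X$ from Lemma \ref{sublemma2} to transport frontiers is a slightly more careful version of the paper's ``by disjointness'' step, since an abstract homeomorphism $R(F)\cong\bigsqcup_{x} R(x)$ by itself does not control frontiers in $X$.
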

\begin{proof}
Let $R(y)$ be the regular neighborhood constructed in Lemma \ref{sublemma1}.
Thus $L', K'$ are simplicial complexes with $X=|L'|,$ $Y=|K'|$,
$y$ is a vertex of $K'$,
$f: L' \to K'$ is simplicial and $R(y) = g^{-1} [0,\smlhf],$
where $g: K' \to [0,1]$ is simplicial,
$g(y) =0$ and $g(v)=1$ for all vertices $v\in K'$, $v\not= y$;
we have $g^{-1} (0) = \{ y \}.$
Let $K''$ be a derived of $K'$ near $y$ such that
$R(y)=|N(y,K'')|$.
By Lemma \ref{sublemma5},
$\Bdy R(y) = g^{-1} (\smlhf).$ Hence
\[
f^{-1} (\Bdy R(y))
 = f^{-1} (g^{-1} (\smlhf))
 = g_F^{-1} (\smlhf),
\]
where $g_F: L' \to [0,1]$ is the simplicial map obtained by composing
$f$ and $g$, as in the proof of Lemma \ref{sublemma1};
$F = f^{-1} (y)$ denotes the fiber over $y$.
In that proof, 
$F$ was triangulated by a full subcomplex $T$ of $L'$, and
$g_F^{-1} (0) = F$, $g_F^{-1} [0,\smlhf] = f^{-1} R(y) =: R(F)$.
Using Lemma \ref{sublemma5} again,
$g_F^{-1} (\smlhf) = \Bdy R(F).$
This shows that
$f^{-1} (\Bdy R(y)) = \Bdy R(F).$
(We note on the side that
given a continuous map, the operation of taking preimages
does not in general commute with the operation of taking
the topological boundary of a subset.)
By Lemma \ref{sublemma3}, using that $f$ is finite-to-one,
every point $x\in f^{-1} (y)$ in the fiber possesses a regular neighborhood
$R(x)$ such that there is a PL homeomorphism
$R(F) = f^{-1} R(y) \cong \bigsqcup_{x\in f^{-1} (y)} R(x).$
By disjointness of this finite union of closed subspaces,
$\Bdy R(F) \cong \bigsqcup_{x\in f^{-1} (y)} \Bdy R(x).$
So
\[ f^{-1} (\Bdy R(y)) \cong \bigsqcup_{x\in f^{-1} (y)} \Bdy R(x). \]
As explained in the proof of Lemma \ref{sublemma4}, 
\[ \Bdy R(y)
 = \Bdy |N(y,K'')| 
 =  |\dot{N} (y,K'')| 
 = |\Lk (y,K'')|
  \cong \Lk (y,Y). \]
According to Lemma \ref{sublemma4}, there are PL homeomorphisms
$\Bdy R(x) \cong \Lk (x,X)$
for every $x\in F.$ 
The desired statement follows.
\end{proof}

Let $L,K$ be simplicial complexes and
let $f: L\to K$ be a simplicial map. Let $\Delta$ be a 
simplex of $K$ and let $y\in \Delta^\circ$ be a point in the
interior of $\Delta$.
We define a subset of the complex $L$ by
\[ f^* (\Delta) := \{ \Delta' \in L ~|~ 
   f^{-1} (y) \cap \Delta'^\circ \not= \varnothing \}.  \]
\begin{lemma} \label{lem.fstardelta}
If $f$ is finite-to-one, then
the preimage of the open simplex $\Delta^\circ$ is given by
\[ f^{-1} (\Delta^\circ) = 
      \bigsqcup_{\Delta' \in f^* (\Delta)} \Delta'^\circ,  \]
and
$\dim \Delta' = \dim \Delta$ for all $\Delta' \in f^* (\Delta).$
In particular, $f^* (\Delta)$ is independent of the choice
of $y \in \Delta^\circ$.
\end{lemma}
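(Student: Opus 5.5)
The plan is to reduce both claims to the simplex-by-simplex structure of the simplicial map $f$, using that the open simplices of $L$ partition $|L|$. Since every point of $|L|$ lies in exactly one open simplex, I will first show that for each $\Delta'\in L$ either $f(\Delta'^\circ)\subset\Delta^\circ$ or $f(\Delta'^\circ)\cap\Delta^\circ=\varnothing$. This holds because $f$ maps $\Delta'^\circ$ into the open simplex $(f\Delta')^\circ$, as in the proof of Lemma \ref{lem.finsurjsimpmapiscofiltered}. Distinct open simplices of $K$ are disjoint, so $\Delta'^\circ$ meets $f^{-1}(\Delta^\circ)$ exactly when $f\Delta'=\Delta$. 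Hence $f^{-1}(\Delta^\circ)$ is the disjoint union of the $\Delta'^\circ$ over all $\Delta'\in L$ with $f\Delta'=\Delta$.

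The next step is to identify this index set with $f^*(\Delta)$. If $f\Delta'=\Delta$, then $f|_{\Delta'}$ is the affine map onto $\Delta$ determined by the vertices, and its image contains $\Delta^\circ$. It still has to be checked that $y$ has a preimage in the \emph{open} simplex $\Delta'^\circ$ and not only on a face. For this I use the dimension statement below: when $\dim\Delta'=\dim\Delta$ and $f\Delta'=\Delta$, the vertex map is a bijection, so $f|_{\Delta'}$ is an affine isomorphism $\Delta'\to\Delta$ carrying $\Delta'^\circ$ onto $\Delta^\circ$. Thus $y$ has exactly one preimage in $\Delta'^\circ$, and so $\Delta'\in f^*(\Delta)$. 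Conversely, if $f^{-1}(y)\cap\Delta'^\circ\neq\varnothing$, then $f\Delta'=\Delta$ by the dichotomy above. So $f^*(\Delta)=\{\Delta' : f\Delta'=\Delta\}$. This description makes no reference to $y$, which gives the independence of the choice of $y$.

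The dimension equality $\dim\Delta'=\dim\Delta$ for $f\Delta'=\Delta$ is the main point, and it is where finiteness is used. It is the same argument as in Lemma \ref{lem.finsurjsimpmapiscofiltered}. Simpliciality gives $\dim f\Delta'\le\dim\Delta'$. If the inequality were strict, then by pigeonhole two distinct vertices $v\neq w$ of $\Delta'$ would have the same image. By linearity, $f$ would then be constant on the edge $[v,w]$, so some fiber would be infinite, contradicting that $f$ is finite-to-one. Hence the vertex map of $\Delta'$ is injective and $\dim\Delta'=\dim f\Delta'=\dim\Delta$.

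The only delicate step is ordering the argument correctly. The dimension statement has to be proved before one can conclude that $f|_{\Delta'}$ maps the open simplex onto the open simplex, and this in turn is what allows $f^*(\Delta)$ to be identified with $\{\Delta' : f\Delta'=\Delta\}$. Everything else is bookkeeping with the partition of $|L|$ into open simplices.
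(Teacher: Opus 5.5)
Your proof is correct and is essentially the paper's argument: both use the partition of $|L|$ into open simplices, deduce from finite-to-one that the vertex map of $\Delta'$ is injective (so $\dim\Delta'=\dim\Delta$), and use barycentric coordinates (your affine isomorphism $\Delta'\to\Delta$) to produce a preimage of $y$ in $\Delta'^\circ$. Your intermediate description $f^*(\Delta)=\{\Delta'\in L : f\Delta'=\Delta\}$ makes the independence of $y$ explicit; the ordering concern in your last paragraph is unnecessary, though, because $f(\Delta'^\circ)=\Delta^\circ$ whenever $f\Delta'=\Delta$ already holds for any simplicial map.
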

\begin{proof}
We show first that if $\Delta' \in f^* (\Delta),$
then $\Delta'^\circ \subset f^{-1} (\Delta^\circ)$.
There exists a point $x\in \Delta'^\circ$ such that $f(x)=y$.
Let $v_0,\ldots, v_r$ be the vertices of $\Delta'$, $r=\dim \Delta'$.
Then
$x = \sum_{i=0}^r t_i v_i$
for positive real numbers $t_i$ with sum $1$.
Thus
\begin{equation} \label{equ.yposconvcombinintdelta} 
y = f(x) = t_0 f(v_0) + \cdots + t_r f(v_r) \in \Delta^\circ. 
\end{equation}
Since $f$ is simplicial, the $f(v_i)$ are vertices of $K$
and the set $f(v_0), \ldots, f(v_r)$ spans a simplex $\sigma$
of $K$. 
Note that since $f$ is finite-to-one, $f(v_i) \not= f(v_j)$
whenever $i\not= j$. Therefore, $\dim \sigma =r$.
Since all $t_i$ are positive, (\ref{equ.yposconvcombinintdelta}) 
shows that $y$ is in the 
interior of $\sigma$. But it is also in the interior of $\Delta$.
Now, a point of a triangulated polyhedron is interior to precisely one simplex
of its triangulation. Hence $\sigma = \Delta$. 
This shows that every $f(v_i)$ is a vertex of $\Delta$, and
that $\dim \Delta = \dim \sigma = r = \dim \Delta'$.
Given any point $x' \in \Delta'^\circ$, we can write it as a convex
combination
$x' = \sum_{i=0}^r s_i v_i$
for positive real numbers $s_i$ with sum $1$.
Then
$f(x') = \sum s_i f(v_i) \in \Delta^\circ.$
This shows that $\Delta'^\circ \subset f^{-1} (\Delta^\circ)$.

Conversely, we prove that $f^{-1} (\Delta^\circ) \subset \bigsqcup \Delta'^\circ$:
Let $x'\in |L|$ be any point with $f(x') \in \Delta^\circ$.
Let $\Delta'$ be the unique simplex of $L$ such that
$x' \in \Delta'^\circ$.
We claim that $\Delta' \in f^* (\Delta)$.
Thus we need to show that $\Delta'^\circ$ contains a point of $f^{-1}(y)$.
Let $v_0,\ldots, v_r$ be the vertices of $\Delta'$.
Then $x' = \sum t_i v_i$
for positive real numbers $t_i$ with sum $1$.
Thus
$f(x') = \sum t_i f(v_i) \in \Delta^\circ.$
This shows, as above, that every $f(v_i)$ is a vertex of $\Delta$ and that,
since $f(x')$ is an interior point of $\Delta$, the set
$f(v_0),\ldots, f(v_r)$ constitutes the \emph{complete} list of
vertices of $\Delta$, i.e. every vertex of $\Delta$ is of the
form $f(v_i)$ for some $i$.
Hence, $y \in \Delta^\circ$ can be written as a convex combination
$y = \sum s_i f(v_i) \in \Delta^\circ$
with positive $s_i$.
Set 
$x := \sum s_i v_i.$
Then $x\in \Delta'^\circ$ is such that $f(x) =y$.
This shows that $\Delta' \in f^* (\Delta)$.
\end{proof}

Given a simplicial complex $K$ (always assumed to be locally finite) 
and a simplex $\Delta$ in $K$,
we denote the simplicial link of $\Delta$ in $K$ by $\Lk (\Delta, K)$.
This is a (finite) simplicial complex.
\begin{lemma} \label{lem.restrramcovonlinks}
Let $Y,\wY$ be pseudomanifolds triangulated by simplicial complexes
$L,K;$ $\wY = |L|,$ $Y= |K|$.
Let $\pi: \wY \to Y$ be a simplicial ramified covering with respect to $L,K$.
Let $\Delta$ be any simplex in $K$ and let $s=\dim \Delta$.
Then there is a PL map 
\[ \lambda: \bigsqcup_{\Delta' \in \pi^* (\Delta)} \Sigma^s |\Lk (\Delta',L)|
  \longrightarrow \Sigma^s |\Lk (\Delta,K)|    \]
which is a ramified covering.
\end{lemma}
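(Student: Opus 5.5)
The plan is to realize both sides as polyhedral links of points and then apply Lemma \ref{lem.preimageoflink}. Pick $y\in\Delta^\circ$. In a simplicial complex the polyhedral link of an interior point of an $s$-simplex is the join $\partial\Delta * |\Lk(\Delta,K)| \cong S^{s-1} * |\Lk(\Delta,K)| \cong \Sigma^s|\Lk(\Delta,K)|$. So we get $\Lk(y,Y)\cong \Sigma^s |\Lk(\Delta,K)|$. By Lemma \ref{lem.fstardelta}, the fiber $\pi^{-1}(y)$ consists of exactly one point $x_{\Delta'}\in\Delta'^\circ$ for each $\Delta'\in\pi^*(\Delta)$, and each such $\Delta'$ has dimension $s$. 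The same join description therefore gives $\Lk(x_{\Delta'},\wY)\cong \Sigma^s|\Lk(\Delta',L)|$.

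Next apply Lemma \ref{lem.preimageoflink} to the finite-to-one PL map $\pi$ at $y$. It yields a triangulation in which the polyhedron $\Lambda:=|\Lk(y,K'')|=\Bdy R(y)$ is PL homeomorphic to $\Lk(y,Y)$. It also yields a PL homeomorphism $\pi^{-1}(\Lambda)\cong\bigsqcup_{x\in\pi^{-1}(y)}\Lk(x,\wY)$. Define $\lambda$ as the restriction $\pi|:\pi^{-1}(\Lambda)\to\Lambda$, transported through these PL homeomorphisms. Since $\pi$ is PL and the identifications are PL, $\lambda$ is a PL map
\[ \bigsqcup_{\Delta'\in\pi^*(\Delta)}\Sigma^s|\Lk(\Delta',L)| \longrightarrow \Sigma^s|\Lk(\Delta,K)|. \]

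Finally, $\lambda$ is a ramified covering. As recalled after Example \ref{exple.orbitprojisramifiedcov}, ramified coverings pull back under continuous maps, in particular restrict to subspaces of the base (Smith). So $\pi|:\pi^{-1}(\Lambda)\to\Lambda$ is a $d$-fold ramified covering with multiplicity $\mu|$. Composing with homeomorphisms on source and target preserves the ramified covering structure: surjectivity, finite fibers and the sum condition are clear, and continuity of $\tau$ is preserved because $SP^d$ is functorial in homeomorphisms. We transport $\mu$ along the source homeomorphism.

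The main obstacle is bookkeeping. We must check that the homeomorphism of Lemma \ref{lem.preimageoflink} respects the indexing, i.e. that the component corresponding to $x_{\Delta'}$ really is $\Sigma^s|\Lk(\Delta',L)|$. We must also check that the join formula for links at interior points is available. Both are standard: $\Lk(y,Y)$ is computed in the subdivision starred at $y$, where it is $\partial\Delta*\Lk(\Delta,K)$, and likewise at each $x_{\Delta'}$. We should also note that Lemma \ref{sublemma3} gives $R(x)$ as a neighborhood of the specific point $x$, so the components are matched correctly. If a direct PL homeomorphism to the join is delicate, the alternative is to choose the starring of $\Delta$ at $y$ compatibly with starring each $\Delta'$ at $x_{\Delta'}$. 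Then $\pi$ remains simplicial, and $\pi^{-1}(\Lk(y))=\bigsqcup\Lk(x_{\Delta'})$ holds simplicially, avoiding regular-neighborhood uniqueness altogether.
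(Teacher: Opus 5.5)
Your proposal is correct and follows the paper's proof essentially step by step. Both use the link formula $\Lk(y,Y)\cong\Sigma^s|\Lk(\Delta,K)|$ at $y\in\Delta^\circ$ and at each fiber point (the paper cites Akin), Lemma \ref{lem.preimageoflink} for $\pi^{-1}(|\Lk(y,K'')|)\cong\bigsqcup_{x}\Lk(x,\wY)$, Lemma \ref{lem.fstardelta} to match the fiber points with $\pi^*(\Delta)$, and the restriction of the ramified covering with the multiplicity map transported along the PL homeomorphisms.

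One caveat concerns your optional shortcut via starring. With the full simplicial link of the starred vertex $y$, the equality $\pi^{-1}(|\Lk(y)|)=\bigsqcup|\Lk(x_{\Delta'})|$ does not follow from simpliciality alone: for a general finite-to-one surjective simplicial map, a simplex mapping into $\Lk(y)$ need not be a face of any simplex mapping onto a simplex with vertex $y$. The equality does hold here, but only because the continuity of $\tau_\pi$ lets you lift nearby points. The small derived neighborhood used in Lemma \ref{lem.preimageoflink} avoids this issue entirely.
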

\begin{proof}
Let $y\in \Delta^\circ$ be any point in the interior of $\Delta$.
Then by Akin \cite[p. 420, c $\to$ a]{akin}, 
there is a PL homeomorphism
\begin{equation} \label{equ.plkyyhomeosigmaslkdeltak}
\Lk (y,Y) \cong \Sigma^s |\Lk (\Delta, K)|, 
\end{equation}
see also  Armstrong \cite[p. 178]{armstrong}.
We noted earlier that the restriction of a $d$-fold ramified cover over
any subspace of its target is again a $d$-fold ramified covering.
Hence the restriction of $\pi$ defines a ramified covering
$\pi|: \pi^{-1} (|\Lk (y,K'')|) \to |\Lk (y,K'')|,$
where $K'$ is a simplicial subdivision of $K$ such that $y$ is a vertex 
of $K'$ and $K''$ is a derived of $K'$ near $y$.
Composition with the PL homeomorphism
$\pi^{-1} (|\Lk (y,K'')|) \cong \bigsqcup_{x\in \pi^{-1} (y)} \Lk (x,\wY)$
provided by Lemma \ref{lem.preimageoflink},
yields a PL map
$\lambda_1: \bigsqcup_{x\in \pi^{-1} (y)} \Lk (x,\wY) \to  
     |\Lk (y,K'')|.$
Under the homeomorphism to the disjoint union, the ramification data,
i.e. the multiplicity function, can be transferred from
$\pi^{-1} (|\Lk (y,K'')|)$ to $\bigsqcup_{x\in \pi^{-1} (y)} \Lk (x,\wY)$.
Then $\lambda_1$ is again a ramified covering.
The pullback of $\lambda_1$ under a PL homeomorphism
$\Lk (y,Y) \cong |\Lk (y,K'')|$ is then a ramified covering
\[ \lambda_0: \bigsqcup_{x\in \pi^{-1} (y)} \Lk (x,\wY) \longrightarrow  
     \Lk (y,Y).  \]
Given $x\in \wY,$ let $\Delta'_x$ be the unique simplex
of $L$ such that $x\in \Delta'^\circ_x$.
Then 
$\pi^* (\Delta) = \{ \Delta'_x ~|~ x \in \pi^{-1} (y) \}.$
By Lemma \ref{lem.fstardelta},
$\pi^{-1} (\Delta^\circ) = 
      \bigsqcup_{x\in \pi^{-1} (y)} \Delta'^\circ_x,$
and
$\dim \Delta'_x = \dim \Delta =s$ for all $x\in \pi^{-1} (y).$
Again by Akin \cite[p. 420, c $\to$ a]{akin},
\begin{equation} \label{equ.plkxwyhomeosigmaslkdeltaxl} 
\Lk (x,\wY) \cong \Sigma^s |\Lk (\Delta'_x, L)| 
\end{equation}
for every $x\in f^{-1} (y)$, since $x$ is interior to $\Delta'_x$.
Under the PL homeomorphisms
(\ref{equ.plkyyhomeosigmaslkdeltak}) and
(\ref{equ.plkxwyhomeosigmaslkdeltaxl}),
the ramified cover $\lambda_0$ can be written as a ramified cover
\[ \lambda: \bigsqcup_{x\in \pi^{-1} (y)} \Sigma^s |\Lk (\Delta'_x, L)| 
  \longrightarrow \Sigma^s |\Lk (\Delta, K)|.  \]
\end{proof}

\begin{thm} \label{thm.baseoframifiedcoveriswitt}
Let $\wY$ and $Y$ be triangulated oriented pseudomanifolds.
Let $\pi: \wY \to Y$ be a simplicial ramified covering (of finite degree).
If $\wY$ satisfies the Witt condition, then so does $Y$.
\end{thm}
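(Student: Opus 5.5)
We verify the Witt condition on $Y$ through links. The PL characterization we use is: for every simplex $\Delta$ of $K$ with $\dim \Delta = n - 2k - 1$ (so the link $|\Lk(\Delta,K)|$ has even dimension $2k$), we need
\[ IH^\bm_k (|\Lk (\Delta,K)|;\rat) = 0. \]
It suffices to check links of simplices, since links of points are PL suspensions of these by Akin's result. The same condition holds for $\wY$ and every simplex of $L$, by hypothesis.

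Fix such a $\Delta$ with $s = \dim \Delta = n-2k-1$. Lemma \ref{lem.restrramcovonlinks} supplies a ramified covering
\[ \lambda: \bigsqcup_{\Delta' \in \pi^*(\Delta)} \Sigma^s |\Lk (\Delta',L)| \longrightarrow \Sigma^s |\Lk(\Delta,K)|. \]
By Lemma \ref{lem.fstardelta}, each $\Delta'$ also has dimension $s$, so each $\Lk(\Delta',L)$ is a $2k$-dimensional link of $\wY$. Hence $IH^\bm_k(|\Lk(\Delta',L)|;\rat)=0$ by the Witt property of $\wY$. By Lemma \ref{lem.ihmiddledegstablesuspension}, middle-degree lower-middle intersection homology is unchanged by $s$-fold suspension. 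Therefore $IH^\bm_k$ of the source of $\lambda$ vanishes, being a finite direct sum of zeros. Likewise $IH^\bm_k(\Sigma^s|\Lk(\Delta,K)|;\rat) \cong IH^\bm_k(|\Lk(\Delta,K)|;\rat)$.

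Next we apply the up-down formula, Proposition \ref{prop.updownmultbydegree}, to $\lambda$ in degree $k$ with $\bp=\bm$. The base has dimension $n-1 = 2k+s$, and $k \le \lfloor (2k+s)/2\rfloor$, so the non-connected version is available. It gives $\lambda_*\circ\lambda_! = d$ on $IH^\bm_k(\Sigma^s|\Lk(\Delta,K)|;\intg)$, and $\lambda_!$ factors through a group whose rationalization is zero. With rational coefficients (tensoring, or rerunning the argument over $\rat$), multiplication by $d\neq 0$ is then both the zero map and invertible. This forces $IH^\bm_k(|\Lk(\Delta,K)|;\rat)=0$, which is the Witt condition at $\Delta$. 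The orientability and pseudomanifold structure of $Y$ are given, so it remains only to record that $K$ may be taken as the triangulation in the simplicial Witt criterion. The criterion is independent of triangulation by topological invariance of intersection homology.

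The main obstacle is making the up-down formula applicable to $\lambda$. Proposition \ref{prop.updownmultbydegree} is stated for \emph{simplicial} ramified coverings with the skeletal filtrations. Our $\lambda$ is only PL, obtained by transporting $\pi$ through the PL homeomorphisms of Lemma \ref{lem.preimageoflink} and Akin's identifications. I would first try to triangulate the source and target so that $\lambda$ becomes simplicial. Restricting $\pi$ to the derived-link subcomplex $\pi^{-1}|\Lk(y,K'')|$ is already simplicial after subdivision, so I would work directly with the restriction $\pi|$ over $|\Lk(y,K'')|$ and transport only the intersection homology groups, not the covering. This also requires checking that intersection homology with respect to the skeletal filtration agrees with the intrinsic one, so that the Witt vanishing and the suspension lemma apply with the simplicial stratification. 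This holds by topological invariance for pseudomanifolds, since links of pseudomanifolds are pseudomanifolds. A secondary point is that the up-down formula over $\intg$ must pass to $\rat$. This can be handled by noting that the transfer and $\pi_*$ are natural under change of coefficients via $\otimes\rat$, which is exact.
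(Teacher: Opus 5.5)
Your proof is correct and follows the paper's route: restrict $\pi$ over the links via Lemma \ref{lem.restrramcovonlinks}, apply up-down in degree $k$, and use suspension stability together with the Witt vanishing upstairs to force multiplication by $d$ to factor through zero rationally. The only differences are cosmetic. The paper suspends $\lambda$ explicitly to reach connected spaces, which is exactly how the non-connected case of Proposition \ref{prop.updownmultbydegree} that you invoke is proved. Your plan to work with the simplicial restriction of $\pi$ over $|\Lk(y,K'')|$, transporting only the groups, makes precise a point the paper covers with the phrase ``with respect to appropriate simplicial subdivisions''.
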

\begin{proof}
If the Witt condition holds in some stratification, then it
holds in every stratification by the Proposition in
\cite[Section 2.4]{gmih2}.
We equip $\wY$ and $Y$ with the simplicial
stratifications $\widetilde{\Ya}$ and $\Ya$. The filtration
subspaces $\wY_i$ and $Y_i$ are thus given by the $i$-dimensional
simplicial skeleta $L_i, K_i$ of the triangulations
$|L| = \wY,$ $|K| = Y$.
Note that the link of any simplex in a simplicial pseudomanifold
is a (compact) pseudomanifold,
see e.g. Siegel \cite[p. 1070, I.2]{siegel}.

Let $\Delta \in K$ be a simplex whose simplicial link $\Lk (\Delta, K)$
has even dimension $2k$. 
We have to prove that $IH^\bm_k (|\Lk (\Delta, K)|; \rat) =0$.
By Lemma \ref{lem.restrramcovonlinks}, the restriction of
$\pi$ induces a ramified covering
\[ \lambda: \bigsqcup_{\Delta' \in \pi^* (\Delta)} \Sigma^s |\Lk (\Delta',L)|
  \longrightarrow \Sigma^s |\Lk (\Delta,K)|,    \]
where $s=\dim \Delta$.  
Let $B$ denote the target of $\lambda$ and $\wB$ its source.
By Lemma \ref{lem.fstardelta},
$\dim \Delta' = \dim \Delta$ for every $\Delta' \in \pi^* (\Delta)$.
Since $\pi$ is finite-to-one and surjective, $\dim \wY = \dim Y$
by (the proof of) Lemma \ref{lem.finsurjsimpmapiscofiltered}.
Therefore, $\dim Lk(\Delta',L) = \dim Lk(\Delta,K) = 2k$ and
$B$ and $\wB$ are PL pseudomanifolds of dimension
$\dim B = s + 2k = \dim \wB.$
Neither $\wB$ nor $B$ (when $s=0$) needs to be connected.
Since our description of the transfer uses Gajer's
intersection homology Dold-Thom theorem, which seems to be
available only for connected polyhedra,
it is perhaps not immediately clear whether $\lambda$ possesses a transfer
homomorphism on intersection homology.
In any case, if necessary, this technical issue can be overcome 
by the following method:
The suspension of $\lambda$ is a map
$\Sigma \lambda: \Sigma \wB \to \Sigma B$
between \emph{connected} PL pseudomanifolds, which is again
a PL ramified cover by Lemma \ref{lem.suspensionisramcov}.
Therefore, it has an associated transfer
$(\Sigma \lambda)_!: IH^\bm_* (\Sigma B) \to IH^\bm_* (\Sigma \wB),$
whose general construction was provided in Section \ref{sec.ramcovsandihtransfer}.
Using Lemma \ref{lem.finsurjsimpmapiscofiltered} with respect to 
appropriate simplicial subdivisions,
the placid map $\Sigma \lambda$ induces covariantly a homomorphism
$(\Sigma \lambda)_*: IH^\bm_* (\Sigma \wB)
  \to IH^\bm_* (\Sigma B).$
By Proposition \ref{prop.updownmultbydegree}, the composition
\[ \xymatrix{
IH^\bm_k (\Sigma B; \rat) \ar[r]^>>>>>{(\Sigma \lambda)_!} \ar[rd]_d
    & IH^\bm_k (\Sigma \wB; \rat) 
       \ar[d]^{( \Sigma \lambda)_*} \\
  & IH^\bm_k (\Sigma B; \rat)    
} \]
is multiplication by the degree $d\geq 1$ of the ramified covering $\Sigma \lambda$
(which equals the degree of $\lambda$).
(Using the coefficient isomorphism
$IH^\bm_* (X;\intg) \otimes \rat = IH^\bm_* (X;\rat)$, we can pass
from integral information to rational one.)
By Lemma \ref{lem.ihlowermiddleofsuspension},
$IH^\bm_k (\Sigma B) = IH^\bm_k (B),$
 $IH^\bm_k (\Sigma \wB) = IH^\bm_k (\wB).$
So the above commutative diagram can be desuspended
 to $\wB$ and $B$ and we obtain a commutative diagram
\[ \xymatrix@R=30pt{
IH^\bm_k (\Sigma^s |\Lk (\Delta,K)|; \rat) 
  \ar[r]^>>>>>{(\Sigma \lambda)_!} \ar[rd]_d
    & \bigoplus_{\Delta' \in \pi^* (\Delta)} IH^\bm_k (\Sigma^s |\Lk (\Delta',L)|; \rat) 
       \ar[d]^{(\Sigma \lambda)_*} \\
  & IH^\bm_k (\Sigma^s |\Lk (\Delta,K)|; \rat).    
} \]
Now, on a rational vector space, multiplication by $d\not= 0$ is an isomorphism
with inverse given by multiplication by $1/d$.
Since $\dim \Lk (\Delta',L) = 2k,$
Lemma \ref{lem.ihmiddledegstablesuspension} implies that
\[ IH^\bm_k (\Sigma^s |\Lk (\Delta',L)|; \rat)
   = IH^\bm_k (|\Lk (\Delta',L)|; \rat). \]
Since $\wY$ satisfies the Witt condition by assumption, we have
$IH^\bm_k (|\Lk (\Delta',L)|; \rat) =0$ for every $\Delta' \in \pi^* (\Delta)$.
This shows that the isomorphism $(\Sigma \lambda)_* \circ (\Sigma \lambda)_!$
factors through the zero vector space and is thus zero.
This implies that $IH^\bm_k (\Sigma^s |\Lk (\Delta,K)|; \rat) = 0$.
By Lemma \ref{lem.ihmiddledegstablesuspension},
\[ IH^\bm_k (|\Lk (\Delta,K)|; \rat) = 
   IH^\bm_k (\Sigma^s |\Lk (\Delta,K)|; \rat) = 0, \]
as was to be shown.\\
\end{proof}

\section{Simplicial Actions and Pseudomanifolds}
\label{sec.simplicialactions}

We recall here some basic material on simplicial group actions.
The main result of this section is Theorem \ref{thm.orbitspacepseudomanifold},
which states that when a finite group acts simplicially and orientedly
on an oriented pseudomanifold, then the orbit space is again an oriented
pseudomanifold.

Let $K$ be an (abstract) simplicial complex with
associated polyhedron $X=|K|$.
A discrete group $G$ acts on $X$ \emph{simplicially} 
(with respect to $K$) if every
transformation $g: X\to X$ is given by a simplicial map
$g:K\to K$, $g\in G$.
A simplicial action induces a simplicial action on the barycentric subdivision
of $K$.
The action on $K$ is called \emph{regular}, if for each subgroup 
$H$ of $G$ the following condition holds:
If $g_0, \ldots, g_n$ are elements of $H$ and
$(v_0,\ldots, v_n)$ and $(g_0 v_0,\ldots, g_n v_n)$
are both simplices of $K$, then there exists an element
$g\in H$ such that $g(v_i)=g_i (v_i)$ for all $i$.
(Here the vertices $v_0, \ldots, v_n$ need not be distinct.)
This condition implies that if $v$ and $gv$ belong to the
same simplex, then $gv=v$.
Thus if $g(\sigma)= \sigma$ for some simplex $\sigma$,
then $g$ acts as the identity on every vertex of $\sigma$,
and thus on every point of $\sigma$.
Any simplicial action can be turned into a regular one by
passing to the second barycentric subdivision.
Thus, without loss of generality, we can and will assume from now
on that every simplicial action is regular.
See Bredon \cite[p. 114ff]{bredontransfgroups} 
and Illman \cite{illman} for foundational material
on simplicial and regular actions.
 
Then a simplicial complex $K/G$ is given by taking the vertices
of $K/G$ to be the $G$-orbits $v^* := Gv$ of the vertices $v$ of $K$,
and by taking the simplices of $K/G$ to be tuples of the form
$(v^*_0, \ldots, v^*_n)$, where $(v_0,\ldots, v_n)$ is a simplex of $K$.
(This does \emph{not} mean that given a simplex
$(v^*_0, \ldots, v^*_n)$ of $K/G$, then $(v_0,\ldots, v_n)$
is a simplex of $K$. In this situation, one knows only that for each $i$,
there exists a vertex $w_i \in v^*_i$, such that 
$(w_0,\ldots, w_n)$ is a simplex of $K$.)
By regularity, if two simplices of $K$ lie over the same simplex of
$K/G$, then they can be moved to each other by a single element of $G$.
That is, if $(v_0,\ldots, v_n)$ and $(w_0, \ldots, w_n)$ are in $K$ such that
$(v^*_0,\ldots, v^*_n) = (w^*_0, \ldots, w^*_n)$,
then there exists a $g\in G$ with
$g (v_0,\ldots, v_n) = (w_0, \ldots, w_n)$.
This notation means that there is a permutation $s$ of $\{ 0,\ldots, n \}$
such that $g v_i = w_{s(i)}$.
The assignment $v \mapsto v^*$ defines a simplicial map $K\to K/G$,
whose associated continuous map
$X=|K| \to |K/G|$ can be identified canonically with the orbit projection
$X \to X/G = |K|/G$.
This shows that the orbit projection $\pi: X\to X/G$ is a simplicial
map with respect to the above triangulations. We will also write $K^*$
for the complex $K/G$. \\

\begin{examples} \label{exples.subanalyticsimplicial}
In \cite[Prop. 6.7]{blp}, Leichtnam, Piazza and the author proved that
subanalytic proper actions admit
a $G$-equivariant triangulation.
The proof uses the subanalytic triangulation theorem 
\cite[Cor. 3.5]{matumotoshiota} of Matumoto and Shiota, 
and Illman's general equivariant triangulation theorem
\cite[p. 497, Thm. 5.5]{illmangenequivtriang}.
Recall that a topological group $G$ is called \emph{subanalytic}
if it is contained in some real analytic manifold $M$ as a subanalytic
subset.
Every finite group is a subanalytic group. 
We assume that if a subanalytic group $G \subset M$ acts
on a subanalytic set $X \subset N$, then it does so subanalytically,
i.e. the graph of the action $G \times X \to X$ is subanalytic in
$M\times N \times N$.
Now let $X$ be a locally compact subanalytic set and let $G$ be a
subanalytic proper transformation group of $X$.
Then, according to \cite[Prop. 6.7]{blp},
$X$ admits a $G$-equivariant triangulation.
\end{examples}

The pseudomanifold property is preserved by orientation preserving
(regular) simplicial actions:
\begin{thm} \label{thm.orbitspacepseudomanifold}
Let $K$ be a simplicial complex which is an oriented pseudomanifold.
Suppose that $K$ is equipped with a (regular) simplicial
action of a finite group $G$ which preserves the orientation. 
Then the orbit complex $K/G$ is a pseudomanifold, and it is oriented.
\end{thm}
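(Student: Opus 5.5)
We verify the three defining properties of a (simplicial, closed) $n$-dimensional pseudomanifold for $K^*=K/G$: purity, non-branching, and strong connectivity. Orientability is handled separately. Throughout we use regularity of the action, the simplicial orbit map $\pi: K \to K^*$, and the fact that $\pi$ is finite-to-one and surjective.

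\emph{Purity and dimension.} By Lemma \ref{lem.fstardelta}, $\pi$ preserves simplex dimension. Hence $\dim K^* = \dim K = n$, as in the proof of Lemma \ref{lem.finsurjsimpmapiscofiltered}. Every simplex of $K^*$ has the form $\pi(\sigma)$ for some $\sigma\in K$. Purity of $K$ puts $\sigma$ in an $n$-simplex $\tau$, so $\pi(\sigma)$ is a face of the $n$-simplex $\pi(\tau)$.

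\emph{Non-branching.} Let $\rho^*$ be an $(n-1)$-simplex of $K^*$ and fix a lift $\rho$. By regularity, any $n$-simplex of $K^*$ containing $\rho^*$ lifts to an $n$-simplex containing $\rho$: lift it to some $\tau'$ with a face $\rho'$ over $\rho^*$, then move $\rho'$ to $\rho$ by an element of $G$. So the $n$-simplices over $\rho^*$ are images of the at most two $n$-simplices $\tau_1,\tau_2$ of $K$ containing $\rho$. It remains to show that there are exactly two, i.e. that $\pi(\tau_1)\neq\pi(\tau_2)$. This is where orientation preservation enters, and I expect it to be the main obstacle. Suppose $g\tau_1=\tau_2$. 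Both contain $\rho$, so $g\rho$ and $\rho$ are faces of $\tau_2$ with the same image in $K^*$. The regularity consequence ("$v$, $gv$ in one simplex implies $gv=v$") gives $g\rho=\rho$ pointwise. Then $g$ fixes $\rho$ and exchanges the two sides of $\rho$, and it maps the vertex of $\tau_1$ opposite $\rho$ to that of $\tau_2$. Comparing induced orientations on $\rho$ shows $g$ reverses the fundamental cycle of $K$, contradicting orientation preservation. Concretely, $[\tau_1]$ and $[\tau_2]$ induce opposite orientations on $\rho$ in the cycle, while $g_*[\tau_1]=[\tau_2]$ with $g|_\rho=\id$ would force them to induce the same one. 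Hence exactly two $n$-simplices of $K^*$ contain $\rho^*$.

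\emph{Strong connectivity.} If $K$ is strongly connected, chains of $n$-simplices adjacent along $(n-1)$-faces map under $\pi$ to such chains in $K^*$, since dimensions are preserved. So $K^*$ is strongly connected. If only a componentwise pseudomanifold is intended, the same argument applies componentwise.

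\emph{Orientation.} Orient each $n$-simplex $\pi(\tau)$ of $K^*$ by pushing forward the orientation of a chosen lift $\tau$. This is well defined: two lifts differ by some $g$ (regularity), $g$ preserves the orientation of $K$, and $\pi\circ g=\pi$. For compatibility along $\rho^*$, take lifts $\tau_1,\tau_2$ adjacent along a common $\rho$. They induce opposite orientations on $\rho$, so their images induce opposite orientations on $\rho^*$. Equivalently, $\pi_*[K]=|G|\cdot c$, where $c=\sum_{\tau^*}[\tau^*]$ with the chosen orientations. Therefore $c$ is a cycle, because $\partial c=\frac{1}{|G|}\pi_*\partial[K]=0$ over $\rat$, with integrality evident simplexwise. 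This $c$ is the fundamental class orienting $K^*$.
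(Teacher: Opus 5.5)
Your proof is correct. For (PM1) and the non-branching condition it follows the paper closely:
\begin{itemize}
\item lifting and regularity show that every $n$-simplex over $\rho^*$ comes from one of the two $n$-simplices containing a fixed lift $\rho$;
\item if those two had the same image, regularity would force the relevant $g$ to fix $\rho$ pointwise and carry the opposite vertex of $\tau_1$ to that of $\tau_2$.
\end{itemize}
You get dimension preservation from finite-to-one-ness via Lemma \ref{lem.fstardelta}, whereas the paper argues it directly from regularity; both work. The paper derives the contradiction geometrically: $g$ acts on the open set $U$ around $\rho^\circ$ as a reflection, reversing the orientation of $X-\Sigma$. You phrase the same idea at the chain level, comparing the coefficients of $\tau_1,\tau_2$ in the fundamental cycle.

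The genuine difference is the orientation of $K^*$. The paper invokes the external fact that the orbit space of an orientation-preserving finite group action on an oriented manifold is an orientable rational homology manifold, applied to $U=\pi^{-1}(U^*)$. You instead build the orientation cycle $c$ by hand:
\begin{itemize}
\item push forward the orientations of lifts;
\item get well-definedness from regularity, orientation preservation and $\pi\circ g=\pi$;
\item get compatibility across $(n-1)$-simplices from the adjacent lifts $\tau_1,\tau_2$ produced in the non-branching step.
\end{itemize}
Your route is self-contained and purely combinatorial; the paper's is shorter but leans on the homology-manifold fact. Strong connectivity is not part of the paper's definition (only (PM1) and (PM2) are), so that paragraph is harmless but unnecessary.

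You should drop one aside: the ``equivalently'' claim $\pi_*[K]=|G|\cdot c$ is false in general. The number of $n$-simplices over $\tau^*$ is $[G:G_\tau]$, where $G_\tau$ is the stabilizer, which by regularity fixes $\tau$ pointwise. This stabilizer need not be trivial. It is nontrivial for any non-effective action, e.g.\ the trivial action, where $\pi_*[K]=c$. It can also be nontrivial for an effective action that fixes one strongly connected component pointwise while rotating another, as on a wedge of two spheres. The correct formula is $\pi_*[K]=\sum_{\tau^*}[G:G_\tau]\,[\tau^*]$, so dividing by $|G|$ does not give $c$. Your local compatibility argument already shows $\partial c=0$, so nothing is lost by omitting that sentence.
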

\begin{proof}
We recall that a simplicial complex $K$ is, by definition, an 
\emph{$n$-dimensional pseudomanifold} (without boundary),
if 
\begin{enumerate}
\item[(PM1)] every simplex of $K$ is a face of some $n$-simplex of $K$, and
\item[(PM2)] every $(n-1)$-simplex of $K$ is a face of precisely two $n$-simplices
 of $K$.
\end{enumerate}
(See e.g. \cite[p. 137]{gmih1}, \cite[p. 74, Examples 4.1.3]{banagltiss}.)
We must verify (PM1) and (PM2) for the complex $K^* = K/G$.

We establish first the following auxiliary claim:
If $(v_0, \ldots, v_n)$ is an $n$-dimensional simplex of $K$,
then its image simplex $(v^*_0, \ldots, v^*_n)$ in $K^*$
also has dimension $n$.
Indeed, suppose that the image simplex had dimension strictly less than $n$.
Then $v^*_i = v^*_j$ for some indices $i\not= j$.
Thus there exists $g\in G$ such that $gv_i = v_j$.
Consequently, $v_i$ and $gv_i$ are vertices of the same simplex of $K$.
Thus the regularity of the action implies that $gv_i = v_i$.
This yields the contradiction $v_i = v_j$. Therefore,
$(v^*_0, \ldots, v^*_n)$ has dimension $n$, as claimed.

Let us now prove property (PM1) for $K^*$.
Let $(v^*_0, \ldots, v^*_d)$ be any simplex in $K^*$,
where $(v_0,\ldots, v_d)$ is a simplex in $K$
with $v_i \not= v_j$ for all $i\not= j$.
Since $K$ is a pseudomanifold, there is an $n$-simplex
of the form 
$(v_0,\ldots, v_d, v_{d+1}, \ldots, v_n)$ in $K$.
Then the simplex $(v^*_0, \ldots, v^*_d)$ is the face of the simplex 
$(v^*_0,\ldots, v^*_d, v^*_{d+1}, \ldots, v^*_n)$ in $K^*$.
Note that by the auxiliary claim, the simplex
$(v^*_0,\ldots, v^*_n)$ is indeed $n$-dimensional.
This verifies property (PM1) for $K^*$.

We turn to (PM2) for $K^*$.
Let $(v^*_0, \ldots, v^*_{n-1})$ be any $(n-1)$-simplex in $K^*$,
where the tuple $(v_0, \ldots, v_{n-1})$ is an $(n-1)$-simplex in $K$.
Since $K$ is a pseudomanifold, there exist vertices $v_+,v_-,$ 
$v_+ \not= v_-,$ in $K$
such that
$(v_0, \ldots, v_{n-1}, v_+)$ and $(v_0, \ldots, v_{n-1}, v_-)$
are $n$-simplices in $K$.
Furthermore, there is no vertex $v\in K$ such that
$(v_0, \ldots, v_{n-1}, v)$ is
an $n$-simplex in $K$ and $v\not\in \{ v_+, v_- \}$.
Then $(v^*_0, \ldots, v^*_{n-1})$ is a face of both
$(v^*_0, \ldots, v^*_{n-1},v^*_+)$ and $(v^*_0, \ldots, v^*_{n-1},v^*_-)$.
These are simplices of $K^*$, and
we will show that they are two \emph{different} simplices. 
(Each one is indeed $n$-dimensional by the auxiliary claim.)
Suppose, by contradiction, that $v^*_+ = v^*_-$.
Then we have two simplices
$(v_0, \ldots, v_{n-1}, v_+)$ and $(v_0, \ldots, v_{n-1}, v_-)$
in $K$ that lie over the same simplex
$(v^*_0, \ldots, v^*_{n-1}, v^*_+) = (v^*_0, \ldots, v^*_{n-1}, v^*_-)$
of $K^*$. Thus the regularity of the action implies that there is
an element $g\in G$ such that
$g (v_0, \ldots, v_{n-1}, v_+) = (v_0, \ldots, v_{n-1}, v_-).$
Hence there exists a permutation $s$ of $\{ 0,\ldots, n-1,- \}$
such that
$g v_j = v_{s(j)},~ \forall j \in \{ 0,\ldots, n-1 \},$
and
$g v_+ = v_{s(-)}.$
For any $j \in \{ 0,\ldots, n-1 \},$
$v_j$ and $v_{s(j)}$ are vertices of the simplex
$(v_0, \ldots, v_{n-1}, v_-)$ in $K$. Therefore, since $v_{s(j)} = gv_j$,
the vertices $v_j$ and $gv_j$ belong to the same simplex of $K$. 
Thus by regularity of the action, $gv_j = v_j$ for all $0\leq j \leq n-1$.
It follows that $gv_+ = v_-$. (The permutation $s$ is thus the identity.)
So $g$ acts locally near points in the interior of the simplex
$(v_0, \ldots, v_{n-1})$ as a reflection across the hyperplane
spanned by $v_0, \ldots, v_{n-1}$.
Let $\Sigma \subset |K|=X$ denote the polyhedron of the 
$(n-2)$-skeleton of $K$. Since $K$ is an oriented pseudomanifold, 
$X-\Sigma$ is an oriented manifold (without boundary)
which is open and dense in $X$.
Let $A \subset X$ be the union of the simplices
$(v_0, \ldots, v_{n-1}, v_+)$ and $(v_0, \ldots, v_{n-1}, v_-)$.
Then the interior $U$ of $A$ is a $g$-invariant open neighborhood
of the interior of $(v_0, \ldots, v_{n-1})$, and $U$ is contained in
the oriented manifold $X-\Sigma$.
The orientation of $X-\Sigma$ induces an orientation of $U$.
The transformation $g$ acts on $U$ as the reflection across
a hyperplane and such a reflection reverses the orientation.
This contradicts the assumption that $G$ acts orientation preservingly.
It follows that $v^*_+ \not= v^*_-.$
This shows that $(v^*_0, \ldots, v^*_{n-1})$ is the face of at least
two different $n$-simplices in $K^*$. 
It remains to show that it is not the face of any other, third, $n$-simplex.
Thus, suppose that
$(v^*_0, \ldots, v^*_{n-1}, w^*)$ is an  $n$-simplex of $K^*$;
we must show that $w^* \in \{ v^*_+, v^*_- \}$.
Then there exists an  $n$-simplex $(w_0, \ldots, w_n)$ in $K$ with
$w^*_j = v^*_j$ for all $j=0,\ldots, n-1,$ and
$w^*_n = w^*$.
Thus $(w_0, \ldots, w_{n-1})$ and $(v_0, \ldots, v_{n-1})$
are two $(n-1)$-simplices in $K$ that both lie over the simplex
$(w^*_0, \ldots, w^*_{n-1}) = (v^*_0, \ldots, v^*_{n-1})$ in $K^*$.
The regularity of the action implies that there exists a $g\in G$
such that
$g (w_0, \ldots, w_{n-1}) = (v_0, \ldots, v_{n-1})$.
Consider the translate $g (w_0, \ldots, w_{n-1}, w_n)$.
This is a simplex in $K$, since $g$ acts simplicially.
It is $n$-dimensional, since $(w_0, \ldots, w_n)$ is; and
we have  $g (w_0, \ldots, w_{n-1}, w_n) =  (v_0, \ldots, v_{n-1}, gw_n)$.
Thus $gw_n \not\in \{ v_0, \ldots, v_{n-1} \}$.
Since $K$ is a pseudomanifold,
we must have $gw_n \in \{ v_+, v_- \}$, say $gw_n = v_+$.
Then $w^* = w^*_n = (gw_n)^* = v^*_+$,
Similarly, $w^* = v^*_-$ if $gw_n = v_-$.
This verifies property (PM2) for $K^*$. We have shown that
$K^*$ is an $n$-dimensional pseudomanifold.

To prove that $K^*$ is oriented, we need to orient the
manifold $U^*$ given by the complement of the polyhedron
$\Sigma^*$ of the $(n-2)$-skeleton of $K^*$.
Let $U$ be the manifold given by the complement of the polyhedron
$\Sigma$ of the $(n-2)$-skeleton of $K$.
Then $U$ is oriented by assumption.
Since the orbit projection $\pi: K \to K^*$ is simplicial and finite-to-one,
we have $\pi^{-1} (\Sigma^*) = \Sigma$ and
$\pi^{-1} (U^*) = U$. Thus $U$ is a $G$-invariant set and
$\pi$ restricts to a map $\pi: U \to U^*$, which is
the orbit projection $U \to U/G = U^*$.
Now whenever a finite group acts in an orientation preserving way on
an oriented manifold, then the orbit space is an orientable 
rational homology manifold. So $U/G$, and hence $U^*$ is orientable.
\end{proof}
In this theorem, the assumption of orientedness of the action
is essential. Simple counterexamples, such as the complex conjugation
action of $\intg/_2$ on the unit circle, readily show that the
pseudomanifold property is not generally preserved by nonoriented actions.
If one allows strata of codimension $1$ (i.e. if one drops requirement (PM2))
and only retains the density condition, expressed simplicially in (PM1),
then Popper shows in \cite[Thm. 3.4]{popper}
that the orbit space of a continuous action of a compact Lie group on a topological
pseudomanifold whose orbits have conical slices is again a topological pseudomanifold.
For our purposes, the classical notion of pseudomanifold as understood
by Goresky-MacPherson \cite{gmih1} is the relevant one.\\

Since the orbit projection associated to the action of a finite group
is a ramified covering (Example \ref{exple.orbitprojisramifiedcov}), 
we obtain, using Theorem \ref{thm.orbitspacepseudomanifold}, 
the following corollary to Theorem \ref{thm.baseoframifiedcoveriswitt}:
\begin{cor} \label{cor.orbitspaceiswitt}
Let $G$ be a finite group and $X$ an oriented triangulated Witt pseudomanifold
upon which $G$ acts by orientation preserving simplicial maps.
Then the orbit space $X/G$ is an oriented Witt pseudomanifold.
\end{cor}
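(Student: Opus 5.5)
The corollary follows by combining Theorem \ref{thm.orbitspacepseudomanifold} with Theorem \ref{thm.baseoframifiedcoveriswitt}. The plan has three steps.

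\emph{Step 1: reduce to a regular action.} Let $K$ be a triangulation of $X$ on which $G$ acts simplicially. Pass to the second barycentric subdivision $K''$, on which the induced action is regular. The subdivision $K''$ is again an oriented pseudomanifold triangulating $X$. The Witt condition does not depend on the chosen stratification, by the Proposition in \cite[Section 2.4]{gmih2}, so $X$ is still Witt with respect to $K''$. From now on we work with $K''$ and write it as $K$.

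\emph{Step 2: the orbit space is an oriented pseudomanifold.} Apply Theorem \ref{thm.orbitspacepseudomanifold} to the regular, orientation preserving action on $K$. It shows that $K/G$ is an oriented $n$-dimensional pseudomanifold. Moreover, $|K/G|$ is canonically identified with $X/G$, and under this identification the orbit projection $\pi: K \to K/G$ is simplicial.

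\emph{Step 3: the orbit space is Witt.} By Example \ref{exple.orbitprojisramifiedcov}, the orbit projection $\pi: X \to X/G$ is a $|G|$-fold ramified covering, with multiplicity $\mu(x)=|G_x|$. It is simplicial with respect to $K$ and $K/G$. We are now in the setting of Theorem \ref{thm.baseoframifiedcoveriswitt}:
\begin{itemize}
\item $\wY = X = |K|$ and $Y = X/G = |K/G|$ are triangulated oriented pseudomanifolds;
\item $\pi$ is a simplicial ramified covering of finite degree;
\item $\wY$ satisfies the Witt condition.
\end{itemize}
That theorem gives that $X/G$ is Witt. Together with Step 2, $X/G$ is an oriented Witt pseudomanifold.

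\emph{Main obstacle.} The only point needing care is the compatibility between the triangulation produced in Step 1 and the hypotheses of the two theorems. Specifically, three things must hold on one and the same complex:
\begin{itemize}
\item the action must be regular, which Theorem \ref{thm.orbitspacepseudomanifold} assumes;
\item the orbit map must be simplicial, which Theorem \ref{thm.baseoframifiedcoveriswitt} requires;
\item the Witt hypothesis must refer to the same underlying space.
\end{itemize}
All three are secured by working throughout with the subdivided complex $K''$ and invoking stratification independence of the Witt condition. Everything else is a direct citation.
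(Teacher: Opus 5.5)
Your proposal is correct and follows the paper's own route. The paper also obtains the corollary by combining Theorem \ref{thm.orbitspacepseudomanifold} (the orbit space is an oriented pseudomanifold) with Theorem \ref{thm.baseoframifiedcoveriswitt}, using Example \ref{exple.orbitprojisramifiedcov} to view the simplicial orbit projection as a $|G|$-fold ramified covering. Your Step 1 (regularity via the second barycentric subdivision, plus stratification independence of the Witt condition) simply makes explicit conventions the paper adopts in Section \ref{sec.simplicialactions} and at the start of the proof of Theorem \ref{thm.baseoframifiedcoveriswitt}.
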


\section{$G$-Signatures of Singular Spaces}
\label{sec.gsignatures}

Let $G$ be a finite group which acts (from the left)
simplicially on an oriented pseudomanifold $X$
of even dimension $n=2m$. We assume that this action preserves the
orientation of $X$ and that $X$ satisfies the Witt condition.
To define a $G$-signature for $X$, we proceed along the lines of
Atiyah-Singer \cite[\S 6]{atiyahsinger}, except that
the singularities in $X$ require us to work with intersection homology instead of
ordinary homology.
Let $IH_* (X)$ denote the intersection homology of $X$, with real coefficients, taken
with respect to the lower or upper middle perversity. The Witt condition ensures
that the canonical map from lower to upper middle perversity intersection 
homology is an isomorphism, and that the bilinear intersection form
\[ B: IH_m (X) \times IH_m (X) \longrightarrow \real \]
is nondegenerate. This form is symmetric when $m$ is even and
skew-symmetric when $m$ is odd.
For every $g\in G$,
the map $g: X\to X$ is a simplicial isomorphism and thus
induces an automorphism $g_*: IH_m (X)\to IH_m (X)$, which
endows $IH_m (X)$ with a left $G$-module structure.
When $X$ is a rational homology manifold,
the action of $G$ is usually
(e.g. in \cite{atiyahsinger} and \cite{zagier}) 
considered on \emph{co}homology $H^m (X)$ rather than homology.
Since cohomology is contravariant, $H^m (X)$ is made into a \emph{left} 
$G$-module by $g\cdot x := (g^*)^{-1} (x)$
(see also Hirzebruch-Zagier \cite[p. 30]{hirzebruchzagier} 
and Gilmer \cite[p. 106]{gilmer}).
Then the Poincar\'e duality isomorphism $H^m (X) \cong H_m (X)$
is an isomorphism of left $G$-modules in view of
\[ 
 g^{-1*} (\xi) \cap [X]
= g_* g^{-1}_* (g^{-1*} (\xi) \cap [X])
= g_* (\xi \cap g^{-1}_* [X])
= g_* (\xi \cap [X]).
\]
In any case, the traces of $g_*$ and $g^{-1}_*$ are equal for a real representation
of a finite group, but in the complex case, which is relevant when $m$ is odd, the
two traces are only conjugate to each other.
We also note that the Poincar\'e duality isomorphism takes the cohomological
intersection form to the homological form $B$.
Consideration of the $G$-signature involves only the perversity $\bm$-intersection
homology in the middle degree $m$, and thus no connectivity requirement
on $X$ is necessary when applying the results of 
Section \ref{sec.ramcovsandihtransfer}.
The form $B$ is $G$-invariant, since the action of $G$ on $X$ preserves the orientation.
Let $\langle \cdot, \cdot \rangle$ be a positive definite and $G$-invariant 
inner product on $IH_m (X)$.
An operator $A$ is defined by the equation 
$B(x,y) = \langle x, Ay  \rangle$.
This operator commutes with the action of $G$, and
its adjoint is given by $A^* = (-1)^m A$.
Since $A$ is $G$-equivariant, $G$ acts on each eigenspace of $A$.

Suppose that $m$ is even.
Then $A$ is self-adjoint and $IH_m (X)$ decomposes as a direct sum
$IH_m (X) = IH_+ \oplus IH_-,$ where $IH_+$ 
is the direct sum of the eigenspaces of the positive eigenvalues of $A$ and 
$IH_-$ is the direct sum of the eigenspaces of the negative eigenvalues.
These are $G$-invariant and
thus define real $G$-modules that we will also denote by
$IH_+, IH_-$. Up to isomorphism, they are independent of the choice
of inner product $\langle \cdot, \cdot \rangle$, as $G$ 
has discrete characters, the characters of $IH_+$ and $IH_-$
vary continuously with the inner product, and
the space of $G$-invariant positive definite 
inner products is connected.
If $V$ is a finite dimensional real vector space and $B$ is a 
nondegenerate symmetric bilinear pairing on $V$, 
then a subspace $V_+ \subset V$ is called
\emph{positive} if $B|_{V_+ \otimes V_+}$ is positive
definite, i.e. $B(v,v)>0$ for all $v\in V_+$,
$v\not= 0$. The subspace $V_+$ is called \emph{maximally positive},
if $V_+$ is not contained in a larger positive subspace. 
Negative and maximally negative subspaces are defined analogously.
\begin{lemma} \label{lem.decompimpmaximalpos}
Let $V_+ \subset V$ be a positive subspace for $B$ and $V_- \subset V$ 
a negative subspace.
If $V= V_+ \oplus V_-$, then $V_+$ is maximally positive and
$V_-$ is maximally negative.
\end{lemma}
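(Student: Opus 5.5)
We argue by contradiction, separately for $V_+$ and $V_-$; the two cases are symmetric (replace $B$ by $-B$), so we treat only $V_+$. Suppose $V_+$ is not maximally positive. Then there is a positive subspace $W$ with $V_+ \subsetneq W$. The plan is to produce a nonzero vector lying in $W \cap V_-$ and derive a sign contradiction from it.

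First, pick a vector $w \in W - V_+$. Using the decomposition $V = V_+ \oplus V_-$, write $w = w_+ + w_-$ with $w_\pm \in V_\pm$. Since $w \notin V_+$, the component $w_-$ is nonzero. Moreover $w_+ \in V_+ \subset W$, and $W$ is a linear subspace, so $w_- = w - w_+ \in W$. Thus $w_-$ is a nonzero vector in $W \cap V_-$.

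Second, compare signs. Because $W$ is positive and $w_- \neq 0$, we have $B(w_-,w_-) > 0$. Because $V_-$ is negative and $w_- \neq 0$, we have $B(w_-,w_-) < 0$. These are incompatible, so no such $W$ exists and $V_+$ is maximally positive. Applying the same argument with the roles of positive and negative interchanged shows that $V_-$ is maximally negative.

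There is no real obstacle here. The only point that needs care is the observation that $w_-$ itself lies in $W$, which uses that $V_+ \subset W$. Note also that neither nondegeneracy of $B$ nor orthogonality of $V_+$ and $V_-$ is required; the argument uses only the direct sum decomposition and the definiteness of $B$ on each subspace.
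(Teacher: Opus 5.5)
Your proof is correct and complete. The paper gives no argument at all; it simply calls the lemma well-known and straightforward. Your direct construction of a nonzero vector $w_- \in W \cap V_-$ is the standard argument that fills this in. Your closing remark is also accurate: neither nondegeneracy nor orthogonality is needed. Moreover, unlike a dimension count based on $\dim W + \dim V_- > \dim V$, your argument does not use finite-dimensionality.
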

This is well-known and straightforward.
Sylvester's law of inertia states that any two maximally positive (respectively, negative)
subspaces have the same dimension. The signature of $B$ is the difference
$\dim V_+ - \dim V_-$, for any maximal positive subspace $V_+$ and any
maximal negative $V_-$.
\begin{lemma} \label{lem.ihpmaxposforb}
The subspace $IH_+$ is maximally positive for the intersection form $B$.
The subspace $IH_-$ is maximally negative for $B$. 
\end{lemma}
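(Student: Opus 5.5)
The plan is to check that $IH_+$ is a positive subspace and $IH_-$ a negative subspace for $B$, and then to apply Lemma \ref{lem.decompimpmaximalpos} to the decomposition $IH_m(X) = IH_+ \oplus IH_-$.

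Here $m$ is even, so $A$ is self-adjoint with respect to the $G$-invariant inner product $\langle\cdot,\cdot\rangle$. By the spectral theorem, $IH_m(X)$ is the orthogonal direct sum of the eigenspaces $E_\lambda$ of $A$, with real eigenvalues $\lambda$.

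The first step is to observe that $0$ is not an eigenvalue. If $Ay=0$, then $B(x,y)=\langle x,Ay\rangle=0$ for all $x$. The Witt condition makes $B$ nondegenerate, so $y=0$. Hence $IH_m(X)=IH_+\oplus IH_-$ exactly, with no kernel summand left over.

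The second step is to show that $IH_+$ is positive. Take $v\in IH_+$ with $v\neq 0$ and write $v=\sum_{\lambda>0} v_\lambda$ with $v_\lambda\in E_\lambda$. Using $Av=\sum\lambda v_\lambda$ and the orthogonality of distinct eigenspaces, we get
\[ B(v,v)=\langle v,Av\rangle=\sum_{\lambda>0}\lambda\,\langle v_\lambda,v_\lambda\rangle . \]
This is strictly positive, since some $v_\lambda$ is nonzero and every $\lambda$ in the sum is positive. The same computation shows that $B(v,v)<0$ for every nonzero $v\in IH_-$, so $IH_-$ is negative.

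Lemma \ref{lem.decompimpmaximalpos} then shows that $IH_+$ is maximally positive and $IH_-$ is maximally negative. The only real input is the nondegeneracy of $B$, which rules out a zero eigenspace, so I expect no serious obstacle.
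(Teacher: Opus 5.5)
Your proposal is correct and follows essentially the same route as the paper: positivity of $IH_+$ and negativity of $IH_-$ via the spectral theorem for the self-adjoint operator $A$, then Lemma \ref{lem.decompimpmaximalpos} applied to $IH_m(X) = IH_+ \oplus IH_-$. Your explicit check that nondegeneracy of $B$ excludes a zero eigenspace is a detail the paper leaves implicit when it writes $IH_m(X) = IH_+ \oplus IH_-$.
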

\begin{proof}
The spectral theorem applied to the self-adjoint operator $A$
can be used to show that $IH_+$ is positive for $B$.
Similarly, $IH_-$ is negative.
Since $IH_m (X) = IH_+ \oplus IH_-$, Lemma \ref{lem.decompimpmaximalpos}
implies that $IH_+$ is in fact maximally positive and
$IH_-$ is maximally negative.
\end{proof}

For the following definition see also \cite{blp}. Let
$RO (G)$ denote the real, and $R(G)$ the complex, representation ring of $G$.
\begin{defn}
For $m$ even, the \emph{$G$-signature} of the 
$G$-Witt space $X^{2m}$ is the virtual representation
\[ \Sign (G,X) := IH_+ - IH_- \in RO(G) \subset R(G).  \]
\end{defn}
\noindent On elements $g\in G,$ we will in particular consider the real numbers
\[  \Sign (g,X) := \tr(g|_{IH_+}) - \tr(g|_{IH_-}), \]
where $\tr$ denotes the trace of a linear endomorphism.
This number depends only on the action of $g$ on $IH_* (X)$.

If $m$ is odd, then $A$ is skew-adjoint.
Let $(AA^*)^{1/2}$ denote the positive square root of $AA^* = -A^2$.
Since the square of the operator $J = A/ (AA^*)^{1/2}$ 
is $J^2 = -1$, $J$ defines a complex structure on $IH_m (X)$.
As $J$ commutes with the $G$-action, we obtain a complex $G$-module $IH_m (X)$.
Again, this module is independent of the choice of inner product.

\begin{defn}
For $m$ odd, the \emph{$G$-signature} of the 
$G$-Witt space $X^{2m}$ is the virtual representation
\[ \Sign (G,X) := IH_m (X) - IH_m (X)^* \in R(G).  \]
\end{defn}
\noindent On elements $g\in G,$ we will in particular consider the complex numbers
\[  \Sign (g,X) := \tr (g|_{IH_m (X)}) - \overline{\tr (g|_{IH_m (X)})} 
  = 2i~ \operatorname{Im} \tr(g|_{IH_m (X)}), \]
where one takes the trace of $g$ as an automorphism of a 
complex vector space.
This number is again independent of choices.

\begin{remark} \label{rem.gsignforgequalone}
For $g=1$, one obtains the ordinary signature $\Sign (X)$.
Indeed, if $m$ is even, then
\[ \Sign (1,X)
    = \tr(\id|_{IH_+}) - \tr(\id|_{IH_-})
    = \dim IH_+ - \dim IH_-
    = \Sign (X), \]
using Lemma \ref{lem.ihpmaxposforb}.    
If $m$ is odd, then
\[ \Sign (1,X)
    = 2i~ \operatorname{Im} \tr(\id|_{IH_m (X)})
    = 2i~ \operatorname{Im} \dim_\cplx IH_m (X)
    = 0 = \Sign (X). \]
\end{remark}
If $X$ is odd dimensional, one sets $\Sign (g,X)=0$.
The next lemma is a consequence of the topological invariance of
the intersection form $B$ 
(see e.g. \cite[Remark 8.2.7 and Thm. 9.3.16]{friedmanihbook}).
\begin{lemma} \label{lem.gsignpresequivhomeo}
Let $X$ and $Y$ be oriented closed $G$-Witt pseudomanifolds.  
If $X$ and $Y$ are $G$-equivariantly and orientation preservingly
homeomorphic, then
$\Sign (g,X) = \Sign (g,Y)$ for every $g\in G$.
\end{lemma}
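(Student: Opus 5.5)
Let $\phi: X \to Y$ be a $G$-equivariant, orientation preserving homeomorphism. The plan is to show that $\phi$ induces an isometric $G$-equivariant isomorphism of the middle-dimensional intersection form spaces. The signature data of Section \ref{sec.gsignatures} then transports verbatim.

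\textbf{Step 1: transport of the form.} Since intersection homology is a topological invariant, $\phi$ induces an isomorphism $\phi_*: IH^\bm_m (X;\real) \to IH^\bm_m (Y;\real)$, independent of stratifications. Here $2m = \dim X = \dim Y$; if the dimension is odd, both signatures vanish and there is nothing to prove. By the topological invariance of the intersection pairing cited just before the statement (\cite[Remark 8.2.7 and Thm. 9.3.16]{friedmanihbook}), together with $\phi_* [X] = [Y]$ by orientation preservation, we get
\[ B_Y (\phi_* x, \phi_* y) = B_X (x,y). \]
Equivariance $\phi \circ g = g \circ \phi$ and functoriality give $\phi_* g_* = g_* \phi_*$ on $IH_m$.

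\textbf{Step 2: choice of inner product.} Choose a $G$-invariant positive definite inner product $\langle \cdot,\cdot \rangle_X$ on $IH_m (X)$. Push it forward by defining $\langle \phi_* x, \phi_* y \rangle_Y := \langle x,y \rangle_X$; this is again $G$-invariant by Step 1. The operators $A_X$ and $A_Y$ defined by $B(x,y) = \langle x, Ay \rangle$ then satisfy $A_Y = \phi_* A_X \phi_*^{-1}$.

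\textbf{Step 3: case $m$ even.} Conjugation by $\phi_*$ carries eigenspaces of $A_X$ to eigenspaces of $A_Y$. Hence $\phi_* (IH_\pm (X)) = IH_\pm (Y)$ as $G$-modules, which gives $\tr(g|_{IH_\pm(X)}) = \tr(g|_{IH_\pm(Y)})$. Since the $G$-modules $IH_\pm$ do not depend on the choice of inner product, as noted in Section \ref{sec.gsignatures}, this yields $\Sign(g,X) = \Sign(g,Y)$.

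\textbf{Step 4: case $m$ odd.} The complex structure is $J = A/(AA^*)^{1/2}$, and from Step 2 we get $J_Y = \phi_* J_X \phi_*^{-1}$. So $\phi_*$ is a $\cplx$-linear $G$-isomorphism, and the complex traces agree, giving equal $\Sign(g,-)$.

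\textbf{Main obstacle.} The delicate point is Step 1: the intersection pairing must be invariant under a homeomorphism that need not be simplicial or stratum-preserving. This is exactly what the cited topological invariance provides. It also requires identifying that $IH$ is computed with respect to arbitrary (e.g. intrinsic) stratifications, which the Witt-space theory allows. Steps 2–4 are then routine linear algebra.
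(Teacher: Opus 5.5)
Your proposal is correct and follows the paper's route. The paper's entire justification is that the lemma follows from the topological invariance of the intersection form, via the same Friedman references you invoke in Step 1. Your Steps 2--4 spell out the routine linear algebra the paper leaves implicit: transporting the $G$-invariant inner product, the operator $A$, its eigenspaces $IH_\pm$, and the complex structure $J$ along the $G$-equivariant isometry $\phi_*$.
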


\begin{prop} \label{prop.equivsignconjugation}
Let $X$ be a closed oriented $G$-Witt pseudomanifold such that the $G$-action
preserves the orientation. Then
$\Sign (hgh^{-1}, X) =  \Sign (g, X)$
for all $g,h\in G$.
\end{prop}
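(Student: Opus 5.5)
The plan is to reduce the statement to the elementary fact that traces are invariant under conjugation, applied to the $G$-modules that define $\Sign (G,X)$. Put $V = IH_m (X)$ (real coefficients, $\dim X = 2m$; if $\dim X$ is odd both sides are zero and there is nothing to prove). By functoriality of intersection homology under the simplicial homeomorphisms $g,h$, we have $(hgh^{-1})_* = h_* g_* h_*^{-1}$ as automorphisms of $V$. The only point needing care is that $\Sign (g,X)$ is not the trace of $g_*$ on $V$ itself, but the trace on the invariant pieces $IH_\pm$ (for $m$ even), or the trace on $V$ regarded as a complex vector space via $J$ (for $m$ odd). So we must check that $h_*$ preserves these structures.

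For $m$ even, fix a $G$-invariant inner product $\langle \cdot,\cdot \rangle$ and the operator $A$ defined by $B(x,y) = \langle x, Ay \rangle$. The form $B$ is $G$-invariant because the action preserves orientation, so $A$ commutes with every $h_*$, as already noted in the text. Consequently $h_*$ maps each eigenspace of $A$ to itself, and in particular $h_*(IH_\pm) = IH_\pm$. Let $h_\pm$ and $g_\pm$ denote the restrictions of $h_*$ and $g_*$ to $IH_\pm$. Then $(hgh^{-1})_*|_{IH_\pm} = h_\pm g_\pm h_\pm^{-1}$, and since $\tr(h_\pm g_\pm h_\pm^{-1}) = \tr(g_\pm)$, taking the difference of the $+$ and $-$ traces gives $\Sign(hgh^{-1},X) = \Sign(g,X)$.

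For $m$ odd, $h_*$ commutes with $A$, and therefore with $AA^*=-A^2$. It also commutes with the positive square root $(AA^*)^{1/2}$, because that root is a polynomial in $AA^*$. Hence $h_*$ commutes with $J$, so $h_*$ is a $\cplx$-linear automorphism of the complex $G$-module $(V,J)$. Conjugation invariance of the complex trace then gives $\tr_\cplx((hgh^{-1})_*) = \tr_\cplx(g_*)$, and the same holds for the imaginary parts, which yields $\Sign(hgh^{-1},X) = \Sign(g,X)$.

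I expect no real obstacle. The only step needing attention is the one just handled: justifying that $h_*$ preserves $IH_\pm$, respectively commutes with $J$. Both follow from the $G$-invariance of $B$ and of the chosen inner product.

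An alternative route is to observe that $h\colon X \to X$ is an orientation-preserving homeomorphism which is equivariant from the $G$-space $X$ to $X$ with the action twisted by conjugation. Lemma \ref{lem.gsignpresequivhomeo} would then apply, but setting up that equivariance is more cumbersome than the direct trace argument above.
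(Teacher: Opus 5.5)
Your argument is correct, but it is not the paper's route. The paper uses exactly the alternative you set aside. It defines a twisted action $g\bullet x := hgh^{-1}\cdot x$ and notes that $\bullet$ is again simplicial and orientation preserving, being a composite of such maps. It observes that $h\colon (X,\cdot)\to (X,\bullet)$ is then $G$-equivariant, and concludes from Lemma \ref{lem.gsignpresequivhomeo} that $\Sign(g,(X,\cdot)) = \Sign(g,(X,\bullet)) = \Sign(hgh^{-1},(X,\cdot))$.

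That takes only a few lines, so it is hardly more cumbersome than your argument. What it buys is that conjugation invariance becomes a formal consequence of the naturality of the $G$-signature under equivariant orientation-preserving homeomorphisms, with no linear algebra at all.

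Your route instead works entirely inside $IH_m(X)$. Because $h_*$ preserves both $B$ and the invariant inner product, it commutes with $A$. Hence it preserves $IH_\pm$ and commutes with $J$, and conjugation invariance of the trace finishes the proof. You handle the one delicate step correctly: $(AA^*)^{1/2}$ is a polynomial in $AA^*$, so $h_*$ commutes with it.

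In effect you are proving that $g\mapsto \Sign(g,X)$ is the character of the virtual representation $\Sign(G,X)$, and therefore a class function. You could shortcut this further. Section \ref{sec.gsignatures} already records that $IH_\pm$ (resp.\ $(IH_m(X),J)$) are $G$-modules, and characters of representations are class functions.

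Compared with the paper, your version avoids both the invariance input packaged in Lemma \ref{lem.gsignpresequivhomeo} and the check that the twisted action is admissible. The paper's version reduces the statement to a structural property of $\Sign$ that it needs elsewhere anyway.
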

\begin{proof}
We consider the automorphism $h: X \to X$ given by $h(x) = h\cdot x$.
This is generally not a $G$-equivariant map.
We make it $G$-equivariant by defining a new $G$-action 
$\bullet$ on $X$: 
For $x\in X$, we set $g\bullet x := hgh^{-1} \cdot x$.
Then $h: (X,\cdot) \to (X,\bullet)=:Y$ is indeed $G$-equivariant, as
\[ h (g\cdot x) = h\cdot (g \cdot x)
= hgh^{-1} \cdot (h\cdot x) = g\bullet (h(x)). \]
The action $\bullet$ is again simplicial and preserves the orientation of $X$,
since $x \mapsto g\bullet x$ is the composition of the 
homeomorphisms 
$x \mapsto h^{-1} \cdot x$,
$x \mapsto g\cdot x$, and
$x \mapsto h\cdot x$,
each of which is simplicial and preserves the orientation of $X$ by assumption.
By Lemma \ref{lem.gsignpresequivhomeo},
$\Sign (g, (X,\cdot)) = \Sign (g, (X,\bullet)) = \Sign (hgh^{-1}, (X,\cdot))$.
\end{proof}

The proof of Proposition \ref{prop.signatureoforbitspace}  on the signature of
the orbit space requires 
the following standard fact from the representation theory of finite groups;
see e.g. Fulton-Harris \cite[p. 15f]{fultonharris},
or Hirzebruch-Zagier \cite[Thm. p. 21f]{hirzebruchzagier}.
\begin{lemma} \label{lem.reptheoryfingroups}
Let $G$ be a finite group acting linearly on a 
finite dimensional real or complex vector space $V$.
Then the dimension of the linear subspace 
$V^G = \{ v\in V ~|~ gv=v \text{ } \forall g\in G \}$ of invariant vectors
is given by
\[ \dim V^G = \frac{1}{|G|} \sum_{g\in G} \tr (g|_V). \]
\end{lemma}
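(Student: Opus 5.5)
The natural route is through the averaging operator
\[ P := \frac{1}{|G|} \sum_{g\in G} g \in \End (V). \]
The plan is to show that $P$ is a projection onto $V^G$ and then compute $\dim V^G$ as the trace of $P$. We work over $\real$ or $\cplx$; in both cases division by $|G|$ is allowed since the characteristic is zero.

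First we check that $P$ takes values in $V^G$. For $h\in G$, left multiplication $g\mapsto hg$ is a bijection of $G$, so reindexing the sum gives
\[ hP = \frac{1}{|G|}\sum_{g} hg = P. \]
Hence $h(Pv) = Pv$ for every $v\in V$, that is, $\operatorname{im} P \subset V^G$. Conversely, if $v\in V^G$, then $gv = v$ for all $g$, so $Pv = \frac{1}{|G|}\,|G|\, v = v$. Thus $P$ restricts to the identity on $V^G$, and therefore $\operatorname{im} P = V^G$ and $P^2 = P$.

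An idempotent endomorphism of a finite dimensional vector space gives a decomposition $V = \operatorname{im} P \oplus \ker P$, and $P$ acts on this sum as $\id \oplus 0$. Its trace is therefore $\dim \operatorname{im} P = \dim V^G$. Since the trace is linear, we also have
\[ \tr P = \frac{1}{|G|}\sum_{g\in G} \tr (g|_V). \]
Comparing the two expressions for $\tr P$ yields the formula.

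There is no real obstacle in this argument. The one point that needs care is the trace of an idempotent: one should use the splitting $V = \operatorname{im} P \oplus \ker P$ rather than diagonalizability of $g$ over $\real$. This splitting holds over any field, so the real case goes through with no extra work.
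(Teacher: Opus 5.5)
Your proof is correct and is the standard averaging-projection argument the paper relies on. The paper does not prove the lemma itself; it cites Fulton--Harris, where $\dim V^G$ is likewise computed as the trace of the projection $\frac{1}{|G|}\sum_{g\in G} g$ onto $V^G$, and your use of the splitting $V = \operatorname{im} P \oplus \ker P$ makes this work verbatim over $\real$, which is the case the paper needs for $IH_\pm$.
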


Let $\pi: X \to X/G$ be the orbit projection, a simplicial map.
By Corollary \ref{cor.orbitspaceiswitt}, $X/G$ is an oriented Witt pseudomanifold 
of dimension $2m$,
and thus has a well-defined signature $\Sign (X/G)$, which can be described
as follows:
Since $X/G$ is Witt,
the canonical map from lower to upper middle perversity intersection 
homology is an isomorphism, and the bilinear intersection form
$\overline{B}: IH_m (X/G) \times IH_m (X/G) \to \real$
is nondegenerate. This form is symmetric when $m$ is even and
skew-symmetric when $m$ is odd. 

Suppose that $m$ is even.
Using the map $\pi_*: IH_m (X) \to IH_m (X/G)$, we define linear subspaces
of $IH_m (X/G)$ by
$\overline{IH}_+ := \pi_* (IH_+),$
$\overline{IH}_- := \pi_* (IH_-).$

\begin{lemma} \label{lem.transfoihisihcapinv}
The image $\pi_! (\overline{IH}_\pm)$ consists precisely of the $G$-invariant
elements in $IH_\pm$, that is,
$\pi_! (\overline{IH}_+) = IH_+ \cap IH_m (X)^G,$
   $\pi_! (\overline{IH}_-) = IH_- \cap IH_m (X)^G.$
\end{lemma}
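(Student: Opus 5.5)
The plan is to combine the down-up formula (Proposition \ref{prop.downupsumofgstar}) with the $G$-invariance of the eigenspaces $IH_\pm$. We work with real coefficients in the middle degree $m$ for the lower middle perversity. By the remark after Corollary \ref{cor.pistarisorestrinvs}, Propositions \ref{prop.updownmultbydegree}, \ref{prop.downupsumofgstar} and \ref{prop.rattransferisisoontoinvariants} hold for $\real$-coefficients and need no connectivity. Via the Witt isomorphism we identify lower and upper middle perversity groups. The key identity is then
\[ \pi_! \circ \pi_* = \sum_{g\in G} g_* \]
on $IH_m (X)$.

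First inclusion, $\pi_!(\overline{IH}_+) \subset IH_+ \cap IH_m (X)^G$. Take $\bar v \in \overline{IH}_+$ and write $\bar v = \pi_*(v)$ with $v\in IH_+$. Then $\pi_!(\bar v) = \sum_g g_* v$. Since $A$ commutes with the $G$-action, each eigenspace of $A$ is $G$-invariant, and so is $IH_+$. Hence every $g_* v$ lies in $IH_+$, and so does their sum. The sum $\sum_g g_* v$ is visibly $G$-invariant, because $h_*\sum_g g_* v = \sum_g (hg)_* v$. (Alternatively, invariance follows directly from Proposition \ref{prop.rattransferisisoontoinvariants}.)

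Reverse inclusion. Let $w\in IH_+$ with $g_* w = w$ for all $g$. By the down-up formula,
\[ \pi_!\Bigl(\tfrac{1}{|G|}\pi_*(w)\Bigr) = \tfrac{1}{|G|}\sum_g g_* w = w. \]
Since $\tfrac{1}{|G|}\pi_*(w) = \pi_*\bigl(\tfrac{1}{|G|}w\bigr)$ and $\tfrac{1}{|G|}w \in IH_+$, this element lies in $\overline{IH}_+$. Hence $w\in \pi_!(\overline{IH}_+)$. The argument for $IH_-$ is identical.

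The only delicate point is whether the down-up identity may be applied to the real vector space $IH_m(X;\real)$ in the middle degree without connectivity. I would settle this by citing the suspension argument in Proposition \ref{prop.downupsumofgstar}, which covers $\bm$ and degree $m = \lfloor \dim X/2\rfloor$, and then tensoring the integral statement with $\real$. Everything else is formal linear algebra.
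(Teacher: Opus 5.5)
Your proposal is correct and follows the paper's proof essentially verbatim: both inclusions rest on the down-up formula $\pi_!\circ\pi_* = \sum_{g\in G} g_*$ together with the $G$-invariance of $IH_\pm$, and the reverse inclusion uses the preimage $\frac{1}{|G|}\pi_*(w)$. The only cosmetic difference is that you verify the $G$-invariance of $\pi_!(\bar v)$ directly from $h_*\sum_g g_*v = \sum_g (hg)_*v$, whereas the paper cites Proposition~\ref{prop.rattransferisisoontoinvariants}.
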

\begin{proof}
For $w\in IH_+,$ one has $\pi_! \pi_* (w)= \sum_g g_* (w)$.
The elements $g_* (w)$ lie in $IH_+$ for every $g\in G$, 
as $IH_+$ is $G$-invariant.
Thus their sum, and hence $\pi_! \pi_* (w)$ is in $IH_+$.
Since every element of $\overline{IH}_+$ is of the form $\pi_* (w)$,
with $w\in IH_+$, this shows that $\pi_! (\overline{IH}_+) \subset IH_+$.
By Proposition \ref{prop.rattransferisisoontoinvariants},
$\pi_! (\overline{IH}_+) \subset \pi_! (IH_m (X/G))
   \subset IH_m (X)^G.$
We have proved that $\pi_! (\overline{IH}_+) \subset IH_+ \cap IH_m (X)^G$.
Conversely, let $w \in IH_+ \cap IH_m (X)^G$ be any element.
Set $d:=|G|$ and $v := \frac{1}{d} \pi_* (w)$. 
Then $v\in \overline{IH}_+$ and
\[ \pi_! (v) = \mbox{$\frac{1}{d}$} \pi_! \pi_* (w)
   =  \mbox{$\frac{1}{d}$} \sum_{g\in G} g_* (w) =    \mbox{$\frac{1}{d}$} dw =w, \]
since $w\in IH_m (X)^G$ is $G$-invariant so that $g_* (w)=w$ for all $g$.
We conclude that $w\in \pi_! (\overline{IH}_+)$, which 
establishes the converse inclusion.
\end{proof}

\begin{lemma} \label{lem.oihpispos}
The subspace $\overline{IH}_+$ is positive for the 
intersection form $\overline{B}$.
The subspace $\overline{IH}_-$ is negative for $\overline{B}$. 
\end{lemma}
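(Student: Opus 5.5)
The plan is to relate the intersection form $\overline{B}$ on $X/G$ to the form $B$ on $X$ via the transfer, through a projection-type formula
\[ \overline{B}(\pi_* w, \pi_* w') = \frac{1}{|G|}\, B\Bigl(\sum_{g\in G} g_* w,\ \sum_{h\in G} h_* w'\Bigr) \quad\text{or equivalently}\quad B(\pi_! v, \pi_! v') = |G|\,\overline{B}(v,v'), \]
for $v,v' \in IH_m(X/G)$. The second identity is the cleaner one to aim for. Intuitively, the ramified cover has degree $|G|$ and is orientation preserving, so transverse generic intersections of allowable cycles in $X/G$ lift to exactly $|G|$ intersections, counted with sign, of their preimages. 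Concretely, I would take $\bm$-allowable PL cycles $\xi,\eta$ on $X/G$ in general position, with $\pi_!$ realized on the chain level by taking the full preimage $\pi^{-1}(\xi)$ (weighted by multiplicity). This is compatible with the Dold--Thom definition of $\pi_!$ because $\tau$ sends a point to its fiber with multiplicities, and Diagram (\ref{equ.ihtransfercompatibleclassicaltransf}) confirms this on ordinary homology. Intersection points of $\xi$ and $\eta$ lie in the top stratum of $X/G$, where $\pi$ is an unramified $|G|$-sheeted orientation-preserving covering (multiplicity $1$ off the codimension-$\ge 2$ fixed set after general position). Hence each local intersection number lifts to $|G|$ equal local intersection numbers, and the identity follows.

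Granting this identity, positivity is immediate. Let $v \in \overline{IH}_+$ with $v \neq 0$. By Lemma \ref{lem.transfoihisihcapinv}, $\pi_!(v) \in IH_+$, and $\pi_!(v) \neq 0$ because $\pi_!$ is injective rationally (Proposition \ref{prop.updownmultbydegree}). By Lemma \ref{lem.ihpmaxposforb}, $B(\pi_! v, \pi_! v) > 0$. Therefore $\overline{B}(v,v) = \frac{1}{|G|} B(\pi_! v,\pi_! v) > 0$. The argument for $\overline{IH}_-$ is the same with the inequalities reversed. Real coefficients are fine here, since the transfer statements hold over any field of characteristic zero.

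The main obstacle is the identity $B(\pi_! v,\pi_! v') = |G|\,\overline{B}(v,v')$. The transfer was defined homotopy-theoretically, so it is not immediately geometric. My first attempt would establish that $\pi_!$ is represented by preimage of PL $\bm$-allowable chains, checking allowability via Lemma \ref{lem.fstardelta}: preimages of open simplices are disjoint unions of open simplices of the same dimension, so codimensions of intersections with skeleta are preserved. I would then compare with $\tau$ through the Dold--Thom identification, using the commutative square with ordinary homology.

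If that comparison proves awkward, I would use a fallback. Work on the dense top stratum $U$ with the unramified covering $U \to U^*$, and use that the intersection pairing can be computed by general-position intersections supported in the regular part. Alternatively, compute $\overline{B}(\pi_* w,\pi_* w')$ directly by pushing forward: since $\pi^{-1}\pi(w) = \sum_g g w$ on the chain level, one gets $\overline{B}(\pi_* w,\pi_* w') = B\bigl(\sum_g g_* w, w'\bigr)$. For $w \in IH_+$, one writes $\pi_* w = \pi_*\bigl(\frac{1}{|G|}\sum_g g_* w\bigr)$ and applies the formula to the invariant element $\bar w = \frac{1}{|G|}\sum_g g_*w \in IH_+$, getting $\overline{B} = |G|\, B(\bar w,\bar w) > 0$ whenever $\pi_* w \neq 0$.
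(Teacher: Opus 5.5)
Your final step is exactly the paper's: $\pi_! v\in IH_+$ by Lemma~\ref{lem.transfoihisihcapinv}, $\pi_! v\neq 0$ by up-down, hence $\overline{B}(v,v)=\frac{1}{|G|}B(\pi_! v,\pi_! v)>0$. The identity this rests on is also the same. What differs is how you justify that identity.

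The paper does no chain-level work. It cites the Goresky--MacPherson adjointness $B(\pi_!(-),-)=\overline{B}(-,\pi_*(-))$ and combines it with $\pi_*\pi_!=d$:
$\overline{B}(v,v)=d^{-2}\,\overline{B}(\pi_*\pi_! v,\pi_*\pi_! v)=d^{-2}B(\pi_!\pi_*\pi_! v,\pi_! v)=d^{-1}B(\pi_! v,\pi_! v)$.

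Your primary route must instead identify the Dold--Thom transfer with the weighted preimage of allowable chains, and the tool you name cannot do this. The square (\ref{equ.ihtransfercompatibleclassicaltransf}) only shows that the two agree after mapping to ordinary homology, and $IH_m\to H_m$ need not be injective. So either cite the adjointness, as the paper does, or use your fallback.

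The fallback is a genuinely different and rather clean route. It never touches $\pi_!$; it uses only the geometric maps $\pi_*$, $g_*$ and the projection formula $\overline{B}(\pi_* w,\pi_* w')=B(\sum_g g_* w,w')$, which is the paper's adjointness composed with down-up. Averaging to $\bar w$ replaces both Lemma~\ref{lem.transfoihisihcapinv} and the injectivity of $\pi_!$. Its price is a stratified general-position argument with a local count at intersection points in the top stratum. Note that $\pi^{-1}\pi(w)=\sum_g gw$ holds as a chain identity only with the weights $\mu$, but the local count is all you need.

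In every version one hypothesis is used silently: by you as ``multiplicity $1$ off the fixed set'', and by the paper when it applies the cited adjointness to the transfer built from $\mu(x)=|G_x|$. Points in open $n$-simplices must have trivial isotropy, or at least the same isotropy everywhere.

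For $\intg/_2$ acting trivially, $\mu\equiv 2$, so $\pi_!=2$ and $B(\pi_! v,\pi_! v)=4\,\overline{B}(v,v)$; your identity is off by a factor, though the sign survives. If the generic isotropy varies, the statement itself fails:
\begin{itemize}
\item Let $\intg/_2$ act on $X=\cplx P^2\sqcup\overline{\cplx P^2}\sqcup\overline{\cplx P^2}$, trivially on the first summand and by swapping the other two. Wedging the summands at a point gives a connected example with effective action.
\item With generators $e$ and $a_0,a_1$ of the three copies of $H_2$, the invariant part $IH_2(X;\real)^G$ has basis $e$, $s=(a_0+a_1)/\sqrt{2}$, on which $B=\operatorname{diag}(1,-1)$.
\item Take the invariant inner product with $\langle e,e\rangle=\langle s,s\rangle=1$, $\langle e,s\rangle=c$, and the invariant part orthogonal to $a_0-a_1$. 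Then $IH_+$ is spanned by $e+ts$ with $t=(\sqrt{1-c^2}-1)/c$.
\item For $c<1$ close to $1$ one has $\frac12<t^2<1$. But $\pi_*(e+ts)=\bar e+\sqrt{2}\,t\,\bar a$, so $\overline{B}(\pi_*(e+ts),\pi_*(e+ts))=1-2t^2<0$.
\end{itemize}
So you should state explicitly that $G$ acts with trivial isotropy on the open $n$-simplices. Under that assumption your multiplicity-one count, and hence either of your arguments, is sound.
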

\begin{proof}
Given a nonzero vector $v \in \overline{IH}_+$, we must prove that
$\overline{B}(v,v)>0$.
Using the up-down Proposition \ref{prop.updownmultbydegree}, we can write
$v$ as $v = \frac{1}{d} \pi_* \pi_! (v).$
Using the adjointness relation
$B (\pi_! (-), -) = \overline{B} (-, \pi_* (-))$ (\cite[p. 390, 3.]{gmlefschetz})
and up-down,
\begin{align*}  
\overline{B} (v,v)
&= \mbox{$\frac{1}{d^2}$} \overline{B} (\pi_* \pi_! v, \pi_* \pi_! v) 
 = \mbox{$\frac{1}{d^2}$} B (\pi_! \pi_* \pi_! v, \pi_! v) \\
&= \mbox{$\frac{1}{d^2}$} B (\pi_! (dv), \pi_! v) 
 = \mbox{$\frac{1}{d}$} B (\pi_! v, \pi_! v).
\end{align*}   
According to Lemma \ref{lem.transfoihisihcapinv}, 
the vector $\pi_! v$ is in $IH_+$, and it is nonzero since
$\pi_!$ is injective.
By Lemma \ref{lem.ihpmaxposforb}, $IH_+$ is positive for $B$.
Therefore, $B (\pi_! v, \pi_! v) >0$.
\end{proof}

\begin{lemma} \label{lem.ihxmodgdecompoihpoihn}
The vector space $IH_m (X/G)$ decomposes as a direct sum
$IH_m (X/G) = \overline{IH}_+ \oplus \overline{IH}_-.$
\end{lemma}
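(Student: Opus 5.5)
We have to show two things: $\overline{IH}_+ + \overline{IH}_- = IH_m (X/G)$, and $\overline{IH}_+ \cap \overline{IH}_- = 0$.

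\emph{Spanning.} By Corollary \ref{cor.pistarisorestrinvs}, with real coefficients, the middle degree $m$ and $\bp = \bm$ (so no connectivity is needed), the map $\pi_*: IH_m(X) \to IH_m(X/G)$ is surjective; indeed it is already surjective on $IH_m(X)^G$. Since $IH_m(X) = IH_+ \oplus IH_-$, every $v \in IH_m(X/G)$ can be written as $v = \pi_*(w)$ with $w = w_+ + w_-$, $w_\pm \in IH_\pm$. Then
\[ v = \pi_*(w_+) + \pi_*(w_-) \in \overline{IH}_+ + \overline{IH}_-. \]

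\emph{Trivial intersection.} If $v \in \overline{IH}_+ \cap \overline{IH}_-$ and $v \neq 0$, then Lemma \ref{lem.oihpispos} gives both $\overline{B}(v,v) > 0$ and $\overline{B}(v,v) < 0$, which is impossible. Hence $v = 0$ and the sum is direct. As an alternative check, one could apply $\pi_!$, which is injective, and use Lemma \ref{lem.transfoihisihcapinv}: this places $\pi_!(v)$ in $IH_+ \cap IH_- = 0$.

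The argument has no real obstacle. The only point to check is that the results of Section \ref{sec.ramcovsandihtransfer} apply even when $X$ is disconnected. They do, because we only use the lower middle perversity in degree $m = \lfloor \dim X/2 \rfloor$, together with the coefficient passage from $\rat$ to $\real$, which holds over any field of characteristic zero.
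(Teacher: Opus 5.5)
Your proof is correct and essentially the same as the paper's. For the spanning step, the paper writes $v=\frac{1}{d}\pi_*\pi_!(v)$ and splits $\pi_!(v)$ in $IH_+\oplus IH_-$, whereas you cite surjectivity of $\pi_*$ from Corollary \ref{cor.pistarisorestrinvs}, which is the same up-down identity. Directness is the identical sign contradiction from Lemma \ref{lem.oihpispos}, and your alternative via injectivity of $\pi_!$ and Lemma \ref{lem.transfoihisihcapinv} is also valid.
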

\begin{proof}
Given any $v\in IH_m (X/G)$, we put
$w:= \pi_! (v) \in IH_m (X)^G \subset IH_m (X)$
(Proposition \ref{prop.rattransferisisoontoinvariants}).
Using the decomposition $IH_m (X) = IH_+ \oplus IH_-$,
there are $w_+ \in IH_+,$ $w_- \in IH_-$ with $w = w_+ + w_-$.
It follows that
$v = \frac{1}{d} \pi_* \pi_! (v)
     = \frac{1}{d} \pi_* (w)
     = \frac{1}{d} \pi_* (w_+) +  \frac{1}{d} \pi_* (w_-),$
where the first summand is in $\overline{IH}_+$ and the
second one in $\overline{IH}_-$.
It remains to prove that $\overline{IH}_+ \cap \overline{IH}_- = \{ 0\}.$
Suppose that $v$ is a vector in this intersection.
If $v$ were nonzero, then by Lemma \ref{lem.oihpispos},
$\overline{B} (v,v)>0$ since $v\in \overline{IH}_+$ and
$\overline{B} (v,v)<0$ since $v\in \overline{IH}_-$.
This contradiction shows that $v=0$.
\end{proof}

Using Lemma \ref{lem.decompimpmaximalpos},
we deduce from Lemmas \ref{lem.oihpispos} and \ref{lem.ihxmodgdecompoihpoihn} 
that $\overline{IH}_+$ is in fact maximal positive for $\overline{B},$
and $\overline{IH}_-$ is maximal negative.
In particular,
$\Sign (X/G) = \dim \overline{IH}_+ - \dim \overline{IH}_-.$
If $m$ is odd, then $\overline{B}$ is skew-symmetric
and hence $\Sign (X/G)=0$.

\begin{prop} \label{prop.signatureoforbitspace}
The signature of the orbit space is given in terms of equivariant signatures
by
\[ \Sign (X/G) = \frac{1}{|G|} \sum_{g\in G} \Sign (g,X). \]
\end{prop}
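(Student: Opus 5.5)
We split into the cases $m$ even and $m$ odd, according to the parity of half the dimension $n=2m$ of $X$.

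\emph{Case $m$ even.} By the discussion preceding the statement, $\overline{IH}_+$ is maximally positive and $\overline{IH}_-$ maximally negative for $\overline{B}$. Hence
\[ \Sign (X/G) = \dim \overline{IH}_+ - \dim \overline{IH}_- . \]
The key step is to identify these dimensions with dimensions of invariant subspaces upstairs. Since $\pi_!$ is injective with real (or rational) coefficients, Lemma \ref{lem.transfoihisihcapinv} gives
\[ \dim \overline{IH}_\pm = \dim (IH_\pm \cap IH_m (X)^G) = \dim (IH_\pm)^G , \]
where $(IH_\pm)^G$ denotes the invariants of the real $G$-module $IH_\pm$. Lemma \ref{lem.reptheoryfingroups}, applied separately to $V = IH_+$ and $V = IH_-$, then yields
\[ \dim (IH_\pm)^G = \frac{1}{|G|}\sum_{g\in G} \tr (g|_{IH_\pm}). \]
Subtracting the two equations and using the definition $\Sign (g,X) = \tr(g|_{IH_+}) - \tr(g|_{IH_-})$ gives the formula.

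\emph{Case $m$ odd.} Here $\overline{B}$ is skew-symmetric, so $\Sign (X/G)=0$, and we must show $\sum_g \Sign (g,X)=0$. Since $\Sign (g,X) = 2i\,\operatorname{Im}\tr(g|_{IH_m(X)})$ with $IH_m(X)$ viewed as a complex $G$-module via $J$, it suffices to show that $\sum_g \tr(g|_{IH_m(X)})$ is real. By Lemma \ref{lem.reptheoryfingroups} over $\cplx$, this sum equals $|G|\dim_\cplx IH_m(X)^G$, which is a real number.

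\emph{Remaining points.} For $n$ odd both sides vanish by convention, since $X/G$ is then odd dimensional by Corollary \ref{cor.orbitspaceiswitt}. The main point needing care is ensuring that the transfer, the up-down and down-up results, and Lemma \ref{lem.transfoihisihcapinv} apply in middle degree with real coefficients even when $X$ is disconnected. This is covered by the $\bm$, $*\le \lfloor \dim X/2\rfloor$ clauses of Propositions \ref{prop.updownmultbydegree}, \ref{prop.downupsumofgstar} and \ref{prop.rattransferisisoontoinvariants}, together with the remark that these hold over any field of characteristic zero.
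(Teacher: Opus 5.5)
Your proposal is correct and follows the paper's proof essentially step for step. In the even case you identify $\overline{IH}_\pm$ with $(IH_\pm)^G$ via the injective transfer and Lemma \ref{lem.transfoihisihcapinv}, then average traces with Lemma \ref{lem.reptheoryfingroups}; in the odd case you note that $\sum_g \tr(g|_{IH_m(X)}) = |G|\dim_\cplx IH_m(X)^G$ is real. Your remarks on odd $n$ and on disconnected $X$ (via the $\bm$, middle-degree clauses) are harmless additions that the paper leaves implicit.
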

\begin{proof}
Suppose first that $m$ is even.
The transfer isomorphism of Proposition \ref{prop.rattransferisisoontoinvariants})
\[ \pi_!: IH_m (X/G) \stackrel{\simeq}{\longrightarrow} IH_m (X)^G  \]
restricts to a monomorphism
$\pi_!: \overline{IH}_+ \hookrightarrow IH_m (X)^G,$
whose image is $IH_+ \cap IH_m (X)^G = (IH_+)^G$ 
by Lemma \ref{lem.transfoihisihcapinv}.
Hence the restriction is an isomorphism
$\pi_!: \overline{IH}_+ \stackrel{\simeq}{\longrightarrow} (IH_+)^G.$
The restriction to the negative subspace similarly yields an isomorphism
$\pi_!: \overline{IH}_- \stackrel{\simeq}{\longrightarrow} (IH_-)^G.$
Thus, using Lemma \ref{lem.reptheoryfingroups},
\begin{align*}
\Sign (X/G)
&= \dim \overline{IH}_+ - \dim \overline{IH}_- 
 = \dim (IH_+)^G - \dim (IH_-)^G \\
&= \frac{1}{|G|} \sum_{g\in G} \tr (g_*|_{IH_+}) - 
       \frac{1}{|G|} \sum_{g\in G} \tr (g_*|_{IH_-}) \\
&= \frac{1}{|G|} \sum_{g\in G} \left( \tr (g_*|_{IH_+}) - 
         \tr (g_*|_{IH_-}) \right) 
 = \frac{1}{|G|} \sum_{g\in G} \Sign(g,X).               
\end{align*}

Now assume that $m$ is odd.
In this case, $\Sign (X/G)=0$. Again using Lemma \ref{lem.reptheoryfingroups},
this agrees with
\begin{align*}
\frac{1}{|G|} \sum_{g\in G} \Sign(g,X)
  &= \frac{1}{|G|} \sum_{g\in G}  
     2i~ \operatorname{Im} \tr(g_*|_{IH_m (X)}) 
   = \frac{2i}{|G|} \operatorname{Im} \sum_{g\in G}  
         \tr(g_*|_{IH_m (X)}) \\
  &= 2i \operatorname{Im} \dim (IH_m (X)^G) 
   = 0.
\end{align*}
\end{proof}

\section{Equivariant Zagier-$L$-Classes}
\label{sec.equivariantzagierlclass}

Let $X$ be an oriented closed PL pseudomanifold of dimension $n$
that satisfies the Witt condition. Suppose that a finite group 
$G$ acts on $X$.
The action is to be simplicial with respect to some triangulation
and preserves the orientation.
For any $g\in G$, we shall define equivariant $L$-classes
\[ L_* (g,X) \in H_* (X; \cplx).\]
In the special case where $X$ is a rational homology manifold, these
classes have been defined by Zagier in \cite{zagier}.
In the smooth case, Zagier's classes, and hence the classes defined in this
paper, agree under a Gysin homomorphism with the Poincar\'e duals of
the equivariant Atiyah-Singer classes introduced in \cite{atiyahsinger}.

We triangulate the oriented $i$-sphere $S^i$ by the complex
$C$ given as the boundary of a standard $(i+1)$-simplex.
We endow $S^i$ with the trivial $G$-action.
Let $\overline{f}: X/G \to S^i$ be a continuous map.
We approximate it by a simplicial map as follows:
Let $K$ be the (finite) simplicial complex with polyhedron $|K|=X$ such that
$G$ acts simplicially on $K$.
As pointed out in Section \ref{sec.simplicialactions},
we may assume that this action is regular so that the
complex $L:= K/G$ triangulates $X/G$ and the orbit projection $\pi: K \to L$ is
simplicial.
By the finite simplicial approximation theorem,
there exists an $N$ such that $\overline{f}$ has a simplicial approximation
$\bsd^N (L) \to C$.
(Here, $\bsd^N$ denotes the $N$-th abstract barycentric subdivision. Note that
the complex $C$ does not have to be subdivided.)
We denote this simplicial approximation again by $\overline{f}$.
The simplicial map $\pi: K\to L$ induces a simplicial map
$\pi: \bsd^N (K) \to \bsd^N (L)$.
(Here, \emph{abstract} barycentric subdivision is relevant, since a simplicial
map will not generally map \emph{geometric} barycenters to 
\emph{geometric} barycenters.)
Any simplicial action induces a simplicial action on barycentric subdivision.
Hence, $G$ still acts simplicially on $\bsd^N (K)$.
Composing, we obtain a $G$-invariant map $f$:
 \[ \xymatrix@R=15pt{
 X \ar[r]^\pi \ar[rd]_{f} & X/G \ar[d]^{\overline{f}} \\
 & S^i.
 } \]
Let $\Delta^\circ \in C$ be the interior of 
an $i$-simplex and let $p\in \Delta^\circ$ be any point,
viewed as the (abstract) barycenter of $\Delta$.
Then the preimage
$f^{-1} (p) \subset X$ is a $G$-invariant subspace.
We give $f^{-1} (p)$ the structure of a simplicial subcomplex as follows:
The point $p$ is a vertex of $\bsd (C)$.
Then the diagram of simplicial maps
\[ \xymatrix@R=15pt{
 \bsd^{N+1} (K) \ar[r]^\pi \ar[rd]_{f} & \bsd^{N+1} (L) \ar[d]^{\overline{f}} \\
 & \bsd (C)
 } \]
shows that $f^{-1} (p)$ is a subcomplex of $\bsd^{N+1} (K)$.
Since $G$ acts simplicially on $\bsd^{N+1} (K)$ and $f^{-1} (p)$ is a 
$G$-invariant subcomplex, the restricted action of $G$ on
$f^{-1} (p)$ is still simplicial.
The preimage is compact since it is closed in the compact space $X$.
It is contained in the open neighborhood $f^{-1} (\Delta^\circ)$.
Let $\phi: f^{-1} (\Delta^\circ) \to \Delta^\circ \times f^{-1} (p)$
be the PL homeomorphism provided by
Milnor-Stasheff \cite[p. 236, Lemma 20.5]{milnorstasheff}.
(See also Curran \cite[p. 121f]{curran}, as well as,
in the equivariant case, Zagier \cite[p. 22]{zagier}.) 
This homeomorphism is such that $f\circ \phi^{-1}$ is
the first factor projection to $\Delta^\circ$.
Furthermore, $\phi$ restricts to the identity map on 
$f^{-1} (p) \subset f^{-1} (\Delta^\circ)$.
This shows that $f^{-1} (p)$ is normally nonsingular in $X$, with 
trivial normal bundle.
As $f^{-1} (\Delta^\circ) = \pi^{-1} (\overline{f}^{-1} (\Delta^\circ))$, the tube is 
$G$-invariant.
We endow $\Delta^\circ \times f^{-1} (p)$ with the $G$-action
that makes $\phi$ $G$-equivariant.
Let $(q,x) \in \Delta^\circ \times f^{-1} (p)$ be a point and $g\in G$.
The translate $g(q,x)$ has the form $(q',x')$.
As $f$ is $G$-invariant,
\[ q' = f \circ \phi^{-1} (q',x') = f \circ \phi^{-1} (g (q,x)) 
    =  f(g\cdot \phi^{-1} (q,x))
    = f \circ \phi^{-1} (q,x) =q. \]
Thus $g(q,x) = (q,x'),$ i.e. $g$ acts trivially on the first component.
Now, the PL pseudomanifold property desuspends, i.e. if
$A\times \real^i$ is a PL pseudomanifold, then $A$ is a pseudomanifold.
(The reason is that the pseudomanifold property can be checked with respect
to any PL stratification. Now use the PL intrinsic stratification. For this
stratification, the intrinsic strata of $A\times \real^i$ are the products
with $\real^i$ of the intrinsic strata of $A$, see \cite[Prop. A.1]{blm}.)
Similarly, the Witt condition desuspends, as is shown in
\cite[p. 46, Lemma 14.1]{banaglborelequivlclass}.
Therefore, $f^{-1} (p)$ is a pseudomanifold satisfying the Witt condition;
its dimension is $n-i$.
The orientation of $S^i$ induces an orientation of $\Delta^\circ$.
Together with the orientation of $X$, it induces an orientation of
$f^{-1} (p)$.
Let $\pi^i (X)$ denote the Borsuk-Spanier cohomotopy set of $X$, consisting of
pointed homotopy classes of continuous maps $X\to S^i$. 
Suppose now that $n \leq 2(i-1)$.
Then $\pi^i (X)$ is an abelian group. 
There is a natural action of $G$ on $\pi^i (X)$ and the subgroup
of invariant elements will be denoted by $\pi^i (X)^G$.
For $g\in G$, we define a map
\[ S_p (g): \pi^i (X)^G \otimes \cplx \to \cplx \]
by
\[ S_p (g)([f] \otimes \lambda) :=
    \lambda \cdot \Sign (g, f^{-1} (p)), \]
using the $G$-signature of $G$-Witt pseudomanifolds as
described in Section \ref{sec.gsignatures} and in
\cite{blp}.
If $p'$ is any other point in $\Delta^\circ$, then 
$\phi$ restricts to an orientation preserving $G$-homeomorphism
$\phi|: f^{-1} (p') \cong \{ p' \} \times f^{-1}(p)$ and thus
$S_p (g)([f] \otimes \lambda) = S_{p'} (g)([f] \otimes \lambda)$
by Lemma \ref{lem.gsignpresequivhomeo}.
Suppose that $F: X \times I \to S^i$ is a 
$G$-equivariant homotopy from $f$ to $f'$.
Since $G$ acts trivially on $S^i$, $F$ factors through a homotopy
$\overline{F}: (X/G) \times I \to S^i$, which we may take to
be simplicial, as described above. Composing with $\pi$, we
receive a $G$-invariant simplicial version of the homotopy $F$ such that
the preimage $F^{-1} (p)$ is a $G$-invariant subcomplex of a simplicial
subdivision of $K \times I$ and a simplicial $G$-space.
Since $F^{-1} (p)$ is also a Witt bordism from $f^{-1} (p)$ to
$f'^{-1} (p)$, we obtain a simplicial Witt $G$-bordism
from $f^{-1} (p)$ to  $f'^{-1} (p)$.
We now use the invariance of $G$-signatures under Witt $G$-bordisms,
a result due to Eric Leichtnam, Paolo Piazza and the author.
\begin{prop} \label{prop.wittgbordisminvariance}
(Banagl-Leichtnam-Piazza.) The equivariant signature $\Sign (g,X)$
of closed oriented Witt pseudomanifolds $X$ is a simplicial Witt $G$-bordism invariant.
\end{prop}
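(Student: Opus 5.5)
The plan follows the classical Atiyah--Singer argument that $G$-signatures are invariant under $G$-bordism, with ordinary homology replaced by middle perversity intersection homology. Let $W$ be a compact oriented simplicial Witt $G$-pseudomanifold with boundary $\partial W = X$; the action on $W$ is simplicial and orientation preserving, and $X$ is collared in $W$. We must show $\Sign(g,X)=0$ for all $g\in G$. For a bordism between $X$ and $X'$, apply this to $X \sqcup -X'$. Here $\Sign(g,-X') = -\Sign(g,X')$, because reversing the orientation negates $B$ and thus swaps $IH_+$ and $IH_-$ (in the odd case it replaces $J$ by $-J$). We also use that $\Sign(g,-)$ is additive under disjoint union. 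Assume $\dim X = 2m$; otherwise there is nothing to prove.

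The first key step is the intersection homology analogue of ``half the middle homology of the boundary dies in the bounding manifold.'' Set $V = IH_m(X;\real)$ and
\[ V_0 := \ker\bigl(i_*: IH_m(X) \to IH^{\bm}_m(W)\bigr). \]
Then $V_0$ is a $G$-invariant subspace, since $i$ is $G$-equivariant. The plan is to show that $V_0$ is Lagrangian for $B$, i.e. $V_0 = V_0^{\perp}$, and for this I would use the intersection homology long exact sequence of the pair $(W,X)$ together with Poincaré--Lefschetz duality for Witt spaces with collared boundary:
\[ IH^{\bm}_{m+1}(W,X) \xrightarrow{\partial} IH_m(X) \xrightarrow{i_*} IH^{\bm}_m(W). \]
Duality identifies $IH^{\bm}_{m+1}(W,X)$ with the dual of $IH^{\bn}_m(W)$, and the Witt condition on $W$ (interior links) lets us drop the distinction between $\bm$ and $\bn$ in the relevant degrees. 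Under this identification, $\partial$ is adjoint to $i_*$ up to sign, which gives $(\operatorname{im}\partial)^{\perp} = \ker i_* $, and hence $V_0=V_0^\perp$. I expect this step to be the main obstacle. One has to set up the duality pairing $IH_{m+1}(W,X)\times IH_m(W)\to\real$ with the boundary compatibility $B(\partial a, b) = \pm\langle a, i_* b\rangle$. This is standard (e.g.\ Friedman's book; Siegel's Witt bordism argument), and I would quote it rather than reprove it.

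The second step is representation theory. First take $m$ even. The invariant nondegenerate symmetric form $B$ on $V$ admits the $G$-invariant Lagrangian $V_0$. Choose a $G$-invariant inner product, and let $A$ be the associated operator as in Section~\ref{sec.gsignatures}. Then the orthogonal projections of $V_0$ to $IH_+$ and to $IH_-$ are $G$-isomorphisms: they are injective because $V_0$ is isotropic, and the dimensions match because $\dim V_0 = \frac12\dim V$ and $V_0\cap IH_\pm = 0$. Hence $IH_+\cong IH_-$ as $G$-modules, so $\Sign(G,X)=0$ in $RO(G)$, and in particular $\Sign(g,X)=0$.

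For $m$ odd, $J$ maps $V_0$ onto $V_0^{\perp_{\langle,\rangle}}$. This holds because $\langle x, Jy\rangle$ is a positive multiple of $B(x,y)$ on $A$-eigenspaces, and $V_0$ is Lagrangian. Therefore $V = V_0\otimes_\real \cplx$ as complex $G$-modules, where $V_0$ is a real $G$-module. The character of such a complexified real module is real, so $\operatorname{Im}\tr(g|_V)=0$ and $\Sign(g,X)=0$. Combining both cases with the additivity and orientation-reversal remarks above proves Proposition~\ref{prop.wittgbordisminvariance}.
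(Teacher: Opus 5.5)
Your proposal is correct in substance, but it takes a different route from the paper.

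\textbf{The paper's route.} The paper argues analytically. It regards the simplicial bordism $W$ as a smoothly stratified space and chooses a $G$-invariant wedge metric that is a product near $\partial W$. The signature operator then gives a class $[D_W]\in K^G_{n+1}(W,\partial W)$ whose boundary is $[D_X]-[D_{X'}]$, up to a factor of $2$. Pushing forward to a point, where $K^G_{n+1}(\pt,\pt)=0$, yields equality of the equivariant indices in $R(G)$. These indices evaluate at $g$ to $\Sign(g,-)$ by the equivariant Hodge theory for Witt spaces.

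\textbf{Your route.} You transplant the Atiyah--Singer Lagrangian argument to middle perversity intersection homology. Lefschetz duality for Witt spaces with collared boundary makes $\ker\bigl(IH_m(\partial W)\to IH_m(W)\bigr)$ a $G$-invariant Lagrangian. Linear algebra then forces $IH_+\cong IH_-$ for $m$ even and a real character for $m$ odd.

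\textbf{Comparison.} Your route is elementary and stays inside intersection homology. It needs no stratified metrics, no $K$-homology boundary map, and no identification of an analytic index with the intersection-homology $G$-signature of Section~\ref{sec.gsignatures}. The paper's route pays for that machinery but fits the $K$-homological framework of its companion work. It also gives invariance of pushforwards of equivariant $K$-homology classes along general $G$-maps $W\to Y$, not just of the numbers $\Sign(g,X)$.

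\textbf{A step that needs repair (odd $m$).} It is not true in general that $J$ maps $V_0$ onto its $\langle\cdot,\cdot\rangle$-orthogonal complement. The relation $\langle x,Jy\rangle=\lambda^{-1/2}B(x,y)$ holds only for $y$ in a single eigenspace $E_\lambda$ of $AA^*$, and a Lagrangian need not split along these eigenspaces.

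For a counterexample, take $\real^4$ with orthonormal basis $e_1,\dots,e_4$ and $Ae_1=ae_2$, $Ae_2=-ae_1$, $Ae_3=be_4$, $Ae_4=-be_3$, where $0<a\neq b$. The span of $e_1+e_3$ and $be_2-ae_4$ is Lagrangian, yet $\langle J(e_1+e_3),\,be_2-ae_4\rangle=b-a\neq 0$.

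What you actually need is only $V_0\cap JV_0=0$, and this does hold. Since $A^*=-A$, we have $AJ=-(AA^*)^{1/2}$. Hence $B(y,Jy)=-\langle y,(AA^*)^{1/2}y\rangle<0$ for $y\neq 0$, while $B$ vanishes on $V_0$.

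It follows that $x\otimes(s+it)\mapsto sx+tJx$ is a $G$-equivariant isomorphism of complex $G$-modules $V_0\otimes_\real\cplx\to IH_m(X)$. Your conclusion $\operatorname{Im}\tr(g|_{IH_m(X)})=0$ then follows.

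Alternatively, replace the inner product by the $G$-invariant one $\langle x,(AA^*)^{1/2}y\rangle$. It produces the same $J$, and for it $\langle x,Jy\rangle'=B(x,y)$ holds on all of $IH_m(X)$. With this inner product your orthogonality claim is literally true.
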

\begin{proof}
In the nonsingular case, this was shown by Ossa in \cite{ossa}.
In the singular Witt situation, this follows from the equivariant 
signature operator methods of \cite{blp}. In more detail, one argues as follows.
Let $W^{n+1}$ be a simplicial Witt $G$-bordism 
with $\partial W = X \sqcup -X',$ $n$ even.
Then $W$ can in particular be smoothly stratified by taking the open simplices
as strata, see also \cite[p. 37, Prop. 4.5]{abp}.
Since $G$ acts linearly on simplices, it acts in particular by 
stratified diffeomorphisms.
There exists a $G$-invariant wedge metric on $W$, and it can be taken
to be a product near the boundary.
Then the signature operator $D_W$ 
(see Albin, Leichtnam, Mazzeo, Piazza \cite{almpsigpack}) 
with respect to such a metric is $G$-invariant and thus defines a class
$[D_W] \in K^G_{n+1} (W, \partial W)$ in
$G$-equivariant analytic relative K-homology
such that
$\partial_* [D_W] = [D_X] - [D_{X'}]$ (up to a multiple of $2$).
The constant maps $C: W\to \pt,$ $c: X\to \pt,$ $c': X'\to \pt$
(and indeed more general $G$-maps $W\to Y$)
fit into a commutative diagram
\[ \xymatrix@R=15pt{
K^G_{n+1} (W,\partial W) \ar[d]_{\partial_*} \ar[r]^{C_*} 
   & K^G_{n+1} (\pt, \pt) =0 \ar[d]^{\partial_*} \\
K^G_n (X) \oplus K^G_n (X') \ar[r]_>>>>>>>>{c_* + c'_*} & K^G_n (\pt) 
} \]
It follows that $c_* [D_X] = c'_* [D_{X'}].$
The pushforward $c_* [D_X] \in K^G_n (\pt) = R(G)$ under the constant map
$c: X \to \pt$ to a point is the equivariant index of $D_X$.
At an element $g\in G$, this index is nothing but $\Sign (g,X)$,
see \cite{blp} for more information.
\end{proof}
By the proposition, $S_p (g)[f] = S_p (g)[f']$.
The map $f$ is equivariantly homotopic to $s\circ f$ for any
simplicial automorphism $s$ of $C$.
Thus we may move $\Delta^\circ$ to any other $i$-simplex of $C$
and $S_p (g)[f]$ does not change.
So altogether then, $S_p (g)[f] = S_{p'} (g)[f']$
for $G$-homotopic maps $f,f'$ and almost all $p,p'\in S^i$ (not necessarily in the
interior of the same $i$-simplex).
This shows that $S_p (g)$ is well-defined and independent of $p$ for
almost all $p$.
We may thus simply write $S(g)$ for $S_p (g)$.
The map $S(g)$ is linear.
Hurewicz maps provide vertical homomorphisms such that the diagram
 \[ \xymatrix@R=20pt{ 
 H^i (X/G;\cplx) \ar[r]^\simeq_{\pi^*} & H^i (X;\cplx)^G \\
 \pi^i (X/G) \otimes \cplx \ar[u]^\simeq \ar[r]^\simeq_{\pi^*} & 
 \pi^i (X)^G \otimes \cplx \ar[u]_\simeq
} \] 
commutes. 
In the range $n\leq 2(i-1)$, a well-known theorem of Serre (\cite{serre}) asserts
that the Hurewicz maps are isomorphisms.
Under these isomorphisms, $S(g)$ defines an element
\[ S(g) \in \Hom (H^i (X/G; \cplx), \cplx) = H_i (X/G;\cplx). \]
Its transfer is an invariant element $\pi_! S(g) \in H_i (X;\cplx)^G$.  
To compensate for the difference between $\pi_* [X]$ and
$[X/G]$ in $H_* (X/G),$ we divide by $\deg \pi$.
(If $G$ acts effectively, then $\deg \pi = |G|$, the order of $G$.)  
  
\begin{defn}  
The \emph{Atiyah-Singer-Zagier equivariant L-class} of
the closed oriented $G$-Witt pseudomanifold $X$
is on an element $g\in G$ defined to be
 \[ L_* (g,X) := \frac{1}{\deg \pi}  \pi_! S(g) \in H_* (X;\cplx)^G.  \]
\end{defn}

The restriction $n \leq 2(i-1)$ can be removed by taking
the product of $X$ with a $G$-invariant sphere of large dimension.

\begin{prop} \label{prop.evaloflgxissigngpreim}
Let $u\in H^i (S^i;\intg)$ be the generator compatible with the orientation
and let $\langle -,- \rangle$ denote evaluation of a cohomology class on
a homology class.
The equivariant class $L_* (g,X)$ satisfies
\begin{equation} \label{equ.fulgxissigngfinvp}
\langle f^* (u),~ L_* (g,X) \rangle = \Sign (g,f^{-1} (p)). 
\end{equation}
For $\dim X \leq 2(i-1),$
it is the unique $G$-invariant homology class satisfying this equation
for all $G$-invariant homotopy classes $f: X \to S^i$.
\end{prop}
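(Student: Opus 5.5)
The plan is to unwind the definition $L_* (g,X) = \frac{1}{\deg \pi}\pi_! S(g)$. We use three ingredients: the transfer on ordinary homology, its adjointness with $\pi^*$ on cohomology, and the Serre--Hurewicz identification. Let $f = \overline{f}\circ \pi$ be a $G$-invariant map, written this way as in the construction. Then $f^*(u) = \pi^* \overline{f}^*(u)$. The key identity we need is
\[ \langle \pi^* \xi, \pi_! a \rangle = \langle \xi, \pi_* \pi_! a\rangle = \deg \pi \cdot \langle \xi, a\rangle \]
for $\xi \in H^i(X/G;\cplx)$ and $a\in H_i(X/G;\cplx)$. It follows from two facts. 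First, evaluation is natural: $\langle \pi^*\xi, b\rangle = \langle \xi, \pi_* b\rangle$. Second, the up-down formula (\ref{equ.transfermultdegree}) for ordinary homology says that $\pi_*\pi_!$ is multiplication by $d=\deg\pi$. This ordinary-homology up-down also follows from Proposition \ref{prop.updownmultbydegree} together with the compatibility diagram (\ref{equ.ihtransfercompatibleclassicaltransf}), or directly from Smith.

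With this identity, $\langle f^*(u), L_*(g,X)\rangle = \langle \overline{f}^*(u), S(g)\rangle$. By construction, $S(g)$ is the element of $\Hom(H^i(X/G;\cplx),\cplx)$ corresponding to the functional $S(g)$ on $\pi^i(X)^G\otimes\cplx$. The correspondence goes through the commutative Hurewicz square: $[\overline f]\in\pi^i(X/G)$ maps to $\overline f^*(u)$ and to $[f]=\pi^*[\overline f]$. Therefore
\[ \langle \overline{f}^*(u), S(g)\rangle = S(g)([f]\otimes 1) = \Sign(g,f^{-1}(p)), \]
and this proves (\ref{equ.fulgxissigngfinvp}). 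The identification $H_i(X/G;\cplx)=\Hom(H^i(X/G;\cplx),\cplx)$ is the universal coefficient isomorphism, which is given exactly by evaluation, so no sign or conjugation issue arises. The only point to check is the well-definedness of $S(g)$, independent of $p$ and of the $G$-homotopy class, which was established before the definition. For arbitrary $G$-invariant $f$, we first replace $f$ by the simplicial approximation used in the construction; this does not change the homotopy class.

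For uniqueness, assume $n\le 2(i-1)$. Let $c\in H_i(X;\cplx)^G$ satisfy $\langle f^*u, c\rangle = \Sign(g,f^{-1}(p))$ for all $G$-invariant $f$. Then the difference $e = c - L_i(g,X)$ is $G$-invariant and pairs to zero with every $f^*(u)$. By Proposition \ref{prop.rattransferisisoontoinvariants}, in its ordinary homology analogue over $\cplx$ (obtained by the same down-up/up-down argument), we can write $e = \pi_! a$. The identity above then gives $\langle \overline f^*u, a\rangle = 0$ for all $\overline f$. By Serre's theorem the classes $\overline f^*(u)$ span $H^i(X/G;\cplx)$ after tensoring with $\cplx$, so $a=0$ and hence $e=0$. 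One must also handle invariant $f$ that do not a priori factor through a simplicial $\overline f$; every $G$-invariant map factors continuously through $X/G$, so this is covered.

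The main obstacle is justifying the ordinary-homology transfer facts with $\cplx$ coefficients for a possibly disconnected $X$ in degree $i$, since Gajer's theorem was stated for connected spaces. I would handle this by citing L. Smith's classical transfer for ramified coverings, for which up-down and down-up hold in all degrees without connectivity, instead of the intersection Dold--Thom construction.
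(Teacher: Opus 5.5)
Your proposal is correct and follows the paper's proof of the evaluation formula. As there, you identify $S(g)$ through the Hurewicz square so that $\langle \overline f^*(u), S(g)\rangle = \Sign(g,f^{-1}(p))$, then use up-down, $\frac{1}{\deg\pi}\pi_*\pi_! S(g) = S(g)$, together with naturality, $\langle \overline f^*(u), \pi_* L_*(g,X)\rangle = \langle f^*(u), L_*(g,X)\rangle$.

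The paper does not spell out the uniqueness clause, and your argument supplies it correctly: write $e=\pi_! a$, deduce $\langle \overline f^*(u), a\rangle = 0$, and conclude $a=0$ because the classes $\overline f^*(u)$ span $H^i(X/G;\cplx)$ by Serre. One small caveat: ordinary up-down does not follow from Proposition~\ref{prop.updownmultbydegree} plus diagram (\ref{equ.ihtransfercompatibleclassicaltransf}), since $IH_*\to H_*$ need not be surjective. Rely instead on Smith, or on the identity $AG(\pi)\circ\tau = a_d$ combined with classical Dold--Thom.
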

\begin{proof}
The Hurewicz isomorphism sends $[\overline{f}]$ to
$\overline{f}^* (u)$.
Thus, by the construction of $S(g)$, we have
$\langle \overline{f}^* (u),~ S(g) \rangle 
  = \Sign (g, f^{-1} (p)).$
Using the up-down formula for the transfer $\pi_!$, we calculate
\begin{align*}  
\langle \overline{f}^* (u),~ S(g) \rangle
&= \langle \overline{f}^* (u),~ \frac{1}{\deg \pi} \pi_* \pi_! S(g) \rangle 
 = \langle \overline{f}^* (u),~ \pi_* L_* (g,X) \rangle \\  
&= \langle \pi^* \overline{f}^* (u),~ L_* (g,X) \rangle 
 = \langle f^* (u),~ L_* (g,X) \rangle. \\ 
\end{align*}  
\end{proof}

For $g=1$, Remark \ref{rem.gsignforgequalone} shows that
 $L_* (1,X) = L_* (X),$ the Goresky-MacPherson-Siegel class.

\begin{prop} \label{prop.equivlconjugation}
For any $g,h \in G$, $h_* L_* (g, X) = L_* (hgh^{-1}, X)$.
\end{prop}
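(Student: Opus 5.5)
Since $L_*(g,X)$ is $G$-invariant by construction, $h_*L_*(g,X)=L_*(g,X)$. It therefore suffices to show $L_*(g,X)=L_*(hgh^{-1},X)$. The equality $L_*(hgh^{-1},X)=h_*L_*(g,X)$ then follows by rewriting the left side.

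\emph{Reduction to $S(g)$.} By the definition, $L_*(g,X)=\frac{1}{\deg\pi}\pi_!S(g)$, and $\pi_!$ and $\deg\pi$ do not depend on the group element. So it is enough to prove $S(hgh^{-1})=S(g)$ in $H_i(X/G;\cplx)$, working in the stable range $n\le 2(i-1)$ where $S(\cdot)$ is defined.

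\emph{Equality of the functionals.} Via the Hurewicz identifications, $S(g)$ is the linear functional on $\pi^i(X)^G\otimes\cplx$ given by $[f]\otimes\lambda\mapsto \lambda\,\Sign(g,f^{-1}(p))$. Fix a $G$-invariant simplicial $f$ and a generic $p$. We showed above that $f^{-1}(p)$ is a closed oriented Witt pseudomanifold, and that $G$ acts on it simplicially and orientation-preservingly. Proposition \ref{prop.equivsignconjugation} applied to $f^{-1}(p)$ gives $\Sign(hgh^{-1},f^{-1}(p))=\Sign(g,f^{-1}(p))$. Hence the two functionals agree on every generator, so $S(hgh^{-1})=S(g)$.

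\emph{Alternative via uniqueness.} One can also argue with the characterization in Proposition \ref{prop.evaloflgxissigngpreim}. The class $h_*L_*(g,X)$ is $G$-invariant, since $G$-invariance of $L_*(g,X)$ gives $k_*h_*L_*(g,X)=h_*(h^{-1}kh)_*L_*(g,X)=h_*L_*(g,X)$. For every invariant $f$ we have $h^*f^*=f^*$, because $f\circ h=f$. Therefore
\[\langle f^*u,\,h_*L_*(g,X)\rangle=\langle f^*u,\,L_*(g,X)\rangle=\Sign(g,f^{-1}(p))=\Sign(hgh^{-1},f^{-1}(p)).\]
Uniqueness then yields $h_*L_*(g,X)=L_*(hgh^{-1},X)$.

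\emph{Main obstacle.} The only real point is removing the restriction $n\le 2(i-1)$ for general degrees. This is done by stabilizing with $X\times S^N$, where $S^N$ carries the trivial action, and checking that the construction is compatible with this stabilization. The conjugation identity then passes through that identification unchanged.
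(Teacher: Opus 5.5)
Your proposal is correct, and your ``alternative via uniqueness'' is exactly the paper's proof. The paper evaluates $f^*u$ on $h_*L_*(g,X)$ and rewrites this as $\Sign(g,h^{-1}(f^{-1}(p)))$. It then uses $h^{-1}(f^{-1}(p))=f^{-1}(p)$ (your $f\circ h=f$), applies Proposition \ref{prop.equivsignconjugation}, and concludes by the uniqueness in Proposition \ref{prop.evaloflgxissigngpreim}.

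Your main route is a mild but genuine reorganization. You first observe that $h_*L_*(g,X)=L_*(g,X)$, because $\pi_!S(g)$ is $G$-invariant. This makes explicit that the statement is the class-function property $L_*(g,X)=L_*(hgh^{-1},X)$, which the paper's argument proves without saying so. You then prove it before the transfer, as $S(g)=S(hgh^{-1})$ in $H_i(X/G;\cplx)$, so you need neither the evaluation formula nor its uniqueness clause. The paper's version, in turn, is phrased to mirror Zagier's identity $h^*L^*(g,X)=L^*(h^{-1}gh,X)$.

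Both routes rest on Proposition \ref{prop.equivsignconjugation} for $f^{-1}(p)$, which requires the $G$-action on $f^{-1}(p)$ to preserve orientation. You cite this as ``shown above'', but before this proposition the paper uses it only implicitly. Instead, give the one-line reason the paper supplies in its proof: $G$ preserves the orientation of the tube $f^{-1}(\Delta^\circ)\cong\Delta^\circ\times f^{-1}(p)$ and acts trivially on the $\Delta^\circ$ factor.

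The stabilization you call the main obstacle is harmless. Your argument for $S(g)=S(hgh^{-1})$ applies verbatim to $X\times S^N$ with trivial action on $S^N$.
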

\begin{proof}
We use notation as in the construction of $L_* (g,X)$.
Thus $f:X \to S^i$ is a simplicial $G$-invariant map,
$f^{-1} (p) \subset X$ is a $G$-invariant simplicial subcomplex,
$p\in \Delta^\circ \subset S^i$, 
and there is a $G$-equivariant PL homeomorphism
$f^{-1} (\Delta^\circ) \cong \Delta^\circ \times f^{-1} (p)$
under which $f$ looks like the factor projection to $\Delta^\circ$.
It suffices to verify that the equality of evaluations
\[  \langle f^* (u),~ h_* L_* (g,X) \rangle   
   = \langle f^* (u),~ L_* (hgh^{-1}, X) \rangle  \]
holds.
Using Proposition \ref{prop.evaloflgxissigngpreim},
the left hand side is the equivariant signature
\begin{align*}
 \langle f^* (u),~ h_* L_* (g,X) \rangle
&=  \langle h^* f^* (u),~ L_* (g,X) \rangle 
 =  \langle (f\circ h)^* (u),~ L_* (g,X) \rangle \\
&= \Sign (g, h^{-1} (f^{-1} (p))),
\end{align*}
while the right hand side is the equivariant signature
\[
 \langle f^* (u),~ L_* (hgh^{-1} ,X) \rangle
   = \Sign (hgh^{-1}, f^{-1} (p)),
\]
The space $Y = f^{-1} (p) \subset X$ is $G$-invariant and
thus $h^{-1} (Y)=Y$.
Hence by Proposition \ref{prop.equivsignconjugation},
$\Sign (g, h^{-1} (Y)) = \Sign (g,Y)
   = \Sign (hgh^{-1}, Y)$.
We note that $h$ does indeed preserve the orientation of $Y$, since
$h$ preserves the orientation of the open $G$-tube
$f^{-1} (\Delta^\circ) \cong \Delta^\circ \times f^{-1} (p)$
and it acts trivially in the normal direction $\Delta^\circ$,
as pointed out earlier.
\end{proof}

Suppose that $X$ is a rational homology manifold,
assumed to be oriented, closed and triangulated.
In that case, Zagier constructed a cohomology class
$L^* (g,X) \in H^* (X;\cplx),$ which is the invariant class uniquely determined
by
\[ \langle L^* (g,X) \cup f^* (u),~ [X] \rangle = \Sign (g,f^{-1} (p)) \]
for all simplicial $G$-invariant maps $f:X \to S^i$ with $p$ a sufficiently general
point as above, see \cite[p. 21, Thm. 1]{zagier}.
Comparing this equation with (\ref{equ.fulgxissigngfinvp}) above,
it follows that the equivariant L-class constructed in this paper is the
Poincar\'e dual of Zagier's cohomological class when $X$ is a rational
homology manifold. In that case, the cohomological expression
of Proposition \ref{prop.equivlconjugation} is $h^* L^* (g,X) = L^* (h^{-1} gh, X)$,
see \cite[p. 18 (16), p. 21 (3)]{zagier}.

Suppose that $X$ is a smooth closed oriented manifold such that
$G$ acts smoothly on $X$, preserving the orientation.
For an element $g\in G$, let $X^g$ be the submanifold of points fixed by $g$.
Atiyah and Singer constructed an equivariant class
in $H^* (X^g; \operatorname{or}_{X^g} \otimes \cplx)$,
explicitly given in terms of characteristic
classes of $X^g$ and of the equivariant normal bundle of the inclusion
$j: X^g \subset X$, see \cite[p. 582]{atiyahsinger}. 
We use the formulation $L' (g,X) \in H^* (X^g; \operatorname{or}_{X^g} \otimes \cplx)$ 
of these classes
as given by Zagier in \cite[p. 12, (27)]{zagier}.
The $G$-Signature Theorem of Atiyah and Singer is the relation
\[ \Sign (g,X) = \langle L' (g,X), [X^g] \rangle. \] 
The inclusion has an associated Gysin homomorphism
$j_!: H^* (X^g) \to H^* (X)$ and the image of the AS-class under $j_!$
is Zagier's class, i.e.
\[ j_! L'(g,X) = L^* (g,X), \]
\cite[p. 4, (3); p. 21, Thm. 1]{zagier}.
As the Gysin map corresponds under Poincar\'e duality to covariant
homological pushforward $j_*: H_* (X^g) \to H_* (X),$ the AS-class
is related to our homological class by
\[ j_* (L'(g,X) \cap [X^g]) = L_* (g,X)  \]
in the differentiable case.

\section{Application: The Goresky-MacPherson $L$-Class of the Orbit Space}
\label{sec.lclassorbitspace}

If a finite group $G$ acts orientation preservingly 
on a Witt pseudomanifold $X$, then we have seen earlier
(Corollary \ref{cor.orbitspaceiswitt})
that the orbit space $X/G$ is again a Witt pseudomanifold. 
Thus the $L$-class $L_* (X/G) \in H_* (X/G;\rat)$
is well-defined (\cite{gmih1}, \cite{siegel}, \cite{curran}). 
We now apply the equivariant $L$-classes
$L_* (g,X)$ constructed in Section \ref{sec.equivariantzagierlclass}
to compute $L_* (X/G)$, considered as an element of $H_* (X/G;\cplx)$.

\begin{thm} \label{thm.lclassorbitspace}
Let $G$ be a finite group and $X$ an oriented closed Witt pseudomanifold
upon which $G$ acts simplicially, preserving the orientation.
Then $X/G$ is a Witt pseudomanifold and its
Goresky-MacPherson-Siegel $L$-class $L_* (X/G)$ is related to the
equivariant $L$-classes by 
\begin{equation} \label{equ.lorbitspaceisaverage}
L_* (X/G) 
    = \frac{1}{|G|} \sum_{g\in G} \pi_* L_* (g,X), 
\end{equation}
where $\pi: X \to X/G$ is the orbit projection.
\end{thm}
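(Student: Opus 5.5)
The plan is to show that both sides of (\ref{equ.lorbitspaceisaverage}) evaluate identically on every class obtained from cohomotopy. First, $X/G$ is an oriented Witt pseudomanifold by Corollary \ref{cor.orbitspaceiswitt}, so $L_*(X/G)$ is defined. We work degree by degree in $H_i(X/G;\cplx)$. After replacing $X$ by $X\times S^N$ with trivial action on a large sphere if needed, we may assume $n\le 2(i-1)$. In that range Serre's theorem makes the Hurewicz map $\pi^i(X/G)\otimes\cplx\to H^i(X/G;\cplx)$ an isomorphism. A homology class in $H_i(X/G;\cplx)$ is therefore determined by its values on the classes $\overline f^*(u)$, where $\overline f:X/G\to S^i$ is simplicial.

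Fix such an $\overline f$, and set $f=\overline f\circ\pi$ and $p$ a generic point as in Section \ref{sec.equivariantzagierlclass}. By the Goresky-MacPherson-Siegel characterization of the $L$-class of a Witt space,
\[ \langle \overline f^*(u), L_i(X/G)\rangle = \Sign(\overline f^{-1}(p)). \]
On the other side, naturality and Proposition \ref{prop.evaloflgxissigngpreim} give
\[ \langle \overline f^*(u), \pi_*L_*(g,X)\rangle=\langle f^*(u),L_*(g,X)\rangle=\Sign(g,f^{-1}(p)). \]
It therefore suffices to prove
\[ \Sign(\overline f^{-1}(p))=\frac{1}{|G|}\sum_{g\in G}\Sign(g,f^{-1}(p)). \]

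To get this, note that $f^{-1}(p)=\pi^{-1}(\overline f^{-1}(p))$ is a closed oriented Witt pseudomanifold with a simplicial, orientation preserving $G$-action; the orientation statement was observed in the proof of Proposition \ref{prop.equivlconjugation}. Its orbit space is exactly $\overline f^{-1}(p)$, since restricting the orbit projection to a $G$-invariant subcomplex gives the orbit projection of that subcomplex. The identity then follows from Proposition \ref{prop.signatureoforbitspace} applied to $f^{-1}(p)$. The one check needed here is that the orientation $\overline f^{-1}(p)$ receives as a transverse preimage agrees with the one it receives as an orbit space, i.e. the one induced from $X/G$. This holds because $\pi$ is orientation compatible on top strata and the normal $\Delta^\circ$-direction is pulled back identically.

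The main obstacle is the normalization. If $G$ does not act effectively, $\deg\pi$ in the definition of $L_*(g,X)$ is $|G|$ only up to the kernel, and the averaging in Proposition \ref{prop.signatureoforbitspace} should then be matched carefully. I would first treat effective actions, where $\deg\pi=|G|$ and all constants match. For a noneffective action, the kernel $N$ acts trivially on homology, so $\Sign(g,\cdot)$ depends only on $gN$. The average over $G$ then equals the average over $G/N$, and the general case reduces to the effective one.

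A second point to verify is that the evaluation argument works with complex coefficients and with the transfer identity $\pi_*\pi_!=\deg\pi$ on ordinary homology. That identity is what makes Proposition \ref{prop.evaloflgxissigngpreim} usable here, and it was already established there. Finally, the classes $\overline f^*(u)$ span $H^i(X/G;\cplx)$, and homology is dual to cohomology over $\cplx$, so equality of all these evaluations gives equality of the classes.
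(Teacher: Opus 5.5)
Your proposal is correct and is essentially the paper's proof. It reduces to the stable range by taking a product with a large sphere, uses Serre's theorem so that the classes $\overline f^*(u)$ span $H^i(X/G;\cplx)$, and then combines Proposition \ref{prop.evaloflgxissigngpreim}, Proposition \ref{prop.signatureoforbitspace} applied to $f^{-1}(p)$ (whose orbit space is $\overline f^{-1}(p)$), and the Goresky--MacPherson--Siegel characterization of $L_*(X/G)$. The normalization detour through effective actions is unnecessary: $\deg\pi$ never appears in your chain of equalities, because it is absorbed into Proposition \ref{prop.evaloflgxissigngpreim}, and Proposition \ref{prop.signatureoforbitspace} does not assume the action is effective.
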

\begin{proof}
The orbit space $X/G$ is an oriented compact Witt pseudomanifold by
Corollary \ref{cor.orbitspaceiswitt}. Therefore, it has a
well-defined $L$-class $L_* (X/G)$.
By the universal coefficient theorem for complex coefficients,
it suffices to show that
\begin{equation} \label{equ.vlorbitspaceasequivclasses}
\langle v,~ L_* (X/G) \rangle
    = \langle v,~ \pi_* \left( \frac{1}{|G|} \sum_{g\in G} L_* (g,X) 
      \right) \rangle 
\end{equation}      
for every $v\in H^i (X/G; \cplx)$.
Up to taking the product with a $G$-invariant large dimensional sphere,
it also suffices to work in the range $n \leq 2(i-1)$, where $n=\dim X$.
Some multiple $kv$ of $v$ can then be written as
$kv = \overline{f}^* (u)$ for some simplicial map 
$\overline{f}: X/G \to S^i,$ $u\in H^i (S^i)$ the generator. 
Let $f:X\to S^i$
be the $G$-invariant simplicial map $f=\overline{f} \circ \pi: X\to S^i$.
Then $\pi^* (kv) = f^* (u)$ and
the right hand side of (\ref{equ.vlorbitspaceasequivclasses}) is
\[
\langle v,~ \pi_* \left( \frac{1}{|G|} \sum_{g\in G} L_* (g,X) 
      \right) \rangle
 = \langle \pi^* v,~ \frac{1}{|G|} \sum_{g\in G} L_* (g,X) \rangle 
 = \frac{1}{k |G|} \sum_{g\in G}
  \langle  f^* u,~  L_* (g,X) \rangle.   
\]   
By Equation (\ref{equ.fulgxissigngfinvp}),
$\sum_{g}
  \langle  f^* u,~  L_* (g,X) \rangle
= \sum_{g}
   \Sign (g, f^{-1} (p)),$ and
by Proposition \ref{prop.signatureoforbitspace},
\[
\frac{1}{k |G|} \sum_{g\in G}
   \Sign (g, f^{-1} (p)) =
   \frac{1}{k} \Sign (f^{-1} (p) /G).
\]
Now, the orbit space of the preimage is
$f^{-1} (p) /G = (\pi^{-1} (\overline{f}^{-1} (p))) /G
  = \overline{f}^{-1} (p).$
Since $\overline{f}$ is transverse regular to $p$
(as $\overline{f}$ is simplicial and $p$ is in the interior of
a top dimensional simplex), it follows from the construction 
of the (nonequivariant) Goresky-MacPherson-Siegel $L$-class
(see also Curran \cite[p. 121f]{curran}) that
$\langle \overline{f}^* (u),~ L_* (X/G) \rangle
   = \Sign (\overline{f}^{-1} (p)).$
Thus
\begin{align*}
\langle v,~ \pi_* \left( \frac{1}{|G|} \sum_{g\in G} L_* (g,X) \right) \rangle
&= \frac{1}{k} \Sign (f^{-1} (p) /G) 
 = \frac{1}{k} \Sign (\overline{f}^{-1} (p)) \\
&= \frac{1}{k} \langle \overline{f}^* (u),~ L_* (X/G) \rangle 
 = \langle v,~ L_* (X/G) \rangle,
\end{align*}
as required.
\end{proof}

The left hand side of (\ref{equ.lorbitspaceisaverage})
comes from rational homology, while
the individual summands $L_* (g,X)$ on the right hand side
are only known to be elements in homology with complex coefficients.
Thus Theorem \ref{thm.lclassorbitspace} is in particular an integrality result 
on the equivariant classes $L_* (g,X)$.
Formula (\ref{equ.lorbitspaceisaverage})
corresponds to the formulae of
Cappell-Shaneson-Maxim-Sch\"urmann \cite[p. 1726f, (1.11), (1.12)]{cmssequivcharcl}
for the equivariant Hirzebruch class in the algebraic setting.

\section{Free Actions}
\label{sec.freeactions}

If $G$ acts freely, then the only contribution to $L_* (X/G)$
can arise from $L_* (1,X)$ in light
of the following result.

\begin{thm} \label{thm.freeaction}
(Free action.)
Suppose that $G$ acts freely on $X$.
If $g\not= 1$, then $L_* (g,X) =0.$
\end{thm}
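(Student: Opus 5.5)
By Proposition \ref{prop.evaloflgxissigngpreim}, and the reduction to the range $n \leq 2(i-1)$ by crossing with a large $G$-invariant sphere on which $G$ acts trivially, $L_*(g,X)$ is the unique $G$-invariant class determined by the numbers $\Sign(g,f^{-1}(p))$ for $G$-invariant simplicial maps $f = \overline{f}\circ \pi: X \to S^i$. It therefore suffices to show that $\Sign(g,Y) = 0$ for every such transverse preimage $Y = f^{-1}(p)$ and every $g \neq 1$. The class $L_*(g,X)$ is $G$-invariant, and its evaluations on all $f^*(u)$ vanish. Since these $f^*(u)$ span $H^i(X;\cplx)^G$ (through the Hurewicz/Serre isomorphisms and the fact that $\pi^*$ is onto the invariants), and invariant homology pairs with cohomology through invariant cohomology, we get $L_*(g,X) = 0$.

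Step 1: reduce to a free action on $Y$. The preimage $Y$ is a $G$-invariant closed oriented Witt subpseudomanifold, and the restricted action is still free, simplicial and orientation preserving. So the theorem reduces to the following claim: for a free orientation-preserving simplicial action of $G$ on a closed oriented Witt space $Y$ of dimension $2m$, one has $\Sign(g,Y) = 0$ for $g \neq 1$. It is enough to prove this for the cyclic subgroup $H = \langle g\rangle$, because $\Sign(g,Y)$ depends only on the action of $g$ on $IH_m(Y)$.

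Step 2: show $\Sign(g,Y)=0$, the main step. The plan follows the classical argument for free actions, using multiplicativity of the signature under coverings. For a free action, $\pi: Y \to Y/H$ is a genuine unramified covering. Apply Proposition \ref{prop.signatureoforbitspace} to every subgroup $H' \le H$, which acts freely as well:
\[ \Sign(Y/H') = \frac{1}{|H'|}\sum_{h\in H'} \Sign(h,Y). \]
Separately, one needs multiplicativity for finite unramified coverings of Witt spaces, $\Sign(Y) = |H'|\,\Sign(Y/H')$. I would derive this from the Novikov/Hirzebruch-type fact that $\Sign$ equals $\langle L_*(Y)\rangle$ in degree $0$, together with the fact that $L_*$ of a covering space is the transfer of $L_*$ of the base. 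That last fact holds because a covering is normally nonsingular of codimension $0$, and the transverse preimages defining Goresky--MacPherson--Siegel classes pull back to coverings of the preimages. This gives the recursive argument of Curran \cite{curran}, which is the natural tool here.

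Combining the two identities for all cyclic subgroups $H' \le H$ gives $\sum_{h\in H'}\Sign(h,Y) = |H'|\,\Sign(Y/H') = \Sign(Y)$, so $\sum_{h\in H',\,h\neq 1}\Sign(h,Y)=0$ for every $H'$. Sums over generators then vanish by Möbius inversion over the subgroup lattice of the cyclic group $H$. Since $\Sign(h,Y)$ is a character value, Galois conjugation permutes the generators of $\langle g\rangle$, and the vanishing of the Galois-orbit sums over all subgroups forces each $\Sign(h,Y)$ to vanish individually. I expect the Galois step to need care. As a fallback, I would argue directly: $\Sign(h,Y)$ depends only on the bordism class of $(Y,H)$, by Proposition \ref{prop.wittgbordisminvariance}, and a free $H$-Witt space is the pullback of $EH$. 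Rationally, free $H$-Witt bordism is concentrated on trivial-covering classes $H\times Z$, and for those $\Sign(h, H\times Z) = 0$ for $h\neq 1$, because $h$ permutes the summands $IH_m(Z)$ without fixed summands and so has trace zero on both $IH_\pm$.

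The main obstacle is Step 2: making multiplicativity (or the bordism reduction) rigorous for Witt spaces with the tools of this paper. I would attempt the $H\times Z$ bordism route first, since Proposition \ref{prop.wittgbordisminvariance} is already available.
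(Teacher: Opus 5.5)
Your reduction to showing $\Sign(g,Y)=0$ for a free, orientation preserving simplicial action of $H=\langle g\rangle\cong\intg/_k$ on a closed Witt space $Y^{2m}$ is how the paper starts, via Proposition~\ref{prop.evaloflgxissigngpreim}. The multiplicativity route you propose for Step~2, however, has a genuine gap at the Galois step. That step is not merely delicate; it is false.

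Proposition~\ref{prop.signatureoforbitspace}, multiplicativity $\Sign(Y)=|H'|\,\Sign(Y/H')$ for all $H'\le H$, and M\"obius inversion give only that $S_e=\sum_{\operatorname{ord}(h)=e}\Sign(h,Y)$ vanishes for every $e>1$. Since $\Sign(h^a,Y)=\sigma_a(\Sign(h,Y))$, where $\sigma_a$ sends $\zeta_e\mapsto\zeta_e^a$, the sum $S_e$ is the field trace $\operatorname{Tr}_{\rat(\zeta_e)/\rat}\Sign(h,Y)$. A vanishing field trace does not force the element to vanish.

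Concretely, take $k=5$ and let $W_1,W_2$ be the two $2$-dimensional real irreducible representations of $H$. The virtual representation $W_1-W_2$ has virtual dimension $0$ and $W_1^H=W_2^H=0$. So adding it to $\Sign(H,Y)$ leaves every averaging and multiplicativity identity intact. Yet it changes the character at $g$ by $2\cos(2\pi/5)-2\cos(4\pi/5)=\sqrt5$.

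For $m$ odd the situation is worse. Both sides of Proposition~\ref{prop.signatureoforbitspace} and of multiplicativity are identically $0$, so they say nothing about $\Sign(g,Y)=2i\operatorname{Im}\tr(g_*)$. Quotient data detect only fixed-space dimensions, that is, Galois-orbit sums of the character. The remaining information needs a genuinely equivariant input: classically the $G$-signature theorem, here bordism.

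Your fallback is the correct argument and is exactly the paper's proof, so make it the main step.

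\begin{itemize}
\item \emph{Trivial coverings.} For $Z\times\intg/_k$ with $H$ translating the second factor, $g_*$ sends $IH_\pm(Z)\times g^j$ to $IH_\pm(Z)\times g^{j+1}$. For $m$ odd, the complex structure $J$ is block diagonal, so the same holds. All diagonal blocks therefore vanish and the traces are zero.
\item \emph{Setting up the bordism class.} In general, $Y/H$ is a Witt space by Corollary~\ref{cor.orbitspaceiswitt}. Its classifying map $c\colon Y/H\to BH$ therefore defines a class in $\Omega^\Witt_*(BH)$.
\item \emph{Rational triviality.} We have $\widetilde{\Omega}^\Witt_*(BH)\otimes\rat=0$. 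The paper uses rational localization of the simple space $B\intg/_k$; the Atiyah--Hirzebruch spectral sequence with $\widetilde{H}_*(B\intg/_k;\rat)=0$ works equally well. Hence some multiple $r[c]$ is bordant over $BH$ to a map $Z\to\pt\to BH$.
\item \emph{Conclusion.} Pull back $EH$ over this bordism and lift a sufficiently fine triangulation. This gives a free simplicial Witt $H$-bordism between $rY$ and $Z\times\intg/_k$. Proposition~\ref{prop.wittgbordisminvariance} then yields $r\,\Sign(g,Y)=0$.
\end{itemize}
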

\begin{proof}
By Proposition \ref{prop.evaloflgxissigngpreim}, it suffices to show
that $\Sign (g,X)=0$ for free $2m$-dimensional
$G$-Witt spaces $X$, $g\in G - \{ 1 \}$.
Since $\Sign (g,X)$ depends only on the action of the subgroup 
$\langle g \rangle \cong \intg/_k$ generated by $g\not= 1,$
we may as well assume $G = \langle g \rangle$.
Let $BG$ be the corresponding classifying space.

Suppose first that $X$ has the form $X= Y \times \intg/_k$,
where $G=\intg/_k$ acts trivially on the Witt space $Y$ and by
translation on the second factor.
Then $IH_m (X) = \bigoplus_{j=0}^{k-1} IH_m (Y \times g^j)$
and the intersection form $B$ on $IH_m (X)$ is given by an orthogonal sum
of $k$ copies of the intersection form $B_Y$ on $IH_m (Y)$.
Choose a positive definite inner product $\langle \cdot, \cdot \rangle$
on $IH_m (Y) = IH_m (Y\times g^0)$ and extend it to $IH_m (X)$
by 
$\langle (v, g^j), (w, g^j) \rangle = \langle (v,g^0), (w,g^0)  \rangle$
and $\langle (v, g^i), (w, g^j) \rangle = 0$ for $g^i \not= g^j.$
This is then a positive definite and $G$-invariant inner product
on $IH_m (X)$. Suppose that $m$ is even.
Let $IH_+ (Y)$ and $IH_- (Y)$ be the positive and negative eigenspaces
of the operator $A_Y$ defined on $IH_m (Y)$ by 
$B_Y (v,w) = \langle v, A_Y w \rangle$, yielding a decomposition
$IH_m (Y) = IH_+ (Y) \oplus IH_- (Y)$.
The operator $A$ on $IH_m (X)$
defined by $B (v,w) = \langle v, A w \rangle$
is a block diagonal sum of $k$ copies of $A_Y$. 
The positive and negative eigenspaces of $A = k\cdot A_Y$
are given by
$IH_\pm = \bigoplus_j IH_\pm (Y) \times g^j$.
Now let $\{ (e^+_1, g^0),\ldots,  (e^+_s, g^0) \}$ be a basis for $IH_+ (Y)$.
Then a basis for $IH_+$ is given by
$\{ (e^+_1, g^j),\ldots,  (e^+_s, g^j) ~|~ j=0,\ldots, k-1  \}.$
If $g_*|_{IH_+}$ is written as a matrix with respect to this basis,
then this matrix has zero blocks, corresponding to $g^j$, along the
diagonal since $g_*$ acts by sending a basis vector $(e^+_i, g^j)$ to 
the basis vector $(e^+_i, g^{j+1})$.
This implies that the trace $\tr (g_*|_{IH_+})$ is zero.
Similarly,  $\tr (g_*|_{IH_-})=0$. Thus the difference $\Sign (g,X)$
of these two traces is zero.
The case of odd $m$ is treated similarly:
The operators $A_Y$ and $A$ are skew-adjoint and
the complex structure $J = A (AA^*)^{-1/2}$ on $IH_m (X)$
is the block diagonal sum of $k$ copies of the complex structure
$J_Y = A_Y (A_Y A_Y^*)^{-1/2}$ on $IH_m (Y)$.
Thus there is a direct sum decomposition of complex vector spaces
$(IH_m (X), J) = \bigoplus_{j=0}^{k-1} (IH_m (Y \times g^j), J_Y).$
Let $\{ (e_1, g^0),\ldots,  (e_s, g^0) \}$ be a basis for the complex
vector space $(IH_+ (Y), J_Y)$.
Then a basis for the complex vector space $(IH_m (X), J)$ is given by
$\{ (e_1, g^j),\ldots,  (e_s, g^j) ~|~ j=0,\ldots, k-1  \}.$
If the complex linear automorphism
$g_*|_{(IH_m (X), J)}$ is written as a matrix with respect to this basis,
then this matrix again has zero blocks along the
diagonal since $g_*|_{(IH_m (X), J)}$ acts by sending a basis vector $(e_i, g^j)$ to 
the basis vector $(e_i, g^{j+1})$.
This implies that the trace $\tr (g_*|_{(IH_m (X), J)})$ is zero.
Thus $\Sign (g,X)=0$ also when $m$ is odd.

We show next that some multiple of a free simplicial $G=\intg/_k$-Witt pseudomanifold $X$
is simplicially Witt $G$-bordant to a $G$-Witt pseudomanifold of the above form 
$Y \times \intg/_k$. This will imply the desired statement by the
Witt $G$-bordism invariance of the $g$-signature,
Proposition \ref{prop.wittgbordisminvariance}.
If $G=\langle g \rangle$ acts freely on $X$, then the orbit map
$\pi: X \to X/G$ is a regular covering space classified by a map
$f: Y := X/G \to BG$. The orbit space $X/G$ is an oriented Witt pseudomanifold by
Corollary \ref{cor.orbitspaceiswitt}.
Hence, the map $f$ defines an element
$[f] \in \Omega^\Witt_* (B\intg/_k)$, where $\Omega^\Witt_* (-)$ denotes
Witt bordism theory.
We recall that a connected space $S$ is called (homotopy theoretically) 
\emph{simple} if $\pi_1 (S)$ is abelian and the action of
$\pi_1 (S)$ on $\pi_n (S)$ is trivial for every $n\geq 2$.
A simple space has a rational localization $S_\rat$, which is again a simple space.
Standard localization methods in stable homotopy theory show that
$\widetilde{E}_n (S_\rat) = \widetilde{E}_n (S) \otimes \rat$
for every spectrum $E$.
The classifying space $BG=B\intg/_k = K(\intg/_k, 1)$ is a simple
space, since $\intg/_k$ is abelian and $\pi_n (K(\intg/_k, 1))=0$ for all
$n\geq 2$. (Explicitly, $B\intg/_k$ is an infinite dimensional lens space
$L = L^\infty_k$.)
The localization $L_\rat$ is homotopy equivalent to a point, since
$\pi_1 (L) \otimes \rat = \intg/_k \otimes \rat =0$,
and for $n\geq 2$, $\pi_n (L) \otimes \rat =0$.
Given any spectrum $E$, we deduce that
$\widetilde{E}_n (L) \otimes \rat = \widetilde{E}_n (L_\rat) 
= \widetilde{E}_n (\pt) \otimes \rat =0.$
The long exact sequence of the pair $(L, \{ l_0 \}),$ where $l_0 \in L$ is a base point,
shows that the inclusion $j:\{ l_0 \} \hookrightarrow L$
induces an isomorphism
$E_n (\{ l_0 \}) \otimes \rat \cong
   E_n (L) \otimes \rat.$
Applying this to the spectrum $E=\MWITT$ representing Witt bordism theory,
we receive an isomorphism
$\Omega^\Witt_n (\pt) \otimes \rat \cong
   \Omega^\Witt_n (B\intg/_k) \otimes \rat.$
We return to the element
$[f] \in \Omega^\Witt_* (BG)$ represented by the classifying map $f:X/G \to BG$.
The above isomorphism shows that some multiple of $[f]$ is
Witt bordant over $BG$ to a map which factors as
\[ Y \stackrel{c}{\longrightarrow} 
    \pt \stackrel{j}{\hookrightarrow}
     BG, \]
where $Y$ is some closed oriented Witt pseudomanifold.     
Let $F:W \to BG$ be such a bordism.     
Let $EG \to BG$ be the universal principal $G$-bundle.
(Explicitly, $EG = S^\infty$.) This is the universal cover of $BG=K(\intg/_k, 1)$.
Since $f$ classifies the $\intg/_k$-space $X$, we have
$X = f^* EG$. 
Let $V := F^* EG$ and $X' := c^* j^* EG$.
Then $V$ is a covering space of $W$ and thus also a Witt pseudomanifold.
The group $G=\intg/_k$ is the deck transformation group and in particular
operates freely on $V$.
Let $C$ be any simplicial complex triangulating $W$ which is fine enough 
so that every simplex
is contained in a connected open set which is evenly covered by $V\to W$.
Then $C$ can be lifted to a triangulation of $V$ by a simplicial complex $K$
such that $V\to W$ becomes a simplicial map $K\to C$
under the triangulation homeomorphisms.
Since the restriction of this map to any slice over an evenly covered subset
is invertible, the group of
deck transformations (i.e. $G$) acts simplicially on $K$.
Since $W$ is oriented, its covering space $V$ is oriented as well.
The boundary of $V$ is 
$(F|_{\partial W})^* EG = f^* EG \sqcup c^* j^* EG = X \sqcup X'.$
Thus $V$ is a simplicial Witt $G$-bordism between a multiple $rX$ of $X$
and $X'$. Now $j^* EG = \intg/_k,$ the principal $G$-bundle over a point.
Thus
$X' = c^* (\intg/_k) = Y \times \intg/_k,$
where $\intg/_k$ acts trivially on $Y$ and by translation on the second factor.
Hence by Proposition \ref{prop.wittgbordisminvariance}, 
$r\Sign (g,X) = \Sign (g, rX) = \Sign (g,X') = \Sign (g,Y\times \intg/_k)=0 \in \cplx$.
This implies that $\Sign (g,X)=0$.
\end{proof}

When $\pi$ is a regular covering projection,
Theorem \ref{thm.lclassorbitspace} together with Theorem 
\ref{thm.freeaction} asserts that
\[ L_* (X/G) =  \frac{1}{|G|} \pi_* L_*(X). \] 
This was already established in \cite{banaglcovertransfer}.

\end{document}